\documentclass[]{amsart}
\usepackage{amsmath,amssymb,amsfonts}
\usepackage[initials,nobysame]{amsrefs}
\usepackage{hyperref}
\usepackage{cleveref}   
\usepackage{subcaption}
\usepackage{float}
\usepackage{graphicx,xcolor,comment} % Required for inserting images
\usepackage[top=2.5cm, bottom=3cm, left=2cm, right=2cm]{geometry}

\title{Existence of closed non-planar $p$-elasticae}
\author[F.~Gruen]{Florian Gruen}
\address[F.~Gruen]{Department of Mathematics, Graduate School of Science, Kyoto University, Kitashirakawa Oiwake-cho, Sakyo-ku, Kyoto 606-8502, Japan}
\email{gruen.florian.32r@st.kyoto-u.ac.jp}

\keywords{$p$-elastica, classification, boundary value problem, closed curve, torus-knot}
\subjclass[2020]{49Q10 and 53A04}

\numberwithin{equation}{section}
\newtheorem{theorem}{Theorem}[section]
\newtheorem{corollary}[theorem]{Corollary}

\newtheorem{lemma}[theorem]{Lemma}
\newtheorem{proposition}[theorem]{Proposition}
\theoremstyle{definition}
\newtheorem{definition}[theorem]{Definition}

\newtheorem{remark}[theorem]{Remark}

\DeclareMathOperator{\argmin}{argmin}

\DeclareMathOperator{\R}{\mathbb{R}}
\DeclareMathOperator{\N}{\mathbb{N}}
\DeclareMathOperator{\Q}{\mathbb{Q}}

\DeclareMathOperator{\sn}{sn}

\DeclareMathOperator{\Z}{\mathbb{Z}}

\newcommand{\eps}{\varepsilon}

\crefname{lemma}{Lemma}{Lemmas}
\Crefname{lemma}{Lemma}{Lemmas}
\crefname{theorem}{Theorem}{Theorems}
\Crefname{theorem}{Theorem}{Theorems}

\begin{document}

\begin{abstract}
  We prove existence of closed non-planar $p$-elasticae for general exponents $p\in (1,\infty)$. In particular, we show that for any $p \in (1,\infty)$, there exists a countable  family of non-planar $p$-elasticae, which are realized as torus knots. This generalizes  well-known results of Langer--Singer for the quadratic case $p=2$ to general exponents $p\in (1,\infty)$.
\end{abstract}

\maketitle

\section{Introduction}

The elastica is one of the oldest problems in the Calculus of Variations, originating in works of Euler and Bernoulli. 
Physically motivated by the modeling of an elastic rod \cites{Levien_berkely, elastica_influence}, it continues to be studied from pure and applied perspectives, revitalized with the pioneering works of Langer--Singer  (e.g.\ \cites{langer-singer_classification, langersinger_minmax, LS_Lagrangian_aspects, singer_lecturenotes}) to modern treatments \cites{miura_phase_transitions, elastica_survey, miura_LiYau, miuraUniqueness, kawohl_original, Bucur-Henrot, henrot2, bevilacquaVariationalAnalysisInextensible2022, dondl_confined, bellettini2026concentrationeffectsgammalimitelastica}. 
This work deals with a recent generalization of the classical squared bending energy, the $p$-bending energy, which nowadays attracts much interest in the research community \cites{acerbi, Japan_p, p-elastica_in_sphere, kawohl_general_p, miura_pinned_p, watanabe_flatcore, miuraclassification,  Pozzetta1, masnou_levellines, ambrosio_image, p-obstacle, ARROYO2003339, miuraStabilityFlatcorePinned2025,infinite_elastica_classification,infinity_elastica_manifold, miura_jlms}.

In particular, for $p\in (1,\infty)$, the \textit{$p$-bending energy} of an immersed curve $\gamma \in W^{2,p}(0,1;\R^n)$ is defined as 
\begin{equation*}
  \mathcal{B}_p[\gamma] := \int_\gamma |\kappa|^p ds,
\end{equation*}
where $\kappa$ is the curvature vector of $\gamma$. Note that $\mathcal{B}_p$ is invariant under reparameterization and Euclidean isometries.

The so-called \textit{$p$-elasticae} arise as critical points of $\mathcal{B}_p$ under a fixed-length constraint $\mathcal{L}[\gamma]=\int_\gamma ds = L$, giving rise to a Lagrange multiplier $\lambda$, i.e.\ more precisely we have the following definitions. 

\begin{definition}
  For $p\in (1,\infty)$, the immersed curve $\gamma \in W^{2,p}(0,1;\R^n)$ is called a \textit{$p$-elastica} if there exists $\lambda \in \R$ such that 
  \begin{equation*}
    \frac{d}{d\eps} \left( \mathcal{B}_p[\gamma+\eps \eta] + \lambda \mathcal{L}[\gamma + \eps \eta] \right) \bigg|_{\eps=0} = 0 \qquad \forall \eta \in C_c^\infty(0,1;\R^n).
  \end{equation*}
\end{definition}

\begin{definition}
  For $p\in (1,\infty)$, the immersed curve $\gamma \in W^{2,p}(\R/\Z;\R^n)$ is called a \textit{closed $p$-elastica} if there exists $\lambda \in \R$ such that 
  \begin{equation}
  \label{eq: first variation closed}
    \frac{d}{d\eps} \left( \mathcal{B}_p[\gamma+\eps \eta] + \lambda \mathcal{L}[\gamma + \eps \eta] \right) \bigg|_{\eps=0} = 0 \qquad \forall \eta \in C^\infty(\R/\Z;\R^n).
  \end{equation}
\end{definition}

Despite existence of minimizers (which follows from a straightforward direct method argument) to the $p$-bending energy under the fixed-length constraint with respect to various boundary conditions, it is a nontrivial question to ask whether global or local minimizers, and generally critical points, are planar or non-planar.
The case of pinned boundary conditions has been thoroughly analyzed in \cites{miura_pinned_p,miuraStabilityFlatcorePinned2025, miura_jlms}, but the case of general clamped boundary conditions remains widely open. 
Closed $p$-elasticae can be interpreted as a special case of clamped boundary conditions, see Remark~\ref{rmk: closed implies closed in C1} and Lemma~\ref{lemma: closedness conditions}. 

The main result of this work is a first proof of the existence of closed non-planar $p$-elasticae, numerical examples for varying $p$ are shown in Figure~\ref{fig: torus different p} and Figure~\ref{fig: cross section different p}.

\begin{theorem}
  \label{thm: main result}
  Let $p\in (1,\infty)$. Then there exists a countable family of closed non-planar $p$-elasticae in $\R^3$. 
  Each curve is analytic, embedded and lies on an embedded torus of revolution. 
  Moreover, each $(n,m)$-torus knot with $m>2n>0$ (where $m$, $n$ are coprime) is realized by some closed non-planar $p$-elastica.
\end{theorem}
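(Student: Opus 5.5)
We follow the Langer--Singer strategy. First we derive the Euler--Lagrange system for a $p$-elastica in $\R^3$ in terms of curvature $k$ and torsion $\tau$ with respect to arclength. Formally, the first variation \eqref{eq: first variation closed} gives the balance law $\frac{d}{ds}\big(\ldots\big)=0$ for a vector field $J$ built from $w:=|k|^{p-2}k$, the tangent and the normal/binormal frame. A standard computation then gives two conserved quantities. The first is $k^2 w\,\tau=c$; since $w = |k|^{p-2}k$, this says that $|k|^{p}\tau$ is constant, up to normalization. The second is a first-order ODE of the form
\begin{equation*}
(w')^2 = P(k;\lambda,c,E),
\end{equation*}
where $E$ is a further integration constant. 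The case $c=0$ is the planar one, so non-planar solutions require $c\neq 0$. In that case $k$ stays bounded away from zero, which rules out the degeneracy of the $p$-Laplacian-type term. Hence $k$ is periodic, positive and analytic, and $\tau = c/k^{p}$ up to a constant. Analyticity of the curve follows, because $k$ solves an analytic ODE away from $k=0$.

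Next we set up the closedness problem, using Lemma~\ref{lemma: closedness conditions}. The constant vector $J$ defines an axis. Using cylindrical coordinates about this axis, the curve after one period $T$ of $k$ is translated along the axis by $\Delta z$ and rotated about it by an angle $\Delta\theta$; both are explicit integrals over one period of functions of $k$. The curve closes up in $\R^3$ exactly when
\begin{equation*}
\Delta z=0 \quad\text{and}\quad \Delta\theta = 2\pi \frac{n}{m}
\end{equation*}
with $m,n$ coprime, after $m$ periods. We then fix the scaling, e.g.\ $\lambda$ or $|J|$ normalized, to reduce to a two-parameter family $(c,E)$. The condition $\Delta z=0$ cuts out a curve in this parameter space. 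We show it is a continuous curve by an implicit function or intermediate value argument on $\Delta z$, which changes sign between two regimes. Along this curve we compute the limits of $\Delta\theta$ at the two ends. The first end is near-circular, i.e.\ small oscillation of $k$ around a constant; linearizing there gives an explicit limiting rotation index, which we expect to be $\pi$, matching the threshold $m>2n$. The second end is a degenerate limit, e.g.\ a multiply covered circle or $c\to0$ planar, where $\Delta\theta\to 0$. By continuity, every rational value $n/m\in(0,1/2)$ is attained, giving the countable family and the condition $m>2n$.

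Finally we check the geometric conclusions. We show that along the $\Delta z=0$ branch the distance to the axis $r(s)$ and the height $z(s)$ are related so that the curve lies on a torus of revolution around the $J$-axis. Concretely, $r^2$ should be an affine function of $k^{p}$ or $|w|^2$, and $z$ a function of $r$, so the curve winds $n$ times around the axis and $m$ times meridionally. Embeddedness then follows because $\theta$ is strictly monotone (from $\tau\neq0$ and the sign of $\theta'$), so the curve is an $(n,m)$-torus knot on an embedded torus. We expect the main obstacle to be the limit analysis of $\Delta\theta$ together with the monotonicity or continuity of the branch $\Delta z=0$. For $p\neq2$ the integrals are no longer elliptic integrals, so we would work with generalized ($p$-)elliptic substitutions and prove the endpoint limits by dominated convergence and a careful second-order expansion near the circle.
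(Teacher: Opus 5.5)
\textbf{Embeddedness fails as argued.} You claim $\theta$ is strictly monotone, but this is false for every curve you are constructing. In the cylindrical frame, $z_s=((p-1)k^p-\lambda)/|J_0|$ and $\theta_s=C|J_0|((p-1)k^p-\lambda)/(|J_0|^2k^{2p-2}-p^2C^2)$ share the numerator $(p-1)k^p-\lambda$. The denominator is positive as long as the curve stays off the axis. Since $k$ is non-constant, the condition $\Delta z=0$ itself forces this numerator to change sign, so $\theta$ oscillates. Likewise $z$ is not a function of $r$: the meridian profile $s\mapsto(r(s),z(s))$ is a loop.

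The argument that works runs as follows. First, $r^2$ is an increasing affine function of $k^{2p-2}$, hence monotone between the extrema of $k$. Next, $z_s$ is strictly monotone on each half-period, and by $\Delta z=0$ and evenness $z$ returns to its initial value after each half-period, so $z>0$ on one half and $z<0$ on the other. The profile is therefore a simple closed curve in $\{r>0\}$, provided the inner radius $r_{\mathcal C}$ is positive, and this gives the embedded torus. Finally, two parameter values with the same $(r,z)$ differ by $b$ curvature periods with $1\le b<m$, so their angles differ by $b\Delta\theta=2\pi bn/m\notin 2\pi\Z$ by coprimality.

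Separately, your conserved quantity is misstated. It should be $k^{2p-2}\tau=C$, i.e.\ $w^2\tau$ with $w=k^{p-1}$, not $k^2w\tau$ or $|k|^p\tau$ (which agree with it only for $p=2$). With your law, $\tau$ and every closing integral would be wrong.

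\textbf{The continuity and endpoint analysis is missing its essential ideas, and your endpoint values are swapped.} An implicit function argument is unavailable, since nothing is known about $\partial_{C^2}\Delta z$ on the zero set. An intermediate value argument only produces isolated zeros. To get a connected set along which $\Delta\theta$ can be followed, you need to do three things:
\begin{itemize}
\item extend $\Delta z$ continuously to the boundary of the parameter region, including $C=0$ where the cylindrical coordinates degenerate, after excising a neighbourhood of the borderline point $(\frac{p-1}{2},0)$ where the period can blow up;
\item determine the sign of $\Delta z$ on the whole boundary;
\item apply a topological crossing lemma to obtain a continuum joining the $p$-figure-eight $(\lambda_p^*,0)$ to the circle $(p-1,0)$.
\end{itemize}
Moreover, $\Delta\theta$ is not continuous on the parameter space: it jumps by $2\pi$ across $\{r_{\mathcal C}=0\}$, where the curve passes through the axis. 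You must show the continuum avoids this set. This holds because $(p-1)k_m^p=\lambda$ there, which forces $\Delta z>0$, and it is also what supplies $r_{\mathcal C}>0$ above.

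At the circle corner $(p-1,0)$, $\Delta\theta$ has no direction-independent limit, so no linearization there determines it. You need the quantitative fact that the branch approaches tangentially, $C^2<(p-1-\lambda)^\sigma$ for some $\sigma\in(1,2)$, and this yields $\Delta\theta\to 0$ at the circle end. The value $-\pi$ (so $|\Delta\theta|\to\pi$) comes from the figure-eight end, through a boundary-layer computation as $k_m\to 0$. Swapping the two values would not change the attained range, but the proposal only says ``we expect'' exactly where the real work lies.
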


For the quadratic case ($p=2$), existence and stability results for closed non-planar $p$-elasticae have been obtained by Langer and Singer \cites{langer-singer_classification,langersinger_minmax,singer_lecturenotes} in the 1980s. 
In summary, closed non-planar $2$-elasticae exist as embedded curves lying on tori of revolution, but are all unstable critical points and the only spatially stable closed $2$-elastica is the once-covered circle.

For general $p\in (1,\infty)$, Miura and Yoshizawa showed in \cite{miuraclassification} that closed planar $p$-elasticae are either a circle or a $p$-figure-eight, possibly multiply covered. 
Moreover, they also investigated stability in \cite{miura_General_Rigidity}. 
Their results rely on a novel characterization of planar $p$-elasticae in terms of $p$-elliptic functions, see also \cites{takeuchi,watanabe_flatcore}.

\begin{figure}[ht]
    \centering
    \begin{subfigure}{0.32\textwidth}
        \centering
        \includegraphics[trim={80px 100px 80px 100px}, clip, width=\linewidth]{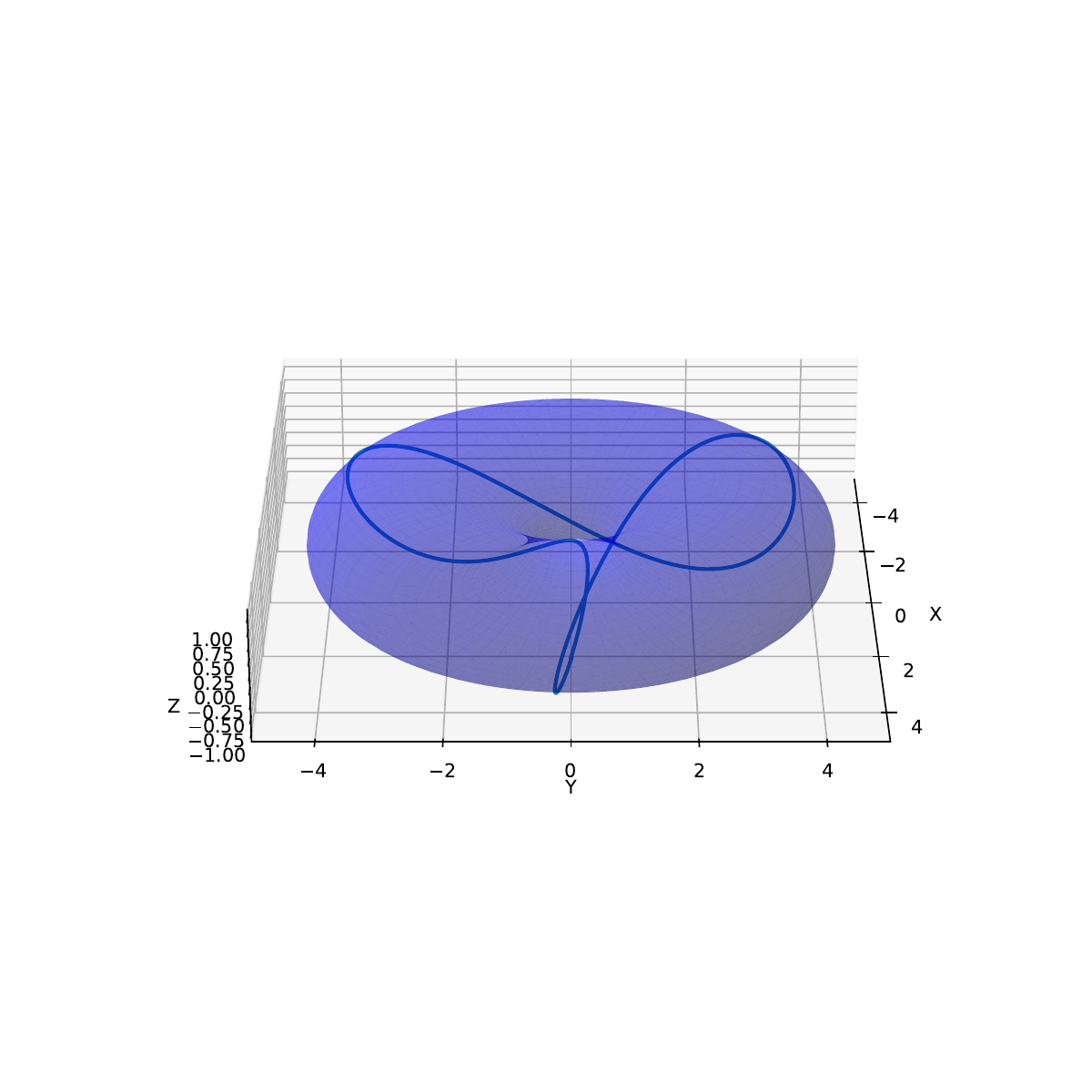}
        \caption{$p=1.5$}
    \end{subfigure}
    \hfill
    \begin{subfigure}{0.32\textwidth}
        \centering
        \includegraphics[trim={80px 100px 80px 100px}, clip, width=\linewidth]{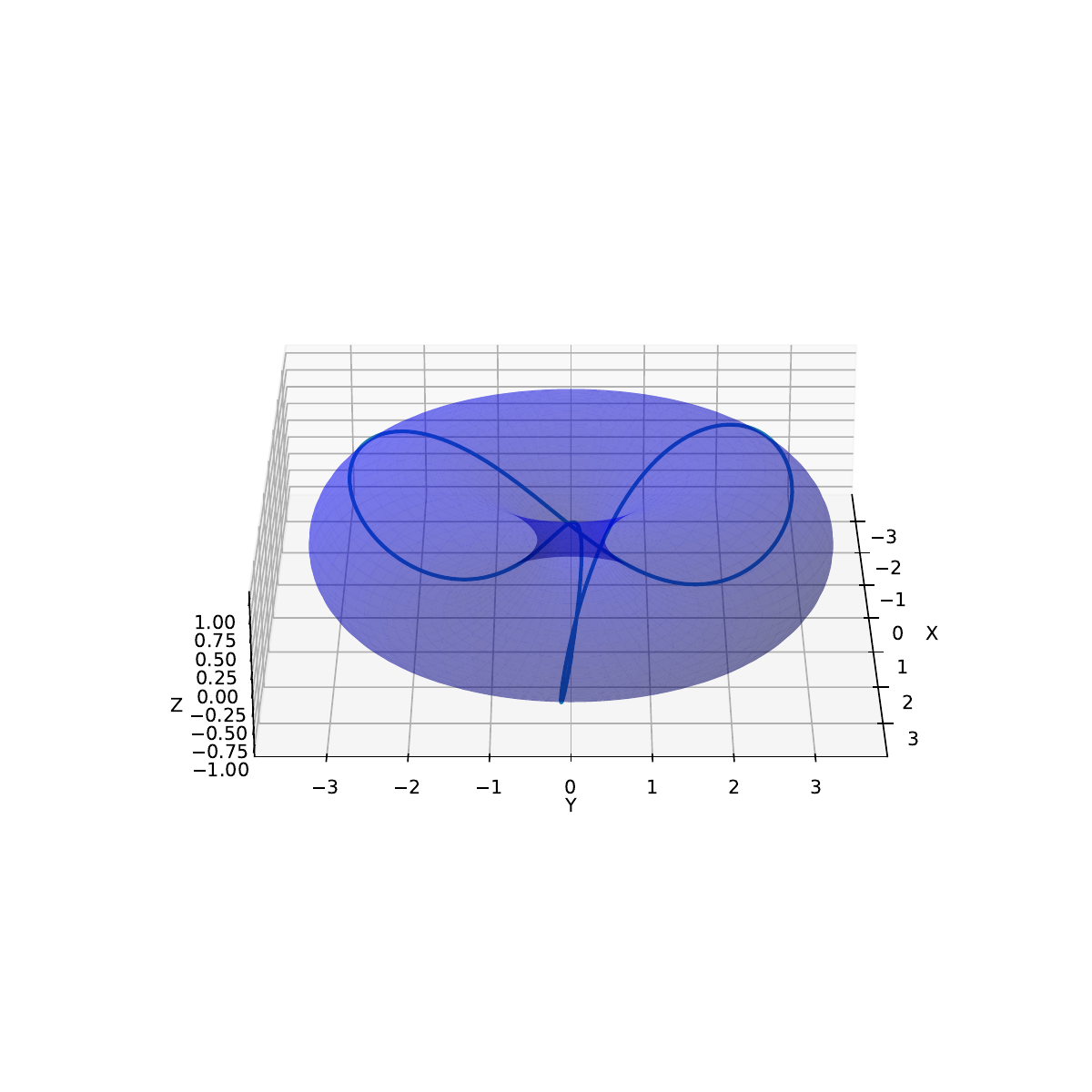}
        \caption{$p=2$}
    \end{subfigure}
    \hfill
    \begin{subfigure}{0.32\textwidth}
        \centering
        \includegraphics[trim={80px 100px 80px 100px}, clip, width=\linewidth]{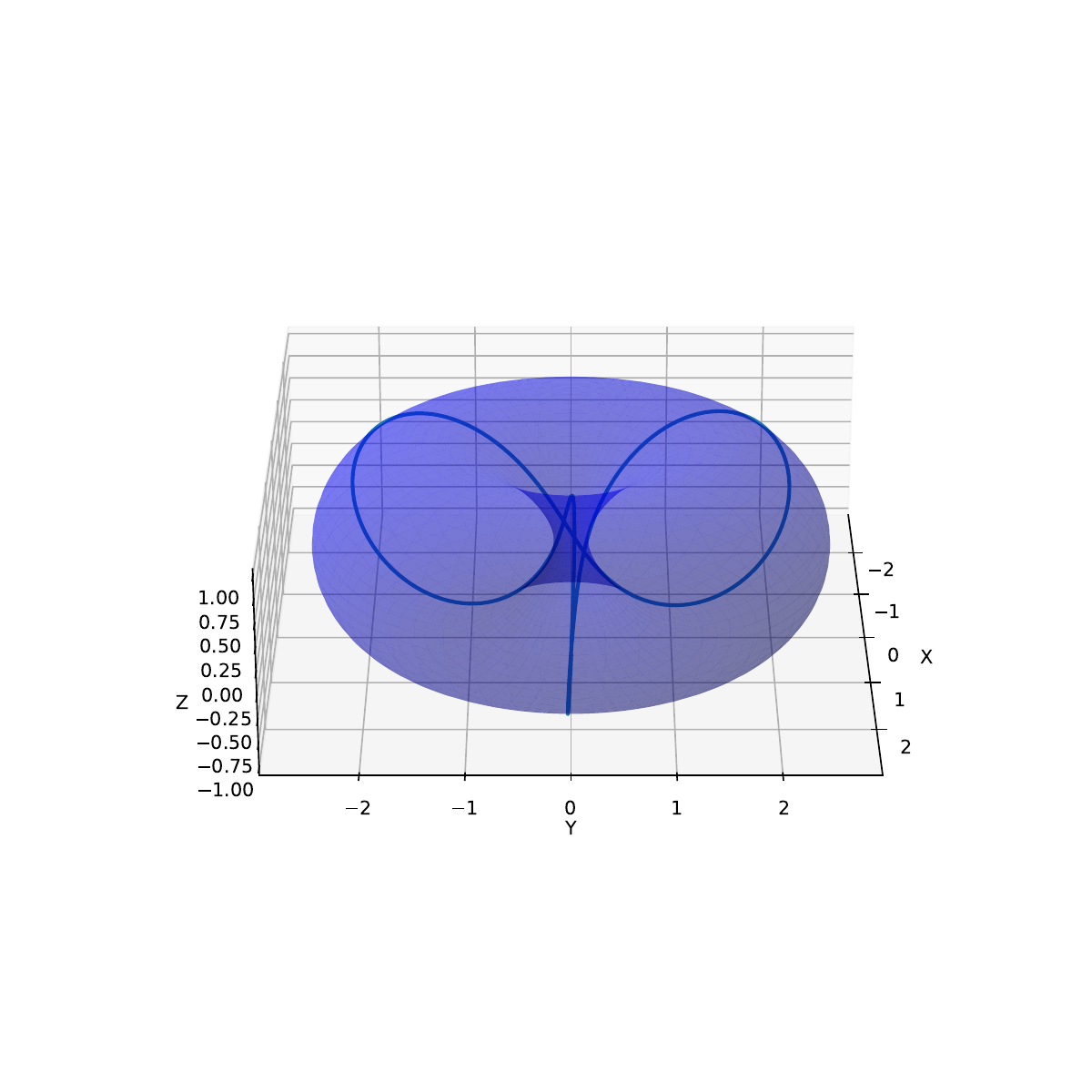}
        \caption{$p=4$}
    \end{subfigure}
    \caption{The $(1,3)$-torus knot $p$-elasticae for different $p$: For $p\to 1$, the torus knot becomes flat, for $p\to \infty$ ``upright''. Such ``flattening/uprising'' behavior for $p\to 1$ and $p\to \infty$ respectively has also been observed in \cite[Section~5]{ourpaper} in the context of leafed $p$-elasticae with self-intersections.}
    \label{fig: torus different p}
\end{figure}

\begin{figure}[ht]
    \centering
    \hspace{1cm}
    \begin{subfigure}{0.24\textwidth}
        \centering
        \includegraphics[trim={30px 30px 30px 155px}, clip, width=\linewidth]{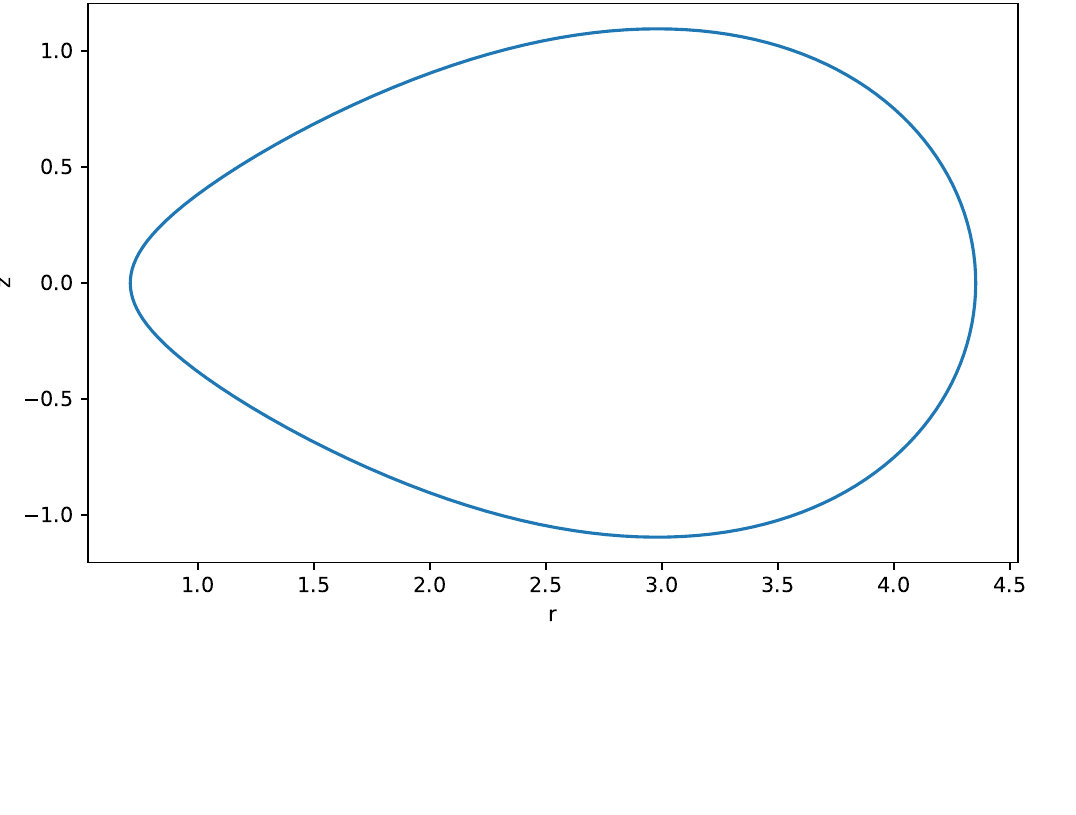}
        \caption{$p=1.5$}
    \end{subfigure}
    \hfill
    \begin{subfigure}{0.24\textwidth}
        \centering
        \includegraphics[trim={30px 30px 30px 125px}, clip, width=\linewidth]{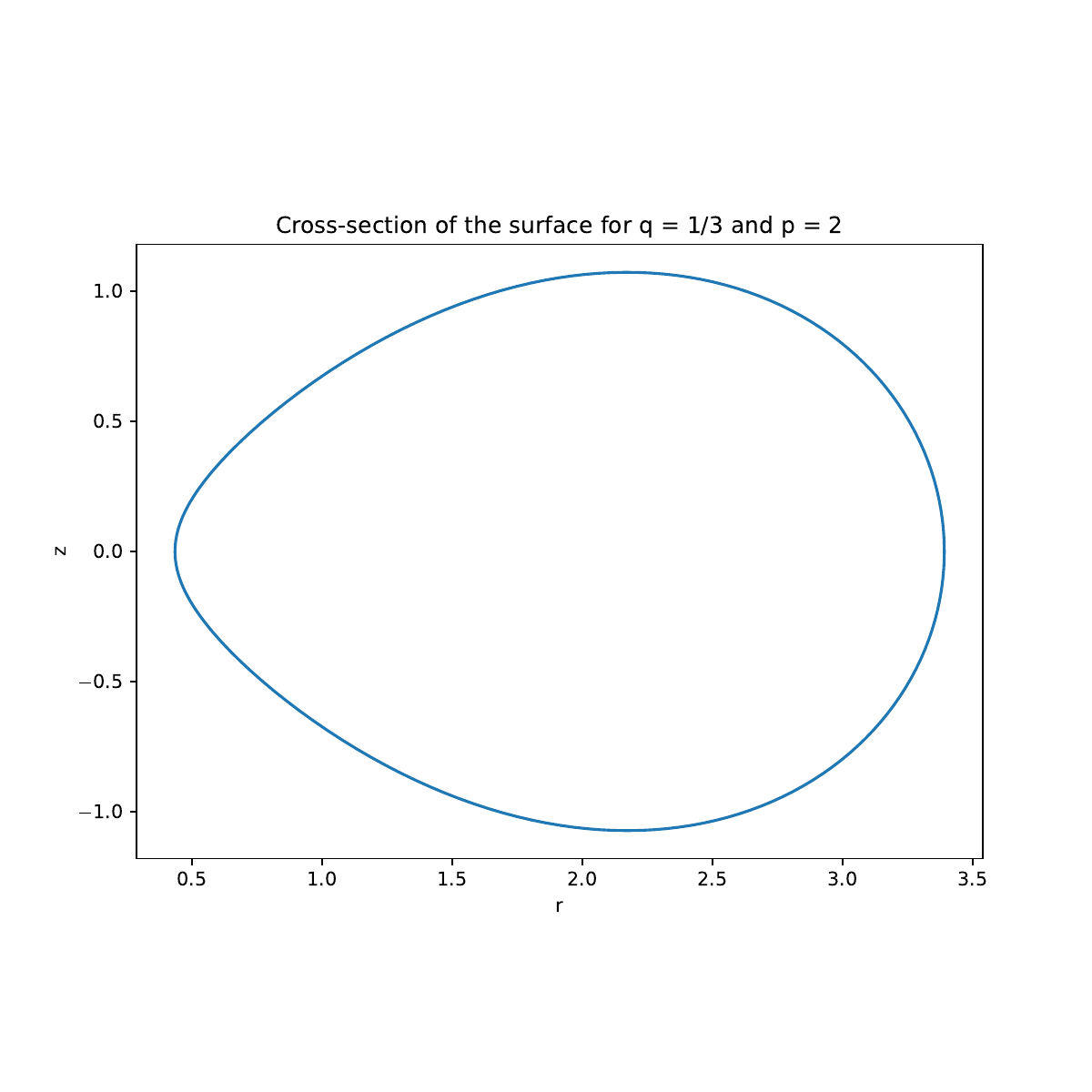}
        \caption{$p=2$}
    \end{subfigure}
    \hfill
    \begin{subfigure}{0.24\textwidth}
        \centering
        \includegraphics[trim={30px 30px 30px 95px}, clip, width=\linewidth]{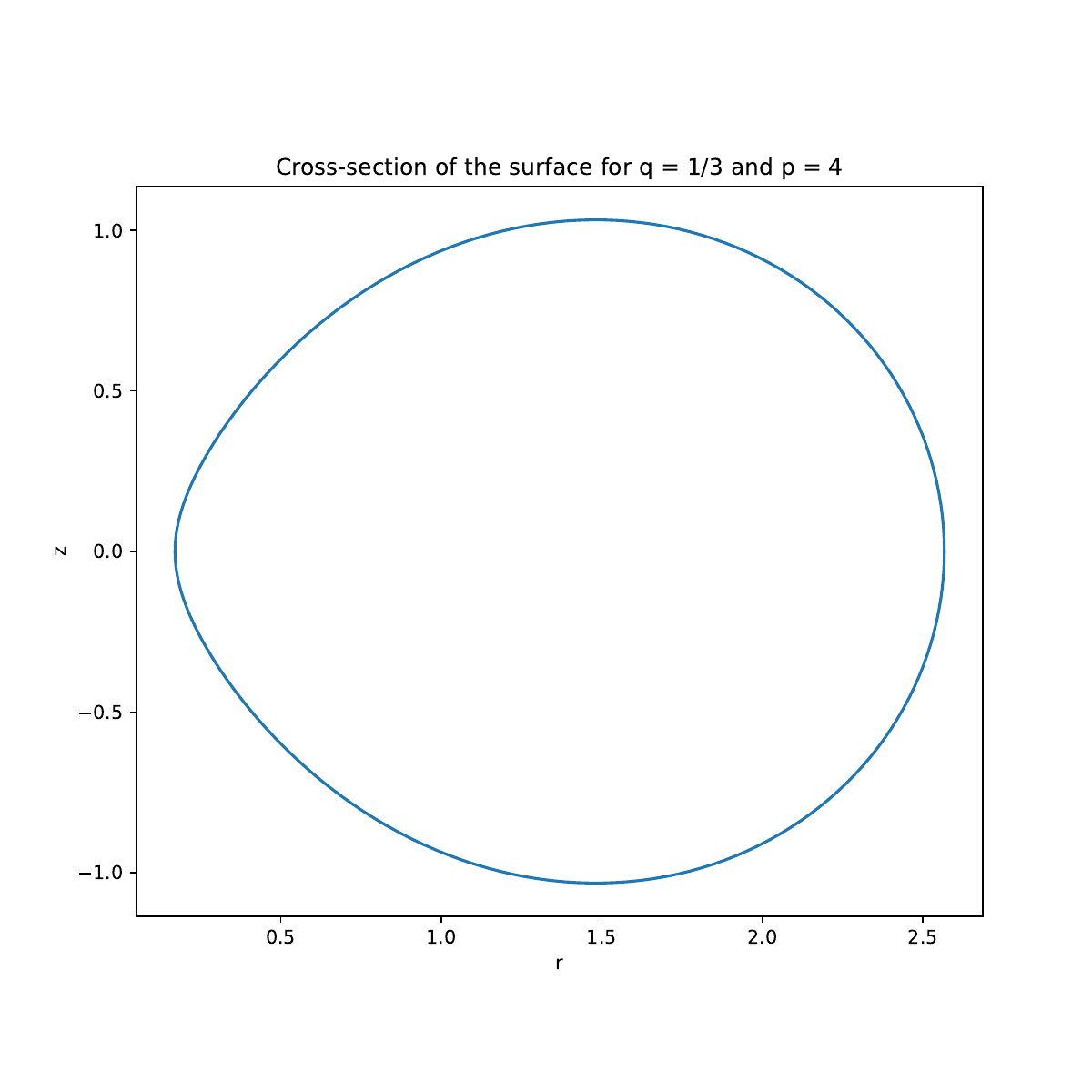}
        \caption{$p=4$}
    \end{subfigure}
    \hspace{1cm}
    \caption{The cross sections of the $(1,3)$-torus knot $p$-elasticae for different $p$: For $p\to 1$, the cross section becomes flat, for $p\to \infty$ circular.}
    \label{fig: cross section different p}
\end{figure}

\subsection{Idea of the proof}
We describe briefly the idea of the proof, building on results from \cite{ourpaper}. 
Recall that by computing the first variation \cites{miuraclassification, ourpaper}, any arclength reparameterized $p$-elastica 
$\gamma$
(hereafter $\gamma$ always denotes an arclength parameterized curve) satisfies the fourth order ODE
\begin{equation}
  \label{eq: first variation}
  p (|\gamma''|^{p-2}\gamma'')'' - ((1-2p)|\gamma''|^p\gamma' + \lambda \gamma')' = 0
\end{equation}
in the distributional sense, and if $\gamma$ is sufficiently smooth, pointwise. 
 Moreover, from \cite[Theorem~1.2, 1.3]{ourpaper} any non-planar $p$-elastica in $\R^n$ is either
\begin{itemize}
  \item smooth (even real-analytic) and three-dimensional (i.e.\ contained in a subspace of dimension three) or,
  \item a flat-core $p$-elastica, which is partially planar and only occurs when $p>2$.
\end{itemize}

Since flat-core $p$-elasticae do not yield closed $p$-elasticae, any closed non-planar $p$-elastica (if it exists) must be smooth, three dimensional and ``completely non-planar'' (i.e.\ no flat parts).
In that case, \eqref{eq: first variation} (or more precisely its restriction to the three-dimensional subspace) reduces to two ODEs for the scalar curvature $k:=|\kappa|$ and scalar torsion $\tau:= \frac{\det(\gamma',\gamma'',\gamma''')}{|\gamma''|^2}$.

\begin{theorem}\cite[Theorem~1.4]{ourpaper}
\label{thm:EL for k and tau}
    Let $p\in (1,\infty)$ and $\gamma:[0,L]\to\R^n$ be an analytic non-planar $p$-elastica, parameterized by arclength.
    Then $\gamma$ is a three-dimensional curve with $k,|\tau|>0$ on $[0,L]$, satisfying 
    \begin{equation}
    \label{eq:EL for k and tau}
    \begin{split}
        p (k^{p-1})'' + (p-1) k^{p+1} -p k^{p-1} \tau^2 -\lambda k &= 0, \\
         k^{2p-2} \tau &= C,
    \end{split}
    \end{equation}
    for some real constant $C\neq 0$.
\end{theorem}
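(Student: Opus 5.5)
The plan is to integrate the Euler--Lagrange equation \eqref{eq: first variation} once, expand the resulting conserved vector in a Frenet-type frame, and read off \eqref{eq:EL for k and tau} from the components of its derivative. Since $\gamma$ is analytic, \eqref{eq: first variation} holds pointwise. Integrating once gives a constant vector $c\in\R^n$ with
\begin{equation*}
  p\,w' + \bigl((2p-1)k^p-\lambda\bigr)\gamma' = c, \qquad w:=|\gamma''|^{p-2}\gamma''=k^{p-1}N .
\end{equation*}
Here $T=\gamma'$ and $N$ is the unit normal, defined wherever $k>0$.

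First work on an open interval where $k>0$ and the curve is not locally planar. There I introduce the generalized Frenet frame $(T,N,B,E_4,\dots)$ with curvatures $k,\tau,\tau_2,\dots$. Using $N'=-kT+\tau B$, expanding gives
\begin{equation*}
  c=\bigl((p-1)k^p-\lambda\bigr)T+p\,(k^{p-1})'N+p\,k^{p-1}\tau B .
\end{equation*}
I then differentiate, set $c'=0$, and use the Frenet equations to collect components.
\begin{itemize}
  \item The $T$-component is an identity.
  \item The $N$-component is exactly the first equation of \eqref{eq:EL for k and tau}.
  \item The $B$-component reads $p(k^{p-1}\tau)'+p\tau(k^{p-1})'=0$. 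Multiplying by $k^{p-1}$ turns it into $(k^{2p-2}\tau)'=0$, which is the second equation.
  \item The $E_4$-component is $p\,k^{p-1}\tau\tau_2=0$, so $\tau_2=0$ wherever $k\tau\neq0$.
\end{itemize}
Hence on such an interval the curve lies in the fixed affine $3$-space spanned by $T,N,B$ at one point.

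Next I globalize using analyticity. The quantities $|\gamma''|^2$ and $\det$-type expressions are analytic, so the good set (where $k\neq0$ and $\tau\neq0$) is either empty or open and dense. It is nonempty, since otherwise $\gamma$ would be planar. By analytic continuation, the $3$-dimensionality and the relation $k^{2p-2}\tau=C$ hold on all of $[0,L]$. Moreover $C\neq0$, because $C=0$ would force $\tau\equiv0$ on a dense set, i.e.\ planarity.

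The main obstacle is excluding zeros of $k$ (and hence of $\tau$) on the whole interval, since the frame degenerates there. For this I would use the conserved norm
\begin{equation*}
  |c|^2=\bigl((p-1)k^p-\lambda\bigr)^2+p^2\bigl((k^{p-1})'\bigr)^2+p^2C^2k^{2-2p},
\end{equation*}
which holds on the dense good set. If $k\to0$ at some point, then, since $2-2p<0$, the last term blows up, contradicting the constancy of $|c|$. Thus $k$ is bounded below by a positive constant on the good set, hence $k>0$ on $[0,L]$ by continuity. It follows that $\tau=Ck^{2-2p}$ is finite and nonzero everywhere, so $|\tau|>0$, which completes the argument.
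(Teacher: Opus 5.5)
Your argument is correct in substance and fits this paper's framework. The paper imports the theorem from \cite{ourpaper} without proof, but your conserved vector $c$ is exactly the constant Killing field $J_0$ of Lemma~\ref{lemma: two Killing vector fields}. The paper verifies its constancy in the reverse direction, starting from \eqref{eq:EL for k}, and your norm identity is the one used there and in Proposition~\ref{prop: derivatives for cylindrical coordinates}. Your component computations are right. Phrase the ``$E_4$-component'' as the part of $B'$ orthogonal to $T,N,B$, since a fourth frame vector need not exist.

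One step is misjustified. You cannot get $k^{2p-2}\tau=C$ with a \emph{single} constant on $[0,L]$ by analytic continuation before you know $k>0$. The reason is that $k=|\gamma''|$, the frame and $\tau$ are not analytic, and not even defined, at zeros of $\gamma''$. So a priori, components of the good set separated by zeros of $k$ could carry different constants, and your norm argument as written uses one $C$.

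The fix uses only your own tools, applied in the opposite order:
\begin{enumerate}
\item Once $\gamma$ is known to lie in a $3$-space, $B=T\times N$ and the signed torsion are analytic on all of $\{k>0\}$. Your computation then gives $k^{2p-2}\tau\equiv C_i$ on each component $I_i$ of $\{k>0\}$, with no continuation across zeros of $\tau$ needed.
\item $C_i\neq0$. Otherwise $B$ is constant on $I_i$, so $\gamma$ is planar on $I_i$ and hence, by analyticity, everywhere.
\item The norm identity on $I_i$ gives $k\ge (p^2C_i^2/|c|^2)^{1/(2p-2)}>0$ on $I_i$, and by continuity on $\overline{I_i}$. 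So $I_i$ is relatively clopen in $[0,L]$ and equals $[0,L]$.
\end{enumerate}
Only at this point is $C$ a global constant, and $\tau=Ck^{2-2p}\neq0$ on $[0,L]$ follows.
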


We note that for $k:[0,L] \to (0,\infty)$ and $\tau:[0,L]\to \R$ satisfying \eqref{eq:EL for k and tau} with $\lambda$ and $C\neq0$ fixed, integrating the Frenet--Serret equations with some initial conditions $(T(0),N(0),B(0))$ gives a non-planar (as $k$, $C$ and so $\tau$ are all non-zero) $p$-elastica $\gamma_{\lambda,C^2}$. 
In other words, the problem reduces to studying the solutions of \eqref{eq:EL for k and tau}, that is, for which parameters $\lambda$ and $C$ is the resulting $p$-elastica $\gamma_{\lambda,C}$ closed 
and first of all, is it even possible to obtain closed curves for $C\neq 0$?
Since the sign of $C$ only changes the chirality of $\gamma_{\lambda,C^2}$, we assume from now on $C>0$.

In the seminal work \cite{langer-singer_classification},  solutions to \eqref{eq:EL for k and tau} with $p=2$ have explicit representation formulae in terms of classical Jacobi elliptic functions. The parameter space of solutions can be expressed in terms of the elliptic moduli from the explicit solution, but now, since explicit formulae for general $p\in (1,\infty)$ and $C\neq 0$ are currently unknown, we work directly in the $\lambda$-$C^2$ parameter space $S$, see Figure~\ref{fig: parameterspace}. 
This leads to a substantially more implicit analysis. A key contribution of the present work is to show that, through a careful combination of delicate estimates and asymptotic approximations, the overall strategy of \cite{langer-singer_classification} can nevertheless be carried out, even without relying on any explicit representation formula.

To be precise, after some preliminaries in Section~\ref{section:Preliminaries}, we set up the cylindrical coordinate system $(r,\theta, z)$ in Section~\ref{section Killing fields} and define the closedness quantities $\Delta z(\lambda,C^2)$ and $\Delta \theta(\lambda,C^2)$. 
In particular, a $p$-elastica $\gamma_{\lambda,C^2}$ is closed if and only if $\Delta z(\lambda,C^2)=0$ and $\Delta \theta(\lambda,C^2) \in 2\pi \Q$.
In Section~\ref{section: Delta z} we show that the zero level-set $\{\Delta z=0\}$ contains a connected subset $\Gamma$ connecting the planar $p$-figure-eight to the circular elasticae in the parameter space $S$. Lastly, in Section~\ref{section: Delta theta}, we show that the limits of $\Delta \theta$ at the two endpoints of $\Gamma$ are $-\pi$ and $0$ respectively, and that all intermediate values are attained.

\begin{figure}[ht]
  \centering
  \includegraphics*[width=0.7\textwidth]{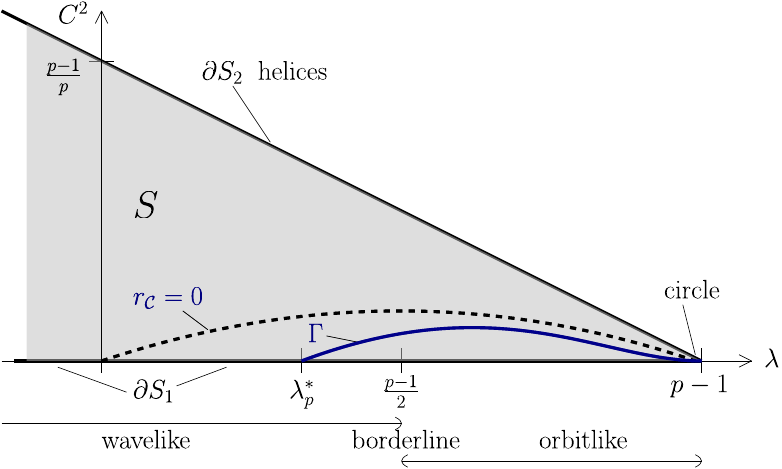}
  \caption{The complete $\lambda$-$C^2$ parameter space $S$: Planar $p$-elasticae are situated on the $\lambda$-axis, ranging from the circle at $(p-1,0)$ over orbitlike $p$-elasticae to the borderline/flatcore $p$-elastica at $(\frac{p-1}{2},0)$. 
  For smaller $\lambda$, the $p$-elasticae are wavelike and the straight line appears ``in the limit at negative infinity''. 
  The line $C^2=\tfrac{p-1-\lambda}{p}$ contains helices, ranging again from the circle to the straight line. The interior contains non-planar, non-constant $p$-elasticae.}
  \label{fig: parameterspace}
\end{figure}

\subsection{Acknowledgments}
The author thanks Tatsuya Miura for helpful advice and discussions.

\section{Preliminaries}
\label{section:Preliminaries}

We first remark that closed $p$-elastica can be interpreted as a special case of clamped boundary conditions, which enables us to use Theorem~\ref{thm:EL for k and tau} and work directly with the ODE.

\begin{remark}
\label{rmk: closed implies closed in C1}
From a closed $p$-elastica $\gamma \in W^{2,p}(\R/\Z;\R^n)$, we identify its open curve counterpart, i.e.\  $\tilde \gamma \in W^{2,p}(0,1;\R^n)$ is a $p$-elastica with $\gamma(0)=\gamma(1)$ and $\gamma'(0)=\gamma'(1)$.
We refer to Lemma~\ref{lemma: closedness conditions} for the reverse implication.
Furthermore, from \cite[Theorem~1.2, 1.3]{ourpaper}, it follows that every closed non-planar $p$-elastica is smooth and \eqref{eq:EL for k and tau} holds pointwise. This is not the case for planar closed $p$-elasticae, since the $p$-figure-eight curve might lose regularity of its higher-order derivatives at inflection points, see \cite{miuraclassification}.
\end{remark}

Substituting the second equation in the first in \eqref{eq:EL for k and tau}, we have for a non-planar $p$-elastica
\begin{equation}
      \label{eq:EL for k}
        p (k(s)^{p-1})'' + (p-1) k(s)^{p+1} -pC^2 k(s)^{3-3p} -\lambda k(s) = 0,
\end{equation}
and upon using the substitution $w=k^{p-1}$, 
\begin{equation}
    \label{eq:EL pointwise for w}
        w''(s) + \frac{p-1}{p} w(s)^{\frac{p+1}{p-1}} - C^2 w(s)^{-3} - \frac{\lambda}{p} w(s)^{\frac{1}{p-1}}=0.
\end{equation}

We start by showing the natural scaling invariance and assume hereafter that $\|k\|_{L^\infty} = \|w\|_{L^\infty}=1$.

\begin{lemma}
  Let $w$ be a solution to \eqref{eq:EL pointwise for w}. Then for $A>0$, the function $\tilde w := A w(A^{\frac{1}{p-1}}\cdot)$ is also a solution to \eqref{eq:EL pointwise for w} with $\tilde{\lambda} = A^{\frac{p}{p-1}}\lambda$ and $\tilde{C}^2 = A^{\frac{4p-2}{p-1}}C^2 $.
\end{lemma}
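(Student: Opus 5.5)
The proof is a direct substitution: plug $\tilde w$ into \eqref{eq:EL pointwise for w}, check that every term carries the same power of $A$, and read off $\tilde\lambda$ and $\tilde C^2$ from the requirement that the prefactors agree. No earlier results are needed beyond the equation itself.

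Set $\sigma := A^{\frac{1}{p-1}}$, so that $\tilde w(s) = A\, w(\sigma s)$ and $\tilde w > 0$ wherever $w>0$. By the chain rule,
\begin{equation*}
  \tilde w''(s) = A\sigma^2 w''(\sigma s) = A^{\frac{p+1}{p-1}} w''(\sigma s).
\end{equation*}
Evaluating each term of \eqref{eq:EL pointwise for w} for $\tilde w$ at $s$ gives the following.
\begin{itemize}
  \item The nonlinear term is $\tilde w^{\frac{p+1}{p-1}} = A^{\frac{p+1}{p-1}} w(\sigma s)^{\frac{p+1}{p-1}}$. This is the same factor as for $\tilde w''$; this is exactly why $\sigma$ was chosen as $A^{\frac{1}{p-1}}$.
  \item The torsion term is $\tilde C^2\tilde w^{-3} = \tilde C^2 A^{-3} w(\sigma s)^{-3}$.
  \item The multiplier term is $\tilde\lambda\,\tilde w^{\frac{1}{p-1}} = \tilde\lambda A^{\frac{1}{p-1}} w(\sigma s)^{\frac{1}{p-1}}$.
\end{itemize}

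For the left-hand side to equal $A^{\frac{p+1}{p-1}}$ times the left-hand side of \eqref{eq:EL pointwise for w} for $w$ at $\sigma s$, which vanishes, we need two matching conditions.

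First, $\tilde C^2 A^{-3} = A^{\frac{p+1}{p-1}} C^2$. This gives
\begin{equation*}
  \tilde C^2 = A^{\frac{p+1}{p-1}+3}C^2 = A^{\frac{4p-2}{p-1}}C^2.
\end{equation*}

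Second, $\tilde\lambda A^{\frac{1}{p-1}} = A^{\frac{p+1}{p-1}}\lambda$. This gives
\begin{equation*}
  \tilde\lambda = A^{\frac{p}{p-1}}\lambda.
\end{equation*}

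With these choices, the equation for $\tilde w$ is $A^{\frac{p+1}{p-1}}$ times the equation for $w$ evaluated at $\sigma s$, and therefore holds.

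There is no real obstacle here; the only care needed is the exponent bookkeeping, in particular choosing $\sigma$ so that the second-derivative and $w^{\frac{p+1}{p-1}}$ terms scale alike. One could also remark that in terms of $k = w^{1/(p-1)}$ this is the usual scaling $\tilde k = A^{\frac{1}{p-1}} k(A^{\frac{1}{p-1}}\cdot)$ of the curve.
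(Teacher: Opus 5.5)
Your proof is correct and matches the paper's: substitute $\tilde w$, observe that each of the four terms carries the common factor $A^{\frac{p+1}{p-1}}$ once $\tilde\lambda = A^{\frac{p}{p-1}}\lambda$ and $\tilde C^2 = A^{\frac{4p-2}{p-1}}C^2$, and factor it out. The only difference is cosmetic: you derive these exponents from matching conditions, whereas the paper verifies the given values directly.
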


\begin{proof}
  We have 
  \begin{equation*}
  \begin{split}
   &\tilde w''(s) + \frac{p-1}{p} \tilde w(s)^{\frac{p+1}{p-1}} -\tilde C^2 \tilde w(s)^{-3} - \frac{\tilde \lambda}{p} \tilde w(s)^{\frac{1}{p-1}} \\ 
   &\quad = AA^{\frac{2}{p-1}} w''(A^{\frac{1}{p-1}}s) +  \frac{p-1}{p} A^{\frac{p+1}{p-1}} w(A^{\frac{1}{p-1}}s)^{\frac{p+1}{p-1}} -  A^{\frac{4p-2}{p-1}} A^{-3}w(A^{\frac{1}{p-1}}s)^{-3} - A^{\frac{p}{p-1}} \frac{\lambda}{p} A^{\frac{1}{p-1}}w(A^{\frac{1}{p-1}}s)^{\frac{1}{p-1}} \\
   &\quad =A^{\frac{p+1}{p-1}} \left( w''(A^{\frac{1}{p-1}}s) + \frac{p-1}{p} w(A^{\frac{1}{p-1}}s)^{\frac{p+1}{p-1}} - C^2 w(A^{\frac{1}{p-1}}s)^{-3} - \frac{\lambda}{p} w(A^{\frac{1}{p-1}}s)^{\frac{1}{p-1}} \right)  = 0,
  \end{split}
  \end{equation*}
  as we wanted to show.
\end{proof}

If $p=2$, the solution to \eqref{eq:EL for k} is given explicitly as
\begin{equation*}
  k^2(s) = 1-\frac{q^2}{w^2} \sn^2 \left(\frac{1}{2w}s,q\right),
\end{equation*}
where $2\lambda = \frac{3w^2-q^2-1}{w^2}$ and $4C^2 = \frac{(1-w^2)(w^2-q^2)}{w^4}$ relate the parameters $q$ and $w$ to $\lambda$ and $C^2$. 
In \cite{langer-singer_classification}, the analysis is carried out using $q$ (named $p$ in \cite{langer-singer_classification}, renamed here to $q$ to avoid confusion with the exponent $p$) and $w$ in the parameter space $0\leq q \leq w\leq 1$.
However, to our knowledge such an explicit solution form is not known for general $p\in (1,\infty)$ and $C^2>0$ and thus we work implicitly with the ODE, depending on the parameters $\lambda$ and $C^2$.

The admissible parameter space in the $\lambda$-$C^2$ plane is also a triangle, but of infinite size, see Figure~\ref{fig: parameterspace}. For $\|w\|_{L^\infty}=1$ it is given by $\overline S$ where
\begin{equation*}
\begin{split}
  S:&= \{(\lambda, C^2) \in \R^2: 0<C^2 < \tfrac{p-1-\lambda}{p} \},
  \\ \partial S &= \partial S_1 \cup \partial S_2 \text{ with }
  \partial S_1= (-\infty,p-1] \times \{0\} \text{ and }
  \partial S_2 = \{(\lambda, C^2) \in \R^2: 0\leq C^2 = \tfrac{p-1-\lambda}{p} \}. 
\end{split}
\end{equation*}
The reason for the boundary curve being $C^2=\tfrac{p-1-\lambda}{p}$ will become apparent in the proof of Proposition~\ref{prop: existence}. 
We work mainly in $S^0 = S \cap \{(x,y):x\geq 0\}$, with boundary $\partial S^0 = \partial S^0_1 \cup \partial S^0_2 \cup \partial S^0_3$, where
\begin{equation*}
  \begin{split}
    \partial S_1^0 &= [0,p-1] \times \{0\} ,\\
    \partial S_2^0 &= \{(\lambda, C^2) \in \R_{\geq 0} \times \R_{\geq 0}: C^2 = \tfrac{p-1-\lambda}{p} \},\\
    \partial S_3^0 &= \{0\} \times [0,\tfrac{p-1}{p}].\\
  \end{split}
\end{equation*}
 Furthermore, let $S^\rho = S^0 \setminus B_\rho((\frac{p-1}{2},0))$, where $\rho>0$ is a small parameter, which will be specified later (see Figure~\ref{fig: S rho domain}). 
 In particular, $\partial S^\rho = \partial S_1^\rho \cup \partial S_2^\rho \cup \partial S_3^\rho \cup \partial S_4^\rho$, where 
\begin{equation*}
  \begin{split}
    \partial S_1^\rho &= \left( [0,p-1] \times \{0\} \right) \setminus \left( \left( \tfrac{p-1}{2}-\rho, \tfrac{p-1}{2}+\rho \right) \times \{0\} \right),\\
    \partial S_2^\rho &= \{(\lambda, C^2) \in \R_{\geq 0} \times \R_{\geq 0}: C^2 = \tfrac{p-1-\lambda}{p} \}, \\
    \partial S_3^\rho &= \{0\} \times [0,\tfrac{p-1}{p}],\\
    \partial S_4^\rho &= \partial B_\rho((\tfrac{p-1}{2},0)) \cap \{(x,y):y>0\}.
  \end{split}
\end{equation*}

\begin{figure}[ht]
  \includegraphics*[width=0.7\textwidth]{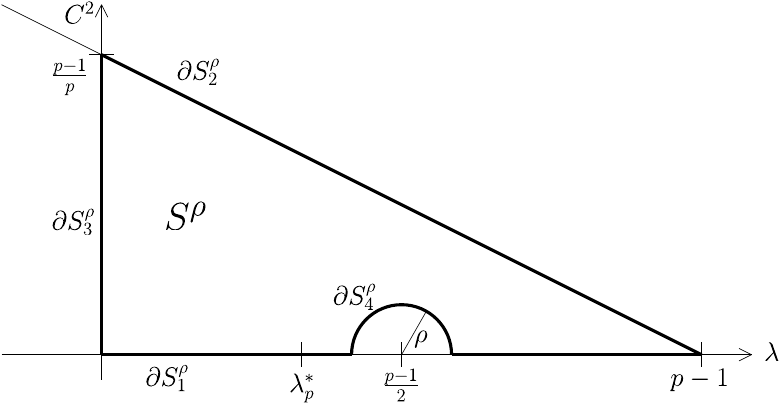}
  \caption{The domain $S^\rho$}
  \label{fig: S rho domain}
\end{figure}

First, we prove existence and uniqueness of solutions to \eqref{eq:EL pointwise for w} for a parameter pair $(\lambda, C^2)\in \overline S \setminus (\frac{p-1}{2},0)$, i.e.\ we also include planar curves (where formally $C^2=0$).

\begin{proposition}
\label{prop: existence}
    Let $p\in (1,\infty)$. 
    \begin{itemize}
        \item If $(\lambda,C^2) \in \partial S_2$, then \eqref{eq:EL pointwise for w} has exactly one strictly positive solution $w=w_{\lambda,C^2}:\R \to \R$ with $w(0)=\|w\|_{L^\infty}=1$, namely the constant solution $w \equiv1$.
        \item If $(\lambda,C^2) \in S$, then \eqref{eq:EL pointwise for w} has exactly one strictly positive solution $w=w_{\lambda,C^2}:\R \to \R$ with $w(0)=\|w\|_{L^\infty}=1$, which is even, smooth and periodic with period $2P=2P(\lambda,C^2) \in (0,\infty)$. 
        \item If $(\lambda,C^2)=(\lambda,0)$ with $\frac{p-1}{2}<\lambda<p-1$, then \eqref{eq:EL pointwise for w} has exactly one strictly positive solution $w=w_{\lambda,C^2}:\R \to \R$ with $w(0)=\|w\|_{L^\infty}=1$, which is even, smooth and periodic with period $2P=2P(\lambda,0) \in (0,\infty)$. 
        \item If $(\lambda,C^2)=(\lambda,0)$ with $\lambda<\frac{p-1}{2}$, then there exists $P=P(\lambda,0)$ and a unique nonnegative continuous function $w=w_{\lambda,C^2}:\R \to \R$ satisfying the following properties:
        \begin{itemize}
            \item $w$ is even;
            \item $w$ is periodic with period $2P=2P(\lambda,0) \in (0,\infty)$;
            \item $w(0)=\|w\|_{L^\infty}=1$;
            \item   $w$ is smooth and satisfies \eqref{eq:EL pointwise for w} in $\R \setminus \{(2j-1)P:j\in \Z\}$;
            \item $w((2j-1)P)=0$ for every $j\in \Z$.
        \end{itemize}
    \end{itemize}
\end{proposition}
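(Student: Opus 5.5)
The plan is a phase-plane (energy) argument for the autonomous second-order ODE \eqref{eq:EL pointwise for w}. Write it as $w''=-V'(w)$ with potential
\begin{equation*}
V(w)=\frac{(p-1)^2}{2p^2}\,w^{\frac{2p}{p-1}}+\frac{C^2}{2}\,w^{-2}-\frac{\lambda(p-1)}{2p}\,w^{\frac{p}{p-1}},
\end{equation*}
which follows from integrating each term. Every solution then conserves the energy $E=\tfrac12(w')^2+V(w)$. Imposing $w(0)=\|w\|_{L^\infty}=1$ forces $w'(0)=0$, so $E=V(1)$. Hence a strictly positive solution with maximum $1$ at $s=0$ exists if and only if $1$ is the right endpoint of a well of $V$, that is, $V'(1)\ge 0$, with $V<V(1)$ immediately to the left of $1$. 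Uniqueness holds because the ODE is smooth and locally Lipschitz on $w>0$ and the initial data $(w,w')(0)=(1,0)$ are fixed. Evenness follows from uniqueness together with the invariance under $s\mapsto -s$.

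The key computation is
\begin{equation*}
V'(1)=\frac{p-1}{p}-C^2-\frac{\lambda}{p}=\frac{p-1-\lambda}{p}-C^2 .
\end{equation*}
This vanishes exactly on $\partial S_2$, which explains the boundary of $S$.

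\textbf{The case $\partial S_2$.} Here $w\equiv1$ is a solution. To see it is the only one with maximum $1$, I would show that $1$ is then the unique critical point of $V$ on $(0,\infty)$. Multiplying $V'(w)=0$ by $w^3$ gives $g(w):=\frac{p-1}{p}w^{\frac{p+1}{p-1}+3}-\frac{\lambda}{p}w^{\frac{1}{p-1}+3}=C^2$. For $\lambda\le0$, $g$ is increasing. For $\lambda>0$, $g$ is negative for small $w$ and then increasing once positive. In both cases the level $C^2>0$ is hit exactly once; if $C^2=0$ (the circle), $g$ has a single positive zero. So $w\equiv1$ is the bottom of the well, and the energy level $V(1)$ admits only the constant solution.

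\textbf{The case $S$.} Now $V'(1)>0$, $V(w)\to+\infty$ as $w\to0^+$ (because $C^2>0$), and by the same monotonicity of $g$, $V$ has a single minimum at some $w_*\in(0,1)$. Hence $\{V\le V(1)\}\cap(0,\infty)=[w_{\min},1]$ with $w_{\min}>0$, and both endpoints are simple turning points ($V'\ne0$ there). The standard argument then gives a periodic solution oscillating between $w_{\min}$ and $1$, with half-period
\begin{equation*}
P=\int_{w_{\min}}^1\frac{dw}{\sqrt{2(V(1)-V(w))}}<\infty .
\end{equation*}
This integral is finite because of the simple zeros at the endpoints. Smoothness follows from positivity of $w$.

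\textbf{Planar case $C^2=0$.} Here $V(0)=0$, and $V(1)=\frac{p-1}{2p}\big(\frac{p-1}{p}-\lambda\cdot\frac{1}{1}\cdot\frac{p}{p}\big)$; more precisely $V(1)=\frac{p-1}{2p^2}(p-1-\lambda p\cdot\tfrac{1}{p}\cdot p)$. I will recompute this carefully; the intended dichotomy is that $V(1)<0$ exactly when $\lambda>\frac{p-1}{2}$.

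If $\lambda>\frac{p-1}{2}$, the well around $w_*$ lies strictly inside $(0,1]$ at level $V(1)<0=V(0)$, and the previous argument applies. If $\lambda<\frac{p-1}{2}$, then $V(1)>0=V(0)$, so the orbit reaches $w=0$ in finite time $P$. The travel time is finite because near $0$ the integrand of $P$ is bounded. The ODE degenerates there: the terms $w^{\frac{1}{p-1}}$ and $w^{\frac{p+1}{p-1}}$ are non-Lipschitz when $p>2$. So I would define $w$ on $[0,P]$ by this orbit, extend it evenly about $P$ (giving $w'(P^+)=-w'(P^-)$, corresponding to the sign change of the curvature), and then extend $2P$-periodically. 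Uniqueness in the class described is automatic on $[0,P)$ by ODE uniqueness, and the required properties fix the extension.

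\textbf{Main obstacle.} I expect the delicate step to be this last case: justifying that the continuous object is well defined and unique despite the loss of Lipschitz regularity at $w=0$, and matching it with the known $p$-elliptic description from \cite{miuraclassification}. I would handle it by citing that classification for the planar case rather than redoing it.
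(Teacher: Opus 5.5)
Your proposal is correct and takes essentially the same route as the paper. Both use the conserved energy $(w')^2+V(w)$, the uniqueness of the critical point of $V$ (you via monotonicity of $w^3V'(w)$, the paper via Lemma~\ref{lemma: unique root}), the orbit through $(1,0)$ with the half-period integral, and Miura--Yoshizawa for the planar cases. Just fix three slips: the $\lambda$-term of your potential should be $-\frac{\lambda(p-1)}{p^2}w^{\frac{p}{p-1}}$ (this matches your $V'(1)$ and gives $V(1)=\frac{(p-1)(p-1-2\lambda)}{2p^2}$ at $C=0$, exactly your intended dichotomy), and for $p>2$ only $w^{\frac{1}{p-1}}$, not $w^{\frac{p+1}{p-1}}$, fails to be Lipschitz at $0$.
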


\begin{proof}
  Multiplying \eqref{eq:EL pointwise for w} by $2w'$ and integrating once more gives eventually
    \begin{equation}
    \label{eq:EL for w first order}
         w'(s)^2 + \frac{(p-1)^2}{p^2}  w(s)^{\frac{2p} {p-1}} - 2\lambda \frac{p-1}{p^2}w(s)^{\frac{p}{p-1}} + C^2 w(s)^{-2} =E, 
    \end{equation}
  with $E\in \R$ a constant of integration. 
  Thus we write $H_{\lambda,C^2}(w,w')=E$, with the Hamiltonian $H_{\lambda,C^2}:(0,\infty) \times \R \to \R$ being given by
  \begin{equation}
    H_{\lambda,C^2}(x,y) = y^2 + V(x) =
    y^2 + \frac{(p-1)^2}{p^2}  x^{\frac{2p} {p-1}} - 2\lambda \frac{p-1}{p^2} x^{\frac{p}{p-1}} + C^2 x^{-2}.
  \end{equation}
  We note that $H_{\lambda,C^2}(x,y)\to \infty$ as $|x|,|y| \to \infty$ and $\lim_{|(x,y)|\to 0} H_{\lambda,C^2}(x,y)= \begin{cases}
    0 &\text{ if } C=0, \\ \infty &\text{ if } C>0.
  \end{cases}$

 \noindent The critical points of $H_{\lambda,C^2}$ are given by
  \begin{equation*}
    0=\nabla H_{\lambda,C^2}(x,y) = (V'(x),2y) = 2\left( \frac{p-1}{p} x^{\frac{p+1}{p-1}} - C^2 x^{-3} - \frac{\lambda}{p} x^{\frac{1}{p-1}}, y \right).
  \end{equation*} 

  \underline{Case $C^2>0$:}
  The first term in $x$ vanishes only at one point $x_0$ by Lemma~\ref{lemma: unique root} and thus by the structure of $H_{\lambda,C^2}$ the only critical point $(x_0,0)$ is a global minimum.

  Suppose that  $(\lambda,C^2) \in \partial S_2$. Then $C^2 = \frac{p-1-\lambda}{p}$ and so $1$ is a root of $V'$, which is unique by Lemma~\ref{lemma: unique root}. 
  Thus $(1,0)$ is the unique minimum of $H_{\lambda,C^2}$, and $\min V = \min H_{\lambda,C^2}$. 
  Hence $(w,w')=(1,0)$ is the only solution to \eqref{eq:EL for w first order} and also \eqref{eq:EL pointwise for w} with $\|w\|_{L^\infty} = 1$. 
  Any other solution $w$ would have $H_{\lambda,C^2}(w,w')>\min H_{\lambda,C^2}$ and thus at points where $w'=0$, we have $H_{\lambda,C^2}(w,0)=V(w)>\min H_{\lambda,C^2}$ by Lemma~\ref{lemma: number of solutions}, which implies that $\|w\|_{L^\infty}>\argmin V = 1$.

  Suppose that  $(\lambda,C^2)\in S$, i.e.\
   \begin{equation*}
    0 <   \frac{p-1-\lambda}{p} - C^2  \qquad \text{and} \qquad C^2>0.
  \end{equation*}
  By Lemma~\ref{lemma: number of solutions}, this implies that the unique minimum $x_0=\argmin V(x)$ is strictly less than $1$. 
  We set 
  \begin{equation*}
     E = E(\lambda,C^2) := \frac{(p-1)^2}{p^2}-2\lambda \frac{p-1}{p^2}+C^2,  
  \end{equation*}
  then $E = H_{\lambda,C^2}(1,0) > H_{\lambda,C^2}(x_0,0) =\min H_{\lambda,C^2}$.
  Thus the level set $H_{\lambda,C^2}^{-1}(E)\cap ((0,\infty) \times \R)$ gives a positive periodic solution to \eqref{eq:EL for w first order} and thereby \eqref{eq:EL pointwise for w}. 
  Denote by $0<w_m(E)<w_M(E)=1$ the two values such that $V(w_m(E)) = V(w_M(E)) = E$, i.e. the minimum and maximum amplitude of the solution $w$ where $w'=0$. 
  The fact that there are exactly two such points where $x\mapsto V(x)-E$ vanishes follows also from Lemma~\ref{lemma: number of solutions}. 
  Moreover, by the same lemma, any other level set has $w_M \neq 1$ (since $V'>0$), hence unique existence. 
  
  From \eqref{eq:EL for w first order}, we get $\frac{dw}{ds} = \pm \sqrt{ E-V(w)}$ and therefore one (half-)period of $w$ from the minimum $w_m$ to the maximum $1$ is given as 
  \begin{equation*}
    \begin{split}
      P(\lambda,C^2)&= \int_{w_m}^1 \frac{ds}{dw} dw = \int_{w_m}^1 \frac{dw}{\frac{dw}{ds}}   = \int_{w_m}^{1} \frac{dw}{\sqrt{ E-V(w)}} = \int_{w_m}^{1} \frac{dw}{\sqrt{ E-\frac{(p-1)^2}{p^2}  w^{\frac{2p} {p-1}} + 2\lambda \frac{p-1}{p^2} w^{\frac{p}{p-1}} - C^2 w^{-2}}}.
    \end{split}
  \end{equation*} 
  Smoothness follows from a bootstrap argument in \eqref{eq:EL pointwise for w}, as $w\geq w_m>0$.

  \underline{Case $C=0$:}
  This case is the planar setting from \cite{miuraclassification} with maximal curvature $k_M=w_M=1$, attained at $s=0$. 
 For $\lambda>\frac{p-1}{2}$, the $p$-elastica is orbitlike, i.e.\ its curvature is strictly positive (with $w_m>0$ being the minimum of $w$) and so \eqref{eq:EL pointwise for w} is equivalent to \cite[Eq~(4.2)]{miuraclassification}. Then \eqref{eq:EL for w first order} holds for $s \in \R$ and from $\frac{dw}{ds} = \pm \sqrt{ E-V(w)}$ we obtain as before
  \begin{equation*}
      P(\lambda,0) = \int_{w_m}^1 \frac{ds}{dw}dw = \int_{w_m}^{1} \frac{dw}{\sqrt{ E-V(w)}} = \int_{w_m}^1 \frac{dw}{\sqrt{ \frac{(p-1)^2}{p^2} - 2\lambda\frac{p-1}{p^2}-\frac{(p-1)^2}{p^2}  w^{\frac{2p} {p-1}} + 2\lambda \frac{p-1}{p^2} w^{\frac{p}{p-1}} }}.
  \end{equation*}

  For $\lambda<\frac{p-1}{2}$, the $p$-elastica is wavelike, i.e.\ its curvature changes sign. Let $(-P,P)$ denote the maximal interval where $k$ (and so $w$) are strictly positive, i.e.\ \eqref{eq:EL pointwise for w} and \eqref{eq:EL for w first order} hold pointwise and so again
  \begin{equation*}
      P(\lambda,0) = \int_{0}^1 \frac{ds}{dw}dw = \int_{0}^{1} \frac{dw}{\sqrt{ E-V(w)}} = \int_{0}^1 \frac{dw}{\sqrt{ \frac{(p-1)^2}{p^2} - 2\lambda\frac{p-1}{p^2}-\frac{(p-1)^2}{p^2}  w^{\frac{2p} {p-1}} + 2\lambda \frac{p-1}{p^2} w^{\frac{p}{p-1}} }}.
  \end{equation*}
  Extending the solution on $(-P,P)$ by $2P$-periodicity yields the desired  function $w:\R \to \R$.
\end{proof}

\begin{remark}
  \label{rmk: k prime}
  We also note that on $[-P(\lambda,C^2),0]$,
  \begin{equation*}
  \begin{split}
    k'(s)=k'&=(w^{\frac{1}{p-1}})' = \frac{1}{p-1}w^{\frac{2-p}{p-1}}w' = \frac{1}{p-1}w^{\frac{2-p}{p-1}} \sqrt{E- \frac{(p-1)^2}{p^2} w^{\frac{2p}{p-1}} + 2\lambda \frac{p-1}{p^2} w^{\frac{p}{p-1}} - C^2 w^{-2}} \\
    &=\frac{1}{p-1}\sqrt{E w^{2\frac{2-p}{p-1}} -\frac{(p-1)^2}{p^2} w^{\frac{4}{p-1}} + 2\lambda \frac{p-1}{p^2} w^{\frac{4-p}{p-1}} - C^2 w^{\frac{6-4p}{p-1}}  } \\
    &=\frac{1}{p-1}\sqrt{E k^{4-2p} -\frac{(p-1)^2}{p^2} k^{4} + 2\lambda \frac{p-1}{p^2} k^{4-p} - C^2 k^{6-4p}  }.
  \end{split}
  \end{equation*}
  In particular, this implies that $w$ and $k$ are monotone between their minima and maxima.
\end{remark}

Throughout the paper, we analyze the functions (which are related by change of variable)
\begin{align*}
  f(k) &= \left(\frac{(p-1)^2}{p^2}-2\lambda \frac{p-1}{p^2}+C^2\right) - \frac{(p-1)^2}{p^2}  k^{2p} + 2\lambda \frac{p-1}{p^2} k^{p} - C^2 k^{2-2p},\\
  g(w) &= f(w^{\frac{1}{p-1}})= \left(\frac{(p-1)^2}{p^2}-2\lambda \frac{p-1}{p^2}+C^2\right) -\frac{(p-1)^2}{p^2}  w^{\frac{2p}{p-1}} + 2\lambda \frac{p-1}{p^2} w^{\frac{p}{p-1}} - C^2 w^{-2} ,\\
  h(u) &= f(u^\frac{1}{p}) = \left(\frac{(p-1)^2}{p^2}-2\lambda \frac{p-1}{p^2}+C^2\right) - \frac{(p-1)^2}{p^2}  u^{2} + 2\lambda \frac{p-1}{p^2} u - C^2 u^{\frac{2}{p}-2}, \\
  \phi(u) &= u^{\frac{2}{p}}h(u) = u^{\frac{2}{p}} \left(\frac{(p-1)^2}{p^2}-2\lambda \frac{p-1}{p^2}+C^2\right) - \frac{(p-1)^2}{p^2}  u^{2+\frac{2}{p}} + 2\lambda \frac{p-1}{p^2} u^{1+\frac{2}{p}} - C^2 u^{\frac{4}{p}-2}.
\end{align*}

\subsection{Estimates for the minimal curvature}
Recall that for a solution $k$ to \eqref{eq:EL for k} the maximal curvature is $k_M=1$, but the minimal curvature is more delicate. We define $k_m$ and $k_M=1$ to be the roots to $f(k)$.
Note that this gives $w_m:=k_m^{p-1}$ (the minimum amplitude of the solution $w$ in Proposition~\ref{prop: existence}) and $1$ as well as $u_m:=k_m^p$ and $1$ as the roots to $g(w)$ and $h(u)$ respectively.

\begin{remark}
  In the case $p=2$, thanks to the polynomial structure of \eqref{eq:EL for w first order}, we explicitly compute $u_m=w_m^2=k_m^2=\lambda-\frac{1}{2}+\sqrt{(\lambda-\frac{1}{2})^2 + 4C^2}$, but this is not possible for general $p\neq 2$.
\end{remark}

In the general case, we have to resort to asymptotic estimates, which differ depending on the region in $S^0$.
For points close to $\partial S_1$, we have the following estimates.
\begin{proposition}
  \label{prop: estimates for um} 
  Let $(\lambda,C^2) \in S^0$, i.e.\ $C^2>0$.
  \begin{enumerate}
    \item[(i)] If $0\leq \lambda < \frac{p-1}{2}$ and $\gamma_p = \min\left(\frac{4p}{2p-2}, \frac{6p-4}{2p-2}\right)$, then there exist $B_{\lambda,p}, \eps_{\lambda,p} \in (0,\infty)$ such that for all $C^2<\eps_{\lambda,p}$
    \begin{equation*}
      \left(\frac{(p-1)(p-1-2\lambda)}{p^2} \right)^{\frac{p}{2-2p}} C^{\frac{2p}{2p-2}} -B_{\lambda,p} C^{\gamma_p} < u_m < \left(\frac{(p-1)(p-1-2\lambda)}{p^2} \right)^{\frac{p}{2-2p}} C^{\frac{2p}{2p-2}}.
    \end{equation*} 
    Moreover, for $0<\delta\ll 1$ fixed, if $\lambda \in [0,\frac{p-1}{2}-\delta]$, then $B_{\lambda,p} \in [m_{\delta,p}, M_{\delta,p}]$ for some constants $0<m_{\delta,p}, M_{\delta,p}$.

    \item[(ii)] If $\frac{p-1}{2}<\lambda < p-1$, then there exists $B_{\lambda,p}>0$, continuous in $\lambda$ with $\lim_{\lambda\to p-1}B_{\lambda,p}=\frac{5p}{2(p-1)}$, and $\eps_{\lambda,p} \in (0,\infty)$ such that for all $C^2<\eps_{\lambda,p}$
    \begin{equation*}
      \frac{2\lambda}{p-1}-1  < u_m < \frac{2\lambda}{p-1}-1 + B_{\lambda,p} C^{2}.
    \end{equation*}
    Also for $0<\delta\ll 1$ fixed, if $\lambda \in [\frac{p-1}{2}+\delta,p-1)$, then $B_{\lambda,p} \in [m_{\delta,p}, M_{\delta,p}]$ for some constants $0<m_{\delta,p}, M_{\delta,p}$.
  \end{enumerate}

\end{proposition}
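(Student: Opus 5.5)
The plan is to work with $h(u)=f(u^{1/p})$, since $u_m$ and $1$ are its two roots. By Lemma~\ref{lemma: number of solutions} (used as in the proof of Proposition~\ref{prop: existence}), $h<0$ on $(0,u_m)$ and $h>0$ on $(u_m,1)$; also $h(u)\to-\infty$ as $u\to0^+$. So each bound on $u_m$ reduces to a sign check of $h$ at one explicit test point: $h(\bar u)>0$ with $\bar u<1$ gives $u_m<\bar u$, and $h(\underline u)<0$ gives $u_m>\underline u$. Write $a=\frac{(p-1)^2}{p^2}$ and $b=\frac{2\lambda(p-1)}{p^2}$. The key algebraic step is the factorization
\begin{equation*}
  h(u)=(1-u)\bigl(a(1+u)-b\bigr)-C^2\bigl(u^{\frac2p-2}-1\bigr),
\end{equation*}
so $h(u)=0$ balances a polynomial part against the singular term $C^2u^{2/p-2}$.

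\emph{Case (i).} Here $c:=a-b=\frac{(p-1)(p-1-2\lambda)}{p^2}>0$, and $(1-u)(a(1+u)-b)=c+bu-au^2$. For small $u$ the balance is $c\approx C^2u^{2/p-2}$, which suggests the test point $u_0:=(C^2/c)^{\frac{p}{2p-2}}$; this is exactly the claimed leading term. Since $C^2u_0^{2/p-2}=c$, we get
\begin{equation*}
  h(u_0)=bu_0-au_0^2+C^2.
\end{equation*}
This is positive for $C$ small because $u_0\to0$, which gives the upper bound.

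For the lower bound I take $u_1=u_0-BC^{\gamma_p}$ and Taylor expand $u^{2/p-2}$ around $u_0$. The singular term then grows by about $c\,\frac{2p-2}{p}\,\frac{BC^{\gamma_p}}{u_0}$, while the remaining terms change by at most $O(bu_0+C^2)$. Hence $h(u_1)<0$ as soon as
\begin{equation*}
  BC^{\gamma_p}\gtrsim \max\bigl(u_0^2,\;C^2u_0\bigr).
\end{equation*}
Since $u_0\sim C^{p/(p-1)}$, these two requirements are $C^{\gamma_p}\gtrsim C^{4p/(2p-2)}$ and $C^{\gamma_p}\gtrsim C^{(6p-4)/(2p-2)}$. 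This is precisely why $\gamma_p$ is the minimum of the two exponents. Because $\gamma_p>\frac{p}{p-1}$, the point $u_1$ stays comparable to $u_0$ for small $C$. If $\lambda\in[0,\frac{p-1}{2}-\delta]$, then $c$ is bounded above and below by constants depending only on $\delta,p$, and so are $a$ and $b$. Therefore $B$ and $\varepsilon$ can be chosen uniformly.

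\emph{Case (ii).} Now $a-b<0$, and the polynomial factor $a(1+u)-b$ vanishes at $u^*=\frac ba-1=\frac{2\lambda}{p-1}-1\in(0,1)$. There
\begin{equation*}
  h(u^*)=-C^2\bigl((u^*)^{\frac2p-2}-1\bigr)<0,
\end{equation*}
which gives $u_m>u^*$.

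For the upper bound I evaluate at $u^*+BC^2$. The first term equals $(1-u^*-BC^2)\,aBC^2$, and the second is $C^2\bigl((u^*)^{2/p-2}-1\bigr)+O(C^4)$. So $h>0$ for small $C$ as soon as
\begin{equation*}
  B>\frac{(u^*)^{2/p-2}-1}{a(1-u^*)}.
\end{equation*}
I would take $B_{\lambda,p}$ equal to $\frac54$ times this ratio, which is continuous in $\lambda$. As $\lambda\to p-1$ we have $u^*\to1$, the ratio tends to $\frac{2-2/p}{a}=\frac{2p}{p-1}$, and hence $B_{\lambda,p}\to\frac{5p}{2(p-1)}$ as claimed. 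For $\lambda\in[\frac{p-1}{2}+\delta,p-1)$, $u^*$ is bounded away from $0$, so the ratio is bounded above and below; this gives uniform constants.

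The main obstacle is making the Taylor remainders quantitative and uniform. In case (i), this concerns $u^{2/p-2}$ near $u_0\to0$, where one must check that $BC^{\gamma_p}\ll u_0$ so that the expansion is legitimate. In case (ii), it concerns the limit $u^*\to1$, where the margin factor $5/4$ has to absorb the $O(C^4)$ error uniformly. I would handle both with explicit mean value bounds on $u^{2/p-2}$ over $[u_0/2,u_0]$ and over $[u^*,(1+u^*)/2]$, respectively.
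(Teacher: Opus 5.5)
Your proposal is correct and follows essentially the same route as the paper. You locate $u_m$ by checking the sign of $h$ at the explicit test points $u_0=\bigl(\tfrac{(p-1)(p-1-2\lambda)}{p^2}\bigr)^{\frac{p}{2-2p}}C^{\frac{2p}{2p-2}}$, $u_0-BC^{\gamma_p}$, $u^*=\tfrac{2\lambda}{p-1}-1$ and $u^*+BC^2$, as the paper's ansatz does, and your explanation of $\gamma_p$ as the minimum of the two competing exponents replaces the paper's split into $p<2$, $p=2$, $p>2$.

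The minor differences are harmless. Your factorization of $h$ makes the lower bound in (ii) exact. Your choice $B_{\lambda,p}=\tfrac54\cdot\tfrac{(u^*)^{2/p-2}-1}{a(1-u^*)}$ differs from the paper's, which is that same threshold plus $\tfrac{p}{2(p-1)}$, but it has the same limit $\tfrac{5p}{2(p-1)}$.
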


\begin{proof}
  Recall that $u_m=k_m^p$ is the only solution in $(0,1)$ to 
  \begin{equation*}
    h(u) = \left(\frac{(p-1)^2}{p^2}-2\lambda \frac{p-1}{p^2}+C^2\right) - \frac{(p-1)^2}{p^2}  u^{2} + 2\lambda \frac{p-1}{p^2} u - C^2 u^{\frac{2}{p}-2},
  \end{equation*}
  by Lemma~\ref{lemma: number of solutions}, and approaches $-\infty$ as $u\to 0$.\\

  \underline{Case $0\leq \lambda < \frac{p-1}{2}$:} Take the ansatz $u=AC^{\frac{2p}{2p-2}}-BC^{\gamma_p}$ with $ \gamma_p = \min \left( \frac{4p}{2p-2}, \frac{6p-4}{2p-2} \right) >2, \frac{2p}{2p-2}$.
  By Remark~\ref{rmk:first order Taylor},
  \begin{equation*}
    \begin{split}
      h(u) &= \left(\frac{(p-1)^2}{p^2}-2\lambda \frac{p-1}{p^2}+C^2\right) - \frac{(p-1)^2}{p^2}  (AC^{\frac{2p}{2p-2}}-BC^{\gamma_p})^{2} \\
      &\quad + 2\lambda \frac{p-1}{p^2} (AC^{\frac{2p}{2p-2}}-BC^{\gamma_p}) - C^2 (AC^{\frac{2p}{2p-2}}-BC^{\gamma_p})^{\frac{2-2p}{p}} \\
      &= \left(\frac{(p-1)^2}{p^2}-2\lambda \frac{p-1}{p^2}+C^2\right) + 2\lambda \frac{p-1}{p^2} A C^{\frac{2p}{2p-2}} \\ &\quad - C^2 \left( A^{\frac{2-2p}{p}}C^{-2} - \frac{2-2p}{p} A^{\frac{2-3p}{p}}B C^{\gamma_p + \frac{4-6p}{2p-2}} +O(C^{2\gamma_p + \frac{4-8p}{2p-2}}) \right) + O(C^{\gamma_p}) \\
      &= \left(\frac{(p-1)^2}{p^2}-2\lambda \frac{p-1}{p^2}- A^{\frac{2-2p}{p}}\right) + C^2 + 2\lambda \frac{p-1}{p^2} A C^{\frac{2p}{2p-2}} - \frac{2p-2}{p} A^{\frac{2-3p}{p}}B C^{\gamma_p - \frac{2p}{2p-2}} + O(C^{\min(\frac{4p}{2p-2},4)}), \\
    \end{split}
  \end{equation*}
  with the constant in $O(C^{\min(\frac{4p}{2p-2},4)})$ depending only on $A,B,\lambda, p$.
Setting $A=A_{\lambda,p}=\left(\frac{(p-1)(p-1-2\lambda)}{p^2} \right)^{\frac{p}{2-2p}}>0$ (since $\lambda < \frac{p-1}{2}$) gives
  \begin{equation*}
    \begin{split}
      h(u) & = C^2 + 2\lambda \frac{p-1}{p^2} A C^{\frac{2p}{2p-2}} - \frac{2p-2}{p} A^{\frac{2-3p}{p}}B C^{\gamma_p - \frac{2p}{2p-2}} + O(C^{\min(\frac{4p}{2p-2},4)}).
    \end{split}
  \end{equation*}
Take now $B=0$, thus
  \begin{equation*}
    h(u)  = C^2 + 2\lambda \frac{p-1}{p^2} A C^{\frac{2p}{2p-2}}  + O(C^{\min(\frac{4p}{2p-2},4)}),
  \end{equation*}
  which is strictly positive for $C^2<\eps_{\lambda,p}$ sufficiently small. Hence $u_m< AC^{\frac{2p}{2p-2}}$. 

  It remains to show the lower bound.
  If $p< 2$, then $2<\frac{2p}{2p-2}$, the leading order term is $C^2$ and as $4<\frac{4p}{2p-2}$,  
  \begin{equation*}
    h(u) = C^2 \left(1 - \frac{2p-2}{p} A^{\frac{2-3p}{p}}B \right) + O(C^{\min(\frac{2p}{2p-2},4)}).
  \end{equation*}
  Taking now $B=B_{\lambda,p}=2\frac{p}{2p-2}A^{\frac{3p-2}{p}}$ gives $h(u) = -C^2 + O(C^{4})$ which is strictly negative for $C^2<\eps_{\lambda,p}$ sufficiently small, that is $u_m > AC^{\frac{2p}{2p-2}}-BC^{\gamma_p}$.

  If $p=2$, the leading order term is of size $C^2$, so for $\gamma_p = 4$, then 
  \begin{equation*}
    h(u) = C^2 \left(1+\frac{\lambda}{2} A - A^{-2} B \right)  + O(C^{4}).
    \end{equation*}
  Taking $B=A^2(2+\frac{\lambda}{2}A)$ gives $h(u) = -C^2 + O(C^4)$ and we conclude as before.
  
  If $p>2$, we have $2>\frac{2p}{2p-2}$, the leading order term is of order $C^{\frac{2p}{2p-2}}$ and $4> \frac{4p}{2p-2}>2$, thus
  \begin{equation*}
    h(u) = C^{\frac{2p}{2p-2}} \left( 2\lambda \frac{p-1}{p^2} A -\frac{2p-2}{p} A^{\frac{2-3p}{p}}B \right) + O(C^{2}),
    \end{equation*}
  Taking $B=\left(2\lambda \frac{p-1}{p^2} A +1 \right)\frac{p}{2p-2}A^{\frac{3p-2}{p}}$ gives $h(u) = -C^{\frac{2p}{2p-2}} + O(C^2)$
  and we conclude as before.
 Note that if $\lambda \leq \frac{p-1}{2}-\delta$ for a fixed $\delta>0$, then $A \in [m'_{p,\delta}, M'_{p,\delta}]$, and so also $B \in [m_{p,\delta}, M_{p,\delta}]\subset \subset (0,\infty)$.

  \underline{Case $\lambda > \frac{p-1}{2}$:} Take the ansatz $u_m = \left( \frac{2\lambda}{p-1}-1 \right) + B C^{2}$.
  We compute, using again Remark~\ref{rmk:first order Taylor}
  \begin{equation*}
  \begin{split}
  h(u) &= \left(\frac{(p-1)^2}{p^2}-2\lambda \frac{p-1}{p^2}+C^2\right) - \frac{(p-1)^2}{p^2}  \left(\left(\frac{2\lambda}{p-1}-1\right) + B C^{2} \right)^{2} \\
  &\quad + 2\lambda \frac{p-1}{p^2} \left(\left(\frac{2\lambda}{p-1}-1\right) + B C^{2} \right) - C^2 \left(\left(\frac{2\lambda}{p-1}-1\right) + B C^{2} \right)^{\frac{2-2p}{p}} \\
  &= \left(\frac{(p-1)^2}{p^2}-2\lambda \frac{p-1}{p^2}+C^2\right) \\
  &\quad- \frac{(p-1)^2}{p^2} \left(\left(\frac{2\lambda}{p-1}-1\right)^{2} + 2 \left(\frac{2\lambda}{p-1}-1\right)B C^{2} + B^2C^{4} \right) \\
  &\quad + 2\lambda \frac{p-1}{p^2} \left(\left(\frac{2\lambda}{p-1}-1\right) + B C^{2} \right) \\
  &\quad - C^2 \left( \left(\frac{2\lambda}{p-1}-1\right)^{\frac{2-2p}{p}} + \frac{2-2p}{p} \left(\frac{2\lambda}{p-1}-1\right)^{\frac{2-3p}{p}} B C^{2} + O(C^{4}) \right).
\end{split}
\end{equation*}
  The coefficient of the $C^0$ term vanishes, as
  \begin{equation*}
  \left( \frac{(p-1)^2}{p^2}-2\lambda \frac{p-1}{p^2} \right) -\frac{(p-1)^2}{p^2}\left(\frac{2\lambda}{p-1} -1\right)^2 + 2\lambda \frac{p-1}{p^2} \left(\frac{2\lambda}{p-1} -1\right) =0.
\end{equation*}
The coefficient of the $C^2$ term, $\text{coeff}(C^2)$, is
\begin{equation*}
  \begin{split}
    1+B \left[-2\frac{(p-1)^2}{p^2} \left(\frac{2\lambda}{p-1}-1 \right)  + 2\lambda\frac{p-1}{p^2} \right] - \left(\frac{2\lambda}{p-1}-1 \right)^{\frac{2-2p}{p}} = 1+B \frac{2(p-1)(p-1-\lambda)}{p^2} - \left(\frac{2\lambda}{p-1}-1 \right)^{\frac{2-2p}{p}}. 
  \end{split}
\end{equation*}
If we take $B=0$, then said coefficient is $1-\left(\frac{2\lambda}{p-1}-1 \right)^{\frac{2-2p}{p}}<0$, that is 
\begin{equation*}
  h(u) = \left( 1-\left(\frac{2\lambda}{p-1}-1 \right)^{\frac{2-2p}{p}} \right) C^2 + O(C^4),
\end{equation*}
which is strictly negative for $C^2<\eps_{\lambda,p}$ sufficiently small, that is $u_m > \left(\frac{2\lambda}{p-1}-1\right)$. 

For the upper bound, taking $B =B_{\lambda,p} = p^2\frac{\left(\frac{2\lambda}{p-1}-1 \right)^{\frac{2-2p}{p}}-\frac{\lambda+1}{p}}{2(p-1)(p-1-\lambda)}$ (which is clearly continuous in $\lambda$ on $(\frac{p-1}{2}, p-1)$) gives
\begin{equation*}
  \text{coeff}(C^2) = 1+ p^2\frac{\left(\frac{2\lambda}{p-1}-1 \right)^{\frac{2-2p}{p}}-\frac{\lambda+1}{p}}{2(p-1)(p-1-\lambda)} \frac{2(p-1)(p-1-\lambda)}{p^2} - \left(\frac{2\lambda}{p-1}-1 \right)^{\frac{2-2p}{p}} = \tfrac{p-1-\lambda}{p}>0,
\end{equation*}
and so $h(u) = \tfrac{p-1-\lambda}{p} C^2 + O(C^4)>0$ for $C^2<\eps_{\lambda,p}$ sufficiently small and so $u_m< \left(\frac{2\lambda}{p-1}-1\right) + BC^2$.

We note that $B \to \infty$ as $\lambda \downarrow \frac{p-1}{2}$, but letting $\eps:=p-1-\lambda \to 0$,
\begin{equation*}
\begin{split}
B &= p^2\frac{ \left(\frac{2\lambda}{p-1}-1 \right)^{\frac{2-2p}{p}}-\frac{\lambda+1}{p}}{2(p-1)(p-1-\lambda)} = p^2\frac{\left(1-\frac{2\eps}{p-1} \right)^{\frac{2-2p}{p}}-\left(1 - \frac{\eps}{p} \right)}{2(p-1)\eps} = p^2\frac{\left(1+\frac{4}{p}\eps  + O(\eps^2)\right)-\left(1 - \frac{\eps}{p} \right)}{2(p-1)\eps},   
\end{split}
\end{equation*}
which converges to $\frac{5p}{2(p-1)}$ as $\eps\to 0$, despite the apparent factor of $\eps$ in the denominator of $B$. Finally, if $\lambda \geq \frac{p-1}{2}+\delta$ for a fixed $\delta>0$, then $B \in [m_{p,\delta}, M_{p,\delta}]\subset \subset (0,\infty)$ as before.
The proof is finished.
\end{proof}

Around $\lambda=\frac{p-1}{2}$, we do not calculate an exact estimate, but only  use a crude upper bound (only needed in Lemma~\ref{lemma: continuity of minimal curvature}), the reason being that at a later point we work with $S^\rho$ anyway.
\begin{lemma}
  \label{lemma: estimate around (p-1)/2}
Let $(\lambda,C^2) \in S^0$ with $\left|\lambda -\frac{p-1}{2}\right| < \frac{p-1}{4} $. Then there exists $\eps_{p}>0$ such that $u_m<\max(0,\frac{2\lambda}{p-1}-1)+C^{\frac{p}{3p-2}}$ for all $C^2<\eps_{p}$.
\end{lemma}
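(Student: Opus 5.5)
Set $a:=\max(0,\tfrac{2\lambda}{p-1}-1)$ and $\bar u:=a+C^{\frac{p}{3p-2}}$. As in the proof of Proposition~\ref{prop: estimates for um}, the idea is to use the sign structure of $h$. By Lemma~\ref{lemma: number of solutions}, $u_m$ is the unique zero of $h$ in $(0,1)$. Moreover, $h\to-\infty$ as $u\to0$, and $h>0$ on $(u_m,1)$. Hence it suffices to show that $\bar u<1$ and $h(\bar u)>0$ for all $C^2<\eps_p$, uniformly in $\lambda\in(\tfrac{p-1}{4},\tfrac{3(p-1)}{4})$.

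The bound $\bar u<1$ is immediate: $a\le \tfrac12$ on this $\lambda$-range, so it holds once $C$ is small. For the positivity, I would split $h$ into a $C$-independent part and a $C^2$-part:
\begin{equation*}
h(u)=q_\lambda(u)+C^2\bigl(1-u^{\frac{2-2p}{p}}\bigr),\qquad q_\lambda(u):=\tfrac{(p-1)^2}{p^2}(1-u^2)-2\lambda\tfrac{p-1}{p^2}(1-u).
\end{equation*}
Factoring gives
\begin{equation*}
q_\lambda(u)=\tfrac{(p-1)^2}{p^2}(1-u)\Bigl(u-\bigl(\tfrac{2\lambda}{p-1}-1\bigr)\Bigr).
\end{equation*}
Write $t:=C^{\frac{p}{3p-2}}$, so $\bar u=a+t$. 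Since $1-\bar u\ge \tfrac14$ for small $C$ and $\bar u-(\tfrac{2\lambda}{p-1}-1)\ge t$ (using $a\ge \tfrac{2\lambda}{p-1}-1$), we get
\begin{equation*}
q_\lambda(\bar u)\ge \tfrac{(p-1)^2}{4p^2}\,t.
\end{equation*}
For the negative term, use $\bar u\ge t$ to obtain
\begin{equation*}
C^2\bar u^{\frac{2-2p}{p}}\le C^2 t^{\frac{2-2p}{p}}=C^{2-\frac{2p-2}{3p-2}}=C^{\frac{4p-2}{3p-2}}.
\end{equation*}
Dropping the positive $C^2$ term, we therefore have
\begin{equation*}
h(\bar u)\ge \tfrac{(p-1)^2}{4p^2}\,C^{\frac{p}{3p-2}}-C^{\frac{4p-2}{3p-2}}.
\end{equation*}
The exponent $\tfrac{4p-2}{3p-2}$ exceeds $\tfrac{p}{3p-2}$ because $3p-2>0$. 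So the right-hand side is positive once $C^{\frac{3p-2}{3p-2}}=C<\tfrac{(p-1)^2}{4p^2}$, which fixes $\eps_p$, independent of $\lambda$.

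The step that needs the most care is making the lower bound on $q_\lambda(\bar u)$ uniform near $\lambda=\tfrac{p-1}{2}$, where the two regimes of Proposition~\ref{prop: estimates for um} degenerate. The choice of $a$ as a maximum handles this: the factor $\bar u-(\tfrac{2\lambda}{p-1}-1)$ is at least $t$ in both cases $\lambda\lessgtr\tfrac{p-1}{2}$. The crude exponent $\tfrac{p}{3p-2}$ is only needed so that $t^{\frac{2-2p}{p}}C^2\ll t$, which holds with room to spare.
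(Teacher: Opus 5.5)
Your proof is correct and is essentially the paper's argument. Both evaluate $h$ at $\bar u$, use $\bar u\ge C^{\frac{p}{3p-2}}$ to bound the singular term by $C^{\frac{4p-2}{3p-2}}$, and show that the part linear in $C^{\frac{p}{3p-2}}$ has a coefficient bounded below uniformly in $\lambda$. Your factorization $q_\lambda(u)=\tfrac{(p-1)^2}{p^2}(1-u)\bigl(u-(\tfrac{2\lambda}{p-1}-1)\bigr)$ just organizes the paper's expansion more cleanly, and it gives the explicit threshold $C<\tfrac{(p-1)^2}{4p^2}$.
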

\begin{proof}
  Let $u = \max(0,\frac{2\lambda}{p-1}-1)+C^{\frac{p}{3p-2}} \geq C^{\frac{p}{3p-2}}$, then
  \begin{equation*}
  \begin{split}
      h(u) &= \left(\frac{(p-1)^2}{p^2} -2\lambda\frac{p-1}{p^2}+ C^2\right)-\frac{(p-1)^2}{p^2} u^2 + 2\lambda\frac{p-1}{p^2} u - C^{2}u^{\frac{2-2p}{p}} \\
      &\geq \left(\frac{(p-1)^2}{p^2} -2\lambda\frac{p-1}{p^2}+ C^2\right)-\frac{(p-1)^2}{p^2} u^2 + 2\lambda\frac{p-1}{p^2} u - C^{2}C^{\frac{2-2p}{3p-2}}  \\
      &\geq C^2 - \frac{(p-1)^2}{p^2}\left(2\max\left(0,\frac{2\lambda}{p-1}-1\right)C^{\frac{p}{3p-2}} + C^{\frac{2p}{3p-2}} \right) + 2\lambda\frac{p-1}{p^2} C^{\frac{p}{3p-2}} - C^{\frac{4p-2}{3p-2}}  \\
  \end{split}
  \end{equation*}
  using $\frac{(p-1)^2}{p^2}-2\lambda \frac{p-1}{p^2}\geq 0$ if $\lambda\leq \frac{p-1}{2}$ and otherwise
  \begin{equation*}
  \left( \frac{(p-1)^2}{p^2}-2\lambda \frac{p-1}{p^2} \right) -\frac{(p-1)^2}{p^2}\left(\frac{p-1-2\lambda}{p-1}\right)^2 + 2\lambda \frac{p-1}{p^2} \left(\frac{2\lambda}{p-1} -1\right) =0.
\end{equation*}
The lowest order term is $C^{\frac{p}{3p-2}}$, with coefficient
\begin{equation*}
  2\frac{p-1}{p^2} \left(\lambda - \max(0,2\lambda - (p-1)) \right) \geq \frac{1}{2} \frac{(p-1)^2}{p^2} >0,
\end{equation*}
thus $h(u)>0$ and so $u_m<\max(0,\frac{2\lambda}{p-1}-1)+C^{\frac{p}{3p-2}}$.
\end{proof}

We directly obtain the following corollary for $k=u^{\frac{1}{p}}$.
\begin{corollary}
  \label{cor: estimates for km}
  Let $(\lambda,C^2) \in S^0$.
  \begin{enumerate}
    \item[(i)] If $0\leq \lambda < \frac{p-1}{2}$ and $\gamma_p = \min\left(\frac{4p}{2p-2}-1, \frac{6p-4}{2p-2}-1\right)$, then there exist $B_{\lambda,p}, \eps_{\lambda,p} \in (0,\infty)$ such that for all $C^2<\eps_{\lambda,p}$
    \begin{equation*}
      \left(\frac{(p-1)(p-1-2\lambda)}{p^2} \right)^{\frac{1}{2-2p}} C^{\frac{2}{2p-2}} -B_{\lambda,p} C^{\gamma_p} < k_m < \left(\frac{(p-1)(p-1-2\lambda)}{p^2} \right)^{\frac{1}{2-2p}} C^{\frac{2}{2p-2}}.
    \end{equation*} 
    Moreover, for $0<\delta\ll 1$ fixed, if $\lambda \in [0,\frac{p-1}{2}-\delta]$, then $B_{\lambda,p} \in [m_{\delta,p}, M_{\delta,p}]$ for some constants $0<m_{\delta,p}, M_{\delta,p}$.

    \item[(ii)] If $\frac{p-1}{2}<\lambda < p-1$, then there exists $B_{\lambda,p}, \eps_{\lambda,p} \in (0,\infty)$ such that for all $C^2<\eps_{\lambda,p}$
    \begin{equation*}
      \left(\frac{2\lambda}{p-1}-1 \right)^{\frac{1}{p}}  < k_m < \left( \frac{2\lambda}{p-1}-1 \right)^{\frac{1}{p}} + B_{\lambda,p} C^{2}.
    \end{equation*}
    Moreover $\lambda \mapsto B_{\lambda,p}$ is continuous and $\lim_{\lambda\to p-1}B_{\lambda,p}=  \frac{5}{p-1}$. Also for $0<\delta\ll 1$ fixed, if $\lambda \in [\frac{p-1}{2}+\delta,p-1)$ then $B_{\lambda,p} \in [m_{\delta,p}, M_{\delta,p}]$ for some constants $0<m_{\delta,p}, M_{\delta,p}$.
  \end{enumerate}
\end{corollary}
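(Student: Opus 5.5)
The corollary is obtained from Proposition~\ref{prop: estimates for um} by applying the increasing map $u\mapsto u^{1/p}$ to the bounds for $u_m=k_m^p$. The only inputs are two elementary concavity inequalities for $x\mapsto x^{1/p}$ with $1/p\in(0,1)$. We keep the same $\eps_{\lambda,p}$ as in the proposition.

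\emph{Case (i).} Write $a=\frac{2p}{2p-2}$ and $A=A_{\lambda,p}=\left(\frac{(p-1)(p-1-2\lambda)}{p^2}\right)^{\frac{p}{2-2p}}$. Let $B$ and $\gamma_p$ denote the constant and exponent from Proposition~\ref{prop: estimates for um}(i).

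The upper bound follows by taking $p$-th roots of $u_m<AC^a$. This gives $k_m<A^{1/p}C^{a/p}$, and since $A^{1/p}=\left(\frac{(p-1)(p-1-2\lambda)}{p^2}\right)^{\frac{1}{2-2p}}$ and $a/p=\frac{2}{2p-2}$, this is the stated bound.

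For the lower bound, shrinking $\eps_{\lambda,p}$ if necessary, we may assume $x:=\frac{B}{A}C^{\gamma_p-a}\in[0,1)$; this is possible because $\gamma_p>a$. We then use $(1-x)^{1/p}\geq 1-x$ on $[0,1]$, which holds since $1/p<1$. This yields
\begin{equation*}
k_m>\left(AC^a-BC^{\gamma_p}\right)^{1/p}=A^{1/p}C^{a/p}(1-x)^{1/p}\geq A^{1/p}C^{\frac{2}{2p-2}}-A^{\frac1p-1}B\,C^{\gamma_p-a+\frac ap}.
\end{equation*}
The key identity is $a(1-\tfrac1p)=1$. It turns the exponent into $\gamma_p-1$, which is exactly the shifted exponent appearing in the corollary. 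The new constant is $B'=A^{\frac1p-1}B$.

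For the uniformity statement, take $\lambda\in[0,\frac{p-1}{2}-\delta]$. Then $A$ lies in a compact subinterval of $(0,\infty)$, and $B\in[m_{\delta,p},M_{\delta,p}]$ by the proposition. Hence $B'$ is also bounded above and below by positive constants depending only on $\delta$ and $p$.

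\emph{Case (ii).} Let $x_0=\frac{2\lambda}{p-1}-1\in(0,1)$ and let $B$ be the constant from Proposition~\ref{prop: estimates for um}(ii). The lower bound is immediate from $u_m>x_0$.

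For the upper bound we use the tangent-line inequality for the concave function $x\mapsto x^{1/p}$:
\begin{equation*}
(x_0+BC^2)^{1/p}\leq x_0^{1/p}+\tfrac1p x_0^{\frac1p-1}BC^2 < x_0^{1/p}+\tfrac2p x_0^{\frac1p-1}BC^2 .
\end{equation*}
We set $B'_{\lambda,p}=\frac2p x_0^{\frac1p-1}B_{\lambda,p}$. The factor $2$ is a harmless enlargement that secures strictness and makes the limit come out as stated.

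This $B'_{\lambda,p}$ is continuous in $\lambda$, being a product of continuous functions. As $\lambda\to p-1$ we have $x_0\to1$, so
\begin{equation*}
B'_{\lambda,p}\to\frac2p\cdot\frac{5p}{2(p-1)}=\frac{5}{p-1}.
\end{equation*}
For $\lambda\in[\frac{p-1}{2}+\delta,p-1)$ we have $x_0\in[\frac{2\delta}{p-1},1)$, so $x_0^{\frac1p-1}$ is bounded above and below. Together with the bounds on $B$ from the proposition, this gives the uniform bounds on $B'$.

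There is no real obstacle in either case. The only points needing care are the exponent bookkeeping $a(1-1/p)=1$ in case (i), and choosing $B'$ in case (ii) so that its limit is exactly $\frac{5}{p-1}$.
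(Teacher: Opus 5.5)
Your proof is correct and follows the paper's route: both take $p$-th roots of the bounds on $u_m$ from Proposition~\ref{prop: estimates for um}. In (i), both use $\frac{2p}{2p-2}\bigl(1-\frac1p\bigr)=1$ to shift the exponent to $\gamma_p-1$.

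The only difference is the tool for the root. The paper uses a first-order Taylor expansion and then absorbs the remainder by redefining constants. You instead use the exact concavity bounds $(1-x)^{1/p}\ge 1-x$ and $(x_0+h)^{1/p}\le x_0^{1/p}+\frac1p x_0^{\frac1p-1}h$, which give explicit constants with no remainder.

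One small correction: the doubling in (ii) is not needed for strictness, which already follows from $u_m<x_0+B_{\lambda,p}C^2$. It is, however, exactly what produces the stated limit $\frac{5}{p-1}$. The paper's absorption step implicitly does the same, since its raw coefficient $\frac1p x_0^{\frac1p-1}B_{\lambda,p}$ tends to $\frac{5}{2(p-1)}$.
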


\begin{proof}
  We take the estimates from Proposition~\ref{prop: estimates for um}.

  \underline{Case $0\leq \lambda < \frac{p-1}{2}$:} For the lower bound, by Taylor (Remark~\ref{rmk:first order Taylor}),
  \begin{equation*}
    \begin{split}
      k_m &>\left( \left(\frac{(p-1)(p-1-2\lambda)}{p^2} \right)^{\frac{p}{2-2p}} C^{\frac{2p}{2p-2}} -B_{\lambda,p} C^{\gamma_p} \right)^{\frac{1}{p}} \\
       &= \left(\frac{(p-1)(p-1-2\lambda)}{p^2} \right)^{\frac{1}{2-2p}} C^{\frac{2}{2p-2}} -\frac{B_{\lambda,p}}{p} \left(\frac{(p-1)(p-1-2\lambda)}{p^2} \right)^{\frac{1}{2}} C^{\gamma_p-1} + O(C^{2\gamma_p-2-\frac{1}{p-1}})
    \end{split}
  \end{equation*}
  and the result follows by redefining the constants and absorbing the remainder into the $C^{\gamma_p-1}$ term. The upper bound is trivial.

  \underline{Case $\frac{p-1}{2} < \lambda < p-1$:} Similarly, we only verify the upper bound, i.e.
  \begin{equation*}
    \begin{split}
      k_m &< \left( \frac{2\lambda}{p-1}-1 + B_{\lambda,p} C^{2}\right)^{\frac{1}{p}} = \left( \frac{2\lambda}{p-1}-1 \right)^{\frac{1}{p}} + \frac{B_{\lambda,p}}{p} \left( \frac{2\lambda}{p-1}-1 \right)^{\frac{1-p}{p}} C^2 + O(C^4).
    \end{split}
  \end{equation*}
 Upon redefining the constants and absorbing the remainder, the proof is finished.
\end{proof}

We also derive a global lower bound for $u_m$.
\begin{lemma}
  \label{lemma: global lower bound for um}
Let $(\lambda,C^2) \in S^0$. 
Then there exists $\eps_{p}>0$ such that $u_m > \begin{cases}
    C^{\frac{2p}{2p-2}}, &\text{if } C^2<\eps_p, \\ \left( \frac{p^2 \eps_p}{(p-1)(2p-1)} \right)^{\frac{p}{2p-2}}, &\text{if } C^2\geq \eps_p.
\end{cases}$
\end{lemma}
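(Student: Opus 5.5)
The plan is to use the sign structure of $h$ on $(0,1)$. As recalled in the proof of Proposition~\ref{prop: estimates for um}, $u_m$ is the only zero of $h$ in $(0,1)$ (Lemma~\ref{lemma: number of solutions}), and $h(u)\to-\infty$ as $u\downarrow 0$. Hence $h<0$ on $(0,u_m)$ and $h>0$ on $(u_m,1)$. So, to show $u_m>u_*$ for a candidate $u_*\in(0,1)$, it suffices to check $h(u_*)<0$. I will do this with $u_*=C^{\frac{2p}{2p-2}}=C^{\frac{p}{p-1}}$ when $C^2<\eps_p$, and with $u_*=\bigl(\frac{p^2\eps_p}{(p-1)(2p-1)}\bigr)^{\frac{p}{2p-2}}$ when $C^2\ge\eps_p$. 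Here $\eps_p$ is a small constant depending only on $p$, taken as the minimum of the thresholds from the two cases. It is chosen small enough that $u_*<1$ in both cases, which is automatic for small $\eps_p$.

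In both cases I use crude bounds valid on $S^0$, where $0\le\lambda<p-1$. First, $-2\lambda\frac{p-1}{p^2}\le 0$ and $-\frac{(p-1)^2}{p^2}u^2\le 0$, so these two terms can be dropped. Second, $2\lambda\frac{p-1}{p^2}u\le 2\frac{(p-1)^2}{p^2}u$. This gives
\begin{equation*}
h(u)\le \frac{(p-1)^2}{p^2}+C^2+2\frac{(p-1)^2}{p^2}u-C^2u^{\frac{2-2p}{p}}.
\end{equation*}

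\emph{Case $C^2<\eps_p$.} With $u=C^{\frac{p}{p-1}}$ we get $C^2u^{\frac{2-2p}{p}}=C^2C^{-2}=1$. Therefore
\begin{equation*}
h(u)\le \frac{(p-1)^2}{p^2}-1+C^2+2\frac{(p-1)^2}{p^2}C^{\frac{p}{p-1}}.
\end{equation*}
Since $\frac{(p-1)^2}{p^2}-1<0$, this is negative once $C^2$ is smaller than a threshold depending only on $p$.

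\emph{Case $C^2\ge\eps_p$.} Write $C^2=t\eps_p$ with $t\ge1$. By the choice of $u_*$ we have $u_*^{\frac{2-2p}{p}}=\frac{(p-1)(2p-1)}{p^2\eps_p}$, so
\begin{equation*}
C^2-C^2u_*^{\frac{2-2p}{p}}=t\Bigl(\eps_p-\frac{(p-1)(2p-1)}{p^2}\Bigr)\le \eps_p-\frac{(p-1)(2p-1)}{p^2}.
\end{equation*}
The last inequality holds because the bracket is negative for $\eps_p$ small and $t\ge 1$. Inserting this into the bound on $h$ gives
\begin{equation*}
h(u_*)\le \frac{(p-1)^2-(p-1)(2p-1)}{p^2}+\eps_p+2\frac{(p-1)^2}{p^2}u_*=-\frac{p-1}{p}+\eps_p+O\bigl(\eps_p^{\frac{p}{2p-2}}\bigr).
\end{equation*}
This is negative for $\eps_p$ small. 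The upper bound $C^2<\frac{p-1}{p}$ on $S^0$ is not even needed here.

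I do not expect a genuine obstacle. The only care needed is to fix a single $\eps_p$ that works in both cases and also ensures $u_*<1$. One must also confirm that the constant in the second case is exactly what makes the negative term $-\frac{(p-1)(2p-1)}{p^2}$ beat the positive constant $\frac{(p-1)^2}{p^2}$; this is the computation $\frac{(p-1)^2-(p-1)(2p-1)}{p^2}=-\frac{p-1}{p}$ above.
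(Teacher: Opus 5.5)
Your proposal is correct and follows the paper's proof: in each case you evaluate $h$ at the candidate threshold, show $h<0$ there, and conclude $u_*<u_m$ from the sign structure of $h$ on $(0,1)$. The only difference is in the case $C^2\ge\eps_p$: the paper uses $C^2<\frac{p-1}{p}$ on $S^0$, so the chosen constant makes the bound cancel exactly to $0$, whereas you use that $C^2\bigl(1-u_*^{\frac{2-2p}{p}}\bigr)$ is decreasing in $C^2$ and absorb $\eps_p+O(\eps_p^{\frac{p}{2p-2}})$ by taking $\eps_p$ small; both routes are valid.
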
 

\begin{proof}
  Let $u = C^{\frac{2p}{2p-2}}$, we have for $C^2<\eps_p$ sufficiently small
  \begin{equation*}
    \begin{split}
      h(u) &= \left(\frac{(p-1)^2}{p^2} -2\lambda\frac{p-1}{p^2}+ C^2\right)-\frac{(p-1)^2}{p^2} C^{\frac{4p}{2p-2}} + 2\lambda\frac{p-1}{p^2} C^{\frac{2p}{2p-2}} - C^{2+ \frac{2p}{2p-2}\frac{2-2p}{p}} \\
      &= \left(\frac{(p-1)^2}{p^2} -2\lambda\frac{p-1}{p^2}-1\right) +O(C^{\min(\frac{2p}{2p-2},2)}) \leq -\frac{1}{2}\frac{2p-1}{p^2}<0,
    \end{split}
  \end{equation*}
  hence $u_m>C^{\frac{2p}{2p-2}}$ holds. 
  On the other hand, for $C^2\geq \eps_p$, letting $u=\left( \frac{p^2 \eps_p}{(p-1)(2p-1)} \right)^{\frac{p}{2p-2}}$,
  \begin{equation*}
  \begin{split}
      h(u) &= \left(\frac{(p-1)^2}{p^2} -2\lambda\frac{p-1}{p^2}+ C^2\right)-\frac{(p-1)^2}{p^2} u^2 + 2\lambda\frac{p-1}{p^2} u - C^{2}u^{\frac{2-2p}{p}} \\
      &< \frac{(p-1)^2}{p^2} + \frac{p-1}{p} - \eps_p \left( \frac{p^2 \eps_p}{(p-1)(2p-1)} \right)^{\frac{p}{2p-2} \frac{2-2p}{p}}  = 0,
  \end{split}
  \end{equation*}
  hence $u_m > \left( \frac{p^2 \eps_p}{(p-1)(2p-1)} \right)^{\frac{p}{2p-2}}$ holds.
\end{proof}

\begin{remark}
  The asymptotic estimates also match the planar theory from \cite{miuraclassification} for $C^2=0$. 
  In that case, the curvature $k$ switches from being wavelike over borderline/flatcore to orbitlike at $\lambda = \frac{p-1}{2}$, \cite[Theorem~4.1]{miuraclassification} (note that here $A_{p,\lambda}=1$). 
  Hence for $\lambda \leq \frac{p-1}{2}$, $k_m=0$ and for $\lambda > \frac{p-1}{2}$, we can easily find $u_m=k_m^p$ as the second root of the quadratic polynomial $z \mapsto P(z)= \left(\frac{(p-1)^2}{p^2}-2\lambda \frac{p-1}{p^2}\right) - \left( \frac{(p-1)^2}{p^2}z^2 - 2\lambda \frac{p-1}{p^2}z \right)$, namely $u_m=k_m^p = \frac{2\lambda}{p-1}-1$.
\end{remark}

For points close to $\partial S_2$ (including points around $(p-1,0)$), we derive a new estimate. 
 Note that now we use $w=k^{p-1}$ (since $w$ and not $k$ is needed at a later point in Proposition~\ref{prop: continuity of period}), but such an estimate can easily be adapted to $k$ or $u$, similar to Corollary~\ref{cor: estimates for km}.

\begin{proposition}
  \label{prop: wm around S2:2}
  For every $\gamma\in (1,2)$, there exists $\bar \eps_\gamma >0$ such that for all $(\lambda,C^2) \in S^0$ with $\tfrac{p-1-\lambda}{p}-  C^2 \leq \bar \eps_\gamma $ we have the estimate
  \begin{equation*}
   1 - A\left( \frac{p-1-\lambda}{p}-C^2 \right) - \left( \frac{p-1-\lambda}{p}-C^2 \right)^\gamma < w_m <    1 - A\left( \frac{p-1-\lambda}{p}-C^2 \right) + \left( \frac{p-1-\lambda}{p}-C^2 \right)^\gamma,
  \end{equation*}
  with $A= \frac{-4}{g''(1)} = 2\left[\frac{p+1}{p} - \frac{\lambda}{p(p-1)} + 3 C^2 \right]^{-1} \in [\frac{p}{2p-1},2]$.
\end{proposition}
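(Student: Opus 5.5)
Set $\eta:=\tfrac{p-1-\lambda}{p}-C^2\geq 0$, which measures the distance to $\partial S_2$. The plan is to treat $w_m$ as the second root of $g$ near the double-root situation at $\eta=0$. We have $g(1)=0$ identically by the choice of $E$. Since
\[
g'(w)=-2\Big(\tfrac{p-1}{p}w^{\frac{p+1}{p-1}}-\tfrac{\lambda}{p}w^{\frac{1}{p-1}}-C^2w^{-3}\Big),
\]
we get $g'(1)=-2\big(\tfrac{p-1-\lambda}{p}-C^2\big)=-2\eta$. A direct computation gives
\[
g''(1)=-2\Big(\tfrac{p+1}{p}-\tfrac{\lambda}{p(p-1)}+3C^2\Big).
\]
On $\overline{S^0}$ we have $\lambda\in[0,p-1]$ and $0\le C^2\le\tfrac{p-1-\lambda}{p}$, so the bracket lies in $[1,\tfrac{2p-1}{p}]$ up to checking endpoints. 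This gives $A=-4/g''(1)\in[\tfrac{p}{2p-1},2]$, and in particular $g''(1)$ is bounded away from $0$ uniformly.

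Next, Taylor expand $g$ around $1$ with step $t=1-w$:
\[
g(1-t)=2\eta t+\tfrac12 g''(1)t^2+R(t),\qquad |R(t)|\le M t^3 \text{ for } t\in[0,\tfrac12].
\]
Here $M$ bounds $|g'''|$ on $[\tfrac12,1]$ uniformly over $\overline{S^0}$ (the coefficients $\lambda$ and $C^2$ are bounded there). The nontrivial root of the quadratic part is $t_0=-4\eta/g''(1)=A\eta$. We then evaluate $g$ at the test points $t_\pm=A\eta\pm\eta^\gamma$:
\[
2\eta t_\pm+\tfrac12g''(1)t_\pm^2=t_\pm\big(2\eta-\tfrac{2}{A}t_\pm\big)=\mp\tfrac{2}{A}t_\pm\eta^\gamma .
\]
The magnitude of this is $\gtrsim\eta^{1+\gamma}$, since $t_\pm\sim A\eta$ once $\eta^{\gamma-1}$ is small. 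The remainder satisfies $|R|\le M(2\eta)^3$, and $\eta^3=o(\eta^{1+\gamma})$ precisely because $\gamma<2$. Hence for $\eta\le\bar\eps_\gamma$ we get $g(1-t_+)<0$ and $g(1-t_-)>0$. This uses $\gamma>1$ (so $t_-$ is positive and comparable to $\eta$) and $\gamma<2$ (so the cubic remainder is dominated).

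To conclude, use Lemma~\ref{lemma: number of solutions}: $g$ has exactly two zeros $w_m<1$ on $(0,1]$, with $g>0$ on $(w_m,1)$ and $g<0$ on $(0,w_m)$. Then the sign information places $w_m$ strictly between $1-t_+$ and $1-t_-$, which is the claimed two-sided estimate. For this we need both test points to lie in $(0,1)$, which holds as $\eta\to0$.

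The main obstacle is uniformity. The constants must not depend on the point of $S^0$, including near the corner $(p-1,0)$ where $C^2\to0$. The term $C^2w^{-2}$ is harmless on $[\tfrac12,1]$, so $|g'''|$ is uniformly bounded there. Also $w_m$ has to be ruled out as lying outside $[\tfrac12,1]$, which follows from the same sign argument since $g(1-t_+)<0$ with $t_+\le\tfrac12$. The only real care needed is the case $\eta=0$, which lies on $\partial S_2$ and is excluded from $S^0$, and checking the stated range of $A$.
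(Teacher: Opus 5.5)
Your proposal is correct and is essentially the paper's argument. The paper likewise evaluates $g$ at $1-A\eta\pm\eta^\gamma$, expands to second order about $w=1$ with $A=-4/g''(1)$ chosen to cancel the $\eta^2$ term, obtains $\pm 2\eta^{1+\gamma}+O(\eta^{\min(3,2\gamma)})$, and concludes from the sign structure of $g$ in Lemma~\ref{lemma: number of solutions}; your expansion of $g$ itself, with a bound on $g'''$ over $[\tfrac12,1]$ that is uniform on $\overline{S^0}$, is a cleaner version of the paper's term-by-term expansion and makes the uniformity of $\bar\eps_\gamma$ explicit. One small slip: on $\overline{S^0}$ the bracket $\tfrac{p+1}{p}-\tfrac{\lambda}{p(p-1)}+3C^2$ ranges over $[1,\tfrac{4p-2}{p}]$, with the upper value at $(0,\tfrac{p-1}{p})$, not over $[1,\tfrac{2p-1}{p}]$, and it is exactly this upper value that gives $A\ge\tfrac{p}{2p-1}$ (similarly, $t_+$ may slightly exceed $2\eta$, which is harmless for the $O(\eta^3)$ remainder).
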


\begin{proof}
  Let $\eps:=\tfrac{p-1-\lambda}{p}-C^2 \ll 1$ and take the ansatz $w = 1-A\eps +B \eps^\gamma$ with $\gamma \in (1,2)$ fixed in $g(w)$.
  Now using Remark~\ref{rmk: Taylor for (1+x)to the r} in the three nonlinear terms gives
\begin{equation*}
\begin{split}
  g(w) &= \frac{(p-1)^2}{p^2} - 2\lambda \frac{p-1}{p^2} +C^2 \\
  &\quad-\frac{(p-1)^2}{p^2}  \left( 1- A \eps +B \eps^\gamma \right)^{\frac{2p} {p-1}} + 2\lambda \frac{p-1}{p^2} \left( 1-A\eps +B \eps^\gamma \right)^{\frac{p}{p-1}} - C^2 \left( 1-A\eps +B \eps^\gamma \right)^{-2} \\
  &= \frac{(p-1)^2}{p^2} - 2\lambda \frac{p-1}{p^2} +C^2 + O(\eps^{\min(3,2\gamma)})\\
    &\quad - \frac{(p-1)^2}{p^2}  \left( 1 -\eps\left[\frac{2p}{p-1}A\right] + \eps^\gamma \left[\frac{2p}{p-1}B\right] + \eps^2 \left[\frac{\frac{2p}{p-1}(\frac{2p}{p-1}-1)}{2}A^2 \right] - \eps^{1+\gamma} \left[\frac{2p}{p-1}(\frac{2p}{p-1}-1) AB \right]  \right) \\
    &\quad +2\lambda \frac{p-1}{p^2}  \left( 1 -\eps\left[\frac{p}{p-1}A\right] + \eps^\gamma \left[\frac{p}{p-1}B\right] + \eps^2 \left[\frac{\frac{p}{p-1}(\frac{p}{p-1}-1)}{2}A^2 \right] - \eps^{1+\gamma} \left[\frac{p}{p-1}(\frac{p}{p-1}-1) AB \right] \right) \\
    &\quad - C^2 \left( 1 - \eps[-2A] + \eps^\gamma [-2B] + \eps^2 \left[\frac{-2(-3)}{2}A^2 \right] - \eps^{1+\gamma} [-2(-3) AB] \right) \\
    &= \frac{(p-1)^2}{p^2} - 2\lambda \frac{p-1}{p^2} +C^2 + O(\eps^{\min(2\gamma,3)})\\
   &\quad - \frac{(p-1)^2}{p^2}  \left( 1 - \eps\left[\frac{2p}{p-1}A\right] + \eps^\gamma \left[\frac{2p}{p-1}B\right] + \eps^2 \left[\frac{p(p+1)}{(p-1)^2}A^2 \right] - \eps^{1+\gamma} \left[\frac{2p(p+1)}{(p-1)^2} AB\right]  \right) \\
   &\quad +2\lambda \frac{p-1}{p^2}  \left( 1 - \eps\left[\frac{p}{p-1}A\right] + \eps^\gamma \left[\frac{p}{p-1}B \right] + \eps^2 \left[\frac{p}{2(p-1)^2}A^2 \right] - \eps^{1+\gamma} \left[\frac{p}{(p-1)^2} AB\right] \right) \\
    &\quad - C^2 \left( 1 + \eps[2A] + \eps^\gamma [-2B] + \eps^2 \left[3A^2 \right] - \eps^{1+\gamma} [6 AB] \right). \\
\end{split}
\end{equation*}
Collecting terms in powers of $\eps$, we obtain
\begin{equation*}
\begin{split}  
 g(w) 
  &=  - \eps A \left( -2\frac{p-1}{p} + \frac{2\lambda }{p} +2C^2 \right) \\
    &\quad + \eps^\gamma B \left( -2\frac{p-1}{p} + \frac{2\lambda}{p} + 2C^2 \right) \\
    & \quad + \eps^2 A^2 \left( -\frac{p+1}{p} + \frac{\lambda}{p(p-1)} - 3 C^2 \right) \\
    & \quad - \eps^{1+\gamma} AB 2 \left( -\frac{p+1}{p} + \frac{\lambda}{p(p-1)} - 3 C^2 \right) + O(\eps^{\min(3,2\gamma)})\\
    &=   \eps^2A \left(-2- A \left[-\frac{p+1}{p} + \frac{\lambda}{p(p-1)} - 3 C^2 \right] \right) \\
  &\quad + \eps^{1+\gamma}B \left(-2 - 2A\left[-\frac{p+1}{p} + \frac{\lambda}{p(p-1)} - 3 C^2 \right] \right) + O(\eps^{\min(3,2\gamma)}), \\
  \end{split}
\end{equation*}
using $\frac{p-1}{p}-\frac{\lambda}{p}-C^2 = \eps$. Taking now $A=2\left[\frac{p+1}{p} - \frac{\lambda}{p(p-1)} + 3 C^2 \right]^{-1}$ and thus $-\frac{p+1}{p} + \frac{\lambda}{p(p-1)} - 3 C^2 = \frac{-2}{A}$, we obtain
\begin{equation*}
\begin{split}  
  g(w)
  &= O(\eps^{\min(3,2\gamma)}) + 2B \eps^{1+\gamma}.  \\
\end{split}
\end{equation*}
 Taking now $B= 1$ gives $g(1-A\eps+\eps^\gamma) >0 $ and taking $B= -1$ gives $g(1-A\eps-\eps^\gamma) <0 $, as long as $\eps<\bar \eps_\gamma$ sufficiently small (since the constant in $O(\eps^{\min(3,2\gamma)})$ depends only on $p$ and $\lambda$, see Remark~\ref{rmk: Taylor for (1+x)to the r}). The fact that $A=\frac{-4}{g''(1)}$ follows from Lemma~\ref{lemma: estimates for gprime and gbis at 1}. 
 The proof is finished.
\end{proof}

\begin{corollary}
    \label{cor: approx of A from estimate at p-1}
    For every $\gamma \in (1,2)$, there exists $\bar \eps_\gamma>0$ such that for $(\lambda,C^2) \in S^0$ with $|p-1-\lambda|<\bar \eps_\gamma$ 
  \begin{equation*}
    w_m = 1-2 \left(\frac{p-1-\lambda}{p} -C^2 \right) 
    + O(|p-1-\lambda|^\gamma).
  \end{equation*}
\end{corollary}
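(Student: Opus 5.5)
The plan is to deduce the corollary directly from Proposition~\ref{prop: wm around S2:2}. Set $\eps:=\tfrac{p-1-\lambda}{p}-C^2$. Since $(\lambda,C^2)\in S^0$, we have $0<C^2<\tfrac{p-1-\lambda}{p}$. Hence
\begin{equation*}
0<\eps\le \tfrac{p-1-\lambda}{p}\le |p-1-\lambda|
\qquad\text{and}\qquad
C^2\le \tfrac{|p-1-\lambda|}{p}.
\end{equation*}
If $|p-1-\lambda|$ is small enough, then $\eps\le\bar\eps_\gamma$ from the proposition. The proposition then gives
\begin{equation*}
w_m = 1 - A\eps + O(\eps^\gamma),
\qquad
A=2\Big[\tfrac{p+1}{p}-\tfrac{\lambda}{p(p-1)}+3C^2\Big]^{-1},
\end{equation*}
and $O(\eps^\gamma)\subset O(|p-1-\lambda|^\gamma)$ because $\eps\le|p-1-\lambda|$.

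The remaining step is to replace $A$ by $2$. Write $\lambda=p-1-\delta$ with $|\delta|<\bar\eps_\gamma$. The bracket equals
\begin{equation*}
\tfrac{p+1}{p}-\tfrac{1}{p}+\tfrac{\delta}{p(p-1)}+3C^2 = 1+\tfrac{\delta}{p(p-1)}+3C^2,
\end{equation*}
and $C^2=O(|\delta|)$, so the bracket is $1+O(|\delta|)$. Therefore $A=2+O(|\delta|)$, with a constant depending only on $p$; this is uniform because $A\in[\tfrac{p}{2p-1},2]$ and the bracket stays bounded away from $0$.

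It follows that
\begin{equation*}
A\eps = 2\eps + O(|\delta|\,\eps) = 2\eps + O(|\delta|^2) \subset 2\eps + O(|\delta|^\gamma),
\end{equation*}
since $\gamma<2$ and $|\delta|<1$. Combining the two error terms gives $w_m=1-2\big(\tfrac{p-1-\lambda}{p}-C^2\big)+O(|p-1-\lambda|^\gamma)$.

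The only point needing care is uniformity of the constants. The $O(\eps^\gamma)$ bound in the proposition is the explicit bound $\pm\eps^\gamma$, so it is uniform. The Lipschitz estimate for $A$ is uniform because the bracket is bounded below by a positive constant, namely the reciprocal of the bound $A\le 2$. One may also shrink $\bar\eps_\gamma$ so that $\bar\eps_\gamma\le 1$ and $\bar\eps_\gamma$ is no larger than the threshold in Proposition~\ref{prop: wm around S2:2}.
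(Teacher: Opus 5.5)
Your proposal is correct and follows the paper's proof: both deduce the estimate from Proposition~\ref{prop: wm around S2:2} by showing $A=-4/g''(1)=2+O(|p-1-\lambda|)$, using $C^2\le \tfrac{p-1-\lambda}{p}$. You also spell out the two steps the paper leaves implicit: first, $\eps\le |p-1-\lambda|$, so $\eps^\gamma\le|p-1-\lambda|^\gamma$; second, $(A-2)\eps=O(|p-1-\lambda|^2)\subset O(|p-1-\lambda|^\gamma)$ because $\gamma<2$.
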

\begin{proof}
    This follows from Proposition~\ref{prop: wm around S2:2} since the term $A$ is approximated as 
  \begin{equation*}
    \begin{split}
      A=\frac{-4}{g''(1)}&=2p(p-1)[p(p-1)+(p-1-\lambda)+3p(p-1)C^2]^{-1} = 2+O(|p-1-\lambda|),
    \end{split}
  \end{equation*}
  using also $C^2 \leq \frac{p-1-\lambda}{p}$.
\end{proof}

Combining the asymptotic estimates on the boundary we obtain the following lemma.

\begin{lemma}
  \label{lemma: continuity of minimal curvature}
  The minimal curvature $k_m$ (and $w_m$, $u_m$) are continuous functions of $(\lambda, C^2)$ inside $S^0$, extend continuously to $\partial S^0$ and are strictly positive if and only if $C^2>0$ or $\lambda>\frac{p-1}{2}$.
\end{lemma}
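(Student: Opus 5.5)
Since $w_m=u_m^{(p-1)/p}$ and $k_m=u_m^{1/p}$ are continuous, monotone functions of $u_m$ on $[0,1]$, it suffices to prove everything for $u_m$.

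\emph{Interior continuity.} For $(\lambda,C^2)\in S^0$, $u_m$ is the unique root in $(0,1)$ of $h(u)=h_{\lambda,C^2}(u)$ (Lemma~\ref{lemma: number of solutions}). The function $h$ depends smoothly on $(\lambda,C^2,u)$ for $u>0$. By uniqueness, $h$ changes sign at $u_m$: it is negative on $(0,u_m)$, since $h\to-\infty$ as $u\to0$, and positive on $(u_m,1)$. Fix $\eta>0$ small. Then $h(u_m-\eta)<0<h(u_m+\eta)$, and by continuity of $h$ in the parameters these strict inequalities persist in a neighbourhood of $(\lambda,C^2)$. The root therefore stays in $(u_m-\eta,u_m+\eta)$, which gives continuity. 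Alternatively, one can use the implicit function theorem with $\partial_u h(u_m)\neq0$, if Lemma~\ref{lemma: number of solutions} gives a simple root.

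\emph{Extension to the boundary.} On each boundary piece the candidate limit is
\begin{itemize}
\item $u_m=1$ on $\partial S^0_2$;
\item $u_m=\max(0,\tfrac{2\lambda}{p-1}-1)$ on $\partial S^0_1$, the planar value;
\item the interior-type value on $\partial S^0_3\setminus\{(0,0)\}$ (where $\lambda=0$ but $C^2>0$, so the same root argument applies).
\end{itemize}

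\emph{Near $\partial S_2^0$, including the corner $(p-1,0)$.} Proposition~\ref{prop: wm around S2:2} gives $|w_m-1|\le A\eps+\eps^\gamma$ with $\eps=\frac{p-1-\lambda}{p}-C^2\to0$ and $A\le2$ uniformly, so $w_m\to1$. This is uniform, so it also covers the corner.

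\emph{Near $\partial S^0_1$ away from $\lambda=\frac{p-1}{2}$.} Here I would use the sign-change argument directly at $C^2=0$ rather than the pointwise asymptotics of Proposition~\ref{prop: estimates for um}, because the $\eps_{\lambda,p}$ there are not stated uniform except on $\delta$-separated ranges. For $\lambda$ in a compact subset of $(\frac{p-1}{2},p-1)$, at the limit point $h_{\lambda,0}$ is a quadratic with simple root $u_0=\frac{2\lambda}{p-1}-1>0$. For $u$ bounded away from $0$, the term $C^2u^{2/p-2}$ is uniformly small, so the same $\pm\eta$ sign argument works jointly in $(\lambda,C^2)$. For $\lambda\in[0,\frac{p-1}{2})$, the limit is $0$, and the upper bound $u_m<A_{\lambda,p}C^{\frac{2p}{2p-2}}$ of Proposition~\ref{prop: estimates for um}(i) with the $\delta$-uniform constants gives $u_m\to0$. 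More directly, $h(\eta)>0$ for $C$ small uniformly on compact $\lambda$-ranges, since $h_{\lambda,0}(\eta)>0$ there and the correction is $O(C^2\eta^{2/p-2})$.

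\emph{Near $(\frac{p-1}{2},0)$.} Lemma~\ref{lemma: estimate around (p-1)/2} gives $0<u_m<\max(0,\frac{2\lambda}{p-1}-1)+C^{\frac{p}{3p-2}}$ with $\eps_p$ independent of $\lambda$. This squeezes $u_m$ to the continuous function $\max(0,\frac{2\lambda}{p-1}-1)$, which is $0$ at that point.

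\emph{Corner $(0,0)$.} It is covered by case (i) with $\lambda=0$, using the uniform constants for $\lambda\in[0,\frac{p-1}{2}-\delta]$.

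\emph{Positivity.} If $C^2>0$, Lemma~\ref{lemma: global lower bound for um} gives $u_m>0$. If $C^2=0$, the limit value is $\max(0,\frac{2\lambda}{p-1}-1)$, which is positive if and only if $\lambda>\frac{p-1}{2}$. On $\partial S_2^0$ the value is $1$.

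The main obstacle is \emph{joint} continuity at boundary points, particularly the corners. The estimates of Proposition~\ref{prop: estimates for um} are pointwise in $\lambda$, so I would rely on the sign-change argument with explicit uniform bounds on the perturbation term, together with the $\lambda$-uniform Lemma~\ref{lemma: estimate around (p-1)/2} and Proposition~\ref{prop: wm around S2:2}.
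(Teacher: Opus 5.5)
Your proposal is correct and follows essentially the same route as the paper. Interior continuity comes from the simple root of $h$; the paper invokes the implicit function theorem, and your sign-change argument is equivalent. The limit $w_m\to 1$ at $\partial S^0_2$, including the corner $(p-1,0)$, comes from Proposition~\ref{prop: wm around S2:2}. The limits on $\partial S^0_1$ come from the small-$C^2$ estimates, with Lemma~\ref{lemma: estimate around (p-1)/2} handling $(\frac{p-1}{2},0)$.

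The one difference is on $\partial S^0_1$ away from $\lambda=\frac{p-1}{2}$. There you compare directly with the explicit quadratic $h_{\lambda,0}(u)=\frac{(p-1)^2}{p^2}(1-u)\bigl(u-(\tfrac{2\lambda}{p-1}-1)\bigr)$ instead of the pointwise asymptotics of Corollary~\ref{cor: estimates for km}. This makes joint continuity in $(\lambda,C^2)$ explicit, whereas the paper leaves the uniformity of $\eps_{\lambda,p}$ implicit.
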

\begin{proof}
  Continuity and positivity strictly inside $S^0$ follow from Lemma~\ref{lemma: number of solutions} (to be precise, $f(k)$ has nonvanishing derivative at the root $k_m$ for any $(\lambda,C^2) \in S^0$, so locally the implicit function theorem applies) and the continuity of the parameters of $f$. 
  If $C^2 \to \tfrac{p-1-\lambda}{p}$, then $k_m\to1$ by Proposition~\ref{prop: wm around S2:2}, that is $k_m$ extends to $\partial S_2$ and $(p-1,0)$.
  If $C^2\to0$, the asymptotic estimates from Corollary~\ref{cor: estimates for km} and Lemma~\ref{lemma: estimate around (p-1)/2} give that $k_m\to 0$ if $\lambda < \frac{p-1}{2}$ and $k_m\to \left(\frac{2\lambda}{p-1}-1 \right)^{\frac{1}{p}}$ if $\lambda>\frac{p-1}{2}$. 
\end{proof}

Moreover, we obtain a uniform Lipschitz estimate in  $S^0$.
\begin{lemma}
  \label{lemma: uniform Lip estimate}
  There exists $L_p \in \R$ such that any solution $w$ to \eqref{eq:EL pointwise for w} with $(\lambda,C^2) \in S^0$ satisfies $|w(s_1)-w(s_2)|\leq L_p|s_1-s_2|$ for all $s_1,s_2$ in $\R$.
\end{lemma}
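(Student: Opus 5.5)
The plan is to bound $|w'|$ uniformly through the first integral \eqref{eq:EL for w first order}. The Lipschitz estimate then follows from the mean value theorem, because $w$ is smooth (even analytic) for $(\lambda,C^2)\in S$, by Proposition~\ref{prop: existence}. Along the normalized solution with $\|w\|_{L^\infty}=1$ we have
\begin{equation*}
  w'(s)^2 = E - V(w(s)) = g(w(s)), \qquad E=\frac{(p-1)^2}{p^2}-2\lambda\frac{p-1}{p^2}+C^2 .
\end{equation*}
So it suffices to bound $g$ from above on $[w_m,1]$, uniformly for $(\lambda,C^2)\in S^0$.

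First, every term that could be large will be dropped or bounded by its sign. In $S^0$ we have $\lambda\ge 0$ and $C^2>0$, so the terms $-\frac{(p-1)^2}{p^2}w^{\frac{2p}{p-1}}$ and $-C^2w^{-2}$ are nonpositive. Using $0<w\le 1$, this gives
\begin{equation*}
  g(w)\le E + 2\lambda\frac{p-1}{p^2}w^{\frac{p}{p-1}} \le \frac{(p-1)^2}{p^2}+C^2 .
\end{equation*}
In the last step the $\lambda$-terms combine to $2\lambda\frac{p-1}{p^2}(w^{\frac{p}{p-1}}-1)\le 0$. Note that $\frac{(p-1)^2}{p^2}+C^2$ is just $E$ plus the nonnegative $\lambda$-contribution.

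Next, the defining inequalities of $S^0$ bound $\lambda$ and $C^2$ uniformly: $0\le\lambda\le p-1$ and $C^2\le \frac{p-1-\lambda}{p}\le\frac{p-1}{p}$. Hence
\begin{equation*}
  |w'|^2\le \frac{(p-1)^2}{p^2}+\frac{p-1}{p}=\frac{(p-1)(2p-1)}{p^2},
\end{equation*}
and we may take $L_p=\sqrt{(p-1)(2p-1)}/p$. This is consistent with the constant appearing in Lemma~\ref{lemma: global lower bound for um}.

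It remains to check the endpoints covered by the statement. If the statement is meant to include $\partial S^0$ as well, the cases there are as follows.
\begin{itemize}
  \item On $\partial S_2^0$ the solution is $w\equiv1$, so there is nothing to prove.
  \item For $C=0$ with $\lambda>\frac{p-1}{2}$, the same computation applies verbatim.
  \item For $C=0$ with $\lambda<\frac{p-1}{2}$, the identity $w'^2=g(w)$ holds only away from the zeros $(2j-1)P$. There the bound on $|w'|$ still holds a.e., and since $w$ is continuous it is Lipschitz with the same constant.
\end{itemize}

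The only mildly delicate point is this last boundary case, where $w$ is merely continuous at its zeros. The Lipschitz bound passes through those points because $w$ is absolutely continuous with $|w'|\le L_p$ on each open piece.
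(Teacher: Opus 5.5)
Your proof is correct, and it takes a more direct route than the paper's. Both arguments start from the first integral \eqref{eq:EL for w first order}, and they differ in how the term $C^2w^{-2}$ is handled.

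The paper bounds $C^2w^{-2}$ from above through the lower bound on the minimal curvature in Lemma~\ref{lemma: global lower bound for um}. It splits into two cases. For $C^2<\eps_p$ one has $w_m=u_m^{(p-1)/p}>C$, so $C^2w_m^{-2}<1$. For $C^2\ge\eps_p$ it uses the second bound of that lemma.

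You observe instead that in $w'^2=g(w)$ this term enters with a minus sign, as does $-\frac{(p-1)^2}{p^2}w^{\frac{2p}{p-1}}$. Since only an upper bound on $w'^2$ is needed, both terms can simply be dropped. Combining this with $\lambda\ge 0$ and $0<w\le 1$ gives the explicit constant $L_p=\sqrt{(p-1)(2p-1)}/p$.

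Your route therefore needs no information about $w_m$ at all. Controlling $C^2w_m^{-2}$, as the paper does, is superfluous for this lemma, and your version makes Lemma~\ref{lemma: global lower bound for um} unnecessary here.

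You also correctly identify where the restriction to $S^0$ matters. For $\lambda<0$ the $\lambda$-terms contribute $2|\lambda|\frac{p-1}{p^2}\bigl(1-w^{\frac{p}{p-1}}\bigr)$, which is not uniformly bounded over $S$.

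Your treatment of boundary points goes beyond the statement, since $C^2>0$ throughout $S^0$. It is nevertheless correct, including the wavelike case, where you pass the Lipschitz bound through the zeros of $w$ by continuity.
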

\begin{proof}
  It suffices to show that the derivative $w'$ is uniformly bounded, regardless of parameters $\lambda$ and $C^2$. 
  From \eqref{eq:EL for w first order}, it remains to bound the term $C^2w_m^{-2}$. 
  By Lemma~\ref{lemma: global lower bound for um}, we have $w_m=k_m^{p-1} = u_m^{\frac{p-1}{p}}> C$ and so $C^2w_m^{-2} < 1$ as long as $C^2 < \eps_p$.
  If $C^2 \geq \eps_p$, the estimate $C^2 w_m^{-2} \leq \frac{p-1}{p} \left( \frac{p^2 \eps_p}{(p-1)(2p-1)} \right)^{\frac{-2p}{2p-2}}$ holds.
  \end{proof}

\subsection{Continuity in the parameter space}
We now turn towards the continuity of the solution and its period with respect to variations in the parameters $\lambda$ and $C^2$. Note that here we restrict from $S^0$ to $S^\rho$, since the period diverges at $(\lambda,C^2)=(\frac{p-1}{2},0)$, the borderline elastica, whenever $p\leq2$. Instead of treating the cases $p\leq2$ and $p>2$ separately, for simplicity we restrict to $S^\rho$ in both cases. 
We start with the following lemma.

\begin{lemma}
  \label{lemma: convergence of s-periods}
  Let $(\lambda_n,C^2_n) \to (\lambda, C^2) \in S^0 \cup \partial S_{1}^0$ such that $(\lambda,C^2) \neq (\frac{p-1}{2},0)$ and $\lambda<p-1$. 
  Moreover, let $g_n:=g_{\lambda_n,C^2_n}$ and $g:=g_{\lambda,C^2}$. Then 
  \begin{equation*}
    S_n: [0,1] \to \R: w\mapsto \begin{cases}
      \int_{w_{m,n}}^w \frac{1}{\sqrt{g_n(x)}}dx &\text{if }w_{m,n}<w\leq1, \\ 0  &\text{if }0\leq w\leq w_{m,n},
    \end{cases}
  \end{equation*}
  converges pointwise to 
  \begin{equation*}
    S:[0,1] \to \R: w \mapsto \begin{cases}
      \int_{w_{m}}^w \frac{1}{\sqrt{g(x)}}dx &\text{if }w_{m}<w\leq 1, \\ 0  &\text{if }0\leq w\leq w_{m},
    \end{cases}
  \end{equation*}
  for any $w \in [0,1]$.
\end{lemma}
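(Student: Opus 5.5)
The plan is dominated convergence applied to the integrand $\mathbf 1_{(w_{m,n},w)}(x)\,g_n(x)^{-1/2}$ on $[0,1]$. By Lemma~\ref{lemma: continuity of minimal curvature}, $w_{m,n}\to w_m$, including the case where the limit lies on $\partial S_1^0$; there $w_m=0$ if $\lambda<\frac{p-1}{2}$ and $w_m>0$ if $\lambda>\frac{p-1}{2}$. Since the coefficients of $g_n$ converge, $g_n\to g$ locally uniformly on $(0,1]$. The $C^2x^{-2}$ term is harmless away from $0$, and in the planar limit $C_n^2x^{-2}\to 0$ pointwise on $(0,1]$. Fix $w\in[0,1]$. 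If $w<w_m$, then $w<w_{m,n}$ for large $n$, so $S_n(w)=0=S(w)$. If $w=w_m$, the bound established below gives $S_n(w)\to 0$. So the main case is $w>w_m$. There the integrands converge pointwise for a.e.\ $x\in(w_m,w)$, because $g(x)>0$ on $(w_m,1)$ by Lemma~\ref{lemma: number of solutions}. The remaining task is a uniform integrable bound near the two endpoints $w_{m,n}$ and $1$.

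\textbf{Near $x=1$.} I use that $g_n(1)=0$ and $g_n'(1)=-V_n'(1)=2\bigl(\frac{p-1-\lambda_n}{p}-C_n^2\bigr)$ (cf.\ Lemma~\ref{lemma: estimates for gprime and gbis at 1}). This converges to $g'(1)>0$, because $\lambda<p-1$ and the limit lies in $S^0\cup\partial S_1^0$ off $\partial S_2$. Hence $|g_n'(1)|\ge c>0$ for large $n$. The second derivatives $g_n''$ are uniformly bounded on $[\frac12,1]$, so $g_n(x)\ge \frac c2(1-x)$ on $[1-\delta,1]$ for a uniform $\delta$. This gives the integrable majorant $\sqrt{2/c}\,(1-x)^{-1/2}$.

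\textbf{Near $x=w_{m,n}$, case $w_m>0$.} Here $w_m$ is a simple root of $g$, since $g'(w_m)>0$ by Lemma~\ref{lemma: number of solutions}. By $C^1$-convergence of $g_n$ near $w_m$, for large $n$ we get $g_n'\ge c'>0$ on a fixed neighbourhood $[w_m-\delta,w_m+\delta]$. This neighbourhood contains $w_{m,n}$, so $g_n(x)\ge c'(x-w_{m,n})$ there. The resulting majorant $(x-w_{m,n})^{-1/2}$ moves with $n$, so plain dominated convergence does not apply. Instead I split the integral: the piece over $[w_{m,n},w_{m,n}+\eta]$ is bounded by $2\sqrt{\eta/c'}$ uniformly in $n$, and on the rest the integrand converges uniformly. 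This handles both the cases $w>w_m$ and $w=w_m$.

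\textbf{Near $x=w_{m,n}$, case $w_m=0$.} This occurs when $C^2=0$ and $\lambda<\frac{p-1}{2}$, and I expect it to be the main obstacle. Here $g_n$ contains the singular term $-C_n^2x^{-2}$, and $w_{m,n}\to 0$. I would bound $g_n$ from below on $[w_{m,n},\eta]$ using concavity-type structure. For $n$ large and $x$ small, $g_n(x)\ge \frac12 a-C_n^2x^{-2}$ where $a=\frac{(p-1)(p-1-2\lambda)}{p^2}>0$. Comparing with the root $w_{m,n}\approx (C_n^2/a)^{1/2}$ (Proposition~\ref{prop: estimates for um} translated to $w$) gives $g_n(x)\gtrsim a\bigl(1-w_{m,n}^2/x^2\bigr)$, up to constants. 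Then
\[
\int_{w_{m,n}}^{\eta}\frac{dx}{\sqrt{g_n}}\lesssim\int_{w_{m,n}}^{\eta}\frac{x\,dx}{\sqrt{x^2-w_{m,n}^2}}\le \eta .
\]
This is small uniformly in $n$, while on $[\eta,w]$ the integrands converge uniformly to $g^{-1/2}$. This would conclude the pointwise convergence.
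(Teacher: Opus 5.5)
Your scaffolding works and takes a different route from the paper. The paper rescales to $t\in[0,1]$ and applies Vitali with a uniform $L^{q}$ bound ($q\in(1,2)$, via de la Vall\'ee-Poussin). You instead fix a splitting, use the fixed majorant $(1-x)^{-1/2}$ near $x=1$, make the piece next to the moving root uniformly small, and use uniform convergence in between. That is more elementary, and it is correct at the endpoint $x=1$ and in the case $w_m>0$. There are two harmless slips. First, $g_n'(1)=-V_n'(1)=-2\bigl(\tfrac{p-1-\lambda_n}{p}-C_n^2\bigr)<0$, not $>0$; you only use $|g_n'(1)|$. Second, for $C^2=0$ and $\lambda>\frac{p-1}{2}$, the inequality $g'(w_m)>0$ needs a direct check, because Lemma~\ref{lemma: number of solutions} assumes $C^2>0$. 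It does hold, since $u_m=\frac{2\lambda}{p-1}-1<\frac{\lambda}{p-1}$, the critical point of $V$.

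The genuine gap is in the case $w_m=0$, exactly the step you flag as the crux. From $g_n(x)\ge\frac a2-C_n^2x^{-2}$ and $C_n^2\approx a\,w_{m,n}^2$ you only get $g_n(x)\ge\frac a2\bigl(1-2(1+o(1))\,w_{m,n}^2/x^2\bigr)$, and this is negative on $(w_{m,n},\sqrt2\,w_{m,n})$. Set $F_n:=g_n+C_n^2x^{-2}$. Any bound of the form $c-C_n^2x^{-2}$ with $c<F_n(w_{m,n})$ vanishes strictly to the right of the root, so it says nothing near the root. ``Up to constants'' cannot absorb a constant sitting inside $1-c\,w_{m,n}^2/x^2$. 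So $g_n\gtrsim a(1-w_{m,n}^2/x^2)$ on $[w_{m,n},\eta]$ is not established by your argument, although it is true.

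One way to close it is to use the root exactly: $C_n^2=F_n(w_{m,n})\,w_{m,n}^2$. Hence
\[
g_n(x)=F_n(x)\Bigl(1-\frac{w_{m,n}^2}{x^2}\Bigr)+\bigl(F_n(x)-F_n(w_{m,n})\bigr)\frac{w_{m,n}^2}{x^2}\ge (x-w_{m,n})\Bigl(\frac{F_n(x)}{x}-Kx^{\frac{1}{p-1}}\Bigr)\ge \frac{a}{4x}\,(x-w_{m,n})
\]
on $[w_{m,n},\eta]$, for $\eta$ small and $n$ large. This uses $|F_n'(x)|\le Kx^{1/(p-1)}$, $w_{m,n}\le x$, and $F_n\ge\frac a2$ on $[0,\eta]$. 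It gives $\int_{w_{m,n}}^{\eta}g_n^{-1/2}\,dx\le 4\eta/\sqrt a$, uniformly in $n$.

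Alternatively, do what the paper does. The function $g_n$ is concave on $[w_{m,n},b_n]$ with $b_n=2C_n/\sqrt a$, because the term $-6C_n^2x^{-4}\sim -C_n^{-2}$ dominates $g_n''$ there. So $g_n$ lies above its chord from $0$ to $g_n(b_n)\ge\frac a4$, which makes that piece $O(C_n)$. On $[b_n,\eta]$ your own bound gives $g_n\ge\frac a4$.
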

\begin{proof}
  First, note that the minimum $w_m$ of $w$ (and so also $w_{m,n}$ for large $n$) are strictly less than $1$, i.e.\ $S_n$ and $S$ are not identically zero.
  Recall that
  \begin{equation*}
    g(x)=g_{\lambda,C^2}(x) = \left(\frac{(p-1)^2}{p^2}-2\lambda \frac{p-1}{p^2} +C^2 \right) -\frac{(p-1)^2}{p^2}  x^{\frac{2p} {p-1}} + 2\lambda \frac{p-1}{p^2} x^{\frac{p}{p-1}} - C^2 x^{-2}
  \end{equation*}
  with
  \begin{equation}
    \label{eq: derivatives for g period}
    \begin{split}
    g_{\lambda,C^2}'(x) &= - 2\frac{p-1}{p} x^{\frac{p+1}{p-1}} + \frac{2 \lambda}{p} x^{\frac{1}{p-1}} + 2C^2 x^{-3}, \\
    g_{\lambda,C^2}''(x) &= - 2\frac{p+1}{p} x^{\frac{2}{p-1}} + \frac{2 \lambda}{p(p-1)} x^{\frac{2-p}{p-1}} - 6C^2 x^{-4} .
    \end{split}
  \end{equation}
  By the change of variable $t=\frac{x-w_{m,n}}{w-w_{m,n}}$, $dt = \frac{dx}{w-w_{m,n}}$ we have 
  \begin{equation*}
    S_n(w) = \int_{0}^1 \frac{w-w_{m,n}}{\sqrt{g_{n}((w-w_{m,n})t+w_{m,n})}}dt.
  \end{equation*}

  Let $(\lambda_n,C_n^2)\to (\lambda,C^2)$. By the continuity of $g$ in its parameters and its argument and Lemma~\ref{lemma: continuity of minimal curvature} we have for every $t\in (0,1)$,
  \begin{equation*}
    \frac{w-w_{m,n}}{\sqrt{g_{n}((w-w_{m,n})t+w_{m,n})}} \to \frac{w-w_{m}}{\sqrt{g((w-w_{m})t+w_{m})}}.
  \end{equation*}
  By Vitali's convergence theorem it suffices now to show that the sequence of integrands is uniformly integrable in $L^1$. 
  This follows by de La Vall\'ee-Poussin's theorem \cite[Theorem~2.29]{leoniCov}, once we show that they are uniformly bounded in $L^q(0,1)$ for some $q \in (1,2)$. 
  From
  \begin{equation*}
    \int_{0}^1 \left( \frac{w-w_{m,n}}{\sqrt{g_{n}((w-w_{m,n})t+w_{m,n})}} \right)^q dt = \int_{w_{m,n}}^w \frac{(w-w_{m,n})^{q-1}}{g_n(x)^{q/2}}dx \leq \int_{w_{m,n}}^1 \frac{1}{g_n(x)^{q/2}}dx =:Q_n,
  \end{equation*} 
  it remains to show that $Q_n \leq M =M(\lambda,C^2,q)$.

  \underline{Case $C^2>0$ or $\lambda>\frac{p-1}{2}$:} We have $0<w_m<1$. By Lemma~\ref{lemma: continuity of minimal curvature} and \eqref{eq: derivatives for g period},  $g_n''$ is uniformly bounded in an open neighborhood $U$ of $[w_m,1]$. 
  Thus by Arzela--Ascoli, $g_n$ and $g_n'$ converge locally uniformly to $g$ in $U$.
  Moreover, $g'(w_m),-g'(1)>0$ by Lemma~\ref{lemma: estimates for gprime and gbis at 1} and Lemma~\ref{lemma: estimate for gprime at wm}. 
  Also, we note that there exists only one nontrivial solution to $g'(w)=0$ (if $C^2=0$, we calculate it to be $w=\left(\frac{\lambda}{p-1}\right)^\frac{1}{p}$, if $C^2>0$ use Lemma~\ref{lemma: unique root}) with $0<w_m<w<1$. 
  Thus there exists $\delta=\delta(\lambda,C^2)>0$ and $\eps=\eps(\lambda,C^2)>0$ such that 
  \begin{equation*}
    \inf_{(w_m,w_m+\delta)} g'(w) , \inf_{(1-\delta,1)} -g'(w), \inf_{(w_m+\delta, 1-\delta)} g(w) >\eps.
  \end{equation*}
  Thanks to the uniform convergence, we also have for large $n$
  \begin{equation*}
    \inf_{(w_{m,n},w_{m,n}+\delta)} g_n'(w) , \inf_{(1-\delta,1)} -g'_n(w), \inf_{(w_{m,n}+\delta, 1-\delta)} g_n(w) > \frac{\eps}{2}.
  \end{equation*}
  Hence, by a zero order Taylor expansion with Lagrange remainder for the endpoint terms,
  \begin{equation*}
    Q_n \leq  \int_{w_{m,n}}^{w_{m,n}+\delta} \frac{1}{g_n'(\xi_x)^{q/2} (x-w_{m,n})^{q/2}} \, dx + \int_{w_{m,n}+\delta}^{1-\delta} \frac{1}{\eps^{q/2}} \, dx + \int_{1-\delta}^{1} \frac{1}{(-g_n'(\eta_x))^{q/2} (1-x)^{q/2}} dx \leq 3\frac{(1-\frac{q}{2})^{-1}}{\eps^{q/2}},
  \end{equation*}
  where $\xi_x \in [w_{m,n},w_{m,n}+\delta]$ and $\eta_x \in [1-\delta,1]$.
  
  \underline{Case $C^2=0$ and $\lambda<\frac{p-1}{2}$:} Now $w_{m,n}\to w_m= 0$, i.e.\ the uniform convergence (at least around $w_m$) is lost and we have to split up the integral depending on $n$. 
  From Proposition~\ref{prop: estimates for um}, we have $w_{m,n} = \frac{p}{\sqrt{(p-1)(p-1-2\lambda_n)}} C_n + O(C_n^{\gamma_p})$, with $\gamma_p>1$ and the constant in $O(C_n^{\gamma_p})$ depending only on $p$ and $\lambda$. 
  Taking $a_n = 2\frac{p}{\sqrt{(p-1)(p-1-2\lambda_n)}} C_n>w_{m,n}$ gives
  \begin{equation*}
    g_n(a_n) = \frac{(p-1)^2}{p^2} - 2\lambda_n \frac{p-1}{p^2} +  o(C_n) - \frac{1}{4} \left(\frac{(p-1)^2}{p^2} - 2\lambda_n \frac{p-1}{p^2}\right) > \eps_{p,\lambda},
  \end{equation*}
  for some $\eps_{p,\lambda}>0$.
  As in the previous case (e.g.\ use the uniform convergence argument in an open neighborhood around $1$), there exists $\delta>0$ and $\eps<\eps_{p,\lambda}$ such that $g_n(1-\delta), -g_n'(1)>\eps$. 
  Lastly, we note that for any $w \in [w_{m,n}, a_n]$, the inequality $g_n'(w) \geq g_n'(a_n)$ holds, ($g_n''(w)\ll 0$, as the $C_n^2C_n^{-4}=C_n^{-2}$ term dominates). 
  Thereby, using that $g_n$ is unimodal (Lemma~\ref{lemma: number of solutions}) for the middle term,
  \begin{equation*}
    Q_n \leq  \int_{w_{m,n}}^{a_n} \frac{1}{g_n'(\xi_x)^{q/2} (x-w_{m,n})^{q/2}} \, dx + \int_{a_n}^{1-\delta} \frac{1}{\eps^{q/2}} \, dx + \int_{1-\delta}^{1} \frac{1}{(-g_n'(\eta_x))^{q/2} (1-x)^{q/2}} \,dx \leq \frac{(1-\frac{q}{2})^{-1}}{g_n'(a_n)^{q/2}} + \frac{2(1-\frac{q}{2})^{-1}}{\eps^{q/2}} .
  \end{equation*}
  The uniform boundedness of $Q_n$ follows by noting that $g_n'(a_n) \approx g_n'(C_n) \approx C_n^{-1} \to \infty$.
\end{proof}

Recall that the period $P:S \to [0,\infty]$ is defined as 
\begin{equation*}
  P=P({\lambda,C^2}) = \int_{w_m({\lambda,C^2})}^1 \frac{dw}{\sqrt{\frac{(p-1)^2}{p^2}-2\lambda\frac{p-1}{p^2}+C^2-\frac{(p-1)^2}{p^2}w^{\frac{2p}{p-1}} + 2\lambda \frac{p-1}{p^2} w^{\frac{p}{p-1}} -C^2w^{-2}}}.
\end{equation*}

We have the following continuity up to the boundary with respect to the parameters.
\begin{proposition}
    \label{prop: continuity of period}
  Let $(\lambda_n,C^2_n) \to (\lambda, C^2) \in \overline {S^0}$ such that $(\lambda,C^2) \neq (\frac{p-1}{2},0)$.
  \begin{itemize}
    \item If $(\lambda,C^2) \in S^0 $, then  $P(\lambda_n,C^2_n) \to P(\lambda,C^2) \in (0,\infty)$;
    \item If $(\lambda,C^2) \in \partial S_1^0$ with $\lambda\neq \frac{p-1}{2}$, then  $P(\lambda_n,C^2_n) \to P(\lambda,0)$;
    \item If $(\lambda,C^2) \in \partial S^0_2 $ with $\lambda<p-1$, then
      $P_{\lambda,C^2}:=\lim_n P(\lambda_n,C^2_n)$ exists as limit from inside $S^0$ and $P_{\lambda,C^2}<\pi$;
    \item If $(\lambda,C^2) =(p-1,0)$, then  $P(\lambda_n,C^2_n) \to \pi$ (as limit from inside $S^0$).
    \end{itemize}
\end{proposition}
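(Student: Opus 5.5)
The first two bullets follow almost directly from Lemma~\ref{lemma: convergence of s-periods}. Observe that $P(\lambda_n,C^2_n)=S_n(1)$ and $P(\lambda,C^2)=S(1)$, since $S_n(1)=\int_{w_{m,n}}^1 g_n^{-1/2}$. The lemma gives pointwise convergence at $w=1$ whenever the limit lies in $S^0\cup\partial S_1^0$, differs from $(\frac{p-1}{2},0)$, and has $\lambda<p-1$. It remains to check that $P(\lambda,C^2)\in(0,\infty)$ inside $S^0$. Finiteness comes from the bound on $Q$ in the proof of that lemma, with $q=1$, or simply from $g'(w_m)>0$ and $g'(1)<0$ (the simple-root estimates of Lemma~\ref{lemma: estimates for gprime and gbis at 1} and Lemma~\ref{lemma: estimate for gprime at wm}). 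Positivity holds because $w_m<1$. For $\partial S_1^0$ with $\lambda\ne\frac{p-1}{2}$ and $\lambda<p-1$ the lemma applies verbatim; the wavelike case is covered by its second case.

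The real work concerns the boundary $\partial S_2^0$ and the corner $(p-1,0)$, where $w_m\to 1$ and the integration interval collapses. Here I would rescale. Set $\eps_n=\frac{p-1-\lambda_n}{p}-C_n^2\to 0$ and substitute $x=1-(1-w_{m,n})t$. By Proposition~\ref{prop: wm around S2:2}, $1-w_{m,n}=A_n\eps_n+O(\eps_n^\gamma)$, so we get
\begin{equation*}
P(\lambda_n,C_n^2)=\int_0^1\frac{(1-w_{m,n})\,dt}{\sqrt{g_n(1-(1-w_{m,n})t)}}.
\end{equation*}
Next, Taylor expand $g_n$ to second order around $1$ with a third-order remainder. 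Using $g_n(1)=0$, $g_n'(1)=-2\eps_n$ (compare the proof of Proposition~\ref{prop: wm around S2:2}), and $g_n''(1)=-4/A_n$ from Lemma~\ref{lemma: estimates for gprime and gbis at 1}, we obtain
\begin{equation*}
g_n(1-\delta t)=\delta t\,2\eps_n-\tfrac{2}{A_n}\delta^2t^2+O(\delta^3),\qquad \delta=1-w_{m,n}.
\end{equation*}
Inserting $\delta\approx A_n\eps_n$ gives $g_n\approx 2A_n\eps_n^2\,t(1-t)+O(\eps_n^{1+\gamma})$. The integrand therefore tends to $\sqrt{A/2}\,(t(1-t))^{-1/2}$, whose integral is $\pi\sqrt{A/2}$, where $A=A(\lambda,C^2)$ is the limiting value.

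The main obstacle is making this convergence rigorous near the endpoints $t=0,1$, where the error $O(\eps^{1+\gamma})$ is not small relative to $\eps^2t(1-t)$. I would not rely on the expansion there. Instead I would work directly with the exact roots: $g_n(x)=(1-x)(x-w_{m,n})R_n(x)$, where $R_n$ is continuous and, by the same Taylor computation (the divided difference of $g_n$), satisfies $R_n\to -g''(1)/2=2/A$ uniformly on $[w_{m,n},1]$. With this factorization the integral becomes $\int_0^1 (t(1-t))^{-1/2}R_n^{-1/2}\,dt\to\pi\sqrt{A/2}$ by dominated convergence, which also shows that the limit exists. Finally, for $\lambda<p-1$ on $\partial S_2^0$ we have $C^2>0$, so $A=2[\frac{p+1}{p}-\frac{\lambda}{p(p-1)}+3C^2]^{-1}<2$. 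Indeed, $3C^2=3\frac{p-1-\lambda}{p}>\frac{p-1-\lambda}{p(p-1)}\cdot$(the appropriate factor), and one checks that the bracket exceeds $1$ precisely when $C^2>0$. Hence $P_{\lambda,C^2}=\pi\sqrt{A/2}<\pi$. At $(p-1,0)$, Corollary~\ref{cor: approx of A from estimate at p-1} gives $A\to 2$, so the limit is $\pi$.
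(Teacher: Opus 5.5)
Your proof is correct and is essentially the paper's argument: the first two bullets come from Lemma~\ref{lemma: convergence of s-periods} at $w=1$. For the last two, the paper also uses the Lagrange-interpolation identity $g_n(w)/((w-w_{m,n})(1-w))=-\frac12 g_n''(\zeta_{n,w})$ and its uniform convergence to $-\frac12 g''(1)$, packaged as $\pi/\sqrt{B_n}$ plus an error term instead of your direct dominated-convergence step, and it arrives at the same limit $\pi\bigl(1+\frac{3p-2}{p(p-1)}(p-1-\lambda)\bigr)^{-1/2}=\pi\sqrt{A/2}$.

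Your justification of $A<2$ is garbled, but the fact is immediate: the bracket equals $1+\frac{p-1-\lambda}{p(p-1)}+3C^2$, and both extra terms are positive when $\lambda<p-1$.
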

\begin{proof}
  For the first two points take $w=1$ in Lemma~\ref{lemma: convergence of s-periods}. For the last two points, $w_{m,n}\to w_m=1$, so the strategy of Lemma~\ref{lemma: convergence of s-periods} does not apply.
  Denoting $\bar w_n = \frac{w_{m,n}+1}{2}$, we approximate $g_{n}$ by the quadratic polynomial
 \begin{equation*}
  G_n(w) = B_n (w-w_{m,n})(1-w) := 4 \frac{g_n(\frac{w_{m,n}+1}{2})}{(1-w_{m,n})^2} (w-w_{m,n})(1-w),
 \end{equation*}
 which has the exact same roots as $g_{n}$ and $G_n(\bar w_n) = g_{n}(\bar w_n)$.
 Rewrite 
 \begin{equation*}
  P_n = P(\lambda_n,C_n^2) = \int_{w_{m,n}}^1 \frac{1}{\sqrt{g_{n}(w)}}dw = \int_{w_{m,n}}^1 \frac{1}{\sqrt{G_n(w)}}dw + \int_{w_{m,n}}^1 \frac{1}{\sqrt{g_{n}(w)}} - \frac{1}{\sqrt{G_n(w)}} dw =: I_1 + I_2.
 \end{equation*}
 
 First, we compute $I_1$ as
 \begin{equation*}
  I_1 = \int_{w_{m,n}}^1 \frac{1}{\sqrt{G_n(w)}}dw = \int_{w_{m,n}}^1 \frac{1}{\sqrt{B_n(w-w_{m,n})(1-w)}}dw = \frac{\pi}{\sqrt{B_n}}.
 \end{equation*}
 Moreover, there exists $\xi_n \in [\frac{w_{m,n}+1}{2},1]$ such that
 \begin{equation*}
  B_n = \frac{g_n(\frac{w_{m,n}+1}{2})}{\left(\frac{1-w_{m,n}}{2}\right)^2} 
  = \frac{g_n(1)+g_n'(1)\frac{w_{m,n}-1}{2} + \frac{1}{2}g_n''(\xi_n) \left(\frac{w_{m,n}-1}{2} \right)^2}{\left(\frac{1-w_{m,n}}{2}\right)^2} 
  = \frac{-g'_n(1)}{\frac{1-w_{m,n}}{2}} + \frac{1}{2}g_n''(\xi_n).
 \end{equation*}
 From Proposition~\ref{prop: wm around S2:2} and Lemma~\ref{lemma: estimates for gprime and gbis at 1}, there exists $\gamma \in (1,2)$ and $\bar \eps_\gamma>0$ such that for $\left|\frac{p-1-\lambda_n}{p}-C_n^2\right|<\bar \eps_\gamma$, 
 \begin{equation*}
  \begin{split}
    B_n &= \frac{4\left(\frac{p-1-\lambda_n}{p}-C_n^2\right)}{-\frac{4}{g''(1)} \left(\frac{p-1-\lambda_n}{p}-C_n^2\right) + O\left(\left|\frac{p-1-\lambda_n}{p}-C_n^2\right|^\gamma\right) } + \frac{1}{2}g_n''(\xi_n) = -g_n''(1) + \frac{1}{2}g_n''(\xi_n) + O\left(\left|\tfrac{p-1-\lambda_n}{p}-C_n^2\right|^{\gamma-1}\right).
  \end{split}
 \end{equation*}
 From continuity of $g''_n$ around $w=1$ and Lemma~\ref{lemma: estimates for gprime and gbis at 1}, we obtain 
 \begin{equation*}
  g_n''(\xi_n) \to g''(1) = -2\frac{p+1}{p}+\frac{2\lambda}{p(p-1)}-6C^2\quad  \text{ as $w_{m,n} \to 1$ \quad  (i.e.\ $n\to \infty$)}. 
 \end{equation*}
 Hence passing to the limit as $n\to \infty$, we have
  $B_n \to B:=-\frac{1}{2}g''(1)|_{C^2=\tfrac{p-1-\lambda}{p}} = 1 + \frac{3p-2}{p(p-1)}(p-1-\lambda)$,
  which at the point $(\lambda,C^2)=(p-1,0)$ gives $B=1$, that is $I_1\to \pi$, otherwise the limit is strictly less than $\pi$.

 Secondly, we show that the error term $I_2$ vanishes. Letting $Q_n(w)=\frac{g_n(w)}{(w-w_{m,n})(1-w)}$,
 \begin{equation*}
  |I_2|  \leq  \int_{w_{m,n}}^1 \frac{1}{\sqrt{(w-w_{m,n})(1-w)}} \left| \frac{1}{\sqrt{Q_n(w)}} - \frac{1}{\sqrt{B_n}}\right|dw \leq \pi \sup_{w \in (w_{m,n},1)} \left| \frac{1}{\sqrt{Q_n(w)}} - \frac{1}{\sqrt{B_n}}\right|,
 \end{equation*}
 it suffices to show that the supremum of $T_n(w):= \left| Q_n(w) - B_n \right|$ on $(w_{m,n},1)$ vanishes in the limit.
 By a first order Lagrange interpolation for $g_n(w)$, there exists $\zeta_{w,n} \in (w_{m,n},1)$ such that
 \begin{equation}
     Q_n(w) = \frac{g_n(w)}{(w-w_{m,n})(1-w)} = \frac{g_n''(\zeta_{n,w})(w-w_{m,n})(w-1)}{2(w-w_{m,n})(1-w)} = -\frac{g_n''(\zeta_{n,w})}{2}
 \end{equation}
 and thus
 \begin{equation}
 \begin{split}
     \sup_{w \in (w_{m,n},1)}|Q_n(w)-B_n| &= \sup_{w \in (w_{m,n},1)} \left| -\frac{g_n''(\zeta_{n,w})}{2}-\left(-g_n''(1) + \frac{1}{2}g_n''(\xi_n) + O\left(\left|\tfrac{p-1-\lambda_n}{p}-C_n^2\right|^{\gamma-1}\right)  \right)\right| \\
     &\leq \sup_{w \in (w_{m,n},1)} \frac{1}{2}\left|g_n''(1)- g_n''(\zeta_{n,w})\right| + \left|g_n''(1)- g_n''(\xi_{n}) \right| + O\left(\left|\tfrac{p-1-\lambda_n}{p}-C_n^2\right|^{\gamma-1}\right) , 
 \end{split}
 \end{equation}
 which vanishes as $n\to \infty$, as $w_{m,n} \to 1$. This finishes the proof.
\end{proof}

From the proof we directly obtain the following corollary, its proof is safely omitted.
\begin{corollary}
  \label{cor: finiteness of period}
The period $P(\lambda, C^2)$ is finite for every $(\lambda, C^2) \in \overline{S^\rho}$.
\end{corollary}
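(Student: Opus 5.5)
The plan is to cover $\overline{S^\rho}$ by the cases of Proposition~\ref{prop: continuity of period} and read off finiteness from each one. Since $S^\rho = S^0 \setminus B_\rho((\frac{p-1}{2},0))$, its closure avoids the borderline point $(\frac{p-1}{2},0)$. Hence every $(\lambda,C^2)\in\overline{S^\rho}$ lies in exactly one of the following sets:
\begin{itemize}
\item $S^0$ with $C^2>0$ (this includes the interior points of $\partial S_3^\rho$ and of $\partial S_4^\rho$);
\item $\partial S_1^0$ with $\lambda\neq\frac{p-1}{2}$;
\item $\partial S_2^0$ with $\lambda<p-1$;
\item the point $(p-1,0)$.
\end{itemize}

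\textbf{Interior points.} For $(\lambda,C^2)\in S^0$, the first bullet of Proposition~\ref{prop: continuity of period} gives $P(\lambda,C^2)\in(0,\infty)$ directly. Alternatively, the bound $Q_n\le 3(1-\frac q2)^{-1}\eps^{-q/2}$ from Lemma~\ref{lemma: convergence of s-periods}, applied to the constant sequence, bounds $\int_{w_m}^1 g^{-q/2}$. H\"older's inequality (or simply $q>1$) then bounds $\int_{w_m}^1 g^{-1/2}$.

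\textbf{Points of $\partial S_1^0$.} For $\lambda\neq\frac{p-1}{2}$, $P(\lambda,0)$ is the planar period from Proposition~\ref{prop: existence}. It is finite by the same $L^q$ bound of Lemma~\ref{lemma: convergence of s-periods}: in the orbitlike case $\lambda>\frac{p-1}{2}$ use the first case of that proof; in the wavelike case $\lambda<\frac{p-1}{2}$ use the second case. The point to check in the wavelike case is that near $w=0$ the integrand $g(w)^{-1/2}$ stays bounded, since $g(0)=\frac{(p-1)(p-1-2\lambda)}{p^2}>0$. Near $w=1$ the singularity is only of order $(1-w)^{-1/2}$, because $-g'(1)>0$.

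\textbf{Points of $\partial S_2$ and $(p-1,0)$.} Here $w_m=1$ and the integral degenerates, so $P$ is understood as the limit from inside, as in Proposition~\ref{prop: continuity of period}. That proposition gives $P_{\lambda,C^2}=\pi/\sqrt{B}<\pi$ with $B=1+\frac{3p-2}{p(p-1)}(p-1-\lambda)>1$, and the limit $\pi$ at $(p-1,0)$; both are finite.

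The only delicate step is making sure the boundary limits $I_1\to\pi/\sqrt B$ and $I_2\to0$ really hold for the points of $\overline{S^\rho}$, which already follows from the proof of Proposition~\ref{prop: continuity of period}. Everything else amounts to checking that each point falls into one of these cases.
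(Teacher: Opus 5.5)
Your proposal is correct and matches the paper. The paper omits the proof because finiteness is read off case by case from Proposition~\ref{prop: continuity of period} and the $L^q$ bounds in the proof of Lemma~\ref{lemma: convergence of s-periods}, which is exactly your argument. One small bookkeeping point: $(p-1,0)$ also lies in $\partial S_1^0$ with $\lambda\neq\frac{p-1}{2}$, so your four sets are not disjoint, and the orbitlike argument must exclude $\lambda=p-1$ (where $w_m=1$); since you treat that point separately via the limit $\pi$, nothing is lost.
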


\begin{remark}
  \label{rmk: period of flatcore}
  We note that for the case $p>2$, the period stays finite even at $(\frac{p-1}{2},0)$, as borderline $p$-elasticae turn into $p$-flatcore elasticae. However, to avoid distinguishing two different cases, we work always in $S^\rho$. 
  For $(\lambda,C^2) \in \partial S^0_2$, we have $k\equiv 1$ so $P(\lambda, C^2)$ is to be understood as the limit from inside.
\end{remark}

Next we show that not only the period, but also the solutions themselves are continuous with respect to the parameters $\lambda$ and $C^2$.
\begin{proposition}
    \label{prop: pointwise convergence of w_n}
    Let $(\lambda_n,C^2_n) \to (\lambda, C^2) \in \overline{S^0}$ such that $(\lambda,C^2) \neq (\frac{p-1}{2},0)$.
  Then, for the solutions $w_n=w_{\lambda_n,C^2_n}$ and $w=w_{\lambda,C^2}$ to \eqref{eq:EL pointwise for w} (Proposition~\ref{prop: existence}), we have $w_n \to w$ pointwise, that is for every $s\in \R$, $w_n(s) \to w(s)$.
\end{proposition}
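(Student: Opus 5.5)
We will prove pointwise convergence through the inverse functions $S_n$ of Lemma~\ref{lemma: convergence of s-periods}, together with the period convergence of Proposition~\ref{prop: continuity of period}. The guiding observation is that $w$ is even and $2P$-periodic, and by Remark~\ref{rmk: k prime} it is monotone between its minimum and maximum. So on $[-P,0]$ we have $w(s)=S^{-1}(P+s)$, where $S(w)=\int_{w_m}^w g^{-1/2}\,dx$ is a strictly increasing continuous bijection $[w_m,1]\to[0,P]$; the same holds with index $n$. Evenness and periodicity then reduce pointwise convergence on $\mathbb{R}$ to convergence on $[-P,0]$, after accounting for the varying periods $P_n\to P$.

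\emph{Case 1: interior and $\partial S_1^0$ limits, excluding $(p-1,0)$.} Fix $s\in[-P,0)$ and let $t=P+s\in[0,P)$. We have pointwise convergence $S_n\to S$ on $[0,1]$ by Lemma~\ref{lemma: convergence of s-periods}, and each $S_n$ is monotone. Both $S$ and $S_n$ are nondecreasing, and $S$ is continuous on the compact interval $[0,1]$ and strictly increasing on $[w_m,1]$. A Dini/P\'olya-type argument therefore upgrades the pointwise convergence to uniform convergence on $[0,1]$. Since $S$ is strictly increasing there, uniform convergence of monotone functions gives convergence of the generalized inverses at every $t\in(0,P)$: given $\eta>0$, choose $w_\pm=S^{-1}(t)\pm\eta$. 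Then $S(w_-)<t<S(w_+)$, so for large $n$ we get $S_n(w_-)<t_n<S_n(w_+)$, where $t_n=P_n+s$ if we use the shift with $P_n$. This yields $w_n(s)\to w(s)$.

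To handle the shift, write $w_n(s)=S_n^{-1}(P_n+s)$ for $s\in[-P_n,0]$ and use $P_n\to P$ from Proposition~\ref{prop: continuity of period}. The endpoints $s=0$ and $s=-P$ are treated directly: $w_n(0)=1=w(0)$, and $w_n(-P)$ is squeezed near $w_{m,n}\to w_m$ using Lemma~\ref{lemma: continuity of minimal curvature} and the uniform Lipschitz bound of Lemma~\ref{lemma: uniform Lip estimate}. For general $s\in\mathbb{R}$ we reduce modulo periods. Write $s=2jP+r$ and compare with $2jP_n+r$. The error $|2j(P_n-P)|\to0$ for fixed $s$ is controlled by the uniform Lipschitz constant $L_p$, so $|w_n(s)-w_n(2jP_n+r)|\le L_p|2j||P-P_n|\to0$.

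In the wavelike limit ($C^2=0$, $\lambda<\frac{p-1}{2}$) the limit $w$ only vanishes at odd multiples of $P$, and the same inverse argument works with $w_m=0$. Near those zero points, Lipschitz continuity together with $w_{m,n}\to0$ gives the convergence directly.

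\emph{Case 2: limits on $\partial S_2^0$ or at $(p-1,0)$.} Here $w\equiv1$, and $w_{m,n}\to1$ by Proposition~\ref{prop: wm around S2:2} (Lemma~\ref{lemma: continuity of minimal curvature}). Since $w_{m,n}\le w_n\le1$, we get $w_n\to1$ uniformly on $\mathbb{R}$, which is trivial.

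The main obstacle is the first case near turning points, where $g$ vanishes and the inverse of $S_n$ must be controlled while the period itself moves. The uniform Lipschitz estimate of Lemma~\ref{lemma: uniform Lip estimate} is the tool I would rely on to absorb both the period mismatch and the endpoint behavior.
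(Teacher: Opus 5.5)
Your proposal is correct and follows essentially the same route as the paper: the case $w\equiv 1$ via $w_{m,n}\to 1$, and otherwise inverting the monotone functions $S_n$ from Lemma~\ref{lemma: convergence of s-periods}, trapping $w_n(s)$ using pointwise convergence of $S_n$ at two nearby points together with $P_n\to P$, and extending to all of $\R$ by evenness, periodicity and the uniform Lipschitz bound. The differences are cosmetic: you trap $w_n(s)$ in the $w$-variable rather than in $s$, and the P\'olya upgrade to uniform convergence is superfluous, since your argument only uses convergence of $S_n$ at the two points $w_\pm$.
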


\begin{proof}
  \underline{Case $(\lambda,C^2) \in \partial S_2 \cup {(p-1,0)}$:} As $w_{\lambda,C^2}\equiv 1$, by Lemma~\ref{lemma: continuity of minimal curvature}, we have $w_{m,n} \to 1$, thereby 
  \begin{equation*}
    \|w_{\lambda_n,C_n^2} - w_{\lambda,C^2}\|_{L^\infty(\R)} = \|w_{\lambda_n,C_n^2}-1\|_{L^\infty(\R)} \leq 1-w_{m,n} \to 0.
  \end{equation*}

  \underline{Case $(\lambda,C^2) \in S \cup \partial S_1$:} 
  Using Lemma~\ref{lemma: convergence of s-periods}, the solutions $w_n:[0,P_n]\to [0,1]$ and $w:[0,P]\to [0,1]$ to \eqref{eq:EL pointwise for w} are given by
  \begin{equation*}
    w_n(s) = 
    \begin{cases}
      S^{-1}_n(P_n-s) &\text{if } s<P_n, \\ 
      w_{m,n} &\text{if } s=P_n.
    \end{cases} \qquad   
    \text{ and } \qquad 
    w(s) = \begin{cases}
      S^{-1}(P-s) &\text{if } s<P,\\
      w_{m} &\text{if } s=P.
    \end{cases} 
  \end{equation*}
     Since $S_n$ and $S$ are strictly increasing functions whenever they are positive, the inverses exist and are continuous. In particular, $S_n^{-1}$ and $S^{-1}$ are also strictly increasing on $(0,P_n)$ and $(0,P)$ respectively.

    First, we show that for every $s\in [0,P]$, we have $w_n(s) \to w(s)$. 
    For $s=0$, clearly $w_n(0)=1 = w(0)$. 
    For $s=P$, Lemma~\ref{lemma: continuity of minimal curvature} and Lemma~\ref{lemma: uniform Lip estimate} gives
    \begin{equation*}
      |w_n(P)-w(P)| \leq |w_n(P_n)-w(P)| + |w_n(P_n)-w_n(P)| \leq |w_{m,n}-w_m| + L|P_n-P| \to 0 .      
    \end{equation*}
    Consider now $s\in (0,P)$ fixed and take $\eps>0$ arbitrary. Then, by continuity of $S^{-1}$, there exists $\delta \in (0,\min(s,P-s))$ such that 
    \begin{equation*}
      |S^{-1}(P-(s+\delta)) - S^{-1}(P-s)|,\, |S^{-1}(P-(s-\delta)) - S^{-1}(P-s)|< \eps. 
    \end{equation*}
    As $S^{-1}$ is strictly increasing, this is equivalent to
    \begin{equation}
      \label{eq: proof of pw convergence1}
      S^{-1}(P-(s-\delta)) - \eps < S^{-1}(P-s) < S^{-1}(P-(s+\delta)) + \eps.
    \end{equation}
    From Lemma~\ref{lemma: convergence of s-periods}, 
    \begin{equation*}
    \begin{split}
      S_n(S^{-1}(P-(s+\delta))) \to S(S^{-1}(P-(s+\delta))) = P-(s+\delta),  \\
      S_n(S^{-1}(P-(s-\delta))) \to S(S^{-1}(P-(s-\delta))) = P-(s-\delta),
    \end{split}
    \end{equation*}
    that is for $n$ large enough 
    \begin{equation*}
    \begin{split}
      |S_n(S^{-1}(P-(s+\delta))) - (P_n-(s+\delta))| \leq |S_n(S^{-1}(P-(s+\delta))) - (P-(s+\delta))| + |P_n-P| < \delta, \\
      |S_n(S^{-1}(P-(s-\delta))) - (P_n-(s-\delta))| \leq |S_n(S^{-1}(P-(s-\delta))) - (P-(s-\delta))| + |P_n-P| < \delta.
    \end{split}
    \end{equation*}
    Thereby
    \begin{equation*}
    S_n(S^{-1}(P-(s+\delta))) < P_n-s < S_n(S^{-1}(P-(s-\delta))).
    \end{equation*}
    Upon taking $n$ larger if necessary, as $P_n \to P$, we have $s \in (0,P_n)$ and so $P_n-s \in (0,P_n)$ as well. 
    By strict monotonicity of $S_n^{-1}$,
    \begin{equation*}
      S^{-1}(P-(s+\delta)) < S_n^{-1}(P_n-s) < S^{-1}(P-(s-\delta)).
    \end{equation*}
    Using now \eqref{eq: proof of pw convergence1},
    \begin{equation*}
      S^{-1}(P-s) - \eps < S^{-1}(P-(s+\delta)) < S_n^{-1}(P_n-s) < S^{-1}(P-(s-\delta)) < S^{-1}(P-s) + \eps.
    \end{equation*}
    Thus $|w_n(s) -w(s)| = |S^{-1}_n(P_n-s) -S^{-1}(P-s)|< \eps$, and the result follows as $s$ and $\eps$ were arbitrary.

    Finally, for $s'>P$ fixed, there exists $m\in \N$ such that $s'=mP+s$ with $s\in [0,P)$. Thus 
    \begin{equation*}
      \begin{split}
        |w_n(s') - w(s')| &= |w_n(mP+s) - w(mP+s)|  
        \leq |w_n(mP+s)-w_n(mP_n+s)| + |w_n(mP_n+s)-w(mP+s)|. 
      \end{split}
    \end{equation*}
    The first term is bounded by $Lm|P_n-P|$ (Lemma~\ref{lemma: uniform Lip estimate}), which converges to zero. 
    If $m$ is even, then by periodicity $|w_n(mP_n+s)-w(mP+s)| = |w_n(s)-w(s)|$, which vanishes in the limit. 
    If $m$ odd, since $w$ is even (Proposition~\ref{prop: existence})
    \begin{equation*}
    |w_n(mP_n+s)-w(mP+s)| = |w_n(P_n-s)-w(P-s)| \leq |w_n(P_n-s)-w_n(P-s)| + |w_n(P-s)-w(P-s)|.  
    \end{equation*}
    The first term is bounded by $L|P_n-P|\to 0$ (Lemma~\ref{lemma: uniform Lip estimate}) and, as $P-s \in [0,P)$, the second term converges to zero in the limit as well.
     Noticing that $w(-s)=w(s)$ gives pointwise convergence for every $s\in \R$, the proof is finished.
\end{proof}

We note the following remark, strengthening the convergence locally.
\begin{remark}
  \label{rmk: local uniform convergence of w_n}
  From Lemma~\ref{lemma: uniform Lip estimate}, the sequence $w_n$ is equicontinuous, thus by Arzela-Ascoli $w_n \to w$ locally uniformly, i.e.\ $\|w_n-w\|_{L^\infty(K)}\to 0$ for every $K\subset \subset \R$. 
  However, 
  as the periods $P_n$ and $P$ can differ, global $L^\infty(\R)$ convergence may not hold. 
  The exact same convergence results hold for $k_n \to k$ and $u_n\to u$.
\end{remark}

\section{Killing fields and cylindrical coordinates}
\label{section Killing fields}
We set up a cylindrical coordinate system for non-planar ($C^2>0$) curves, following the ideas in \cite{langer-singer_classification}.
\begin{definition}
  A \textit{Killing vector field on $\R^3$} is a $C^1$ vector field $X:\R^3 \to \R^3$ such that there exist $a,\omega \in \R^3$ with $X(x) = a + \omega \times x$ for every $x$. 
\end{definition}

\begin{definition}
  A \textit{Killing vector field along $\gamma$} is the restriction of a Killing vector field in $\R^3$ to $\gamma$. 
\end{definition}

We first find two Killing vector fields along non-planar $p$-elasticae.
\begin{lemma}
\label{lemma: two Killing vector fields}
  Let $k$ be a solution to \eqref{eq:EL for k} for $C^2> 0$ on $[0,L]$. By \eqref{eq:EL for k and tau}, set $\tau =C k^{2-2p}$. Let $\gamma:[0,L]\to \R^3$ be an associated curve with tangent $T$, normal $N$ and binormal $B$. 
  Then the vector fields 
  \begin{equation*}
    J_0 = T ((p-1)k^p - \lambda) + N (p(p-1)k^{p-2}k') + B(pk^{p-1}\tau)\qquad \text{and} \qquad I = k^{p-1}B 
  \end{equation*}
  are Killing vector fields along $\gamma$.
  Moreover, $|J_0|^2 = (p-1-\lambda)^2 + p^2C^2$.
\end{lemma}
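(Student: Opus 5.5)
The approach is to use the elementary characterization of Killing fields along an arclength-parameterized curve. If $X(x)=a+\omega\times x$, then $\frac{d}{ds}X(\gamma(s))=\omega\times T(s)$. Conversely, if a vector field $V$ along $\gamma$ satisfies $V'=\omega\times T$ for a constant $\omega\in\R^3$, then $V$ is the restriction of $X(x)=a+\omega\times x$ with $a:=V(0)-\omega\times\gamma(0)$. Indeed, $V$ and $X\circ\gamma$ have the same derivative and agree at $s=0$. So it suffices to show two things. First, $J_0'\equiv 0$, so $J_0$ is constant and hence Killing with $\omega=0$, $a=J_0$. Second, $I'=\omega\times T$ with $\omega=-\frac1p J_0$, which is constant by the first step. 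Since $C^2>0$, $k>0$ and $k$ is smooth by Proposition~\ref{prop: existence}, so all derivatives below are classical.

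\textbf{Step 1: $J_0'=0$.} Differentiate $J_0$ using the Frenet--Serret equations $T'=kN$, $N'=-kT+\tau B$, $B'=-\tau N$, and collect components. The $T$-component is $p(p-1)k^{p-1}k' - p(p-1)k^{p-2}k'\cdot k=0$. Using $p(p-1)(k^{p-2}k')'=p(k^{p-1})''$, the $N$-component is
\begin{equation*}
p(k^{p-1})''+(p-1)k^{p+1}-\lambda k-pk^{p-1}\tau^2,
\end{equation*}
which vanishes by the first equation of \eqref{eq:EL for k and tau}. The $B$-component is $p(k^{p-1})'\tau+p(k^{p-1}\tau)'$. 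This equals $p\,k^{1-p}(k^{2p-2}\tau)'$, which vanishes because $k^{2p-2}\tau=C$, and this holds by the choice $\tau=Ck^{2-2p}$.

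\textbf{Step 2: $I$ is Killing.} We have
\begin{equation*}
I'=(k^{p-1})'B-k^{p-1}\tau N=(p-1)k^{p-2}k'B-k^{p-1}\tau N.
\end{equation*}
On the other hand, $N\times T=-B$ and $B\times T=N$, so
\begin{equation*}
J_0\times T=-p(p-1)k^{p-2}k'B+pk^{p-1}\tau N=-p\,I'.
\end{equation*}
Hence $I'=\left(-\tfrac1p J_0\right)\times T$, and $-\frac1p J_0$ is constant by Step~1.

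\textbf{Step 3: the norm.} Since $J_0$ is constant, $|J_0|^2$ can be computed pointwise:
\begin{equation*}
|J_0|^2=((p-1)k^p-\lambda)^2+p^2(p-1)^2k^{2p-4}k'^2+p^2C^2k^{2-2p}.
\end{equation*}
We substitute $k'^2$ from Remark~\ref{rmk: k prime}; the identity there holds on all of $\R$ after squaring, since it comes from \eqref{eq:EL for w first order}. This gives
\begin{equation*}
p^2(p-1)^2k^{2p-4}k'^2=p^2E-(p-1)^2k^{2p}+2\lambda(p-1)k^p-p^2C^2k^{2-2p}.
\end{equation*}
All $k$-dependent terms then cancel, leaving $|J_0|^2=\lambda^2+p^2E$. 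Inserting $E=\frac{(p-1)^2}{p^2}-2\lambda\frac{p-1}{p^2}+C^2$, which is the value fixed by the normalization $k_M=1$, yields $|J_0|^2=(p-1-\lambda)^2+p^2C^2$.

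The only delicate point is bookkeeping. In Step~1 one must recognize the $B$-component as $p\,k^{1-p}(k^{2p-2}\tau)'$, and in Step~3 one must use the correct value of the energy constant $E$. None of the steps presents a genuine obstacle.
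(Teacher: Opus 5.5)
Your proof is correct. For $J_0$ it coincides with the paper's argument. For the norm the difference is cosmetic. The paper, having shown $J_0$ is constant, evaluates it at the maximum point where $k=1$ and $k'=0$. You instead cancel the $k$-dependence using the first integral \eqref{eq:EL for w first order}, which is the same computation the paper performs later in Step~1 of Proposition~\ref{prop: derivatives for cylindrical coordinates}. Your version also gives the normalization-free identity $|J_0|^2=\lambda^2+p^2E$.

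The genuine difference is in how you treat $I$. The paper verifies the Langer--Singer criterion of Proposition~\ref{prop: characterization of Killing field}. It computes $I''$ and $I'''$, notes $I'\perp T$ and $I''\perp N$, and matches the $B$-component of $I'''-\frac{k'}{k}I''+k^2I'$ against the derivative of \eqref{eq:EL for k} divided by $k$. You instead spot the identity $J_0\times T=-pI'$, so that $I'=\omega\times T$ with the constant $\omega=-\frac{1}{p}J_0$, and then integrate.

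What your route buys:
\begin{itemize}
\item It avoids third derivatives and the differentiated equation entirely; the Euler--Lagrange equation enters only through $J_0'=0$.
\item It does not rely on the cited characterization.
\item It identifies the rotation vector of $I$ explicitly, which pays off in the next steps. For $J_1=J_0-\frac{|J_0|^2}{pC}I$ it gives at once $J_1'=\frac{|J_0|^2}{p^2C}\,J_0\times T$. So the rotation axis is parallel to $J_0$ without the non-planarity argument of the following lemma. The normalization $Q=\frac{p^2C}{|J_0|^3}$ in Proposition~\ref{prop: derivatives for cylindrical coordinates} then follows directly as $1/|\omega|$, rather than through the computation of $R_{\mathcal{C}}$ in Step~2 there.
\end{itemize}

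What the paper's route buys: it is a purely mechanical check that needs no prior insight into how $I$ relates to $J_0$.
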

\begin{proof}
  Since $C,k,\tau$ are all strictly positive, it follows from \cite[Theorem~1.4]{ourpaper}, that $k,\tau$ and $\gamma$ are all smooth and furthermore $T, N, B$ and so $J_0$ and $I$ are well defined. 
  It remains to check the conditions from Proposition~\ref{prop: characterization of Killing field}. We compute, using $\tau=Ck^{2-2p}$ and the $p$-elastica equation \eqref{eq:EL for k},
  \begin{equation*}
  \begin{split}
    J_0' &= [(p-1)pk^{p-1}k']T + [p(p-1)(p-2)k^{p-3}k'^2+p(p-1)k^{p-2}k'']N+[pC(1-p)k^{-p}k']B \\
    &\quad [(p-1)k^p-\lambda]kN+[p(p-1)k^{p-2}k'][-kT+\tau B]+[pCk^{1-p}][-\tau N] \\
    &= N[p(p-1)(p-2)k^{p-3}k'^2 + p(p-1)k^{p-2}k'' + (p-1)k^{p+1} -\lambda k -pC^2k^{3-3p}] \\
    &= N[p(k^{p-1})''+(p-1)k^{p+1}- pC^2k^{3-3p}-\lambda k] =0,
  \end{split}
  \end{equation*}
  and so $J_0$ is actually constant and trivially Killing.
  For $I$ we have
  \begin{equation*}
  \begin{split}
  I' &= (p-1)k^{p-2}k'B - k^{p-1}\tau N = (p-1)k^{p-2}k'B - Ck^{1-p} N, \\
  I'' &=  [(p-1)(p-2)k^{p-3}k'^2+(p-1)k^{p-2}k'']B-(p-1)k^{p-2}k'\tau N-C(1-p)k^{-p}k'N-Ck^{1-p}(-kT+\tau B),\\
  &\quad = [(p-1)(p-2)k^{p-3}k'^2+(p-1)k^{p-2}k''- C^2k^{3-3p}]B + Ck^{2-p}T \\
  I''' &= [(p-1)(p-2)((p-3)k^{p-4}k'^3 + k^{p-3}3k'k'') + (p-1)k^{p-2}k''' - (3-3p)C^2k^{2-3p}k']B + (\dots)N+(\dots)T.
  \end{split}
  \end{equation*}
  We observe that it remains to check only the last condition, since $I'\perp T$ and $I'' \perp N$. We compute
  \begin{equation*}
  \begin{split}
    &\langle I'''-\frac{k'}{k}I''+k^2 I',B \rangle \\ &\quad= [(p-1)(p-2)((p-3)k^{p-4}k'^3 + k^{p-3}3k'k'') + (p-1)k^{p-2}k''' - (3-3p)C^2k^{2-3p}k'] \\&\qquad-\frac{k'}{k}[(p-1)(p-2)k^{p-3}k'^2+(p-1)k^{p-2}k''-C^2k^{3-3p}]  + k^2 (p-1)k^{p-2}k' \\
    &\quad = (p-1)\big( (p-2)((p-4)k^{p-4}k'^3 + k^{p-3}3k'k'') + k^{p-2}k''' -k^{p-3}k'k''  + k^{p}k' \big) + (3p-2)C^2k^{2-3p}k' .
  \end{split}
  \end{equation*}
  From dividing the elastica equation \eqref{eq:EL for k} by $k$ and differentiating, we get
  \begin{equation*}
    \begin{split}
      0&=p(p-1)k^{p-3}k''' + p(p-1)(3p-7)k^{p-4}k'k''+ p(p-1)(p-2)(p-4)k^{p-5}k'^3 
      \\ &\quad + p(p-1)k^{p-1}k'- pC^2(2-3p)k^{1-3p}k',
    \end{split}
  \end{equation*}
  which upon multiplying by $\frac{k}{p}$ gives exactly the same terms in $\langle I'''-\frac{k'}{k}I''+k^2I',B \rangle$.
  Hence $J_0$ and $I$ are both Killing vector fields.

To compute $|J_0|$, we compute it at the maximum of $k$ (where in our case $k_M=1$), that is
\begin{equation*}
  |J_0|^2 = ((p-1)k_M^p-\lambda)^2 + p^2 \frac{C^2}{k_M^{2p-2}}=(p-1-\lambda)^2 + p^2 C^2,
\end{equation*}
finishing the proof.
\end{proof}

We now construct a rotational vector field around the axis $\frac{J_0}{|J_0|}$.

\begin{lemma}
The vector field
\begin{equation*}
  J_1 = J_0 - \frac{1}{p C} |J_0|^2 k^{p-1} B,
\end{equation*}
with 
\begin{equation}
  \label{eq: normsq of J_1}
  |J_1|^2 = \left\langle J_0 - \frac{1}{p C} |J_0|^2 k^{p-1} B, J_0 - \frac{1}{p C} |J_0|^2 k^{p-1} B \right\rangle  = \frac{|J_0|^2 }{p^2C^2} (|J_0|^2 k^{2p-2}-p^2C^2).
\end{equation} 
is a pure rotation along the axis $\frac{J_0}{|J_0|}$.
\end{lemma}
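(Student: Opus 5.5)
Since $J_0$ and $I = k^{p-1}B$ are Killing fields along $\gamma$ by Lemma~\ref{lemma: two Killing vector fields}, and Killing fields along $\gamma$ form a vector space (restrictions of $x\mapsto a+\omega\times x$ are closed under linear combinations with constant coefficients), $J_1 = J_0 - \frac{|J_0|^2}{pC} I$ is again a Killing field along $\gamma$. The plan is to identify the ambient fields explicitly: $J_0$ is the restriction of the constant field $a = J_0$ (translation), and $I$ is the restriction of some field $x \mapsto a_I + \omega_I\times x$. The first step is to show that $\omega_I$ is a nonzero multiple of $J_0$. For this I would differentiate $I$ along $\gamma$: $I' = \omega_I \times T$, and from the computation in the proof of Lemma~\ref{lemma: two Killing vector fields}, $I' = (p-1)k^{p-2}k' B - Ck^{1-p}N$. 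Writing $\omega_I = \alpha T + \beta N + \delta B$ and comparing $\omega_I\times T = \delta N - \beta B$ gives $\delta = -Ck^{1-p}$, $\beta = -(p-1)k^{p-2}k'$. Comparing with $J_0 = ((p-1)k^p-\lambda)T + p(p-1)k^{p-2}k' N + pCk^{1-p}B$ (using $k^{p-1}\tau = Ck^{1-p}$) suggests $\omega_I = -\frac{1}{p} J_0$; I would verify the $T$-component by the constancy of $\omega_I$ (differentiate $\omega_I$ and use Frenet--Serret plus \eqref{eq:EL for k}, exactly as in the constancy computation of $J_0$), or more simply note that $\omega_I + \frac1p J_0$ has vanishing $N$ and $B$ components and is constant, hence a constant multiple of $T$, which must vanish since $T$ is nonconstant for a non-planar curve.

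With $\omega_I = -\frac1p J_0$, the field $J_1$ is the restriction of $x\mapsto \big(J_0 - \tfrac{|J_0|^2}{pC}a_I\big) + \tfrac{|J_0|^2}{p^2C} J_0\times x$. A Killing field $a+\omega\times x$ with $\omega\neq 0$ is a pure rotation about an axis parallel to $\omega$ exactly when $a\perp\omega$ (then translating the origin to a point on the axis kills $a$). So the second step is to check $\langle J_0 - \frac{|J_0|^2}{pC}a_I, J_0\rangle = 0$. Since $\langle \omega\times x, J_0\rangle = 0$ when $\omega \parallel J_0$, this is equivalent to $\langle J_1, J_0\rangle = 0$ pointwise along $\gamma$, i.e.
\begin{equation*}
|J_0|^2 - \frac{|J_0|^2}{pC}k^{p-1}\langle B, J_0\rangle = |J_0|^2 - \frac{|J_0|^2}{pC}k^{p-1}\cdot pCk^{1-p} = 0,
\end{equation*}
which holds identically. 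Finally, the norm formula \eqref{eq: normsq of J_1} is a direct expansion: $|J_0|^2 - 2\frac{|J_0|^2}{pC}k^{p-1}\cdot pCk^{1-p} + \frac{|J_0|^4}{p^2C^2}k^{2p-2} = \frac{|J_0|^2}{p^2C^2}(|J_0|^2k^{2p-2}-p^2C^2)$.

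The main obstacle is the first step, pinning down the rotation vector $\omega_I$ of $I$ rigorously (in particular its tangential component); the fallback of using constancy of $\omega_I$ together with non-planarity should settle it, and everything else is algebra.
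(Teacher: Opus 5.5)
Your proof is correct, but it reaches the two key facts $\omega\neq 0$ and $\omega\parallel J_0$ by a different route than the paper. The paper never identifies the rotation vector of $I$. It writes $J_1=a+\omega\times\gamma$ and argues $\omega\neq 0$ because $J_1$ is nonconstant: via the norm formula when $k$ is nonconstant, and via the nonconstancy of $B$ in the helix case. It then uses $\langle J_0,J_1\rangle=0$ to see that $\langle\gamma(s),\omega\times J_0\rangle$ is constant, so non-planarity forces $\omega\times J_0=0$ and, at the same time, $\langle J_0,a\rangle=0$.

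You instead compute $\omega_I=-\tfrac1p J_0$ by matching $I'=\omega_I\times T$ in the Frenet frame. You kill the tangential part using the constancy of $\omega_I+\tfrac1p J_0$ and the nonconstancy of $T$. This gives $\omega=\tfrac{|J_0|^2}{p^2C}J_0$ explicitly, hence $\omega\neq0$ and $\omega\parallel J_0$ at once. Your route needs no helix case distinction and uses only $k>0$ rather than non-planarity; the same identity $\langle J_1,J_0\rangle=0$ then gives $a\perp\omega$.

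A bonus of your version is that the angular speed $|\omega|=|J_0|^3/(p^2C)$ is known. So the normalization $Q=1/|\omega|=p^2C/|J_0|^3$ in Proposition~\ref{prop: derivatives for cylindrical coordinates} follows immediately, whereas the paper obtains it through a separate computation of the curvature of the circle through the point of maximal curvature. The paper's argument, in turn, is slightly more abstract and avoids any componentwise determination of the rotation vector.
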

\begin{proof}
Clearly $J_1$ is a Killing vector field as a linear combination of the vector fields in Lemma~\ref{lemma: two Killing vector fields} and hence $J_1(s) =a+\omega \times \gamma(s)$ for some $a,\omega \in \R^3$. 
If $k$ is not constant, so is $J_1$ by \eqref{eq: normsq of J_1}. 
If $k$ is constant, $\gamma$ is a helix, i.e.\ $B$ and therefore also $J_1$ are not constant. 
To conclude, $\omega \neq 0$.
Since
\begin{equation*}
    \langle J_0, \omega \times \gamma(s) \rangle + \langle J_0, a \rangle = \langle J_0, J_1 \rangle = |J_0|^2 - \frac{1}{pC} |J_0|^2 k^{p-1} p k^{p-1} \tau = |J_0|^2 - \frac{1}{pC} |J_0|^2 k^{p-1} p k^{p-1} C k^{2-2p} = 0,
\end{equation*}
the quantity $\langle J_0,a \rangle$ and thus also $\langle \gamma(s), \omega \times J_0 \rangle$ are constant. 
However, as $\gamma(s)$ is non-planar, $\omega \parallel J_0$ and $\langle J_0,a \rangle=0$. 
In other words, $a = - \omega \times \gamma_0$ for some $\gamma_0 \in \R^3$ and thereby $J_1(s) = \omega \times (\gamma(s) -\gamma_0)$, a pure rotation along the axis $\omega$ parallel to the constant vector field $J_0$.
\end{proof}

We conclude that the vector fields
\begin{equation*}
  \begin{split}
    \frac{\partial}{\partial z} = \frac{J_0}{|J_0|}, \qquad \frac{\partial}{\partial \theta} = QJ_1, \qquad
    \frac{\partial}{\partial r} = \frac{J_0 \times B}{|J_0 \times B|},
  \end{split}
\end{equation*}
form a cylindrical coordinate system.
Note that $Q$ is a normalization factor, which is to be determined.
We also stress that these cylindrical coordinates are only defined under the assumption $C^2>0$ and become singular when $C=0$.

\begin{proposition}
  \label{prop: derivatives for cylindrical coordinates}
  Let $\gamma(s) = (r(s), \theta(s), z(s))$. Then $Q=\frac{p^2C}{|J_0|^3}$ and moreover
  \begin{equation*}
  \begin{split}
    r_s &= \frac{p (k^{2p-2})'}{2\sqrt{|J_0|^2 k^{2p-2}- p^2 C^2}}, \\
    \theta_s &= \frac{C |J_0| ((p-1)k^p - \lambda)}{k^{2p-2}|J_0|^2-p^2C^2}, \\
    z_s &= \frac{(p-1)k^p-\lambda}{|J_0|}.
  \end{split}
  \end{equation*}
\end{proposition}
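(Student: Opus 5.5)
We work throughout with the cylindrical frame determined by $J_0$ and $J_1$. Since $J_0$ is constant and $J_1(s)=\omega\times(\gamma(s)-\gamma_0)$ with $\omega\parallel J_0$, we place the $z$-axis along $\omega$ through $\gamma_0$. Then the cylindrical quantities are $z(s)=\langle \gamma(s)-\gamma_0, J_0/|J_0|\rangle$ and $r(s)=|(\gamma-\gamma_0)-\langle \gamma-\gamma_0,J_0\rangle J_0/|J_0|^2|$. Moreover $J_1 = \omega\times(\gamma-\gamma_0)$ has length $|\omega|\,r$, and the unit vector $\partial/\partial\theta$ equals $J_1/|J_1|$.

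\textbf{Step 1: the formula for $z_s$.} Differentiating $z$ gives $z_s=\langle T,J_0\rangle/|J_0|=((p-1)k^p-\lambda)/|J_0|$, directly from the $T$-component of $J_0$ in Lemma~\ref{lemma: two Killing vector fields}.

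\textbf{Step 2: the rotation speed $|\omega|$.} We compute $|\omega|$ by differentiating $J_1$. The term $J_0$ is constant, so $J_1'=-\frac{|J_0|^2}{pC}I'$. On the other hand $J_1'=\omega\times T$. Using $I'=(p-1)k^{p-2}k'B-Ck^{1-p}N$, we compare this with $\omega\times T$, writing $\omega=\mu J_0/|J_0|$. The $N$-component of $\omega\times T$ is $\mu\langle J_0,B\rangle/|J_0|=\mu pCk^{1-p}/|J_0|$, and this must equal $\frac{|J_0|^2}{pC}Ck^{1-p}$. Hence $\mu=|J_0|^3/(p^2C)$, and the normalization making $QJ_1$ the coordinate field $\partial/\partial\theta$ is $Q=1/\mu=p^2C/|J_0|^3$. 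As a consistency check, the $B$-component of $\omega\times T$ should match; it does, by $\langle J_0,N\rangle$.

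\textbf{Step 3: the formula for $r_s$.} We have $r=|J_1|/|\omega|$, and \eqref{eq: normsq of J_1} gives
\[
r=\frac{Q|J_0|}{pC}\sqrt{|J_0|^2k^{2p-2}-p^2C^2}=\frac{p}{|J_0|^2}\sqrt{|J_0|^2k^{2p-2}-p^2C^2}.
\]
Differentiating yields $r_s=\frac{p(k^{2p-2})'}{2\sqrt{\cdots}}$, as claimed.

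\textbf{Step 4: the formula for $\theta_s$.} We use $r^2\theta_s=\langle T,\partial/\partial\theta\rangle\, r=\langle T,J_1\rangle/|\omega|$ (up to the orientation convention). Since $T\perp B$, we get $\langle T,J_1\rangle=\langle T,J_0\rangle=(p-1)k^p-\lambda$. Therefore
\[
\theta_s=\frac{Q((p-1)k^p-\lambda)}{r^2}=\frac{p^2C}{|J_0|^3}\cdot\frac{|J_0|^4((p-1)k^p-\lambda)}{p^2(|J_0|^2k^{2p-2}-p^2C^2)},
\]
which simplifies to the stated formula.

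The main obstacle is sign and orientation bookkeeping: choosing the orientation of $\omega$ so that $\mu>0$, and interpreting $Q$ consistently with the normalization of $\partial/\partial\theta$. There is also a degenerate case to handle where $r=0$. By Lemma~\ref{lemma: two Killing vector fields} we have $|J_0|^2k^{2p-2}\ge |J_0|^2k_m^{2p-2}$, and from the explicit $B$-component of $J_0$ one gets $|J_0|^2k^{2p-2}-p^2C^2=k^{2p-2}(|J_0|^2-p^2C^2k^{2-2p}\cdot k^{0})\ge0$, since $|J_0|\ge|\langle J_0,B\rangle|$. Equality can occur only in the helix case, and even there we still need $r>0$ to be checked.
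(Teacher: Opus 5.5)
Your derivation is correct, but for $Q$ and $r_s$ it takes a different route from the paper.

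\textbf{The paper's route.} It first computes $r_s=\langle T,\partial_r\rangle$ from the radial field $\partial_r=J_0\times B/|J_0\times B|$ and the first integral \eqref{eq:EL for w first order}. It then obtains $Q$ geometrically. At a point $s_M$ with $k=1$ it splits $T$ into vertical and horizontal parts and identifies $R_\mathcal{C}=r(s_M)$ as the radius of a horizontal circle. It computes the curvature of that circle from $(J_1/|J_1|)'(s_M)$.

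\textbf{Your route.} You read off the angular speed directly from $J_1'=\omega\times T=-\frac{|J_0|^2}{pC}I'$. Comparing $N$-components gives $\mu=|J_0|^3/(p^2C)$ at every $s$, so $Q=1/\mu$. Then \eqref{eq: normsq of J_1} gives $r=Q|J_1|=\frac{p}{|J_0|^2}\sqrt{|J_0|^2k^{2p-2}-p^2C^2}$, and $r_s$ follows by differentiation. Your $\theta_s$ step is the same as the paper's Step~3.

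\textbf{Comparison.}
\begin{itemize}
\item Your version is shorter and purely pointwise, and it needs no distinguished point.
\item It gives the sign of $r_s$ for free. The paper uses without comment that $J_0\times B$ points away from the axis; this is true, but unstated.
\item It also produces the closed formula for $r(s)$ that the paper later uses in \eqref{eq: r_C definition} and in the proof of Theorem~\ref{thm: main result}.
\item The paper's version obtains $r_s$ without knowing $Q$, and it gives $Q$ a geometric meaning through $R_\mathcal{C}$.
\end{itemize}

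\textbf{Two small points.}
\begin{itemize}
\item The orientation issue you raise is moot: the computation itself yields $\mu>0$ when $C>0$.
\item You use $\partial/\partial\theta$ both for the unit vector $J_1/|J_1|$ and for the coordinate field $QJ_1$ of length $r$. The formulas you derive are consistent anyway, but the notation should be fixed.
\end{itemize}

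\textbf{Your closing remark on the degenerate case is wrong,} although the proof does not depend on it. Since $|J_0|^2-p^2C^2k^{2-2p}=\langle J_0,T\rangle^2+\langle J_0,N\rangle^2$, the quantity $|J_0|^2k^{2p-2}-p^2C^2$ vanishes exactly when $k'=0$ and $(p-1)k^p=\lambda$.
\begin{itemize}
\item This never happens for helices with $\lambda<p-1$. There $r\equiv p(p-1-\lambda)/|J_0|^2>0$.
\item It does happen for non-helical elasticae: at the minima of $k$, exactly on the curve $\{r_\mathcal{C}=0\}$ of Lemma~\ref{lemma: condition for rm=0}, where the elastica meets the axis.
\end{itemize}
At such points the formulas for $r_s$ and $\theta_s$ are of the form $0/0$ and are simply not asserted. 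As in the paper, whose $\partial_r$ requires $J_0\times B\neq 0$, the statement is to be read at points with $r>0$, and nothing further needs to be checked.
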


\begin{proof}
  \underline{Step 1:}
  Write $T=r_s \frac{\partial}{\partial r} + \theta_s \frac{\partial}{\partial \theta} + z_s \frac{\partial}{\partial z}$. 
  We directly have 
  \begin{equation*}
    z_s = \left\langle T, \frac{J_0}{|J_0|} \right\rangle = \frac{(p-1)k^p-\lambda}{|J_0|}.
  \end{equation*}
  Since 
  \begin{equation*}
    J_0 \times B = -N ((p-1)k^p - \lambda) + T (p(p-1)k^{p-2}k') = -N \left((p-1)k^p - \lambda \right) + T \left( p(k^{p-1})' \right),
  \end{equation*}
  and from \eqref{eq:EL for w first order},
  \begin{equation*}
  \begin{split}
 & ((p-1)k^p - \lambda)^2 + p^2(k^{p-1})'^2 \\
      &\quad = (p-1)^2 k^{2p} - 2(p-1)\lambda k^p + \lambda^2 + \left( (p-1)^2 -2\lambda (p-1) + p^2 C^2 - (p-1)^2 k^{2p} + 2\lambda(p-1)k^p - p^2C^2 k^{2-2p}  \right) \\
      &\quad = (p-1-\lambda)^2 + p^2C^2 - p^2C^2 k^{2-2p},
  \end{split}
  \end{equation*}
  we have 
  \begin{equation*}
    \begin{split}
      r_s &= \left\langle T, \frac{\partial}{\partial r} \right\rangle = \left\langle T, \frac{J_0 \times B}{|J_0 \times B|} \right\rangle = \frac{p(k^{p-1})'}{\sqrt{((p-1)k^p - \lambda)^2 + p^2(k^{p-1})'^2}} \\
      &= \frac{p(p-1)k^{p-2}k'}{\sqrt{|J_0|^2 -\frac{p^2C^2}{k^{2p-2}}}} = \frac{p(p-1)k^{p-2}k'k^{p-1}}{\sqrt{|J_0|^2 k^{2p-2}-p^2C^2}} = \frac{p(k^{2p-2})'}{2\sqrt{|J_0|^2 k^{2p-2}-p^2C^2}}.
    \end{split}
  \end{equation*}

  \underline{Step 2:}
  We compute the normalization factor $Q$. Take $s_M$ such that $k(s)$ and $r(s)$ attain their maxima at $s_M$. Then $\left|\frac{\partial}{\partial \theta}(s_M) \right| = r(s_M) =: R_\mathcal{C}$. 
  Note that at $s_M$ we have for the vertical component of the unit tangent vector $T$, 
  \begin{equation*}
    \left\langle T(s_M), \frac{\partial}{\partial z}(s_M) \right\rangle =  \left\langle T(s_M), \frac{J_0(s_M)}{|J_0(s_M)|} \right\rangle = \frac{(p-1)k(s_M)^p - \lambda}{|J_0|} = \frac{(p-1) - \lambda}{|J_0|},
  \end{equation*}
  by Pythagoras, the horizontal component is
  \begin{equation*}
    \left\langle T(s_M), \frac{\frac{\partial}{\partial \theta}(s_M)}{\left|\frac{\partial}{\partial \theta}(s_M) \right|} \right\rangle = \frac{pC}{|J_0|}.
  \end{equation*}
  This horizontal component corresponds to a tangent vector $v(s_M)$ to the circle $\mathcal{C}$ centered at $(0,0,z(s_M))$ with radius $r(s_M)=R_\mathcal{C}$. 
  To compute $R_\mathcal{C}$ we take the reciprocal of the curvature $k_\mathcal{C}$ to $\mathcal{C}$. Let $\tilde s$ be the arclength parameter of $\mathcal{C}$ (which generally differs from $s$) and note that $\frac{d\tilde s}{ds}(s_M) = |v(s_M)| = \frac{pC}{|J_0|}$. Then
  \begin{equation*}
    k_{\mathcal{C}} = \left| \frac{d}{d\tilde{s}}\bigg|_{\tilde s (s_M)} \left(\frac{\frac{\partial}{\partial \theta}}{\left|\frac{\partial}{\partial \theta} \right|} \right) \right| = \left| \frac{d}{ds} \bigg|_{s_M} \left( \frac{\frac{\partial}{\partial \theta}}{\left|\frac{\partial}{\partial \theta} \right|} \right) \right| \cdot \left| \frac{d s}{ d\tilde s}(s_M) \right| = \frac{|J_0|}{pC} \left(\frac{J_1}{|J_1|} \right)'(s_M).
  \end{equation*}
  At the maximum of $k=k_M=1$, we have $k'=0$ and so
  \begin{equation*}
    J_1' = -\frac{1}{pC} |J_0|^2((p-1)k^{p-2}k'B+k^{p-1}(-\tau)N) = \frac{|J_0|^2}{p} N,
  \end{equation*}
  as well as $\langle J_1,J_1' \rangle=0$ and hence 
  \begin{equation*}
    \left(\frac{J_1}{|J_1|} \right)'(s_M) = \frac{J_1'|J_1|^2-J_1 \langle J_1, J_1' \rangle}{|J_1|^3} (s_M) = \frac{|J_1'(s_M)|}{|J_1(s_M)|},
  \end{equation*}
  which finally gives 
  \begin{equation*}
    \frac{1}{R_\mathcal{C}}= k_\mathcal{C} = \frac{|J_0|}{pC} \frac{|J_0|^2}{p |J_1(s_M)|}= \frac{|J_0|^2}{p (p-1-\lambda)} = \frac{(p-1-\lambda)^2 + p^2C^2 }{p(p-1-\lambda)}.
  \end{equation*}
 The expression for $Q$ follows directly from
  \begin{equation*}
     Q|J_1(s_M)| =\left|\frac{\partial}{\partial \theta}(s_M)\right| = R_\mathcal{C} = \frac{p^2 C}{|J_0|^3} |J_1(s_M)| .
  \end{equation*}
  
  \underline{Step 3:} We compute directly
  \begin{equation*}
    \begin{split}
      \theta_s &= \frac{1}{\left|\frac{\partial}{\partial \theta}\right|^2} \left\langle T, \frac{\partial}{\partial \theta}\right\rangle = \frac{1}{Q |J_1|^2} ((p-1)k^p-\lambda) \\
      &= \frac{p^2C^2 }{|J_0|^2  (|J_0|^2 k^{2p-2}-p^2C^2)} \frac{|J_0|^3}{p^2 C}((p-1) k^p-\lambda) = \frac{C |J_0|((p-1)k^p-\lambda)}{(|J_0|^2 k^{2p-2}-p^2C^2)},
    \end{split}
  \end{equation*}
  which finishes the proof.
\end{proof}

Note that the critical points of $r$ and $k$ coincide, a minimum (maximum) of $r$ corresponds to a minimum (maximum) of $k$ and they have the same period. 
Moreover, the critical points of $z$ and $\theta$ coincide and $z$ and $\theta$ differ from the periodic function $k$ with period $P$ by a linear function. 
The elastica lies between two concentric cylinders with outer radius $R_\mathcal{C}$ and inner radius $r_\mathcal{C}$, which is given by
\begin{equation}
\label{eq: r_C definition}
   r_\mathcal{C} = r_\mathcal{C}(\lambda,C^2) = R_\mathcal{C} - \int_{s_m}^{s_M} r_s\, ds = \frac{p\sqrt{|J_0|^2k_m^{2p-2} -p^2C^2}}{|J_0|^2} .
\end{equation}

We now characterize the set where $r_\mathcal{C}=0$, i.e.\ the  case where the $p$-elastica passes through the $z$-axis.
\begin{lemma}
  If $(\lambda,C^2)\in S$, then $r_\mathcal{C}(\lambda,C^2)=0$ if and only if $\lambda\geq 0$ and
  \label{lemma: condition for rm=0}
  \begin{equation*}
   C^2 = \frac{(p-1-\lambda)^2}{p^2} \frac{1}{1-\left(\frac{\lambda}{p-1}\right)^{\frac{2p-2}{p}}} \left(\frac{\lambda}{p-1}\right)^\frac{2p-2}{p}.
  \end{equation*}
  The set $\{r_\mathcal{C}=0\}$ is a smooth curve in $S^0 \subset S$ connecting $(0,0)$ to $(p-1,0)$ and $(p-1)k_m^p-\lambda=0$ along that curve.
\end{lemma}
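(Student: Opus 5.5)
The plan is to reduce the condition $r_\mathcal{C}=0$ to an algebraic relation at the minimum of $k$. By \eqref{eq: r_C definition}, $r_\mathcal{C}=0$ if and only if $|J_0|^2k_m^{2p-2}=p^2C^2$. In Step 1 of the proof of Proposition~\ref{prop: derivatives for cylindrical coordinates} we derived the identity
\begin{equation*}
((p-1)k^p-\lambda)^2+p^2\big((k^{p-1})'\big)^2=|J_0|^2-p^2C^2k^{2-2p}.
\end{equation*}
I will evaluate it at a minimum point $s_m$ of $k$, where $k'=0$, to get $((p-1)k_m^p-\lambda)^2=k_m^{2-2p}\big(|J_0|^2k_m^{2p-2}-p^2C^2\big)$. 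Since $k_m>0$ in $S$ (Lemma~\ref{lemma: continuity of minimal curvature}), this shows $r_\mathcal{C}=0$ if and only if $(p-1)k_m^p=\lambda$. This already gives the last claim, and it forces $\lambda=(p-1)u_m>0$, so in particular $\lambda\geq0$. Hence no point of $\{r_\mathcal{C}=0\}$ has $\lambda<0$.

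Next I will turn the condition $u_m=\lambda/(p-1)$ into the stated formula for $C^2$. Recall that $u_m$ is the unique root of $h$ in $(0,1)$ (Lemma~\ref{lemma: number of solutions}), the other root being $1$. For $u=t:=\lambda/(p-1)$ the $C^2$-free part of $h(t)$ is
\begin{equation*}
\tfrac{(p-1)^2-2\lambda(p-1)-\lambda^2+2\lambda^2}{p^2}=\tfrac{(p-1-\lambda)^2}{p^2},
\end{equation*}
and $h$ is affine in $C^2$. So $h(t)=0$ is equivalent to
\begin{equation*}
C^2\big(t^{\frac{2-2p}{p}}-1\big)=\tfrac{(p-1-\lambda)^2}{p^2},
\end{equation*}
which is exactly the displayed formula after multiplying through by $t^{\frac{2p-2}{p}}$.

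For the converse direction, let $(\lambda,C^2)\in S$ with $\lambda\ge0$ satisfy the formula. Then $C^2>0$ forces $\lambda>0$. Also $\lambda<p-1$, since in $S$ we have $C^2<\frac{p-1-\lambda}{p}$, hence $\lambda<p-1$. So $t\in(0,1)$ is a root of $h$ different from $1$, and therefore $t=u_m$. Running the first step backwards then gives $r_\mathcal{C}=0$.

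Finally, I will describe the set $\{r_\mathcal{C}=0\}$ as the graph $C^2=F(\lambda)$, $\lambda\in(0,p-1)$, where $F$ is the right-hand side of the formula. $F$ is smooth and positive on this interval. As $\lambda\to0$ we have $F\to0$. As $\lambda\to p-1$ we have $1-t^{a}\sim a(1-t)$ with $a=\frac{2p-2}{p}$, so $F\sim\frac{(p-1-\lambda)}{2p}\to0$. Thus the curve connects $(0,0)$ to $(p-1,0)$.

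The main obstacle I expect is verifying that the graph actually lies in $S$, i.e.\ $F(\lambda)<\frac{p-1-\lambda}{p}$. This is equivalent to $(p-1)(1-t)t^a<p(1-t^a)$ for $t\in(0,1)$. I would first try the elementary bound $1-t^a\ge a\,t^{a}(1-t)$, which follows from convexity of $\log$ (or from the mean value theorem with derivative $at^{a-1}\ge a t^{a}$ on $(0,1)$). This gives
\begin{equation*}
p(1-t^a)\ge (2p-2)\,t^a(1-t)>(p-1)(1-t)t^a,
\end{equation*}
with a factor of $2$ to spare, which settles the inequality.
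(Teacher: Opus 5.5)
Your proof is correct and follows essentially the paper's route. Both arguments use the first integral to show that $|J_0|^2k_m^{2p-2}-p^2C^2$ vanishes exactly when the perfect square $((p-1)k_m^p-\lambda)^2$ does, and then solve for $C^2$. The paper evaluates $f$ at the candidate point $(p^2C^2/|J_0|^2)^{\frac{1}{2p-2}}$ and invokes uniqueness of the root of $f$ below $1$. You instead evaluate the identity directly at $k_m$ and use $h(\tfrac{\lambda}{p-1})=0$, which incidentally also yields $r_\mathcal{C}=\frac{p}{|J_0|^2}k_m^{p-1}|(p-1)k_m^p-\lambda|$.

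Your last step is a worthwhile addition rather than a deviation. There you check $F(\lambda)<\frac{p-1-\lambda}{p}$ via $1-t^a\ge a t^a(1-t)$, together with the endpoint limits. The paper only asserts ``solving for $C^2$ gives the explicit curve'' and never verifies that this graph lies in $S$.
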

\begin{proof}
  For $(\lambda,C^2) \in S$, the function $r_\mathcal{C}$ vanishes if and only if $k_m^{2p-2}=\frac{p^2 C^2}{(p-1-\lambda)^2 + p^2 C^2}$, i.e.\ (since $f$ only has one root strictly less than $1$ at $k_m$) if and only if $f((p^2C^2 |J_0|^{-2})^{\frac{1}{2p-2}})=0$. 
  In that case, we have
  \begin{equation}
    \label{eq: f evaluated}
    \begin{split}
      0&=f\left(\left(\frac{p^2 C^2}{(p-1-\lambda)^2 + p^2 C^2}\right)^{\frac{1}{2p-2}}\right) \\
      &= \frac{(p-1)^2}{p^2}-2\lambda \frac{p-1}{p^2}+C^2 - \frac{(p-1)^2}{p^2} \left(\frac{p^2 C^2}{(p-1-\lambda)^2 + p^2 C^2}\right)^{\frac{2p}{2p-2}} \\ 
      &\quad + 2\lambda \frac{p-1}{p^2} \left(\frac{p^2 C^2}{(p-1-\lambda)^2 + p^2 C^2}\right)^{\frac{p}{2p-2}} - C^2 \left(\frac{p^2 C^2}{(p-1-\lambda)^2 + p^2 C^2}\right)^{\frac{2-2p}{2p-2}} \\
      &= - \frac{(p-1)^2}{p^2} \left(\frac{p^2 C^2}{(p-1-\lambda)^2 + p^2 C^2}\right)^{\frac{2p}{2p-2}}  + 2\lambda \frac{p-1}{p^2}\left(\frac{p^2 C^2}{(p-1-\lambda)^2 + p^2 C^2}\right)^{\frac{p}{2p-2}} - \frac{\lambda^2}{p^2} \\
      &= -\frac{1}{p^2}\left( (p-1) \left(\frac{p^2C^2}{(p-1-\lambda)^2 + p^2C^2} \right)^{\frac{p}{2p-2}}-\lambda \right)^2.
    \end{split}
  \end{equation}
That is, any $(\lambda,C^2) \in S$ satisfies $r_\mathcal{C}(\lambda, C^2)=0$ if and only if it also satisfies
\begin{equation*}
  0=(p-1)\left(\frac{p^2 C^2}{(p-1-\lambda)^2 + p^2C^2} \right)^{\frac{p}{2p-2}} - \lambda = (p-1)k_m^p-\lambda,
\end{equation*}
i.e.\ $\lambda$ cannot be negative.
Solving for $C^2$ gives the explicit curve.
\end{proof}

We are now able to quantify Lemma~\ref{lemma: global lower bound for um}.
\begin{corollary}
  \label{cor: global lower bound for km}
  For every $(\lambda,C^2)\in S$ we have $k_m(\lambda,C^2) \geq \left(\frac{p^2 C^2}{(p-1-\lambda)^2 + p^2 C^2}\right)^{\frac{1}{2p-2}}$ with equality if and only if $(\lambda,C^2)\in \{r_\mathcal{C}=0\}$.
\end{corollary}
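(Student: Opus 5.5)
The inequality will be read off from the formula \eqref{eq: r_C definition} for the inner radius, and the equality case from Lemma~\ref{lemma: condition for rm=0}.

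\textbf{Step 1: the quantity under the root is nonnegative.} For $(\lambda,C^2)\in S$ we have
\begin{equation*}
  r_\mathcal{C}=\frac{p\sqrt{|J_0|^2k_m^{2p-2}-p^2C^2}}{|J_0|^2}.
\end{equation*}
We need $|J_0|^2k_m^{2p-2}-p^2C^2\ge 0$. This follows from \eqref{eq: normsq of J_1}, which gives $|J_1|^2=\frac{|J_0|^2}{p^2C^2}\bigl(|J_0|^2k^{2p-2}-p^2C^2\bigr)\ge 0$ pointwise along the curve. Evaluating at a point $s_m$ where $k=k_m$ yields the inequality.

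The same conclusion can be seen directly, which is more transparent. From the identity in Step~1 of the proof of Proposition~\ref{prop: derivatives for cylindrical coordinates},
\begin{equation*}
  |J_0|^2-p^2C^2k^{2-2p}=((p-1)k^p-\lambda)^2+p^2\bigl((k^{p-1})'\bigr)^2\ge 0 .
\end{equation*}
At $k=k_m$ we have $k'=0$, so this reads
\begin{equation*}
  |J_0|^2k_m^{2p-2}-p^2C^2=k_m^{2p-2}\bigl((p-1)k_m^p-\lambda\bigr)^2 .
\end{equation*}

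\textbf{Step 2: the inequality.} Substituting $|J_0|^2=(p-1-\lambda)^2+p^2C^2$ from Lemma~\ref{lemma: two Killing vector fields} and using $k_m>0$, Step~1 gives
\begin{equation*}
  k_m^{2p-2}\ge\frac{p^2C^2}{(p-1-\lambda)^2+p^2C^2}.
\end{equation*}
Taking the $\frac{1}{2p-2}$-th power, which is monotone since $p>1$, gives the claimed bound.

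\textbf{Step 3: the equality case.} By the explicit formula for $r_\mathcal{C}$, equality holds if and only if $r_\mathcal{C}(\lambda,C^2)=0$. Equivalently, by the displayed identity in Step~1, it holds if and only if $(p-1)k_m^p=\lambda$. Lemma~\ref{lemma: condition for rm=0} characterizes exactly this set as $\{r_\mathcal{C}=0\}$.

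\textbf{Main obstacle.} The only point needing care is the existence of a point $s_m$ with $k(s_m)=k_m$ and $k'(s_m)=0$. This holds because, by Proposition~\ref{prop: existence}, $w$ is smooth, periodic and attains its minimum $w_m=k_m^{p-1}>0$. Note also that $C>0$ in $S$, so dividing by $C$ in \eqref{eq: normsq of J_1} is harmless. I expect no further difficulty.
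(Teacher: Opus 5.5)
Your proof is correct, but it takes a different route from the paper's.

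\textbf{The paper's argument.} It argues by contradiction. It sets $x_*=\bigl(\frac{p^2C^2}{|J_0|^2}\bigr)^{\frac{1}{2p-2}}$ and reads off from \eqref{eq: f evaluated} that $f(x_*)=-\frac{1}{p^2}\bigl((p-1)x_*^p-\lambda\bigr)^2\le 0$. It then uses the sign pattern of $f$ from Lemma~\ref{lemma: number of solutions}: $f>0$ exactly on $(k_m,1)$, and $x_*<1$ because $\lambda<p-1$. This rules out $k_m<x_*$.

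\textbf{Comparison.} Both arguments rest on the same algebraic identity, valid for every $k>0$:
\begin{equation*}
p^2f(k)=|J_0|^2-p^2C^2k^{2-2p}-\bigl((p-1)k^p-\lambda\bigr)^2 .
\end{equation*}
The paper evaluates it at $x_*$, where the first two terms cancel. It then needs unimodality to locate $x_*$ relative to $k_m$. You evaluate it at $k_m$, where $f$ vanishes, and read off the sign directly, so no unimodality is needed.

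Your route is therefore more self-contained, and it yields more:
\begin{itemize}
\item the pointwise bound $k(s)\ge x_*$ along the whole curve, which is just $|\langle J_0,B\rangle|\le |J_0|$ (equivalently $|J_1|^2\ge 0$);
\item the exact formula $|J_0|^2k_m^{2p-2}-p^2C^2=k_m^{2p-2}\bigl((p-1)k_m^p-\lambda\bigr)^2$, which makes the equality case and the relation $(p-1)k_m^p=\lambda$ on $\{r_\mathcal{C}=0\}$ immediate.
\end{itemize}
The paper's version has the advantage of reusing verbatim the computation it has just done in Lemma~\ref{lemma: condition for rm=0}.

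\textbf{On your ``main obstacle''.} It is not really one. The vanishing $k'(s_m)=0$ plays no role in the inequality. For the equality identity it is already encoded in $f(k_m)=0$, via $k'^2=\frac{1}{(p-1)^2}k^{4-2p}f(k)$ from Remark~\ref{rmk: k prime}. So you never need to locate a point on the curve at all.
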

\begin{proof}
  Suppose that $k_m<\left(\frac{p^2 C^2}{(p-1-\lambda)^2 + p^2 C^2}\right)^{\frac{1}{2p-2}}$, then by Lemma~\ref{lemma: number of solutions} we have $f\left((\frac{p^2 C^2}{(p-1-\lambda)^2 + p^2 C^2})^{\frac{1}{2p-2}} \right)>0$, which contradicts \eqref{eq: f evaluated}.
\end{proof}

Let $\Delta z$ and $\Delta \theta$  denote the change in $z$ and $\theta$ respectively through one period of $k$, starting from its minimum $k_m$, that is 
\begin{equation*}
  \Delta z := \int_{-P}^P z_s(s) ds = 2\int_{-P}^0 z_s(s) ds \qquad \text{and} \qquad \Delta \theta := \int_{-P}^P \theta_s(s) ds = 2\int_{-P}^0 \theta_s(s)ds. 
\end{equation*}
We note that these functions are a priori only defined for $(\lambda, C^2)\in S$ where $C^2>0$. In the next section we extend them suitably to the boundary of $S^\rho$. 

We finally show that for a $p$-elastica $\gamma_{\lambda,C^2}$ to be closed, $\Delta z(\lambda,C^2)=0$ and $\Delta \theta(\lambda,C^2) \in 2\pi \mathbb{Q}$ at the same time. 
To find and characterize $(\lambda, C^2) \in S$ that satisfy these two conditions is the main part of the subsequent sections.

\begin{lemma}
    \label{lemma: closedness conditions} 
    The following are equivalent:
    \begin{enumerate}
        \item[(i)] The curve $\gamma_{\lambda,C^2}\in W^{2,p}(\R/\Z;\R^3)$ is a closed non-planar $p$-elastica.
        \item[(ii)] The open curve $\tilde \gamma_{\lambda,C^2}\in W^{2,p}(0,1;\R^3)$ is a  non-planar $p$-elastica with $\tilde \gamma_{\lambda,C^2}(0)=\tilde \gamma_{\lambda,C^2}(1)$ and $\tilde \gamma_{\lambda,C^2}'(0)=\tilde \gamma_{\lambda,C^2}'(1)$.
        \item[(iii)] The parameter couple $(\lambda,C^2) \in S$ satisfies $\Delta z(\lambda,C^2)=0$ and $\Delta \theta(\lambda,C^2) \in 2\pi \Q$.
    \end{enumerate}
\end{lemma}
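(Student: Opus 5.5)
The plan is to prove the cycle (i)$\Rightarrow$(ii)$\Rightarrow$(iii)$\Rightarrow$(i).

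\emph{(i)$\Rightarrow$(ii).} This is Remark~\ref{rmk: closed implies closed in C1}: restricting the test functions $\eta$ in \eqref{eq: first variation closed} to $C_c^\infty(0,1;\R^3)$ shows that the open counterpart $\tilde\gamma$ is a $p$-elastica. Since $\gamma\in W^{2,p}(\R/\Z)\subset C^1$, the periodicity conditions $\tilde\gamma(0)=\tilde\gamma(1)$ and $\tilde\gamma'(0)=\tilde\gamma'(1)$ hold automatically. Non-planarity is unchanged.

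\emph{(ii)$\Rightarrow$(iii).} By Theorem~\ref{thm:EL for k and tau} and the reduction above, $\tilde\gamma$ is analytic with $k,\tau$ solving \eqref{eq:EL for k and tau} for some $(\lambda,C^2)\in S$. Here I normalize $k_M=1$ by scaling, and $C^2>0$ by non-planarity.

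The $C^1$-closing condition implies that the frame $(T,N,B)$ and $k,k',\tau$ also match at the endpoints. To see this, note that $\tilde\gamma$ extends analytically as a solution of the ODE \eqref{eq: first variation}. Matching $\gamma,\gamma'$ alone does not immediately give matching of higher derivatives, so I would argue instead via the closed definition: \eqref{eq: first variation closed} with non-compactly supported periodic $\eta$ gives boundary terms whose vanishing forces continuity of $k$ and $k'$ across the junction. I admit this step is the delicate one in the direction (ii)$\Rightarrow$(i) below rather than here. For (ii)$\Rightarrow$(iii), the real question is whether $\gamma(0)=\gamma(1)$ and $T(0)=T(1)$ force closedness in cylindrical coordinates.

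Assume the length $L$ is a multiple of the period, say $L=2mP$. Then $z(L)-z(0)=m\Delta z$ and $\theta(L)-\theta(0)=m\Delta\theta$, by the periodicity of $z_s$ and $\theta_s$ from Proposition~\ref{prop: derivatives for cylindrical coordinates}. Closedness then gives $m\Delta z=0$ and $m\Delta\theta\in 2\pi\Z$. Note that $J_0$ is a constant vector determined by the frame, so the axis is well defined. The reason $L$ must be a multiple of the period is that the values $k(0)=k(L)$ and $k'(0)=k'(L)$ are recovered from $r$, $r_s$ and $\theta_s$, since $\theta_s$ is a strictly monotone function of $k$ via $k^p$. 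Because $k$ is monotone between extrema (Remark~\ref{rmk: k prime}), this forces $L\in 2P\Z$.

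\emph{(iii)$\Rightarrow$(i).} Write $\Delta\theta=2\pi a/b$ and take $L=2bP$. Then $\gamma(L)$ is obtained from $\gamma(0)$ by a rotation through $2\pi a$ about the axis, and a translation by $b\Delta z=0$, so $\gamma(L)=\gamma(0)$. The same rigid motion maps the frame at $0$ to the frame at $L$, and $k,\tau$ are $2P$-periodic. Hence the $L$-periodic extension is smooth and is a closed curve, and after rescaling to $\R/\Z$ it satisfies \eqref{eq: first variation} pointwise. Integrating by parts against periodic $\eta$ produces no boundary terms, which gives \eqref{eq: first variation closed}. Non-planarity holds because $\tau\neq0$.

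The main obstacle is the step in (ii)$\Rightarrow$(iii) showing that $C^1$ closure forces the length to be a whole number of periods. I would handle it through the cylindrical-coordinate identities for $r$ and $\theta_s$ as functions of $k$.
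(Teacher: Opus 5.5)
Your proposal is correct. For (i)$\Rightarrow$(ii) and (ii)$\Rightarrow$(iii) it is essentially the paper's argument. The paper also reads off $k(0)=k(L)$ from $r(0)=r(L)$ and the explicit formula for $r$, deduces $L=2mP$ from the monotonicity of $k$ between its extrema, and then obtains $m\Delta z=0$ and $m\Delta\theta\in 2\pi\Z$.

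The paper can get by with $k$ alone because $\gamma_{\lambda,C^2}$ is normalized so that $s=0$ is a curvature maximum ($w(0)=1$ in Proposition~\ref{prop: existence}). Then $k(L)=1$ already forces $L\in 2P\Z$. Your extra use of $r_s(0)=r_s(L)$ to match $k'$ makes the argument independent of where the curve starts.

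Where you genuinely differ is (iii)$\Rightarrow$(i).
\begin{itemize}
\item The paper matches $\tilde\gamma'$ and $\tilde\gamma''$ at $0$ and $L$ through their cylindrical components. It then solves the Euler--Lagrange equation for $\tilde\gamma'''$ (using $|\tilde\gamma''|=k>0$) and shows inductively that all derivatives agree.
\item You observe that one curvature period acts as a fixed screw motion about the axis, with angle $\Delta\theta=2\pi a/b$ and translation $\Delta z=0$. Its $b$-th power is the identity.
\end{itemize}
Your route gives $\tilde\gamma(s+L)=\tilde\gamma(s)$ for all $s$ at once and avoids the induction, so it is the cleaner one. To make ``the same rigid motion maps the frame at $0$ to the frame at $L$'' precise, note that periodicity of $r$, $\theta_s$, $z_s$ gives $r(s+2P)=r(s)$, $\theta(s+2P)=\theta(s)+\Delta\theta$ and $z(s+2P)=z(s)+\Delta z$ for every $s$, not just $s=0$.

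Three small repairs are needed.
\begin{itemize}
\item The paragraph proposing to use \eqref{eq: first variation closed} inside (ii)$\Rightarrow$(iii) is circular and should be deleted. The cylindrical-coordinate argument you give afterwards is what actually works.
\item The claim that $\theta_s$ is strictly monotone in $k$ is false in general. For $p<2$, $\lambda=0$ and small $C$, the map $k\mapsto\theta_s$ has a critical point inside $(k_m,1)$. You do not need this claim, since $r$ (equivalently $z_s$) is strictly increasing in $k$, and $r_s$ then determines $k'$.
\item Both your argument and the paper's need the curve to stay off the axis. Only then is $\theta$ a genuine angular coordinate along the whole curve and $\Delta\theta$ really the rotation angle of the screw motion. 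Once $\Delta z=0$ is known (assumed in (iii), and obtained first in (ii)$\Rightarrow$(iii)), this follows from Proposition~\ref{prop: global growth of Delta z=0}. You should cite it, as the paper does in (iii)$\Rightarrow$(i).
\end{itemize}
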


\begin{proof}
The implication $(i) \implies (ii)$ is trivial, as noted in Remark~\ref{rmk: closed implies closed in C1}.
To show $(ii)\implies (iii)$, from $\tilde \gamma_{\lambda,C^2}(0)=\tilde \gamma_{\lambda,C^2}(L)$, we have $k(0)=k(L)$ as
\begin{equation*}
    \frac{p\sqrt{|J_0|^2k(0)^{2p-2} -p^2C^2}}{|J_0|^2}=r(0) = r(L) = \frac{p\sqrt{|J_0|^2k(L)^{2p-2} -p^2C^2}}{|J_0|^2}.
\end{equation*}
Since $k$ is monotone while going from $k_m$ to $k_M$ (which we assume to be $1$, after a suitable normalization), see Remark~\ref{rmk: k prime}, $k(0)=k(L)$ if and only if the length $L$ is a positive integer multiple $m$ of the curvature period $2P(\lambda,C^2)$.
   This periodicity of $k$ (thus of $\theta_s$, $z_s$, $r_s$ and $r$) implies for the cylindrical coordinates,
    \begin{equation*}
    \begin{split}
         m \Delta z(\lambda,C^2) &= z(2mP(\lambda,C^2))-z(0) = 0, \\
         m \Delta \theta(\lambda,C^2) &= \theta(2mP(\lambda,C^2))-\theta(0) = 0 \mod 2\pi,\\
    \end{split}
    \end{equation*}
   and hence $\Delta z(\lambda,C^2)=0$ and $\Delta \theta(\lambda,C^2) \in 2\pi \mathbb{Q}$.

    Lastly we show $(iii) \implies (i)$. 
    Indeed, $\Delta z(\lambda,C^2)=0$ and $\Delta \theta(\lambda,C^2) = 2\pi \frac{n}{m} \in 2\pi \mathbb{Q}$ implies for the corresponding arclength-parameterized open curve (constructed by the Frenet--Serret frame), $\tilde \gamma_{\lambda,C^2}(0) = \tilde \gamma_{\lambda,C^2}(L)$, where $L=2mP(\lambda,C^2)$. 
    Concerning the condition for $\tilde \gamma'_{\lambda,C^2}$, note that in cylindrical coordinates
    \begin{equation*}
       \tilde \gamma_{\lambda,C^2}'(s) = r_s(s) \frac{\partial}{\partial r}(s) + \theta_s(s) \frac{\partial}{\partial \theta}(s) + z_s(s) \frac{\partial}{\partial z}(s).
    \end{equation*}
    Since all the terms on the RHS are $L$-periodic, $\tilde \gamma'_{\lambda,C^2}(0) = \tilde \gamma_{\lambda,C^2}'(L)$. Moreover,
    \begin{equation*}
    \begin{split}
        \tilde \gamma_{\lambda,C^2}''(s)= \left(r_{ss}(s)-r(s)\theta_s(s)^2\right)\frac{\partial}{\partial r}(s)+\left(2\frac{r_s(s)}{r(s)}\theta_s(s)+\theta_{ss}(s)\right)\frac{\partial}{\partial \theta}(s) +z_{ss}(s)\frac{\partial}{\partial z}(s),
    \end{split}
    \end{equation*}
    and since $r_{ss}$, $\theta_{ss}$ and $z_{ss}$ are also $L$-periodic as derivatives of periodic functions, $\tilde \gamma''_{\lambda,C^2}(0)= \tilde \gamma''_{\lambda,C^2}(L)$.
    In particular, as $\{\Delta z=0\}$ and $\{r_{\mathcal{C}}=0\}$ only intersect at $(p-1,0)$ (see Proposition~\ref{prop: global growth of Delta z=0}), we have $s\mapsto r(s)$ being bounded away from zero. 
    From \eqref{eq: first variation}, 
    \begin{equation*}
       (|\tilde \gamma''|^{p-2}\tilde \gamma'')'(s)= A+\frac{1-2p}{p}|\tilde \gamma''(s)|^p \tilde\gamma'(s) + \frac{\lambda}{p} \tilde\gamma'(s)
    \end{equation*}
    for some $A\in \R^3$ and so after moving terms (cf.\ \cite[Proposition~2.7]{ourpaper})
    \begin{equation}
    \label{eq: expression for gamma'''}
    \begin{split}
    \tilde\gamma_{\lambda,C^2}'''(s) = |\tilde\gamma_{\lambda,C^2}''(s)|^{2-p}
    \left(I-\frac{p-2}{p-1}\frac{\tilde\gamma_{\lambda,C^2}''(s)\otimes \tilde\gamma_{\lambda,C^2}''(s)} {|\tilde\gamma_{\lambda,C^2}''(s)|^2} \right) \cdot \left( A+\left(\frac{1-2p}{p}|\tilde\gamma_{\lambda,C^2}''(s)|^p+\frac{\lambda}{p} \right) \tilde\gamma_{\lambda,C^2}'(s) \right),
    \end{split}
    \end{equation}
    i.e.\ $\tilde\gamma_{\lambda,C^2}'''(0)=\tilde\gamma_{\lambda,C^2}'''(L)$. 
    Differentiating \eqref{eq: expression for gamma'''} repeatedly (using the fact that $|\tilde\gamma_{\lambda,C^2}''(s)| =k >0$ on $[0,L]$), gives $\tilde\gamma_{\lambda,C^2}^{(k)}$ in terms of lower order derivatives, that is $\tilde\gamma_{\lambda,C^2}^{(k)}(0)=\tilde\gamma_{\lambda,C^2}^{(k)}(L)$ for every $k \in \N$ follows inductively.
    To conclude, the open curve $\tilde \gamma_{\lambda,C^2}$ is smoothly closed, i.e.\ $\gamma_{\lambda,C^2} \in C^\infty(\R/\Z;\R^3)$ and \eqref{eq: first variation closed} holds, since the boundary terms arising from integration by parts cancel.
\end{proof}

\section{The function \texorpdfstring{$\Delta z$}{Delta z}}
\label{section: Delta z}

The main object of this section is the function $\Delta z: S \cup \partial S_2 \to \R$ (which so far is only defined for $C^2>0$) with
\begin{equation}
  \label{eq: Delta z original}
   \Delta z(\lambda,C^2) = 2\int_{-P(\lambda,C^2)}^0 z_s(s) ds = 2\int_{-P(\lambda,C^2)}^0 \frac{(p-1)k_{\lambda,C^2}(s)^p-\lambda}{\sqrt{(p-1-\lambda)^2+p^2 C^2}} ds.
\end{equation}

\begin{lemma}
  \label{lemma: continuity of Delta z inside}
  The function $\Delta z$ is continuous in $S^0 \cup \partial S^0_2 \setminus (p-1,0)$.
\end{lemma}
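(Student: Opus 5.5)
We prove continuity of $\Delta z$ on $S^0 \cup \partial S^0_2 \setminus (p-1,0)$ by treating interior points and points of $\partial S^0_2$ separately, in both cases passing to the limit along a sequence $(\lambda_n,C_n^2)\to(\lambda,C^2)$. The prefactor $|J_0|^{-1}=((p-1-\lambda)^2+p^2C^2)^{-1/2}$ is continuous and finite on the whole set, because $(p-1,0)$ is excluded and $|J_0|=0$ only there. It therefore suffices to show continuity of
\begin{equation*}
  F(\lambda,C^2) := \int_{-P(\lambda,C^2)}^0 \big((p-1)k_{\lambda,C^2}(s)^p-\lambda\big)\, ds .
\end{equation*}

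\textbf{Interior points.} Fix $(\lambda,C^2)\in S^0$ and let $(\lambda_n,C_n^2)\to(\lambda,C^2)$. By Proposition~\ref{prop: continuity of period}, $P_n\to P\in(0,\infty)$. By Remark~\ref{rmk: local uniform convergence of w_n}, $k_n\to k$ uniformly on the compact set $K=[-P-1,0]$, and all $k_n$ take values in $[0,1]$. We split
\begin{equation*}
  F_n - F = \int_{-P}^0 (p-1)(k_n^p-k^p)\,ds - \lambda_n P_n + \lambda P + \int_{-P_n}^{-P}(p-1)k_n^p\,ds .
\end{equation*}
The first integral vanishes in the limit by uniform convergence. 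The middle terms converge since $\lambda_n\to\lambda$ and $P_n\to P$. The last integral is bounded by $(p-1)|P_n-P|\to0$.

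\textbf{Boundary points.} On $\partial S^0_2$ with $\lambda<p-1$, $\Delta z$ is a priori defined by the limit from inside, consistent with Remark~\ref{rmk: period of flatcore}. The natural value is obtained with $k\equiv1$ and $P=P_{\lambda,C^2}$:
\begin{equation*}
  \Delta z = 2\,\frac{(p-1-\lambda)\,P_{\lambda,C^2}}{|J_0|}.
\end{equation*}
Take $(\lambda_n,C_n^2)\to(\lambda,C^2)\in\partial S^0_2$ with $\lambda<p-1$. By Lemma~\ref{lemma: continuity of minimal curvature} and Proposition~\ref{prop: wm around S2:2}, $k_{m,n}\to1$, so $\|k_n-1\|_{L^\infty}\le 1-k_{m,n}\to0$, exactly as in the first case of Proposition~\ref{prop: pointwise convergence of w_n}. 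The third item of Proposition~\ref{prop: continuity of period} gives $P_n\to P_{\lambda,C^2}<\pi$. Hence
\begin{equation*}
  F_n = P_n\big((p-1)-\lambda_n\big) + O\big(P_n(1-k_{m,n}^p)\big) \to (p-1-\lambda)\,P_{\lambda,C^2},
\end{equation*}
using that $P_n$ is bounded. This proves continuity along sequences from inside, and the same estimate gives continuity along sequences that lie on $\partial S^0_2$ itself.

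The one point needing care is that the integration interval depends on the parameters. The bound on $\int_{-P_n}^{-P}$ and the use of Remark~\ref{rmk: local uniform convergence of w_n} on a fixed compact set handle this. The genuinely hard input, convergence of the period near $\partial S^0_2$, is already supplied by Proposition~\ref{prop: continuity of period}.
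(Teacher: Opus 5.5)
Your proof is correct and follows the paper's argument. As in the paper, you split off the mismatch integral over $[-P_n,-P]$, bounding it by $|P_n-P|$ times a uniform bound on the integrand, and you combine locally uniform convergence of $k_n$ (Remark~\ref{rmk: local uniform convergence of w_n}, which on $\partial S^0_2$ reduces to $\|k_n-1\|_{L^\infty}\le 1-k_{m,n}$) with Proposition~\ref{prop: continuity of period}; your separate treatment of $\partial S^0_2$ and your remark that $|J_0|$ vanishes only at $(p-1,0)$ just make the paper's one-paragraph proof more explicit.
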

\begin{proof}
  Let $(\lambda_n,C_n^2)\to(\lambda,C^2)\in S^0 \cup \partial S^0_2$ where $\lambda<p-1$, and denote their corresponding periods by $P_n$ and $P$. Then 
  \begin{equation*}
    \begin{split}
      |\Delta z(\lambda_n,C_n^2)-\Delta z(\lambda,C^2)| &= 2\left|\int_{-P_n}^0 (z_n)_s(s) ds - \int_{-P}^0 z_s(s)ds \right| \\
      &= 2\left| \int_{-P}^0(z_n)_s(s)ds - z_s(s)ds + \int_{-P_n}^{-P}(z_n)_s(s)ds \right| \\
      &\leq 2\|(z_n)_s-z_s\|_{L^1(0,P)} + 2|P_n-P|\|(z_n)_s\|_{L^\infty}.
    \end{split}
  \end{equation*}
  Since $(z_n)_s = \frac{(p-1)k_n^p-\lambda_n}{\sqrt{(p-1-\lambda_n)^2 + p^2 C_n^2}} \to \frac{(p-1)k^p-\lambda}{\sqrt{(p-1-\lambda)^2 + p^2 C^2}} = z_s$ in $L^1(K)$ for any compact set $K$ by Remark~\ref{rmk: local uniform convergence of w_n} (the denominator does not vanish thanks to $C^2>0$), the first term converges to zero. The second term converges also to zero by Proposition~\ref{prop: continuity of period} and uniform boundedness of $(z_n)_s$.
\end{proof}

By the change of variables $k=k(s)$, $dk=k'(s)ds$ and Remark~\ref{rmk: k prime}, we have for $(\lambda,C^2) \in S$,
\begin{equation*}
  \Delta z(\lambda,C^2) = 2(p-1) \int_{k_m}^{1} \frac{(p-1)k^p - \lambda}{|J_0|} \frac{1}{\sqrt{E k^{4-2p} -\frac{(p-1)^2}{p^2} k^{4} + 2\lambda \frac{p-1}{p^2} k^{4-p} - C^2 k^{6-4p}  }} dk,
\end{equation*}
where $E=\frac{(p-1)^2}{p^2}-2\lambda \frac{p-1}{p^2} + C^2$.
Letting $u=k^p$, $du = pk^{p-1}dk = pu^{\frac{p-1}{p}}dk$,
  \begin{equation}
  \label{eq: Delta z in u}
   \Delta z(\lambda,C^2)=\frac{2(p-1)}{p|J_0|} \int_{u_m}^{1} \frac{(p-1)u - \lambda}{\sqrt{  E u^{\frac{2}{p}} -\frac{(p-1)^2}{p^2} u^{\frac{2}{p}+2} + 2\lambda \frac{p-1}{p^2} u^{\frac{2}{p}+1} - C^2 u^{\frac{4}{p}-2}  }} du =: \frac{2(p-1)}{p|J_0|} \int_{u_m}^{1} \frac{(p-1)u - \lambda}{\sqrt{\phi(u)}} du.
  \end{equation}

\begin{remark}
  We note that for $(\lambda,C^2) \in S$ fixed, we have $\phi(u)=u^{\frac{2}{p}} h(u)$. Since $\phi'$ does not vanish at its two positive zeros ($u_m$ and $1$) from Lemma~\ref{lemma: estimate for gprime at wm} and Lemma~\ref{lemma: estimates for gprime and gbis at 1}, we estimate $\phi(u) \approx \phi'(u_m)(u-u_m)$ and $\phi(u) \approx \phi'(1)(u-1)$ around the singular points. 
  Roughly speaking, the integrand behaves like $\frac{1}{\sqrt{u}}$ around the endpoints, i.e.\ it is integrable and so $\Delta z(\lambda,C^2)$ is finite inside $S$. 
\end{remark}

We also directly can characterize the value of $\Delta z$ on parts of the boundary.
\begin{lemma}
  \label{lemma: positivity on diagonal}
  Let $(\lambda,C^2) \in \partial S_2^\rho$ with $\lambda < p-1$ or $(\lambda,C^2) \in \partial S_3^\rho$. Then $\Delta z(\lambda,C^2)>0$ .
\end{lemma}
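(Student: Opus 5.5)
Two cases, treated separately.

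\emph{Case $\partial S_2^\rho$ with $\lambda<p-1$.} Here $k\equiv 1$ and $\Delta z$ is understood as the limit from inside $S$ (continuity from Lemma~\ref{lemma: continuity of Delta z inside}). I would pass to the limit in \eqref{eq: Delta z original}. As $(\lambda_n,C_n^2)\to(\lambda,C^2)\in\partial S_2$, Proposition~\ref{prop: wm around S2:2} gives $k_{m,n}\to 1$, so by Proposition~\ref{prop: pointwise convergence of w_n} $k_n\to 1$ uniformly. Proposition~\ref{prop: continuity of period} gives $P_n\to P_{\lambda,C^2}$. Hence
\begin{equation*}
\Delta z(\lambda,C^2)=2P_{\lambda,C^2}\,\frac{p-1-\lambda}{\sqrt{(p-1-\lambda)^2+p^2C^2}}.
\end{equation*}
This is positive, because $\lambda<p-1$ and the limit period is positive. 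Positivity of the period follows from $P_{\lambda,C^2}=\pi/\sqrt{B}$ with $B=1+\frac{3p-2}{p(p-1)}(p-1-\lambda)$, taken from the proof of Proposition~\ref{prop: continuity of period}.

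\emph{Case $\partial S_3^\rho$, i.e.\ $\lambda=0$ and $0<C^2<\frac{p-1}{p}$.} The endpoints $(0,0)$ and $(0,\frac{p-1}{p})$ need separate treatment. The endpoint $(0,\frac{p-1}{p})$ lies in $\partial S_2$ and is covered by the first case. For interior points I would use \eqref{eq: Delta z in u}:
\begin{equation*}
\Delta z(0,C^2)=\frac{2(p-1)^2}{p|J_0|}\int_{u_m}^1\frac{u}{\sqrt{\phi(u)}}\,du .
\end{equation*}
The integrand is strictly positive on $(u_m,1)$ since $u_m>0$ (Lemma~\ref{lemma: continuity of minimal curvature}) and $\phi>0$ there. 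The integral is finite by the endpoint behaviour noted in the remark after \eqref{eq: Delta z in u}. So $\Delta z>0$ directly; equivalently, $z_s=(p-1)k^p/|J_0|>0$ pointwise when $\lambda=0$.

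The main subtlety is the corner $(0,0)$, where $C^2=0$ and the cylindrical coordinates degenerate. Two options:
\begin{itemize}
\item If $\Delta z$ is only required on $\{C^2>0\}$, this point is simply excluded.
\item Otherwise I would show $\liminf\Delta z>0$ along sequences in $S$ approaching $(0,0)$. Here $|J_0|\to p-1$, and by Remark~\ref{rmk: local uniform convergence of w_n} $k_n\to k_{0,0}$ locally uniformly, with $P_n\to P(0,0)>0$. So $\int_{-P_n}^0(p-1)k_n^p\,ds$ converges to $(p-1)\int_{-P}^0 k^p\,ds$, the integral of $(p-1)k^p$ over a half-period of the wavelike planar elastica, which is positive. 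The $-\lambda_n P_n/|J_0|$ term vanishes in the limit. Continuity at $\lambda=0$ would be checked via Lemma~\ref{lemma: continuity of Delta z inside} and Proposition~\ref{prop: continuity of period}.
\end{itemize}

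The only step needing care is justifying the limit interchange at the corners, which the cited continuity results handle.
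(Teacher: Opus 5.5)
Your proposal is correct and follows the paper's argument. For $\lambda=0$ the integrand $z_s=(p-1)k^p/|J_0|$ is pointwise positive, and on $\partial S_2$ with $\lambda<p-1$ the integrand tends to $(p-1-\lambda)/|J_0|>0$ as $k_m\to 1$. Your justification that the limiting half-period $\pi/\sqrt{B}$ is positive, and your separate treatment of the corner $(0,0)$, make explicit points the paper's short proof leaves implicit; at that corner the relevant continuity result is Lemma~\ref{lemma: limit os Delta z at S_1} rather than Lemma~\ref{lemma: continuity of Delta z inside}.
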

\begin{proof}
  The case $(\lambda,C^2) \in \partial S_3^\rho$ is trivial, for $(\lambda,C^2) \in \partial S_2^\rho$ with $\lambda < p-1$, from \eqref{eq: Delta z original}
  \begin{equation}
      \Delta z(\lambda,C^2) = \int_{-P(\lambda,C^2)}^0 \frac{(p-1)k(s)^p -\lambda}{\sqrt{(p-1-\lambda)^2 +p^2C^2}} \in (0,\infty),
  \end{equation}
  since the denominator is bounded and $(p-1)k(s)^p \geq (p-1)k_m^p \to p-1 >\lambda$.
\end{proof}

\subsection{Extension to \texorpdfstring{$\overline{S^\rho}$}{S rho}}

Note that since the cylindrical coordinates are only defined for $C^2>0$, i.e.\ $\Delta z$ is so far not defined on $\partial S_1$. We show that we can extend it continuously to  $\partial S^\rho_1$, that is a continuous extension exists to $\overline{S^\rho}$.

\begin{lemma}
  \label{lemma: limit os Delta z at S_1}
  Let $(\lambda_n,C^2_n) \to (\lambda,0) \in \partial S_1^\rho$ with $\lambda<p-1$, then
    \begin{equation*}
       \Delta z(\lambda_n,C_n^2) \to \frac{2(p-1)}{p|J_0|} \int_{u_{m}}^1 \frac{(p-1)u-\lambda}{u^{\frac{1}{p}}\sqrt{\left( \frac{(p-1)^2}{p^2}-2\lambda \frac{p-1}{p^2} \right)-\frac{(p-1)^2}{p^2} u^2 + 2\lambda\frac{p-1}{p^2}u}}du =: \Delta z(\lambda, 0).
    \end{equation*}
\end{lemma}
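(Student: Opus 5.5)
We work throughout with the $u$-representation \eqref{eq: Delta z in u}, writing
\[
\Delta z(\lambda_n,C_n^2)=\frac{2(p-1)}{p|J_0^n|}\int_{u_{m,n}}^1 \frac{(p-1)u-\lambda_n}{u^{1/p}\sqrt{h_n(u)}}\,du ,
\]
since $\phi_n(u)=u^{2/p}h_n(u)$. The prefactor is harmless: $|J_0^n|^2=(p-1-\lambda_n)^2+p^2C_n^2\to(p-1-\lambda)^2>0$ because $\lambda<p-1$. Moreover $h_n\to h:=h_{\lambda,0}$ locally uniformly on $(0,1]$ together with derivatives, and by Lemma~\ref{lemma: continuity of minimal curvature} we have $u_{m,n}\to u_m$, where $u_m=0$ if $\lambda<\frac{p-1}{2}$ and $u_m=\frac{2\lambda}{p-1}-1$ otherwise. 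The task is then to pass to the limit in the integral, and we split according to the two components of $\partial S_1^\rho$.

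\textbf{Orbitlike case $\lambda>\frac{p-1}{2}+\rho$.} Here $u_m>0$, so the weight $u^{-1/p}$ is bounded near $[u_m,1]$. This is the situation of Lemma~\ref{lemma: convergence of s-periods}. We substitute $u=u_{m,n}+(1-u_{m,n})t$ to get integrals over $[0,1]$ with pointwise convergent integrands. We then show a uniform $L^q$ bound, $q\in(1,2)$, exactly as in the first case of that lemma: both $h'(u_m)>0$ and $-h'(1)>0$ hold (via Lemmas~\ref{lemma: estimate for gprime at wm} and \ref{lemma: estimates for gprime and gbis at 1}, transported by $u=w^{p/(p-1)}$), so uniform convergence of $h_n'$ gives lower bounds for $h_n$ of the form $c(u-u_{m,n})$ and $c(1-u)$ near the endpoints, with a positive constant in the middle. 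Vitali's theorem then concludes.

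\textbf{Wavelike case $0\le\lambda<\frac{p-1}{2}-\rho$.} Here $u_{m,n}\to0$, and the term $-C_n^2u^{2/p-2}$ in $h_n$ is singular at $0$. The limit integrand behaves like $-\lambda u^{-1/p}/\sqrt{h(0)}$ near $0$, with $h(0)=\frac{(p-1)(p-1-2\lambda)}{p^2}>0$, so it is integrable. We split $[u_{m,n},1]$ into $[u_{m,n},a_n]$, $[a_n,\delta]$, $[\delta,1]$, where $a_n=2A_{\lambda,p}C_n^{2p/(2p-2)}$ is chosen from Proposition~\ref{prop: estimates for um}(i), so that $u_{m,n}<a_n$ and $h_n(a_n)\ge c>0$.

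On $[\delta,1]$ the endpoint argument at $u=1$ from the orbitlike case applies unchanged. On $[a_n,\delta]$ we use $h_n\ge c/2$ (by unimodality, Lemma~\ref{lemma: number of solutions}), and dominated convergence against $Cu^{-1/p}\mathbf 1_{u\ge a_n}$ gives convergence to $\int_0^\delta$ of the limit integrand. It remains to show that the contribution of $[u_{m,n},a_n]$ vanishes. There we bound $h_n(u)\ge h_n'(a_n)(u-u_{m,n})$, using concavity of $h_n$ on this interval, since the $C_n^2u^{2/p-4}$ term dominates $h_n''$ as in Lemma~\ref{lemma: convergence of s-periods}. Since
\[
h_n'(a_n)\gtrsim C_n^2a_n^{2/p-3}\sim C_n^{-2p/(2p-2)}\cdot C_n^{\,\cdot}\ \text{(a large power)},
\]
and the weight satisfies $u^{-1/p}\le u_{m,n}^{-1/p}\sim C_n^{-2/(2p-2)}$, the piece is bounded by
\[
u_{m,n}^{-1/p}\,h_n'(a_n)^{-1/2}\,a_n^{1/2}.
\]
Inserting the scalings $a_n\sim u_{m,n}\sim C_n^{p/(p-1)}$ and $h_n'(a_n)\sim C_n^2a_n^{2/p-3}$ gives
\[
C_n^{-1/(p-1)}\cdot C_n^{-1}a_n^{3/2-1/p}\cdot a_n^{1/2}\sim C_n^{-1/(p-1)-1+(2-1/p)p/(p-1)}=C_n^{0}.
\]

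\textbf{Main obstacle.} This exponent bookkeeping on $[u_{m,n},a_n]$ is the step I expect to be the main obstacle, because the naive bound comes out exactly of order one rather than $o(1)$. To obtain decay I would instead take $a_n=Ku_{m,n}$ and bound the piece by $\int_{u_{m,n}}^{Ku_{m,n}}u^{-1/p}/\sqrt{h_n}$. Rescaling $u=u_{m,n}v$ shows this equals $u_{m,n}^{1-1/p}$ times a bounded integral, because $h_n(u_{m,n}v)$ converges to the positive profile $h(0)\bigl(1-v^{2/p-2}\bigr)$. This is $O\bigl(C_n^{1}\bigr)\to0$, and the middle region then starts at $Ku_{m,n}$, where $h_n\ge c(K)>0$.
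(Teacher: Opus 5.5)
Your proof is correct, but the ``main obstacle'' comes from an arithmetic slip. The exponent is $-\frac{1}{p-1}-1+\frac{2p-1}{p-1}=\frac{p-1}{p-1}=1$, not $0$. So your first bound on $[u_{m,n},a_n]$, with $a_n=2A_{\lambda,p}C_n^{p/(p-1)}$, is already $O(C_n)$ and closes the argument by itself. This agrees with your rescaled bound $u_{m,n}^{1-1/p}\sim C_n$, as it must: both choices have $a_n\asymp u_{m,n}$.

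Two smaller points. First, you need no dominance argument for $h_n(u)\ge h_n'(a_n)(u-u_{m,n})$, because $h_n''(u)=-2\frac{(p-1)^2}{p^2}-\frac{(2p-2)(3p-2)}{p^2}C_n^2u^{\frac{2}{p}-4}<0$, so $h_n$ is concave on all of $(0,\infty)$. Second, in the orbitlike case the signs $h'(u_m)>0>h'(1)$ at $C^2=0$ follow directly from the factorization $h=\frac{(p-1)^2}{p^2}(1-u)(u-u_m)$. Lemma~\ref{lemma: estimate for gprime at wm} is a statement near $\partial S_2$ and is not the right citation.

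Your route differs from the paper's, which never passes to the limit in the $u$-integral. The paper stays in arclength. There $\lambda<p-1$ keeps $|J_0|$ bounded below, so $(z_n)_s$ is uniformly bounded, and
\[
|\Delta z(\lambda_n,C_n^2)-\Delta z(\lambda,0)|\le 2\|(z_n)_s-z_s\|_{L^1(0,P)}+2|P_n-P|\,\|(z_n)_s\|_{L^\infty(\R)}.
\]
Both terms vanish by Proposition~\ref{prop: continuity of period} and the local uniform convergence $k_n\to k$ of Remark~\ref{rmk: local uniform convergence of w_n}. The $u$-formula appears only at the end, via $u=k^p$ in the limit.

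The boundary-layer analysis as $u_{m,n}\to 0$ is done once, in Lemma~\ref{lemma: convergence of s-periods}, in the $w$-variable. There the measure is $dw/\sqrt{g(w)}$ and the integrand $z_s$ is bounded. The weight $u^{-1/p}$ you have to handle is simply the Jacobian of $w=u^{(p-1)/p}$, introduced by your choice of variable.

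Your argument is self-contained at the level of the integral formula, does not need Proposition~\ref{prop: pointwise convergence of w_n}, and gives an explicit rate $O(C_n)$ for the inner layer. The cost is redoing an endpoint analysis that the paper simply reuses.
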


\begin{proof}
  We note that $P_n=P(\lambda_n,C^2_n)$ and $k_n=k(\lambda_n,C^2_n)$ have well-defined limits $P$ and $k$ by Proposition~\ref{prop: continuity of period} and ~\ref{prop: pointwise convergence of w_n}. Since also $\lambda<p-1$, we have 
\begin{equation*}
  \lim_n (z_n)_s= \lim_n \frac{(p-1)k_n^p-\lambda_n}{\sqrt{(p-1-\lambda_n)^2+p^2C_n^2}} \to \frac{(p-1)k^p-\lambda}{\sqrt{(p-1-\lambda)^2}}=: z_s
\end{equation*}
  in $L^1(K)$ for any compact $K$ by Remark~\ref{rmk: local uniform convergence of w_n}. Thereby defining 
  \begin{equation*}
    \Delta z(\lambda,0) := 2\int_{-P}^0 \frac{(p-1)k(s)^p-\lambda}{\sqrt{(p-1-\lambda)^2}}\, ds, 
  \end{equation*}
  we have 
  \begin{equation*}
    \begin{split}
      |\Delta z(\lambda_n,C_n^2)-\Delta z(\lambda,0)| &= \left|2\int_{-P_n}^0 (z_n)_s \, ds - 2\int_{-P}^0 z_s \, ds \right| \\
      &= \left| 2\int_{-P}^0(z_n)_s - z_s \, ds + 2\int_{-P_n}^{-P}(z_n)_s \, ds\right| \\
      &\leq2 \|(z_n)_s - z_s\|_{L^1(0,P)} + 2|P_n-P|\|(z_n)_s\|_{L^\infty(\R)}.
    \end{split}
  \end{equation*}
  
  Since $P$ is finite, see Corollary~\ref{cor: finiteness of period}, the first term converges. Note that $(z_n)_s$ is uniformly bounded as $\lambda<p-1$ (that is the denominator is bounded below) and so the second term also vanishes in the limit, using again Proposition~\ref{prop: continuity of period}.
  The integral formula follows by the change of variable $u=k(s)^p$. 
\end{proof}

\begin{remark}
  We have deliberately used $S^\rho$, to avoid dealing with the special case $\lambda=\frac{p-1}{2}$, where the period might become infinite. In fact, formally  
  \begin{equation*}
   \Delta z \left( \tfrac{p-1}{2}, 0 \right) = \frac{4}{p} \int_{0}^1 \frac{(p-1)u-\frac{p-1}{2}}{u^{\frac{1}{p}}\sqrt{ \frac{(p-1)^2}{p^2}(u- u^2)}}du = \frac{4}{p-1} \int_{0}^1 \frac{(p-1)u-\frac{p-1}{2}}{u^{\frac{1}{p}+\frac{1}{2}} \sqrt{1-u}}du.
  \end{equation*}
  The singularity at $u=1$ is always integrable, but the singularity at $u=0$ only if $p>2$, see also Remark~\ref{rmk: period of flatcore}.
\end{remark}

We now evaluate the limiting behavior of $\Delta z$ around the endpoint $(p-1,0)$.

\begin{proposition}
  \label{prop: limit of Delta z at p-1}
  Let $S \ni (\lambda,C^2) \to (p-1,0)$. Then $\Delta z(\lambda,C^2) \to 0$. 
  Furthermore, for any fixed $\sigma \in (1,2)$ there exists $\delta=\delta(p,\sigma)$ such that for all $\lambda \in (p-1-\delta,p-1)$ and for all $C^2 \in [(p-1-\lambda)^\sigma, \frac{p-1-\lambda}{p}]$, we have $\Delta z(\lambda,C^2)>0$.
\end{proposition}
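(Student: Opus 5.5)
The plan is to rewrite $\Delta z$ as a leading term plus errors that are small relative to $p-1-\lambda$. For $(\lambda,C^2)\in S$ near $(p-1,0)$, split $(p-1)k^p-\lambda=(p-1-\lambda)-(p-1)(1-k^p)$ in \eqref{eq: Delta z original} to get
\begin{equation*}
  \Delta z(\lambda,C^2)=\frac{2}{|J_0|}\Big[(p-1-\lambda)P(\lambda,C^2)-(p-1)\int_{-P}^0\big(1-k(s)^p\big)\,ds\Big].
\end{equation*}
Write $\eta:=p-1-\lambda$ and $\eps:=\tfrac{p-1-\lambda}{p}-C^2\in[0,\eta/p]$. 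The goal is the expansion
\begin{equation*}
  \Delta z=\frac{2P}{|J_0|}\big(\eta-p\eps\big)+\frac{O(\eta^{\gamma})}{|J_0|}=\frac{2pPC^2}{|J_0|}+\frac{O(\eta^{\gamma})}{|J_0|},
\end{equation*}
for any fixed $\gamma\in(1,2)$. Since $|J_0|^2=\eta^2+p^2C^2$, we have $|J_0|\ge \max(\eta,pC)$. The main term is then at most $2PC\le 2\pi C+o(1)\to0$, using Proposition~\ref{prop: continuity of period} ($P\to\pi$). The error is $O(\eta^{\gamma-1})\to 0$. This gives the first claim.

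For the second claim, fix $\sigma\in(1,2)$ and choose $\gamma\in(\sigma,2)$. If $C^2\ge\eta^\sigma$, then $2pPC^2\ge p\pi\eta^\sigma$ for $\eta$ small. This dominates the $O(\eta^\gamma)$ error, so the bracket is positive and $\Delta z>0$ for $\eta<\delta(p,\sigma)$. The range $C^2\le \eta/p$ is exactly the admissible region, so no further case distinction is needed.

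The core step is computing $\int_{-P}^0(1-k^p)\,ds$ to accuracy $O(\eta^\gamma)$. First replace $1-k^p=1-w^{p/(p-1)}$ by $\tfrac{p}{p-1}(1-w)$. Since $0\le 1-w\le 1-w_m=O(\eps)$ by Proposition~\ref{prop: wm around S2:2}, this costs $O(\eps^2)$ pointwise, hence $O(\eta^2)$ after integrating over a bounded period. Then change variables $ds=dw/\sqrt{g(w)}$ to get
\begin{equation*}
  \int_{-P}^0(1-w)\,ds=\int_{w_m}^1\frac{1-w}{\sqrt{g(w)}}\,dw.
\end{equation*}
Next compare $g$ with the quadratic interpolant $G(w)=B(w-w_m)(1-w)$ from the proof of Proposition~\ref{prop: continuity of period}. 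This gives the exact values
\begin{equation*}
  \int_{w_m}^1\frac{1-w}{\sqrt{G}}\,dw=\frac{\pi(1-w_m)}{2\sqrt B},\qquad \int_{w_m}^1\frac{dw}{\sqrt G}=\frac{\pi}{\sqrt B}.
\end{equation*}
The interpolation error is controlled as in that proof: $\sup|Q-B|\lesssim \sup_{[w_m,1]}|g''-g''(1)|+O(\eps^{\gamma-1})$. After multiplying by the weight $1-w=O(\eps)$ this yields an $O(\eps^{\gamma})$ error, and the same bound holds for $P$ itself.

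Combining, $(p-1)\int(1-k^p)\,ds=\tfrac{p}{2}(1-w_m)P+O(\eta^\gamma)$. By Corollary~\ref{cor: approx of A from estimate at p-1}, $1-w_m=2\eps+O(\eta^\gamma)$, so this equals $p\eps P+O(\eta^\gamma)$, which is the claimed expansion.

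I expect the main obstacle to be uniformity: every $O(\cdot)$ must hold uniformly over the whole wedge $0<C^2\le\eta/p$, including $\eps\to0$ (helices) and $C\to 0$. The constants from Proposition~\ref{prop: wm around S2:2} and Corollary~\ref{cor: approx of A from estimate at p-1} are uniform in this sense, and $g''$ is uniformly Lipschitz near $w=1$ there. The delicate point is that the error in $\int(1-w)/\sqrt g$ must be genuinely $O(\eps\cdot\eta^{\gamma-1})$ and not merely $o(\eta)$. If the weighted interpolation estimate turns out too crude, I would fall back to a direct Taylor expansion of $g$ about $1$ to third order, which gives an explicit $O(\eps)$ relative error.
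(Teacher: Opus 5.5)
Your argument is correct. It uses the same mechanism as the paper: a quadratic interpolant with the same two roots, explicit arcsine-type integrals, and $1-w_m=2\eps+O(\eta^\gamma)$ from Corollary~\ref{cor: approx of A from estimate at p-1}. The decomposition, however, is different.

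\textbf{The paper's route.} It works in $u=k^p$ and interpolates $\phi$ by $\Phi=B(u-u_m)(1-u)$. It evaluates the main part in closed form via $u=\frac{1+u_m}{2}+\frac{1-u_m}{2}\cos\theta$, which requires the explicit value $B=\frac{(p-1)^2}{p^2}+O(\eta)$. The interpolation error is bounded crudely as an additive $O(\eta)$. This gives $\Delta z=2\pi\frac{pC^2+O(\eta^\gamma)}{|J_0|}+O(\eta)$. Positivity then uses $|J_0|\le\sqrt{2}\,pC$ and forces $\gamma\in(1+\frac{\sigma}{2},2)$.

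\textbf{Your route.} Subtracting the constant $\eta$ leaves a small integrand $(p-1)(1-k^p)=O(\eps)$. Keeping $P$ as a factor exploits that $\int_{w_m}^1\frac{1-w}{\sqrt G}\,dw$ and $\int_{w_m}^1\frac{dw}{\sqrt G}$ have ratio exactly $\frac{1-w_m}{2}$. So $B$ cancels and its value is never needed. All errors sit inside one bracket, $\Delta z=\frac{2}{|J_0|}\bigl(pPC^2+O(\eta^\gamma)\bigr)$. Positivity then reduces to $C^2\gg\eta^\gamma$, i.e.\ $\gamma\in(\sigma,2)$, with no comparison of $|J_0|$ and $C$. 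You also reuse the $g$-interpolant from Proposition~\ref{prop: continuity of period} rather than building a new one for $\phi$. The only cost is the harmless linearization $1-k^p=\frac{p}{p-1}(1-w)+O(\eps^2)$.

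\textbf{Two precision points.}

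First, the unweighted interpolation error for $P$ is not $O(\eps^\gamma)$. With the bound you quote it is $O(\eps^{\gamma-1})$. It is in fact $O(\eps)$: with $\bar w=\frac{1+w_m}{2}$ one has $B=Q(\bar w)$ by construction. Hence $|Q(w)-B|=\frac12|g''(\zeta_w)-g''(\zeta_{\bar w})|$, which is at most $\frac12(1-w_m)$ times a Lipschitz constant of $g''$ on $[\frac12,1]$. Either bound suffices, because the error in $P$ enters only multiplied by $1-w_m=O(\eps)$, and that is all you use.

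Second, the endpoint $C^2=\frac{p-1-\lambda}{p}$ lies on $\partial S_2$, not in $S$. There $\Delta z$ means the continuous extension (equivalently Lemma~\ref{lemma: positivity on diagonal}). Your expansion is uniform, so it passes to this limit and yields a positive value; indeed $k\equiv 1$ gives $\frac{2\eta P}{|J_0|}$.
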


\begin{proof}
Recall that for $(\lambda,C^2) \in S$,
  \begin{equation*}
    \Delta z(\lambda,C^2)  =\frac{2(p-1)}{p|J_{0}|}\int_{u_{m}}^1 \frac{(p-1)u-\lambda}{\sqrt{\phi(u)}}du, 
  \end{equation*}
  where 
  \begin{equation*}
    \phi(u) = u^{\frac{2}{p}} h(u) = \left(\frac{(p-1)^2}{p^2}-2\lambda\frac{p-1}{p^2}+C^2 \right) u^{\frac{2}{p}} -\frac{(p-1)^2}{p^2} u^{\frac{2}{p}+2} + 2\lambda \frac{p-1}{p^2} u^{\frac{2}{p}+1} - C^2 u^{\frac{4}{p}-2}.
  \end{equation*}
  Fix $\gamma\in (1+\frac{\sigma}{2},2)$. 
  By Corollary~\ref{cor: approx of A from estimate at p-1}, there exists $\eps_\gamma>0$ such that for $|\tfrac{p-1-\lambda}{p}-C^2|<\eps_\gamma$,
  \begin{equation}
    \label{eq: estimate for um}
    \begin{split}
      u_m &= w_m^{\frac{p}{p-1}} = \left[1-2\left(\tfrac{p-1-\lambda}{p}-C^2 \right)+ O(|p-1-\lambda|^\gamma)\right]^{\frac{p}{p-1}} = 1 - \frac{2p}{p-1}\left(\tfrac{p-1-\lambda}{p}-C^2 \right) + O(|p-1-\lambda|^\gamma),
    \end{split}
  \end{equation}
  and thus Lemma~\ref{lemma: estimates for gprime and gbis at 1} gives
  \begin{equation}
    \begin{split}
      \label{eq: derivatives of phi}
      \phi'(u) &= \frac{2}{p} u^{\frac{2}{p}-1} h(u) + u^{\frac{2}{p}} h'(u) = \frac{2}{p} u^{\frac{2}{p}-1} h(u) + \frac{p-1}{p} u^{\frac{1}{p}} g'(u^{\frac{p-1}{p}}),\\
      \phi'(1) &= h'(1) = \frac{p-1}{p} g'(1) = -2\frac{p-1}{p} \left(\tfrac{p-1-\lambda}{p}-C^2 \right), \\
      \phi''(u) &= \frac{2}{p}\left(\frac{2}{p}-1\right) u^{\frac{2}{p}-2} h(u) + 2 \frac{2}{p}u^{\frac{2}{p}-1} h'(u) + u^\frac{2}{p} h''(u),\\
      \phi''(1) &= \frac{4}{p} h'(1) + h''(1) = \frac{(p-1)^2}{p^2} g''(1) + \frac{3(p-1)}{p^2} g'(1) = -2\frac{(p-1)^2}{p^2}  + O(|p-1-\lambda|).
    \end{split}
  \end{equation}
  Note also that $\|\phi'''\|_{L^\infty(\frac{1}{2},1)}\leq L_p$ for some constant $L_p>0$, i.e.\ $\phi''$ is Lipschitz continuous on $[\frac{1}{2},1]$.
  We note that $\phi$ has zeros at $u_m$ and $1$ and so we approximate it by the quadratic polynomial
  \begin{equation*}
    \Phi (u) := B(u-u_m)(1-u) = 4\frac{\phi(\frac{u_{m}+1}{2})}{(1-u_{m})^2}(u-u_{m})(1-u),
  \end{equation*}
  which has the same roots and the factor $B$ is chosen such that $\Phi(\frac{u_{m}+1}{2})=\phi(\frac{u_{m}+1}{2})$. 
  This is very similar to the argument in Proposition~\ref{prop: continuity of period}. 
  By Lagrange interpolation we have for some $\xi_u \in (u_m,1)$ the uniform estimate
  \begin{equation}
      Q(u):= \frac{\phi(u)}{(1-u)(u-u_m)} = -\frac{1}{2} \phi''(\xi_u) = -\frac{1}{2}\phi''(1) - \frac{1}{2}(\phi''(\xi_u)-\phi''(1)) = \frac{(p-1)^2}{p^2} + O(|p-1-\lambda|),
  \end{equation}
  using $|\phi''(\xi_u)-\phi''(1)|\leq \|\phi'''\|_{L^\infty(\frac{1}{2},1)}|1-u_m| = O(|p-1-\lambda|)$.
  Therefore $B=Q(\frac{u_m+1}{2})=\frac{(p-1)^2}{p^2} + O(|p-1-\lambda|)$ is strictly positive.
  We split the integral into
  \begin{equation*}
    \Delta z(\lambda,C^2) = \frac{2(p-1)}{p} \int_{u_{m}}^1 \frac{(p-1)u-\lambda}{|J_{0}|\sqrt{\Phi(u)}}du + \frac{2(p-1)}{p}\int_{u_{m}}^1 \frac{(p-1)u-\lambda}{|J_{0}|} \left(\frac{1}{\sqrt{\phi(u)}} -\frac{1}{\sqrt{\Phi(u)}} \right)du =: I_1+I_2.
  \end{equation*}

 \noindent For the error term $I_2$, since $(p-1)u_m-\lambda = O(|p-1-\lambda|)$ by \eqref{eq: estimate for um} we have
  \begin{equation*}
    \begin{split}
      |I_2| &= \frac{2(p-1)}{p} \left| \int_{u_m}^1 \frac{(p-1)u-\lambda}{|J_0|} \left(\frac{1}{\sqrt{\phi(u)}} -\frac{1}{\sqrt{\Phi(u)}} \right)du \right| \\
      &\leq \frac{2(p-1)}{p} \sup_{u\in (u_m,1)}\left| \frac{(p-1)u-\lambda}{|J_0|} \right| \int_{u_m}^1  \frac{1}{\sqrt{(u-u_m)(1-u)}} \left|\frac{1}{\sqrt{Q(u)}} -\frac{1}{\sqrt{B}} \right|du  \\
      &\leq \frac{2(p-1)}{p} \frac{O(|p-1-\lambda|)}{|J_0|} \sup_{u\in(u_m,1)} \left|\frac{1}{\sqrt{Q(u)}} -\frac{1}{\sqrt{B}} \right| \, \int_{u_m}^1  \frac{1}{\sqrt{(u-u_m)(1-u)}} du  \\
      &\leq  \frac{2\pi(p-1)}{p} \sup_{u\in(u_m,1)} \left|\frac{1}{\sqrt{Q(u)}} -\frac{1}{\sqrt{B}} \right| = \frac{2\pi(p-1)}{p} \sup_{u\in(u_m,1)} \left|\frac{Q(u)-B}{\sqrt{Q(u)B} \left(\sqrt{Q(u)}+\sqrt{B}\right)} \right| = O(|p-1-\lambda|). 
    \end{split}
  \end{equation*}

 \noindent For $I_1$ a closed form exists, i.e.\ by the substitution $u=\frac{1+u_m}{2} + \frac{1-u_{m}}{2}\cos \theta$, $du = \frac{u_{m}-1}{2} \sin \theta d\theta$ for  $\theta \in [0,\pi]$,
  \begin{equation*}
    \begin{split}
      I_1 &= \frac{2(p-1)}{p} \int_{u_{m}}^1 \frac{(p-1)u-\lambda}{|J_{0}|\sqrt{B(u-u_m)(1-u)}}du \\
      &= \frac{2(p-1)}{p} \frac{1}{|J_0|\sqrt{B}} \int_0^\pi (p-1)\left(\frac{1+u_m}{2} + \frac{1-u_m}{2}\cos \theta\right) - \lambda\, d\theta \\
      &=\frac{2(p-1)}{p} \frac{\pi}{|J_0|\sqrt{B}}\left(\frac{(p-1)(1+u_m)}{2} -\lambda\right).
    \end{split}
  \end{equation*}
  
  \noindent Thus by \eqref{eq: estimate for um},
  \begin{equation*}
    \begin{split}
      I_1 &=\frac{2(p-1)}{p} \frac{\pi}{|J_0|\sqrt{B}}\left( \frac{(p-1)\left(2 - \frac{2p}{p-1}\left(\tfrac{p-1-\lambda}{p}-C^2 \right) + O(|p-1-\lambda |^\gamma)\right) }{2}-\lambda \right) \\
        &= \frac{2\pi(p-1)}{p} \frac{pC^2 + O(|p-1-\lambda|^\gamma) } { \sqrt{(p-1-\lambda)^2+p^2C^2}  \sqrt{\frac{(p-1)^2}{p^2}+O(|p-1-\lambda|) } } = 2\pi  \frac{pC^2 + O(|p-1-\lambda|^\gamma) } { \sqrt{(p-1-\lambda)^2+p^2C^2} } + O(|p-1-\lambda|), 
    \end{split}
  \end{equation*}
    and combining the estimates for $I_1$ and $I_2$,
    \begin{equation*}
        \Delta z(\lambda,C^2) = 2 \pi \frac{pC^2 + O(|p-1-\lambda|^\gamma)}{\sqrt{(p-1-\lambda)^2 + p^2C^2}} + O(|p-1-\lambda|).
    \end{equation*}
    As $\lambda \to p-1$,
      \begin{equation*}
          |\Delta z(\lambda,C^2)| \leq 2\pi C + \left( 2\pi p +1 \right) O(|p-1-\lambda|^{\gamma-1}) \leq O(|p-1-\lambda|^{\min(\frac{1}{2},\gamma-1)}) \to 0.
      \end{equation*}
    In addition, if $C^2 \in [(p-1-\lambda)^\sigma, \frac{p-1-\lambda}{p})$,
    \begin{equation*}
        \Delta z(\lambda,C^2) \geq 2\pi p \frac{C^2}{\sqrt{2p^2C^2}} + O(|p-1-\lambda|^{\gamma-1}) \geq \sqrt{2} \pi  (p-1-\lambda)^{\sigma/2} + O(|p-1-\lambda|^{\gamma-1}),
    \end{equation*}
    which is strictly positive, provided that $|p-1-\lambda|<\delta(p,\sigma)$ is sufficiently small. The case $C^2 = \frac{p-1-\lambda}{p}$ follows directly from Lemma~\ref{lemma: positivity on diagonal}.
\end{proof}

Combining Lemma~\ref{lemma: continuity of Delta z inside}, Lemma~\ref{lemma: limit os Delta z at S_1} and Proposition~\ref{prop: limit of Delta z at p-1}, we obtain the following.
\begin{proposition}
  \label{prop: existence of continuous extension}
  The function $\Delta z$ on $S$ has a continuous extension (also denoted by $\Delta z$) to $\overline{S^\rho}$ for any $\rho>0$.
\end{proposition}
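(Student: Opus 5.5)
We define the extension piecewise and then check continuity at every point of $\overline{S^\rho}$. On $S^\rho$ (so $C^2>0$) and on $\partial S^\rho_2\setminus\{(p-1,0)\}$ and $\partial S^\rho_3$, $\Delta z$ is already given by \eqref{eq: Delta z original}. On $\partial S^\rho_2$ this is understood with $k\equiv 1$ and $P$ the limit from inside, as in Remark~\ref{rmk: period of flatcore}. On $\partial S^\rho_1\setminus\{(p-1,0)\}$ we set $\Delta z(\lambda,0)$ to be the value defined in Lemma~\ref{lemma: limit os Delta z at S_1}. At the corner $(p-1,0)$ we set $\Delta z(p-1,0):=0$. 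Note that $\overline{S^\rho}$ avoids $B_\rho((\frac{p-1}{2},0))$, so the point $(\frac{p-1}{2},0)$ is never approached.

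Continuity then splits by the location of the limit point $(\lambda,C^2)\in\overline{S^\rho}$, with an arbitrary sequence $(\lambda_n,C_n^2)\in\overline{S^\rho}$ converging to it.
\begin{enumerate}
\item[(a)] If $(\lambda,C^2)\in \overline{S^\rho}\setminus(\partial S_1^\rho\cup\{(p-1,0)\})$, then $C^2>0$ and $\lambda<p-1$. Lemma~\ref{lemma: continuity of Delta z inside} applies, since $\overline{S^\rho}\setminus\partial S^\rho_1\subset S^0\cup\partial S^0_2$.
\item[(b)] If $(\lambda,0)\in\partial S^\rho_1$ with $\lambda<p-1$, we split the sequence into two subsequences. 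Along terms with $C_n^2>0$, Lemma~\ref{lemma: limit os Delta z at S_1} gives convergence to $\Delta z(\lambda,0)$. Along terms lying in $\partial S^\rho_1$, we need continuity of $\lambda\mapsto\Delta z(\lambda,0)$ itself. We obtain it by repeating the same estimate, $2\|(z_n)_s-z_s\|_{L^1(0,P)}+2|P_n-P|\,\|(z_n)_s\|_\infty$, using Proposition~\ref{prop: continuity of period}, Remark~\ref{rmk: local uniform convergence of w_n} and the lower bound $|J_0|\ge p-1-\lambda_n>0$ on the denominator. Both of these cited results allow limits in $\partial S_1^0$ away from $(\frac{p-1}{2},0)$.
\item[(c)] At the corner $(p-1,0)$, we again split the sequence. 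Interior points are handled by Proposition~\ref{prop: limit of Delta z at p-1}, which gives $\Delta z\to 0$. Points on $\partial S^\rho_2$ are also covered by the estimate $|\Delta z|\le 2\pi C+O(|p-1-\lambda|^{\gamma-1})$ from that proof, which holds uniformly up to $C^2=\frac{p-1-\lambda}{p}$; alternatively one can compute directly $\Delta z=\frac{2P(p-1-\lambda)}{|J_0|}\le 2\pi\sqrt{(p-1-\lambda)/p}\to0$.
\end{enumerate}

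The main obstacle is the corner in case (c), specifically the points of $\partial S^\rho_1$ near $(p-1,0)$. There we must show $\Delta z(\lambda,0)\to0$ as $\lambda\uparrow p-1$, where $|J_0|=p-1-\lambda\to0$ sits in the denominator. I would redo the quadratic-approximation argument of Proposition~\ref{prop: limit of Delta z at p-1} with $C=0$. In that setting $u_m=\frac{2\lambda}{p-1}-1$ exactly, and the same computation gives $I_1=\frac{2\pi(p-1)}{p|J_0|\sqrt B}\bigl(\frac{(p-1)(1+u_m)}{2}-\lambda\bigr)=0$. The error term $I_2$ is bounded by $\sup|(p-1)u-\lambda|/|J_0|\cdot O(|p-1-\lambda|)$. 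Since $(p-1)u-\lambda$ ranges over $[\lambda-(p-1),\,p-1-\lambda]$, this ratio is bounded, so $I_2=O(|p-1-\lambda|)\to0$.

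Combining (a)–(c) gives sequential continuity at every point of $\overline{S^\rho}$, which is the claimed continuous extension.
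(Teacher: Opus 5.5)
Your proposal is correct and follows the paper's route: the paper's proof is exactly the combination of Lemma~\ref{lemma: continuity of Delta z inside}, Lemma~\ref{lemma: limit os Delta z at S_1} and Proposition~\ref{prop: limit of Delta z at p-1}. The extra cases you check by hand (sequences running along $\partial S^\rho_1$ or $\partial S^\rho_2$, including the corner computation with $C=0$ that you flag as the main obstacle) are done correctly, but they are not needed: every boundary value is itself a limit of interior values, so replacing each $(\lambda_n,C_n^2)$ by an interior point within $1/n$ of it, whose $\Delta z$-value is within $1/n$, reduces all of them to interior sequences.
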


\subsection{The set \texorpdfstring{$\Delta z =0$}{Delta z =0}}

We now seek conditions on $C^2$ and $\lambda$ that make $\Delta z(\lambda,C^2)$ vanish. 
First, we characterize the value of $\Delta z$ on the boundary $\partial S^\rho_1$.

\begin{lemma}
  \label{lemma: Delta z on C=0} The following holds.
  \begin{itemize}
    \item If $\lambda \in (\frac{p-1}{2},p-1)$, then $\Delta z(\lambda, 0) < 0$. 
    \item There exists $\lambda^*_p \in (0,\frac{p-1}{2})$ such that $\Delta z(\lambda, 0) \geq 0$ for $\lambda \in [0,\lambda^*_p)$, $\Delta z(\lambda^*_p, 0) = 0$ and $\Delta z(\lambda, 0) \leq 0$ for $\lambda \in (\lambda^*_p,\frac{p-1}{2})$.
  \end{itemize}
\end{lemma}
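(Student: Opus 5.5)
The plan is to compute $\Delta z(\lambda,0)$ from the explicit integral in Lemma~\ref{lemma: limit os Delta z at S_1} and reduce each claim to a sign question for a weighted integral. The key step is to factor the quadratic under the square root. With $a:=1-\frac{2\lambda}{p-1}$,
\begin{equation*}
\Big(\tfrac{(p-1)^2}{p^2}-2\lambda\tfrac{p-1}{p^2}\Big)-\tfrac{(p-1)^2}{p^2}u^2+2\lambda\tfrac{p-1}{p^2}u=\tfrac{(p-1)^2}{p^2}(1-u)(u+a),
\end{equation*}
since $(p-1)(1-u^2)-2\lambda(1-u)=(1-u)\big((p-1)(1+u)-2\lambda\big)$. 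Also $|J_0|=p-1-\lambda>0$. Hence, up to a positive factor, $\Delta z(\lambda,0)$ equals
\begin{equation*}
F(\lambda):=\int_{u_m}^1 \frac{(p-1)u-\lambda}{u^{1/p}\sqrt{(1-u)(u+a)}}\,du .
\end{equation*}
Here $u_m=-a=\frac{2\lambda}{p-1}-1$ if $\lambda>\frac{p-1}{2}$, and $u_m=0$ if $\lambda<\frac{p-1}{2}$, matching the remark on the planar theory.

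\emph{Orbitlike case $\lambda\in(\frac{p-1}{2},p-1)$.} Here $(p-1)u-\lambda=(p-1)\big(u-\frac{1+u_m}{2}\big)$ is odd about the midpoint $c=\frac{1+u_m}{2}$ of $[u_m,1]$. The weight $((1-u)(u-u_m))^{-1/2}$ is symmetric about $c$. I will pair $u=c+t$ with $u'=c-t$ for $t\in(0,\frac{1-u_m}{2})$. This gives
\begin{equation*}
F(\lambda)=(p-1)\int_0^{\frac{1-u_m}{2}}\frac{t\big((c+t)^{-1/p}-(c-t)^{-1/p}\big)}{\sqrt{(\frac{1-u_m}{2})^2-t^2}}\,dt .
\end{equation*}
The integrand is strictly negative because $u\mapsto u^{-1/p}$ is strictly decreasing and $c-t>u_m>0$. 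Therefore $F(\lambda)<0$.

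\emph{Wavelike case $\lambda\in[0,\frac{p-1}{2})$.} Here $a>0$ and the integral runs over $[0,1]$. Set $w_a(u)=u^{-1/p}(1-u)^{-1/2}(u+a)^{-1/2}$, which is integrable, and $M(a)=\int_0^1 u\,w_a\,du\big/\int_0^1 w_a\,du$. Then
\begin{equation*}
\operatorname{sign}F(\lambda)=\operatorname{sign}\big((p-1)M(a(\lambda))-\lambda\big).
\end{equation*}
\begin{itemize}
\item \emph{Monotonicity.} As $\lambda$ increases, $a$ decreases. For $a'<a$ the ratio $w_{a'}/w_a=\sqrt{(u+a)/(u+a')}$ is strictly decreasing in $u$. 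By a monotone-likelihood-ratio argument, $M(a')<M(a)$. Concretely, $\int\!\!\int (u-v)\big(r(v)-r(u)\big)w_a(u)w_a(v)\,du\,dv>0$ for decreasing $r$. Hence $\lambda\mapsto(p-1)M(a(\lambda))-\lambda$ is continuous and strictly decreasing on $[0,\frac{p-1}{2})$.
\item \emph{Left endpoint.} At $\lambda=0$ this quantity is $(p-1)M>0$.
\item \emph{Right endpoint.} As $\lambda\uparrow\frac{p-1}{2}$, i.e.\ $a\downarrow0$, the weight tends to $u^{-1/p-1/2}(1-u)^{-1/2}$. If $p\le2$ this limit is non-integrable at $0$, so the mass concentrates at $u=0$ and $M(a)\to0$, giving the limit $-\frac{p-1}{2}<0$. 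If $p>2$, Beta functions give
\begin{equation*}
M(0)=\frac{B(\frac32-\frac1p,\frac12)}{B(\frac12-\frac1p,\frac12)}=\frac{\frac12-\frac1p}{1-\frac1p}<\frac12,
\end{equation*}
so again the limit $(p-1)M(0)-\frac{p-1}{2}$ is negative.
\end{itemize}
The intermediate value theorem and strict monotonicity then give a unique $\lambda_p^*\in(0,\frac{p-1}{2})$ with the stated sign pattern.

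I expect the main obstacle to be the wavelike case. A direct $\lambda$-derivative of $F$ is indefinite, because the integrand changes sign. I will therefore use the normalized-mean reformulation together with the likelihood-ratio monotonicity, which I will check carefully. I also need to justify the limit $a\downarrow0$ with Fatou or dominated convergence, treating $p\le2$ and $p>2$ separately as above.
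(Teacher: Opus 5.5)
Your proof is correct, and your orbitlike case is exactly the paper's symmetry argument. In the wavelike case you take a different route.

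\textbf{Monotonicity.} The paper keeps the integral $F$ itself and differentiates under the integral sign. Since $u_m=0$ and the weight $u^{-1/p}$ do not depend on $\lambda$, it obtains
\[
\frac{\partial}{\partial\lambda}\,\frac{(p-1)u-\lambda}{\sqrt{(1-u)(u+a)}}=-\frac{1-\frac{\lambda}{p-1}}{\sqrt{1-u}\,(u+a)^{3/2}}<0
\]
pointwise on $(0,1)$, so $F$ is strictly decreasing in $\lambda$. The obstacle you anticipated therefore does not arise. The integrand does change sign, but its $\lambda$-derivative has a fixed sign, which is all that matters. Your normalized-mean reformulation with the Chebyshev (likelihood-ratio) inequality is a valid substitute: for $a'<a$ the ratio $w_{a'}/w_a$ is strictly decreasing, hence $M(a')<M(a)$. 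It is just a longer way to reach the same sign-monotonicity.

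\textbf{Right endpoint.} Here your approach is genuinely cleaner. The paper works by hand at $\lambda=\frac{p-1}{2}-\eps$. It splits $[0,1]$ into four pieces, uses odd symmetry about the root for the middle part, and uses an explicit antiderivative near $u=0$ and $u=1$ to show the end pieces contribute a negative amount of order $\eps^{1/2-1/p}$. You instead note that $w_a$ increases as $a\downarrow 0$, so monotone convergence (together with the Beta identity when $p>2$) gives the exact limit of $(p-1)M(a(\lambda))-\lambda$. That limit is $-\frac{p-1}{2}$ for $p\le 2$ and $-\frac12$ for $p>2$, a sharp value rather than a one-sided asymptotic estimate.
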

\color{black}

\begin{proof}
  This is about estimating the integral in Lemma~\ref{lemma: limit os Delta z at S_1}, namely
  \begin{equation*}
    I(\lambda, 0) = \int_{u_m}^1 \frac{(p-1)u-\lambda}{\sqrt{E-\frac{(p-1)^2}{p^2} u^2 + 2\lambda\frac{p-1}{p^2}u}u^{\frac{1}{p}}}du = \frac{p}{p-1} \int_{u_m}^1 \frac{(p-1)u-\lambda}{\sqrt{(1-u)(u-(\frac{2\lambda}{p-1}-1))}u^{\frac{1}{p}}}du,
  \end{equation*}
  where $u_m = \frac{2\lambda}{p-1}-1$ if $\lambda>\frac{p-1}{2}$ and $0$ otherwise, see Lemma~\ref{lemma: continuity of minimal curvature}.

  \underline{Case $\lambda \in (\frac{p-1}{2},p-1)$:} We note that the integrand is negative for $u\in (u_m, \frac{u_m+1}{2})$  and positive on $(\frac{u_m+1}{2},1)$. Since the numerator and the quadratic polynomial in the square root are symmetric, multiplication by $u^{-\frac{1}{p}}$, which is strictly decreasing, implies the result. Since the singularities at the endpoints are of order $\frac{1}{\sqrt{u}}$, the integral is finite.

  \underline{Case $\lambda \in [0,\frac{p-1}{2})$:}
  We compute $\frac{d}{d\lambda}I(\lambda,0)$ for $\lambda \in [0,\frac{p-1}{2})$. Here $u_m=0$ regardless of $\lambda$ and we move directly the derivative inside, that is
  \begin{equation*}
    \begin{split}
      \frac{d}{d\lambda} I(\lambda,0) &= \frac{p}{p-1} \int_{0}^1 \frac{d}{d\lambda} \frac{(p-1)u-\lambda}{\sqrt{(1-u)(u-(\frac{2\lambda}{p-1}-1))}}du \\
      &= \frac{p}{p-1} \int_0^1 \frac{-\sqrt{(1-u)(u-(\frac{2\lambda}{p-1}-1))} +\frac{(1-u)((p-1)u-\lambda)}{\sqrt{(1-u)(u-(\frac{2\lambda}{p-1}-1))}}\frac{1}{p-1} } {(1-u)(u-(\frac{2\lambda}{p-1}-1))u^{\frac{1}{p}}}<0,
    \end{split}
  \end{equation*}
  by the elementary chain of equivalences 
  \begin{equation*}
    \begin{split}
      (1-u)\left(u-\left(\frac{2\lambda}{p-1}-1\right)\right) >\tfrac{1}{p-1}(1-u)((p-1)u-\lambda)
     \Longleftrightarrow u-\frac{2\lambda}{p-1} +1 > u - \frac{\lambda}{p-1} 
      \Longleftrightarrow 1 > \frac{\lambda}{p-1}.
    \end{split}
  \end{equation*}
  Let us also verify that $I(\lambda,0)<0$ for $\lambda=\frac{p-1}{2}-\eps$ with $0<\eps\ll 1$. In that case, we have
  \begin{equation*}
    \begin{split}
      \frac{p-1}{p} I(\lambda, 0) &=  \int_{0}^1 \frac{(p-1)u-\lambda}{\sqrt{(1-u)(u-(\frac{2\lambda}{p-1}-1))}}u^{-\frac{1}{p}}du =I_1+I_2+I_3+I_4 \\
      &= \left( \int_{0}^\eps + \int_{\eps}^{\frac{1}{2}-\frac{\eps}{p-1}} + \int_{\frac{1}{2}-\frac{\eps}{p-1}}^{1-\eps(\frac{2}{p-1}+1)} + \int_{1-\eps(\frac{2}{p-1}+1)}^1 \right) \frac{(p-1)u-\frac{p-1}{2}+\eps}{\sqrt{(1-u)(u+\eps\frac{2}{p-1})}}u^{-\frac{1}{p}}du.
    \end{split}
  \end{equation*}
  We note that the fraction $\frac{(p-1)u-\frac{p-1}{2}+\eps}{\sqrt{(1-u)(u+\eps\frac{2}{p-1})}}$ is odd around its root $u=\frac{\lambda}{p-1}=\frac{1}{2}-\frac{\eps}{p-1}$ and since it is multiplied by a strictly decreasing function $u^{-\frac{1}{p}}$, we have $I_2+I_3<0$. 
  Moreover,
  \begin{equation*}
    \begin{split}
      I_1+I_4 &= \int_{0}^\eps \frac{(p-1)u-\frac{p-1}{2}+\eps}{\sqrt{(1-u)(u+\eps\frac{2}{p-1})}}u^{-\frac{1}{p}}du + \int_{1-\eps(\frac{2}{p-1}+1)}^1  \frac{(p-1)u-\frac{p-1}{2}+\eps}{\sqrt{(1-u)(u+\eps\frac{2}{p-1})}}u^{-\frac{1}{p}}du \\
      &\leq \eps^{-\frac{1}{p}} \int_{0}^\eps \frac{(p-1)u-\frac{p-1}{2}+\eps}{\sqrt{(1-u)(u+\eps\tfrac{2}{p-1})}}du + (1-\eps(\tfrac{2}{p-1}+1))^{-\frac{1}{p}} \int_{1-\eps(\frac{2}{p-1}+1)}^1  \frac{(p-1)u-\frac{p-1}{2}+\eps}{\sqrt{(1-u)(u+\eps\frac{2}{p-1})}}du \\
      &= -(p-1) \left(\eps^{-\frac{1}{p}} \sqrt{(1-u)(u+\eps\tfrac{2}{p-1})} \bigg|_{0}^\eps  + (1-\eps(\tfrac{2}{p-1}+1))^{-\frac{1}{p}}  \sqrt{(1-u)(u+\eps\tfrac{2}{p-1})}\bigg|_{1-\eps(\frac{2}{p-1}+1)}^1   \right) \\
      &= -(p-1) \left(\eps^{-\frac{1}{p}} \left( \sqrt{(1-\eps)\eps(1+\tfrac{2}{p-1})} - \sqrt{\eps \tfrac{2}{p-1}}  \right)  -  (1-\eps(\tfrac{2}{p-1}+1))^{-\frac{1}{p}} \sqrt{(1-\eps)\eps(\tfrac{2}{p-1}+1)} \right) \\
      &\leq -(p-1) \eps^{\frac{1}{2}-\frac{1}{p}} + O(\eps^{\frac{1}{2}}),
    \end{split}
  \end{equation*}
  as long as $\eps<\frac{2}{p+1}$. 
  Thus $I_1+I_4$ is strictly negative as $\eps$ is small enough and hence $\Delta z(\frac{p-1}{2}-\eps,0)<0$.
  Note also that $\Delta z(0,0)>0$, so by continuity and monotonicity of the function $\lambda \mapsto \Delta z(\lambda,0)$, there exists a unique $\lambda^*_p \in (0,\frac{p-1}{2})$ such that $\Delta z(\lambda^*_p,0)=0$. 
\end{proof}

The value of $\Delta z$ on $\partial S^\rho_2$ and $\partial S^\rho_3$ is known by Lemma~\ref{lemma: positivity on diagonal}, for $\partial S^\rho_4$ we have to estimate $\Delta z$ around its possible singularity.
\begin{lemma}
\label{lemma: Delta z is negative around singularity}
    There exists $\rho_p>0$ such that for $(\lambda,C^2)$ with $|\lambda-\frac{p-1}{2}|<\rho_p$ and  $C^2<\rho_p$ we have $\Delta z(\lambda,C^2)<0$.
\end{lemma}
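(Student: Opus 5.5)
The plan is to compare $\Delta z$ near $(\tfrac{p-1}{2},0)$ with its formal value at that point and show that the formal value is either $-\infty$ (when $p\le 2$) or a strictly negative number (when $p>2$). Then a semicontinuity argument finishes the proof. I work with the representation \eqref{eq: Delta z in u},
\begin{equation*}
\Delta z(\lambda,C^2)=\frac{2(p-1)}{p|J_0|}\int_{u_m}^1\frac{(p-1)u-\lambda}{\sqrt{\phi(u)}}\,du ,
\end{equation*}
and, for $C^2=0$, with the extension of Lemma~\ref{lemma: limit os Delta z at S_1}, which has the same form with $\phi(u)=u^{2/p}h(u)$, $C=0$. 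Near the point, $|J_0|^2=(p-1-\lambda)^2+p^2C^2$ is close to $\tfrac{(p-1)^2}{4}$, so the prefactor is positive and bounded. It therefore suffices to show that $\limsup$ of the integral as $(\lambda,C^2)\to(\tfrac{p-1}{2},0)$ is strictly negative (or $-\infty$).

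\textbf{Step 1: splitting the integral.} Split at the root $u_0=\tfrac{\lambda}{p-1}\approx\tfrac12$ of the numerator, writing the integral as $I_-+I_+$. Here $I_-$ is the integral over $(u_m,u_0)$, where the integrand is negative, and $I_+$ is the integral over $(u_0,1)$, where it is positive.

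\textbf{Step 2: convergence of $I_+$.} On $[u_0,1]$ I use dominated convergence. The functions $\phi$ and their derivatives converge locally uniformly on a neighbourhood of $[\tfrac14,1]$, since $u$ stays away from $0$ there and the $C^2u^{4/p-2}$ term is harmless. Moreover, $-\phi'(1)=-\tfrac{p-1}{p}g'(1)$ is bounded below near the point by Lemma~\ref{lemma: estimates for gprime and gbis at 1}. Arguing as in Lemma~\ref{lemma: convergence of s-periods} (Lagrange remainder near $u=1$, and a positive lower bound on $\phi$ in the middle region), this gives a uniform bound $\phi(u)\ge c(1-u)$ on $[u_0,1]$. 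Hence $I_+$ converges to the corresponding limit integral.

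\textbf{Step 3: Fatou for $I_-$.} Since $u_m\to0$ by Lemma~\ref{lemma: continuity of minimal curvature}, and the integrand of $-I_-$ is nonnegative and converges pointwise on $(0,\tfrac12)$, Fatou's lemma gives
\begin{equation*}
\liminf(-I_-)\ge\frac{p}{p-1}\int_0^{1/2}\frac{\tfrac{p-1}{2}-(p-1)u}{u^{\frac1p+\frac12}\sqrt{1-u}}\,du .
\end{equation*}
For $p\le2$ this limit integral is $+\infty$, because the exponent $\tfrac1p+\tfrac12\ge1$, and then $\Delta z\to-\infty$.

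\textbf{Step 4: sign of the formal value for $p>2$.} Combining Steps 2 and 3, $\limsup\Delta z$ is at most a positive multiple of
\begin{equation*}
\int_0^1\Bigl(u-\tfrac12\Bigr)u^{-a}(1-u)^{-1/2}\,du=B\bigl(2-a,\tfrac12\bigr)-\tfrac12B\bigl(1-a,\tfrac12\bigr)=B\bigl(1-a,\tfrac12\bigr)\frac{\tfrac12-a}{2(\tfrac32-a)},
\end{equation*}
with $a=\tfrac1p+\tfrac12\in(\tfrac12,1)$. This is strictly negative since $a>\tfrac12$.

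\textbf{Conclusion and main obstacle.} In both cases $\limsup\Delta z<0$ as $(\lambda,C^2)\to(\tfrac{p-1}{2},0)$ within $\overline{S^0}$, so $\Delta z<0$ on a small neighbourhood. This yields $\rho_p$. The step I expect to be the main obstacle is the uniform lower bound $\phi(u)\ge c(1-u)$ on $[u_0,1]$ in Step 2. It requires controlling $h$ uniformly in the parameters away from $u_m$, presumably via the unimodality in Lemma~\ref{lemma: number of solutions} together with a positive value of $h$ at a fixed interior point such as $u=\tfrac34$ for the limiting parameters. The Fatou step conveniently avoids any need for uniform control near $u_m$, where $\phi$ degenerates.
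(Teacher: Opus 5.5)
Your argument is correct, and it takes a genuinely different route from the paper's.

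\textbf{The paper's route.} The paper never passes to the limit; it works uniformly in the parameters. It cuts $(u_m,1)$ at $b=\max(0,\frac{2\lambda}{p-1}-1)+C^{\frac{p}{3p-2}}$, at $\frac{\lambda}{p-1}$ and at $\frac{2\lambda}{p-1}-b$. It then proves the reflection inequality $\phi(\frac{\lambda}{p-1}-t)\le\phi(\frac{\lambda}{p-1}+t)$ for $t\in[0,\frac{\lambda}{p-1}-b]$, with a strict gain $c_p$ on a middle range. This requires a Taylor expansion for small $t$ and separate estimates near $t=\frac{\lambda}{p-1}-b$, where the $C^2$-term of $h$ is controlled through $u\ge b\ge C^{\frac{p}{3p-2}}$; this in turn uses the bound on $u_m$ from Lemma~\ref{lemma: estimate around (p-1)/2}. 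The odd numerator then gives $I_2+I_3\le -c_p'$, while $I_1\le 0$ and $I_4\to 0$.

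\textbf{Your route.} You apply lower semicontinuity (Fatou) exactly where $\phi$ degenerates, near $u_m\to 0$, and dominated convergence where it does not. This reduces everything to the sign of the formal integral at $(\frac{p-1}{2},0)$ (compare the remark following Lemma~\ref{lemma: limit os Delta z at S_1}). Your Beta-function evaluation of that integral is correct. One could equally conclude from the oddness of $\frac{u-1/2}{\sqrt{u(1-u)}}$ about $\frac12$ against the decreasing weight $u^{-1/p}$, as in Lemma~\ref{lemma: Delta z on C=0}.

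\textbf{What each buys.} Your route is much shorter and needs only $u_m\to 0$, not a rate. It also gives extra information: $\Delta z\to-\infty$ for $p\le 2$, and an explicit negative upper bound for $\limsup\Delta z$ when $p>2$. The price is a case distinction in $p$ and a sequential contradiction argument to extract $\rho_p$. The paper's comparison is uniform and does not require identifying the limit object.

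\textbf{The step you flag as the main obstacle is routine.} You have $h(u)=\frac{(p-1)^2}{p^2}(1-u)\bigl(u+1-\frac{2\lambda}{p-1}\bigr)+C^2\bigl(1-u^{\frac2p-2}\bigr)$, and $0\le u^{\frac2p-2}-1\le M_p(1-u)$ on $[\frac14,1]$. Together these give $h(u)\ge c(1-u)$ on $[u_0,1]$ for small $C^2$ directly, with no unimodality argument needed. This is exactly the estimate the paper uses for $I_4$.
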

\begin{proof}
    From \eqref{eq: Delta z in u} it suffices to show that
    \begin{equation*}
    \begin{split}
    I =    \int_{u_m}^1 \frac{(p-1)u-\lambda}{u^{\frac{1}{p}}\sqrt{h(u)}}du =\int_{u_m}^1 \frac{(p-1)u-\lambda}{u^{\frac{1}{p}}\sqrt{\frac{(p-1)^2}{p^2} (1-u)(u-(\frac{2\lambda}{p-1}-1)) +C^2(1-u^{\frac{2}{p}-2}) }} du
    \end{split}
    \end{equation*}
    is strictly negative. We note that the integrand is negative on $(u_m,\frac{\lambda}{p-1})$ and positive on $(\frac{\lambda}{p-1},1)$.
    From Lemma~\ref{lemma: estimate around (p-1)/2},
    \begin{equation*}
        u_m < \max(0,\tfrac{2\lambda}{p-1}-1) + C^{\frac{p}{3p-2}} =: b \ll 1,
    \end{equation*}
    thus we split up the integral into 
    \begin{equation*}
        I =I_1+I_2+I_3+I_4= \left(\int_{u_m}^{b} + \int_{b}^{\frac{\lambda}{p-1}} + \int_{\frac{\lambda}{p-1}}^{\frac{2\lambda}{p-1}-b} + \int_{\frac{2\lambda}{p-1}-b}^1 \right) \frac{(p-1)u-\lambda}{u^{\frac{1}{p}}\sqrt{\frac{(p-1)^2}{p^2} (1-u)(u-(\frac{2\lambda}{p-1}-1)) +C^2(1-u^{\frac{2}{p}-2}) }} du.
    \end{equation*}
    
    We start by estimating $I_4$. Since $u\geq \frac{1}{2}$ and $u-(\frac{2\lambda}{p-1}-1) \geq \frac{1}{2}$ (as long as $C^2<\delta_p$ and $|\lambda-\frac{p-1}{2}|<\delta_p$ for $\delta_p$ sufficiently small) and $C^2(1-u^{\frac{2}{p}-2})>-\frac{1}{4}\frac{(p-1)^2}{p^2}(1-u)$ on the interval of integration,
    \begin{equation*}
        \begin{split}
            I_4 \leq \int_{\frac{2\lambda}{p-1}-b}^1 \frac{p-1-\lambda}{u^{\frac{1}{p}} \sqrt{\frac{(p-1)^2}{p^2} \frac{1}{4}(1-u)}}du \leq \frac{ p 2^{1+\frac{1}{p}}}{p-1} \int_{\frac{2\lambda}{p-1}-b}^1 \frac{1}{\sqrt{1-u}}du = \frac{ p 2^{2+\frac{1}{p}}}{p-1} \sqrt{1-\frac{2\lambda}{p-1}+b} \to 0
        \end{split}
    \end{equation*}
    as $(\lambda, C^2) \to (\frac{p-1}{2},0)$.
    
    Consider now the $I_2+I_3$ term. We note that the numerator is linear and symmetric across $u=\frac{\lambda}{p-1}$. Since the lengths of the integration intervals in $I_2$ and $I_3$ are exactly the same, in order to obtain strict negativity of $I_2+I_3$ it suffices to show that the denominator of the integrand in $I_2$ is strictly less than the denominator in $I_3$.
    In other words, we show for every $t \in [0,\frac{\lambda}{p-1}-b]$,
    \begin{equation}
    \label{eq: total estimate for denominator}
    \begin{split}
        &(\tfrac{\lambda}{p-1}-t)^{\frac{2}{p}} h(\tfrac{\lambda}{p-1}-t)
        = (\tfrac{\lambda}{p-1}-t)^{\frac{2}{p}}\left( \frac{(p-1)^2}{p^2}(1-\tfrac{\lambda}{p-1}-t)(1-\tfrac{\lambda}{p-1}+t) +C^2(1-(\tfrac{\lambda}{p-1}-t)^{\frac{2}{p}-2})\right) \\
        & \leq (\tfrac{\lambda}{p-1}+t)^{\frac{2}{p}}\left( \frac{(p-1)^2}{p^2} (1-\tfrac{\lambda}{p-1}-t)(1-\tfrac{\lambda}{p-1}+t) +C^2(1-(\tfrac{\lambda}{p-1}+t)^{\frac{2}{p}-2})\right) 
        = (\tfrac{\lambda}{p-1}+t)^{\frac{2}{p}} h(\tfrac{\lambda}{p-1}+t).
    \end{split}
    \end{equation}

   \noindent We will split up the domain $[0,\frac{\lambda}{p-1}-b]$ into three parts.
First consider $t \ll 1$. By Taylor, we have
\begin{equation*}
\begin{split}
    LHS &= \left[(\tfrac{\lambda}{p-1})^{\frac{2}{p}} - \frac{2}{p}(\tfrac{\lambda}{p-1})^{\frac{2-p}{p}}t + O(t^2) \right] \left[ \frac{(p-1)^2}{p^2}(1-\tfrac{\lambda}{p-1}-t)(1-\tfrac{\lambda}{p-1}+t) \right] \\
    & \qquad+ C^2\left[(\tfrac{\lambda}{p-1})^{\frac{2}{p}} - \frac{2}{p}(\tfrac{\lambda}{p-1})^{\frac{2-p}{p}}t + O(t^2) \right] \left[ 1 -(\tfrac{\lambda}{p-1})^{\frac{2-2p}{p}} + \frac{2-2p}{p}(\tfrac{\lambda}{p-1})^{\frac{2-3p}{p}}t + O(t^2) \right], \\
    RHS &= \left[(\tfrac{\lambda}{p-1})^{\frac{2}{p}} + \frac{2}{p}(\tfrac{\lambda}{p-1})^{\frac{2-p}{p}}t + O(t^2) \right] \left[ \frac{(p-1)^2}{p^2}(1-\tfrac{\lambda}{p-1}-t)(1-\tfrac{\lambda}{p-1}+t) \right] \\
    & \qquad+ C^2\left[(\tfrac{\lambda}{p-1})^{\frac{2}{p}} + \frac{2}{p}(\tfrac{\lambda}{p-1})^{\frac{2-p}{p}}t + O(t^2) \right] \left[ 1 -(\tfrac{\lambda}{p-1})^{\frac{2-2p}{p}} - \frac{2-2p}{p}(\tfrac{\lambda}{p-1})^{\frac{2-3p}{p}}t + O(t^2) \right]. \\
\end{split}
\end{equation*}
Since the terms multiplying $(\tfrac{\lambda}{p-1})^{\frac{2}{p}}$ agree and thereby cancel in the inequality, after dividing by $t$, we obtain
\begin{equation*}
\begin{split}
    &  - \frac{2}{p}\frac{(p-1)^2}{p^2}(\tfrac{\lambda}{p-1})^{\frac{2-p}{p}} (1-\tfrac{\lambda}{p-1})^2 - C^2\left[ \frac{2}{p}(\tfrac{\lambda}{p-1})^{\frac{2-p}{p}} (1- (\tfrac{\lambda}{p-1})^{\frac{2-2p}{p}}) + \frac{2p-2}{p}(\tfrac{\lambda}{p-1})^{\frac{4-3p}{p}} \right] + O(t)\\
    &\quad \leq   \frac{2}{p}\frac{(p-1)^2}{p^2}(\tfrac{\lambda}{p-1})^{\frac{2-p}{p}} (1-\tfrac{\lambda}{p-1})^2   + C^2\left[ \frac{2}{p}(\tfrac{\lambda}{p-1})^{\frac{2-p}{p}} (1- (\tfrac{\lambda}{p-1})^{\frac{2-2p}{p}}) + \frac{2p-2}{p}(\tfrac{\lambda}{p-1})^{\frac{4-3p}{p}} \right] + O(t),\\
\end{split}
\end{equation*}
which holds for all $t < \bar t_p$ where $0<\bar t_p \ll \tfrac{\lambda}{p-1}$ is taken sufficiently small.
For the $C^2$ terms we note that $b\geq C^\frac{p}{3p-2}$, that is $\frac{\lambda}{p-1}-t \geq b\geq C^{\frac{p}{3p-2}}$ and thus
    \begin{equation*}
        \left|C^2(\tfrac{\lambda}{p-1}-t)^{\frac{2}{p}} (1- (\tfrac{\lambda}{p-1}-t)^{\frac{2-2p}{p}}) \right| = O(C^{\min(2,\frac{4p}{3p-2})}), \qquad
        \left| C^2(\tfrac{\lambda}{p-1}+t)^{\frac{2}{p}} (1- (\tfrac{\lambda}{p-1}+t)^{\frac{2-2p}{p}}) \right| = O(C^2).
    \end{equation*}
    Hence it remains to show for $t \in [\bar t_p, \tfrac{\lambda}{p-1}-b]$ the inequality
    \begin{equation}
    \label{eq: inequality with big O}
        (\tfrac{\lambda}{p-1}-t)^{\frac{2}{p}} (1-\tfrac{\lambda}{p-1}-t)(1-\tfrac{\lambda}{p-1}+t) + O(C^{\min(2,\frac{4p}{3p-2})}) \leq (\tfrac{\lambda}{p-1}+t)^{\frac{2}{p}} (1-\tfrac{\lambda}{p-1}-t)(1-\tfrac{\lambda}{p-1}+t).
    \end{equation}
    Consider first the case $t \in (\tfrac{\lambda}{p-1}-b-\bar t_p, \tfrac{\lambda}{p-1}-b]$. 
    Then, adjusting $\bar t_p$ and $\delta_p>|\lambda -\frac{p-1}{2}|$ if necessary,
    \begin{equation*}
        (1-\tfrac{\lambda}{p-1}-t)(1-\tfrac{\lambda}{p-1}+t) [(\tfrac{\lambda}{p-1}+t)^{\frac{2}{p}}-(\tfrac{\lambda}{p-1}-t)^{\frac{2}{p}}] \geq (1-\tfrac{2\lambda}{p-1}+b)(1-b-\bar t_p)  \frac{1}{2} \geq \frac{1}{4} C^{\frac{p}{3p-2}},
    \end{equation*}
    using the definition $b=\max(0,\frac{2\lambda}{p-1}-1) + C^{\frac{p}{3p-2}}$.
    Thus \eqref{eq: inequality with big O} holds if $C^2<\delta_p$ for some $\delta_p>0$.
    Lastly, for $t\in[\bar t_p, \tfrac{\lambda}{p-1}-b-\bar t_p]$ note that $(1-\tfrac{\lambda}{p-1}-t)$ and $(\tfrac{\lambda}{p-1}+t)^{\frac{2}{p}}-(\tfrac{\lambda}{p-1}-t)^{\frac{2}{p}}$ are bounded below by a constant $c_p>0$ and hence for $C^2<\delta_p$ sufficiently small,
    \begin{equation}
    \begin{split}
    \label{eq: estimate for denominator in special interval}
        &(\tfrac{\lambda}{p-1}-t)^{\frac{2}{p}} h(\tfrac{\lambda}{p-1}-t) = (\tfrac{\lambda}{p-1}-t)^{\frac{2}{p}} \frac{(p-1)^2}{p^2} (1-\tfrac{\lambda}{p-1}-t)(1-\tfrac{\lambda}{p-1}+t) + O(C^{\min(2,\frac{4p}{3p-2})}) \\
        &\qquad \leq (\tfrac{\lambda}{p-1}+t)^{\frac{2}{p}}\frac{(p-1)^2}{p^2} (1-\tfrac{\lambda}{p-1}-t)(1-\tfrac{\lambda}{p-1}+t) + O(C^2) - \frac{c_p}{2} = (\tfrac{\lambda}{p-1}+t)^{\frac{2}{p}} h(\tfrac{\lambda}{p-1}+t) - \frac{c_p}{2}.
    \end{split}
    \end{equation}

\noindent Therefore, by the change of variables $u=\frac{\lambda}{p-1}-t$ (for $I_2$) and $u=\frac{\lambda}{p-1}+t$ (for $I_3$ respectively), 
\begin{equation*}
\begin{split}
    I_2+I_3 &=  \left( \int_0^{\bar t_p} + \int_{\bar t_p}^{\frac{\lambda}{p-1}-b-\bar t_p} +\int_{\frac{\lambda}{p-1}-b-\bar t_p}^{\frac{\lambda}{p-1}-b} \right)\frac{-t}{\sqrt{(\frac{\lambda}{p-1}-t)^{\frac{2}{p}}h(\frac{\lambda}{p-1}-t)}} + \frac{t}{\sqrt{(\frac{\lambda}{p-1}+t)^{\frac{2}{p}}h(\frac{\lambda}{p-1}+t)}}dt \\
    &\leq \int_{\bar t_p}^{\frac{\lambda}{p-1}-b-\bar t_p}  \frac{-t}{\sqrt{(\frac{\lambda}{p-1}+t)^{\frac{2}{p}}h(\frac{\lambda}{p-1}+t) - \frac{c_p}{2}}} + \frac{t}{\sqrt{(\frac{\lambda}{p-1}+t)^{\frac{2}{p}}h(\frac{\lambda}{p-1}+t)}}dt\\
    &= \int_{\bar t_p}^{\frac{\lambda}{p-1}-b-\bar t_p} \frac{t \left(\sqrt{(\frac{\lambda}{p-1}+t)^{\frac{2}{p}}h(\frac{\lambda}{p-1}+t) - \frac{c_p}{2}} - \sqrt{(\frac{\lambda}{p-1}+t)^{\frac{2}{p}}h(\frac{\lambda}{p-1}+t) } \right)}{\sqrt{(\frac{\lambda}{p-1}+t)^{\frac{2}{p}}h(\frac{\lambda}{p-1}+t)} \sqrt{(\frac{\lambda}{p-1}+t)^{\frac{2}{p}}h(\frac{\lambda}{p-1}+t) - \frac{c_p}{2}}} dt \\
    &\leq \int_{\bar t_p}^{\frac{\lambda}{p-1}-b-\bar t_p} \frac{-t c_p}{4 (\frac{\lambda}{p-1}+t)^{\frac{2}{p}}h(\frac{\lambda}{p-1}+t) \sqrt{(\frac{\lambda}{p-1}+t)^{\frac{2}{p}}h(\frac{\lambda}{p-1}+t) - \frac{c_p}{2}}} dt \leq \int_{\bar t_p}^{\frac{\lambda}{p-1}-b-\bar t_p}   \frac{-t c_p}{M_p} dt = -c'_p<0,
\end{split}
\end{equation*}
using \eqref{eq: total estimate for denominator}, \eqref{eq: estimate for denominator in special interval} in the first inequality and the fact that $t\mapsto h(\frac{\lambda}{p-1}+t)$ (and thereby the whole denominator) is uniformly bounded above (independent of $\lambda$, $C^2$) in the last.
Since $I_1\leq 0$ and $I_4 \to 0$, we have $I =I_1+I_2+I_3+I_4 \leq -\frac{1}{2}c'_p$ whenever $|\lambda-\frac{p-1}{2}|, C^2 <\rho_p$ sufficiently small, finishing the proof.
\end{proof}

Let $\lambda^*_p$ be the constant from Lemma~\ref{lemma: Delta z on C=0} and take $\rho \in (0, \frac{1}{2} \left( \frac{p-1}{2} - \lambda^*_p \right))$ sufficiently small. Let $\Delta z: \overline{S^\rho} \to \R$ be the continuous extension of $\Delta z|_{S^\rho}:S^\rho \to \R$ from Proposition~\ref{prop: existence of continuous extension}.
Then, the following properties hold:
\begin{itemize}
  \item $\Delta z(p-1,0)=\Delta z(\lambda_p^*,0)=0$, by Proposition~\ref{prop: limit of Delta z at p-1} and Lemma~\ref{lemma: Delta z on C=0}.
  \item $\Delta z>0$ on $\partial S^\rho_3 \cup \partial S^\rho_2 \cup \left([0,\lambda^*_p) \times \{0\} \right) $, by Lemma~\ref{lemma: positivity on diagonal} and Lemma~\ref{lemma: Delta z on C=0}.
  \item $\Delta z<0$ on $\left((\lambda_p^*,\frac{p-1}{2}-\rho] \cup [\frac{p-1}{2}+\rho, p-1) \right)  \times \{0\}$, by Lemma~\ref{lemma: Delta z on C=0}.
  \item $\Delta z<0$ on $\partial S^\rho_4$ by Lemma~\ref{lemma: Delta z is negative around singularity}, upon taking $\rho=\rho(p)$ smaller if necessary.
\end{itemize}

\noindent This gives now an abstract description of the zero level set.

\begin{proposition}
  \label{prop: Delta z=0 is continuous curve}
  The zero level set of $\Delta z:\overline{S^\rho} \to \R$, denoted by $\{\Delta z=0\}$, contains a connected component $\tilde \Gamma$ connecting $(\lambda_p^*,0)$ and $(p-1,0)$ in $\overline{S^\rho}$, where only the points $(\lambda_p^*,0),(p-1,0) \in \tilde \Gamma$ lie in $\partial S^\rho$.
\end{proposition}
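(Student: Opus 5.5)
The argument is topological: the continuous function $\Delta z$ on the compact set $\overline{S^\rho}$ (Proposition~\ref{prop: existence of continuous extension}) has sign information on $\partial S^\rho$ given by the bullet list preceding the statement. Going around $\partial S^\rho$ counterclockwise from $(\lambda_p^*,0)$, we pass through $[0,\lambda_p^*)\times\{0\}$, $\partial S^\rho_3$, $\partial S^\rho_2$ up to $(p-1,0)$, and on all of these $\Delta z>0$. From $(p-1,0)$ back to $(\lambda_p^*,0)$ we pass through $[\frac{p-1}{2}+\rho,p-1)\times\{0\}$, $\partial S^\rho_4$ and $(\lambda_p^*,\frac{p-1}{2}-\rho]\times\{0\}$, and on all of these $\Delta z<0$. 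Since $\overline{S^\rho}$ is a closed topological disk (a triangle with a half-disk removed; this holds since $\rho<\frac{p-1}{4}$), $\partial S^\rho$ is a Jordan curve. It splits into two open arcs, $A_+$ and $A_-$, joined at the two points $a=(\lambda_p^*,0)$ and $b=(p-1,0)$. These are exactly the zeros of $\Delta z$ on the boundary.

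Let $K_+=\{\Delta z\geq 0\}$ and $K_-=\{\Delta z\le 0\}$, both compact. I would use the following standard planar separation lemma for a closed disk $D$: if $\overline{A_+}\subset K_+$ and $\overline{A_-}\subset K_-$, then the component of $Z:=K_+\cap K_-=\{\Delta z=0\}$ containing $a$ also contains $b$. To prove it, suppose the contrary. Since $Z$ is compact, a standard result (components equal quasi-components in compact Hausdorff spaces) gives a splitting $Z=Z_a\sqcup Z_b$ into disjoint compact sets with $a\in Z_a$ and $b\in Z_b$. Choose disjoint open neighbourhoods $U_a\supset Z_a$ and $U_b\supset Z_b$. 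Then I would build a path from $A_+$ to $A_-$ inside $D$ that avoids $Z$, which contradicts the intermediate value theorem.

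The cleanest way to get that path, I think, is Janiszewski's theorem or the "hex/Poincaré--Miranda" type argument. Map $D$ homeomorphically onto the square $[0,1]^2$ so that $a$ and $b$ go to corners, with $A_+$ the left and top sides and $A_-$ the bottom and right sides. Then apply the standard fact that a continuous function which is $\geq0$ on the left and top sides and $\le0$ on the bottom and right sides has a connected zero set joining the corners $(0,1)\ldots$. Equivalently, consider the modified function $F=\Delta z+\eta\,\psi$ with a small perturbation to reduce to the Poincaré--Miranda/Hex lemma, then pass to the limit using Hausdorff limits of connected compact sets, which remain connected. I would set $\tilde\Gamma$ to be the connected component of $\{\Delta z=0\}$ containing $(\lambda_p^*,0)$; it then contains $(p-1,0)$.

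Finally, the boundary claim is immediate: $\tilde\Gamma\cap\partial S^\rho\subset\{\Delta z=0\}\cap\partial S^\rho=\{(\lambda_p^*,0),(p-1,0)\}$ by the strict sign information on the open arcs. The main obstacle is rigorously producing the connected component between two boundary points rather than just a separating set. I would try first the compactness/Hausdorff-limit argument: approximate $\Delta z$ uniformly by smooth functions $f_j$ that keep the same strict signs on the open arcs (modify near $a,b$ by adding $\pm\frac1j$ times a bump). Each $f_j$ can be chosen with $0$ a regular value, so the component of $\{f_j=0\}$ starting near $a$ is an embedded arc. That arc cannot exit through the boundary arcs, so it must end near $b$. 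Its Hausdorff limit is then a connected subset of $\{\Delta z=0\}$ containing $a$ and $b$.
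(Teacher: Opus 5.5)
Your reduction is the same as the paper's: transport $\Delta z$ to a square, use the strict signs on the two open boundary arcs, find a continuum in $\{\Delta z=0\}$ through both boundary zeros, and read off the boundary claim from the strict signs. The paper settles the topological step by citing the crossing lemma: a compact set that meets every left-to-right path in the square has a component meeting the bottom and the top. It sends $(\lambda_p^*,0)$ and $(p-1,0)$ to the midpoints of the bottom and top sides, so that $F<0$ on the left side and $F>0$ on the right side. By the intermediate value theorem every left-to-right path then meets $\{F=0\}$. Hence some component of $\{F=0\}$ meets the bottom and the top, which it can only do at the images of the two endpoints.

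Your corner placement also works, provided you cross from the left side ($F\ge 0$) to the right side ($F\le 0$). The component then meets the bottom only at $a=(0,0)$ and the top only at $b=(1,1)$, not at $(0,1)$, which lies in $A_+$. Your ``standard fact'' is exactly this lemma restated, so it must be cited or proved. As written, the decisive step is not established.

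Your own attempts at proving it have a gap.
\begin{itemize}
\item In the approximation argument, ``the arc starting near $a$ must end near $b$'' does not follow. The function $f_j-c_j$ can have several transversal zeros on the boundary near $a$, and an arc may leave from one of them and return to another. What you need is a parity count. Take $c_j\to 0$ to be a regular value of both $f_j$ and $f_j|_{\partial D}$, working on a smooth disk since $\partial S^\rho$ has corners. Across a short boundary arc around $a$, $f_j-c_j$ passes from positive to negative, so it has an odd number of simple zeros there, and likewise near $b$. The arc components of the compact $1$-manifold $\{f_j=c_j\}$ pair up these endpoints, so some arc joins a point near $a$ to a point near $b$. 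A Hausdorff limit of such arcs is then a continuum in $\{\Delta z=0\}$ containing $a$ and $b$.
\item The bump correction is not needed. Uniform approximation already gives the correct strict signs outside shrinking neighbourhoods of $a$ and $b$, which is all the parity argument uses. As you describe it, it also cannot work: a single-signed bump cannot produce opposite signs on the two sides of $a$.
\item In your quasi-component route, ``build a path avoiding $Z$'' is the whole content, and you leave it undone. Janiszewski's theorem applied twice supplies it: first to $Z_a$ and $S^2\setminus\mathrm{int}\,D$, which meet in $\{a\}$, then to their union and $Z_b$, which meet in $\{b\}$. This shows that interior points close to $A_+$ and $A_-$ are joined in $\mathrm{int}\,D\setminus Z$, contradicting the intermediate value theorem.
\end{itemize}

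Finally, $\rho<\frac{p-1}{4}$ does not by itself keep the half-disk off $\partial S^\rho_2$. The line $C^2=\frac{p-1-\lambda}{p}$ lies at distance $\frac{p-1}{2\sqrt{1+p^2}}$ from $(\frac{p-1}{2},0)$, which is less than $\frac{p-1}{4}$ once $p>\sqrt3$. You need $\rho$ small, as the paper assumes.
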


\begin{proof}
  The function $\Delta z$ is continuous up to the boundary on $S^\rho$. Moreover, $\Delta z $ is strictly negative on
  \begin{equation*}
    \partial S^\rho_{neg} := \left((\lambda_p^*, \tfrac{p-1}{2}-\rho] \cup [\tfrac{p-1}{2}+\rho, p-1) \right)  \times \{0\} \cup \partial S^\rho_4,
  \end{equation*} 
  strictly positive on 
  \begin{equation*}
    \partial S^\rho_{pos}:= (\partial S^\rho_2 \setminus\{P_2\}) \cup \partial S^\rho_3 \cup \left([0,\lambda^*_p) \times \{0\} \right),
  \end{equation*}
   and vanishes at $P_1=(\lambda_p^*,0)$ and $P_2=(p-1,0)$. 
   We now show that there exists a connected component $\tilde \Gamma$ of $\{\Delta z=0\}$ such that $P_1, P_2 \in \tilde \Gamma$.

   First, consider a homeomorphism $\varphi: \overline{S^\rho} \to [0,1]^2$ such that $\varphi(P_1) = (\frac{1}{2},0)$, $\varphi(P_2)=(\frac{1}{2},1)$, $\varphi(\partial S^\rho_{neg}) = \partial [0,1]^2 \cap \{x<\frac{1}{2}\}$, $\varphi(\partial S^\rho_{pos}) = \partial [0,1]^2 \cap \{x>\frac{1}{2}\}$ and set $F= \Delta z \circ \varphi^{-1}: [0,1]^2 \to \R$. 
   Then $F$ is continuous and $F|_{\{0\} \times [0,1]}<0$ as well as $F|_{\{1\} \times [0,1]}>0$.
   Thus it suffices to show that there exists a connected component $\Sigma$ of the compact set $\{F=0\}$ containing $\varphi(P_1) = (\frac{1}{2},0)$ and $\varphi(P_2)=(\frac{1}{2},1)$, the original $\tilde \Gamma$ is then given by $\varphi^{-1}(\Sigma)$.
   
   This follows from the ``crossing lemma'' \cite[Lemma~2.9]{crossing_lemma_paper}, (see also \cite[Lemma~1.1.17]{pireddu2009fixedpointschaoticdynamics_phdthesis}), once we verify that every continuous map $\sigma:[0,1] \to [0,1]^2$ with $\sigma(0) \in \{0\} \times [0,1]$ and $\sigma(1) \in \{1\} \times [0,1]$ satisfies $\{F=0\} \cap \sigma([0,1]) \neq \varnothing$.
   This follows from $F(\sigma(0))<0$ and $F(\sigma(1))>0$ as well as continuity of $F$ and $\sigma$ and the intermediate value theorem, i.e.\ there exists $\xi \in (0,1)$ such that $F(\sigma(\xi))=0$ and the intersection is non-empty. 
   We thereby obtain a connected component $\Sigma \subset \{F=0\}$, such that $\Sigma \cap ([0,1]\times \{0\}) \neq \varnothing$ and $\Sigma \cap ([0,1]\times \{1\}) \neq \varnothing$. Since $\Sigma \cap ([0,1]\times \{0\}) \subset \{F=0\} \cap ([0,1]\times \{0\}) =P_1$ it follows that $P_1 \in \Sigma$, and analogously $P_2 \in \Sigma$.
\end{proof}

We also estimate the global shape of $\{\Delta z=0\}$ and so $\tilde \Gamma$ in $S^0$.
\begin{proposition}
  \label{prop: global growth of Delta z=0}
  The curve $r_\mathcal{C}=0$ intersects the set $\{\Delta z=0\}$ only at the right endpoint $(p-1,0)$.
  Moreover, letting $A$ be the domain whose boundary is given by the piecewise smooth curve $([0,p-1]\times \{0\}) \cup \{r_{\mathcal{C}}=0\}$, its closure $\overline A$ in $\overline{S^\rho}$ contains itself a connected compact subset $\Gamma$ of $\{\Delta z=0\}$ containing $(\lambda_p^*,0)$ and $(p-1,0)$, which furthermore intersects the curve $r_\mathcal{C}=0$ only at the point $(p-1,0)$.
\end{proposition}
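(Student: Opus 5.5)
The plan is to show that $\Delta z$ has a strict sign on the curve $\{r_\mathcal{C}=0\}$ away from its endpoints, and then repeat the crossing-lemma argument of Proposition~\ref{prop: Delta z=0 is continuous curve} on the smaller region $\overline A$ instead of $\overline{S^\rho}$.

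\textbf{Sign on $\{r_\mathcal{C}=0\}$.} By Lemma~\ref{lemma: condition for rm=0}, along this curve we have $(p-1)k_m^p=\lambda$, that is, $(p-1)u_m-\lambda=0$. Since $u\ge u_m$ on $[u_m,1]$, the numerator $(p-1)u-\lambda$ in \eqref{eq: Delta z in u} is then nonnegative on the whole interval of integration. It vanishes only at the endpoint $u=u_m$, and $u_m<1$ in $S$. Hence
\begin{equation*}
\Delta z(\lambda,C^2)=\frac{2(p-1)}{p|J_0|}\int_{u_m}^1\frac{(p-1)u-\lambda}{\sqrt{\phi(u)}}\,du>0
\end{equation*}
at every interior point of the curve, i.e.\ every point with $C^2>0$. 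The endpoint $(0,0)$ lies in $[0,\lambda_p^*)\times\{0\}$, where $\Delta z>0$ by Lemma~\ref{lemma: Delta z on C=0}. The endpoint $(p-1,0)$ is the only remaining candidate, and there $\Delta z=0$ by Proposition~\ref{prop: limit of Delta z at p-1}. This proves the first claim.

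\textbf{Geometry of $\overline A$.} The region $A$ is bounded by the segment $[0,p-1]\times\{0\}$ below and by the graph $\{r_\mathcal{C}=0\}$ above. This graph is given explicitly in Lemma~\ref{lemma: condition for rm=0}, and it lies strictly below $\partial S_2$ because it belongs to $S$. I will intersect $\overline A$ with $\overline{S^\rho}$, removing the small half-disc $B_\rho((\frac{p-1}{2},0))$. Here I need the curve $\{r_\mathcal{C}=0\}$ to pass above that half-disc. This holds because the curve has positive height at $\lambda=\frac{p-1}{2}$; if necessary I shrink $\rho$. The resulting set is a closed topological disc. I then sort its boundary by sign:
\begin{itemize}
\item $\Delta z>0$ on the upper arc $\{r_\mathcal{C}=0\}\setminus\{(p-1,0)\}$ and on $[0,\lambda_p^*)\times\{0\}$;
\item $\Delta z<0$ on $(\lambda_p^*,\frac{p-1}{2}-\rho]\times\{0\}$, on $\partial S_4^\rho\cap\overline A$, and on $[\frac{p-1}{2}+\rho,p-1)\times\{0\}$, by Lemmas~\ref{lemma: Delta z on C=0} and \ref{lemma: Delta z is negative around singularity};
\item $\Delta z=0$ exactly at $P_1=(\lambda_p^*,0)$ and $P_2=(p-1,0)$.
\end{itemize}
So the boundary splits into two arcs joining $P_1$ to $P_2$: one on which $\Delta z>0$ and one on which $\Delta z<0$.

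\textbf{Crossing argument.} I choose a homeomorphism $\overline A\cap\overline{S^\rho}\to[0,1]^2$ that sends the negative arc to the left side, the positive arc to the right side, and $P_1,P_2$ to the bottom and top midpoints. The crossing lemma, applied exactly as in the proof of Proposition~\ref{prop: Delta z=0 is continuous curve}, then yields a compact connected subset $\Gamma\subset\{\Delta z=0\}\cap\overline A$ that contains $P_1$ and $P_2$. By the first step, $\Gamma$ meets $\{r_\mathcal{C}=0\}$ only at $(p-1,0)$.

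\textbf{Main obstacle.} The delicate point is near the corner $(p-1,0)$. There the positive arc $\{r_\mathcal{C}=0\}$ and the negative arc on the $\lambda$-axis meet at a point where $\Delta z=0$, so I must check that the homeomorphism to the square is still valid. Concretely, I need the two arcs to meet only at $P_2$, and the corner to be a genuine boundary point of a Jordan domain. This follows from the explicit formula for the curve: $C^2\approx (p-1-\lambda)^{2}/p^2\cdot(\ldots)>0$ for $\lambda<p-1$, so the curve approaches $P_2$ from strictly inside $S$. The crossing lemma needs no sign information at the corners themselves, so nothing more is required there.
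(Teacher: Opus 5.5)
Your proposal is correct. The first part is the paper's argument: on $\{r_\mathcal{C}=0\}$ one has $(p-1)k_m^p=\lambda$, so the numerator is nonnegative and not identically zero. You write it in the $u$-variable rather than in $s$, and you also handle the endpoint $(0,0)$ explicitly, which the paper leaves implicit.

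For the second part you take a different route. You rerun the crossing lemma on the smaller Jordan domain $\overline A\cap\overline{S^\rho}$, with the first part supplying the sign on the new boundary arc $\{r_\mathcal{C}=0\}$. The paper instead keeps the component $\tilde\Gamma$ already obtained on all of $\overline{S^\rho}$ in Proposition~\ref{prop: Delta z=0 is continuous curve} and sets $\Gamma=\tilde\Gamma\cap\overline A$. It then proves connectedness by a point-set argument. Since $\overline A\cap\overline{A^c}=\{r_\mathcal{C}=0\}$ and $\tilde\Gamma$ meets this set only at $(p-1,0)$, a separation $\Gamma=U\cup V$ with $(p-1,0)\in V$ would produce a separation $\tilde\Gamma=U\cup\bigl(V\cup(\tilde\Gamma\cap\overline{A^c})\bigr)$, contradicting the connectedness of $\tilde\Gamma$.

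The paper's route is more economical in three ways:
\begin{itemize}
\item it needs no second homeomorphism onto a square;
\item it needs no information about where $\{r_\mathcal{C}=0\}$ lies relative to the half-disc $B_\rho((\frac{p-1}{2},0))$, so it works for the $\rho$ already fixed;
\item it yields $\Gamma\subset\tilde\Gamma$.
\end{itemize}
Your route is self-contained, since it uses only the method of Proposition~\ref{prop: Delta z=0 is continuous curve} and not its result. The price is the Jordan-domain verification and possibly shrinking $\rho$ further. That shrinking is harmless, because $\rho$ was only ever required to be sufficiently small and all the sign lemmas persist for smaller $\rho$.

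A minor aside on your corner heuristic: the factor $\bigl(1-(\frac{\lambda}{p-1})^{\frac{2p-2}{p}}\bigr)^{-1}$ blows up like $\frac{p}{2(p-1-\lambda)}$. So the curve actually behaves like $C^2\approx\frac{p-1-\lambda}{2p}$ near $(p-1,0)$, approaching at a positive angle strictly between the $\lambda$-axis and $\partial S_2$. Only positivity for $\lambda<p-1$ is used, so this does not affect your argument.
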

\begin{proof}
  Suppose that $(\lambda,C^2) \in S^0$ lies on $r_\mathcal{C}=0$. Then $k_m = \left(\frac{p^2C^2}{(p-1-\lambda)^2 + p^2C^2} \right)^{\frac{1}{2p-2}}$ and thus by Lemma~\ref{lemma: condition for rm=0}
  \begin{equation*}
    (p-1)k(s)^p-\lambda \geq (p-1)k_m^p-\lambda = (p-1)\left(\frac{p^2C^2}{(p-1-\lambda)^2 + p^2C^2} \right)^{\frac{p}{2p-2}} - \lambda =0.
  \end{equation*}
  Since $k(s)$ is continuous, but not constant, the inequality is strict for a set of positive measure and it follows that $\Delta z(\lambda,C^2) = \int_0^1 \frac{(p-1)k(s)^p-\lambda}{|J_0|} >0$. 

Let $\Gamma = \overline A \cap \tilde \Gamma$, which is closed and clearly $(\lambda_p^*,0),(p-1,0) \in \Gamma$. 
It remains to show that $\Gamma$ is connected. 
Suppose that $\Gamma$ is not connected, i.e.\ there exist relatively disjoint closed sets $U$ and $V$ such that $U \cup V = \Gamma$. 
We may assume that $(p-1,0) \in V$.
Then $U$ and $W:=V \cup (\tilde \Gamma \cap \overline{A^c})$ (where the complement and closure are taken in $\overline{S^\rho}$) are relatively closed in $\tilde \Gamma$ and 
\begin{equation*}
   U \cup W = U \cup (V \cup (\tilde \Gamma \cap \overline{A^c})) = (U \cup V) \cup (\tilde \Gamma \cap \overline{A^c}) = \Gamma \cup (\tilde \Gamma \cap \overline{A^c}) = (\tilde \Gamma \cap \overline A) \cup (\tilde \Gamma \cap \overline{A^c}) = \tilde \Gamma,
\end{equation*}
and unless $U\cap W \neq \varnothing$, $U$ and $W$ form a separation for $\tilde \Gamma$, which contradicts the connectivity of $\tilde \Gamma$.
Indeed, suppose there exists $x \in U\cap W$. Since $U \cap V =\varnothing$, necessarily $x \in U \cap (\tilde \Gamma \cap \overline{A^c}) = \tilde \Gamma \cap (\overline A \cap \overline{A^c})$.
However, as $\overline A \cap \overline{A^c} = \{r_{\mathcal{C}}=0\}$ and $\tilde \Gamma \subset \{\Delta z=0\}$, by the first part of the proposition, $x=(p-1,0)$. This in turn contradicts $x \in U$, that is $U\cap W = \varnothing$.
Therefore $\Gamma$ is connected.
\end{proof}

From now on we work only with the connected compact subset $\Gamma$ of $\{\Delta z=0\}$, lying below $r_{\mathcal{C}}=0$ (Proposition~\ref{prop: global growth of Delta z=0}).
We furthermore characterize the asymptotic behavior of $\Gamma$ near the $(p-1,0)$ endpoint, which is of importance in the next section, see also Figure~\ref{fig: deltaZ asypmtotic}.

\begin{figure}[ht]
  \includegraphics*[width=0.7\textwidth]{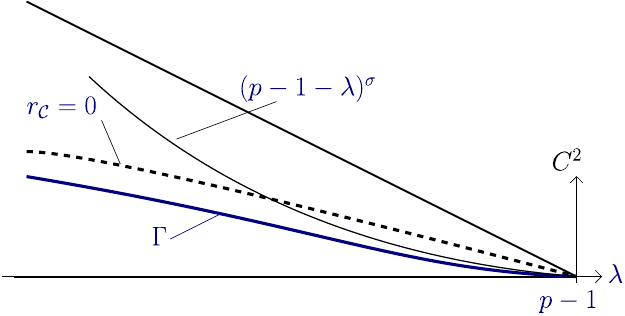}
  \caption{The asymptotic behavior of $\Gamma$}
  \label{fig: deltaZ asypmtotic}
\end{figure}

\begin{corollary}
  \label{cor: asymptotics at p-1 of Gamma}
  Fix $\sigma \in (1,2)$. Then there exists $\delta=\delta(p,\sigma)$ such that for all $(\lambda,C^2) \in \Gamma \subset \{\Delta z =0\}$ with $\lambda \in (p-1-\delta, p-1)$, we have $C^2 < (p-1-\lambda)^\sigma$.
\end{corollary}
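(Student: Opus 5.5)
This follows almost directly from the second part of Proposition~\ref{prop: limit of Delta z at p-1}, applied with the same fixed $\sigma\in(1,2)$. That proposition gives $\delta=\delta(p,\sigma)>0$ such that for all $\lambda\in(p-1-\delta,p-1)$ and all $C^2\in[(p-1-\lambda)^\sigma,\tfrac{p-1-\lambda}{p}]$ we have $\Delta z(\lambda,C^2)>0$. I will take this $\delta$ and, if necessary, shrink it so that $(p-1-\lambda)^\sigma<\tfrac{p-1-\lambda}{p}$ for all $\lambda\in(p-1-\delta,p-1)$. This is possible because $\sigma>1$: the inequality is equivalent to $(p-1-\lambda)^{\sigma-1}<1/p$. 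With this choice the interval of $C^2$ values above is non-empty and lies inside $\overline{S^0}$.

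Now let $(\lambda,C^2)\in\Gamma$ with $\lambda\in(p-1-\delta,p-1)$. By Proposition~\ref{prop: global growth of Delta z=0}, $\Gamma\subset\tilde\Gamma\subset\{\Delta z=0\}$, so the continuous extension satisfies $\Delta z(\lambda,C^2)=0$. Since $\Gamma\subset\overline{S^\rho}$, we have $0\le C^2\le\tfrac{p-1-\lambda}{p}$.

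\begin{itemize}
\item If $C^2=0$, the claim $C^2<(p-1-\lambda)^\sigma$ holds trivially, since $p-1-\lambda>0$.
\item If $C^2>0$ and $C^2\ge(p-1-\lambda)^\sigma$, then $C^2\in[(p-1-\lambda)^\sigma,\tfrac{p-1-\lambda}{p}]$. Proposition~\ref{prop: limit of Delta z at p-1} then gives $\Delta z(\lambda,C^2)>0$. For points of $S$ this is the original $\Delta z$. For the endpoint $C^2=\tfrac{p-1-\lambda}{p}$ it is Lemma~\ref{lemma: positivity on diagonal}, which covers $\partial S_2$. Either way this contradicts $\Delta z(\lambda,C^2)=0$.
\end{itemize}

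Hence $C^2<(p-1-\lambda)^\sigma$ for every point of $\Gamma$ in this $\lambda$-range.

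The only points needing care are bookkeeping. First, the positivity statement must apply on the whole set where the continuous extension is used; it does, because the region in question lies in $S\cup\partial S_2$ away from $\partial S_1$, where $\Delta z$ is given by its original formula. Second, for $\lambda$ close to $p-1$ we should check $\rho<\tfrac{p-1}{2}$, so that these points are not cut out by the ball in $S^\rho$. Neither step is a genuine obstacle; the analytic work was already done in Proposition~\ref{prop: limit of Delta z at p-1}.
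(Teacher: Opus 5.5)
Your proposal is correct and matches the paper's proof: the paper also applies the second part of Proposition~\ref{prop: limit of Delta z at p-1} with the same $\sigma$ and gets a contradiction with $\Delta z=0$ on $\Gamma$. Your extra bookkeeping (non-emptiness of the $C^2$-interval, the trivial case $C^2=0$, and the diagonal endpoint via Lemma~\ref{lemma: positivity on diagonal}) is harmless and only spells out what the paper leaves implicit.
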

\begin{proof}
  By Proposition~\ref{prop: limit of Delta z at p-1}, there exists $\delta=\delta(p,\sigma)$ such that $\Delta z(\lambda,C^2)>0$ for all $C^2 \in [(p-1-\lambda)^\sigma, \frac{p-1-\lambda}{p}]$ and $|p-1-\lambda| < \delta$, that is for $(\lambda,C^2) \in \Gamma \subset \{\Delta z=0\}$ we have $C^2 < (p-1-\lambda)^\sigma$.
\end{proof}

\section{The function \texorpdfstring{$\Delta \theta$}{Delta theta}}
\label{section: Delta theta}
Recall that
\begin{equation}
\label{eq: formula for Delta theta}
\begin{split}
  \Delta \theta(\lambda, C^2) &= 2\int_{-P}^0 \theta_s = 2\int_{-P}^0 \frac{C |J_0| ((p-1)k(s)^p - \lambda)}{k(s)^{2p-2}|J_0|^2-p^2C^2} ds  = 2\int_{-P}^0 \frac{C \sqrt{(p-1-\lambda)^2 + p^2 C^2} ((p-1)k(s)^p - \lambda)}{k(s)^{2p-2}((p-1-\lambda)^2 + p^2 C^2)-p^2C^2} ds.
\end{split}
\end{equation}

We are now interested in the value of $\Delta \theta$ along  $\Gamma \subset \{\Delta z = 0\}$ in the $\lambda$-$C^2$ plane.
To show existence of some non-planar closed elasticae it suffices to show that $\Delta \theta$ is continuous (at least along $\Gamma$) and that it takes at least two different values. 
(Then by continuity it attains all intermediate values, in particular, all intermediate rational multiples of $2\pi$.) 

We should also note that strictly speaking $\Delta \theta$ is only defined for $C^2\neq 0$, the value of $\Delta \theta$ for $(\lambda,0)$ is to be understood as the limit from inside $S$.

\begin{lemma}
  \label{lemma: Delta theta denominator stays positive}
  Let $(\lambda,C^2)\in S^0$ with $\Delta z(\lambda,C^2)=0$. Then 
  \begin{equation*}
    k(s)^{2p-2}((p-1-\lambda)^2 + p^2 C^2)-p^2C^2\geq \eps_{\lambda,C^2}>0.
  \end{equation*}
\end{lemma}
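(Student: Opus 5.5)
The quantity in question is, by \eqref{eq: r_C definition}, exactly $\frac{|J_0|^4}{p^2} r(s)^2$, since $r(s) = \frac{p\sqrt{|J_0|^2 k(s)^{2p-2}-p^2C^2}}{|J_0|^2}$ (the same formula used in the proof of Lemma~\ref{lemma: closedness conditions}). So the claim is that a non-planar $p$-elastica with $\Delta z=0$ stays a positive distance away from the $z$-axis. Because $k$ is $2P$-periodic and monotone between $k_m$ and $1$ (Remark~\ref{rmk: k prime}), the infimum over $s$ is attained at $k=k_m$. It therefore suffices to show
\begin{equation*}
k_m^{2p-2}\big((p-1-\lambda)^2+p^2C^2\big)-p^2C^2>0,
\end{equation*}
which is equivalent to $r_\mathcal{C}(\lambda,C^2)>0$. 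We then set $\eps_{\lambda,C^2}$ equal to this value.

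By Corollary~\ref{cor: global lower bound for km}, for every $(\lambda,C^2)\in S$ we already have $k_m^{2p-2}\ge \frac{p^2C^2}{(p-1-\lambda)^2+p^2C^2}$, with equality exactly on the curve $\{r_\mathcal{C}=0\}$. So the expression above is $\ge 0$, and it is strictly positive unless $(\lambda,C^2)\in\{r_\mathcal{C}=0\}$. Here $(\lambda,C^2)\in S^0\subset S$ has $C^2>0$, and since $\lambda\ge 0$ we also have $\lambda\le p-1$ strictly inside, so $|J_0|>0$.

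It remains to exclude $\{r_\mathcal{C}=0\}\cap\{\Delta z=0\}$ inside $S^0$. This is the first part of Proposition~\ref{prop: global growth of Delta z=0}. On $\{r_\mathcal{C}=0\}$, Lemma~\ref{lemma: condition for rm=0} gives $(p-1)k_m^p=\lambda$, hence $(p-1)k(s)^p-\lambda\ge0$, with strict inequality on a set of positive measure because $k$ is non-constant inside $S$. Therefore $\Delta z>0$ there, contradicting $\Delta z=0$. The only common point is $(p-1,0)$, which is not in $S^0$ since $C^2=0$ there.

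Combining these steps gives $\eps_{\lambda,C^2}:=\frac{|J_0|^4}{p^2}r_\mathcal{C}^2>0$, and the bound holds for all $s$ by monotonicity of $k^{2p-2}$ in $k$. I expect no real obstacle. The only points needing care are that the bound is pointwise in $s$ with a constant depending on $(\lambda,C^2)$, and that $k$ is non-constant (so that $\Delta z>0$ is strict). Non-constancy holds because $(\lambda,C^2)\in S$ rather than $\partial S_2$, by Proposition~\ref{prop: existence}.
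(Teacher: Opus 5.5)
Your proof is correct and follows the paper's argument: reduce to the minimum $k_m$, use Corollary~\ref{cor: global lower bound for km} to get the non-strict inequality with equality only on $\{r_\mathcal{C}=0\}$, and exclude that curve using the first part of Proposition~\ref{prop: global growth of Delta z=0}, which you re-derive. Your identification of the quantity with $\frac{|J_0|^4}{p^2}r(s)^2$ is a clarifying addition, not a change of method.
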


\begin{proof}
  It suffices to show that strict inequality holds for the minimum $k_m$, that is we want to show
  \begin{equation*}
    k_m^{2p-2} > \frac{p^2 C^2}{(p-1-\lambda)^2 + p^2 C^2}.
  \end{equation*}
  Assume that $k_m^{2p-2} \leq \frac{p^2 C^2}{(p-1-\lambda)^2 + p^2 C^2}$. By Corollary~\ref{cor: global lower bound for km} we have equality, and $(\lambda,C^2)$ lies also in $\{r_\mathcal{C}=0\}$, which contradicts Proposition~\ref{prop: global growth of Delta z=0}. 
\end{proof}

\begin{proposition}
  \label{prop: continuiy of Delta theta}
  The function $\Delta \theta: S^0 \to \R$ is continuous along $\Gamma \cap S^0$. 
\end{proposition}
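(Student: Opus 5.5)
We prove continuity of $\Delta\theta$ at a fixed point $(\lambda,C^2)\in\Gamma\cap S^0$ by copying the argument of Lemma~\ref{lemma: continuity of Delta z inside}, with the denominator of $\theta_s$ as the only new difficulty. Let $(\lambda_n,C_n^2)\to(\lambda,C^2)$ with $(\lambda_n,C_n^2)\in\Gamma\cap S^0$. Write $P_n,P$ for the periods and $k_n,k$ for the curvatures, and let $\theta^n_s,\theta_s$ be given by \eqref{eq: formula for Delta theta}. Exactly as before,
\begin{equation*}
  |\Delta\theta(\lambda_n,C_n^2)-\Delta\theta(\lambda,C^2)|\leq 2\|\theta^n_s-\theta_s\|_{L^1(-P,0)}+2|P_n-P|\,\|\theta^n_s\|_{L^\infty}.
\end{equation*}
By Proposition~\ref{prop: continuity of period}, $P_n\to P\in(0,\infty)$ (note $C^2>0$ and $\lambda\ge0$, so we are away from $(\frac{p-1}{2},0)$; if $(\lambda,C^2)\in\partial S_2^0$ we use the limit from inside instead). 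By Remark~\ref{rmk: local uniform convergence of w_n}, $k_n\to k$ uniformly on compact sets. Therefore both terms vanish once we know that the numerators converge uniformly and that the denominators
\begin{equation*}
  D_n(s):=k_n(s)^{2p-2}\bigl((p-1-\lambda_n)^2+p^2C_n^2\bigr)-p^2C_n^2
\end{equation*}
are bounded below by a positive constant uniformly in $n$ and $s$. The numerators $C_n|J_0^n|((p-1)k_n^p-\lambda_n)$ are uniformly bounded and converge locally uniformly, since $k_n\le1$ and the parameters converge.

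The key step is this uniform lower bound on $D_n$. Since $k_n\ge k_{m,n}$, we have $\inf_s D_n(s)=k_{m,n}^{2p-2}|J_0^n|^2-p^2C_n^2$. By Lemma~\ref{lemma: continuity of minimal curvature}, $k_{m,n}\to k_m$, and $|J_0^n|\to|J_0|$. Hence
\begin{equation*}
  \inf_s D_n\to k_m^{2p-2}|J_0|^2-p^2C^2\ge\eps_{\lambda,C^2}>0,
\end{equation*}
where the last inequality is Lemma~\ref{lemma: Delta theta denominator stays positive}, valid because $\Delta z(\lambda,C^2)=0$. For large $n$ this gives $D_n\ge\eps_{\lambda,C^2}/2$, which makes $\theta^n_s$ uniformly bounded and gives $\theta^n_s\to\theta_s$ uniformly on $[-P-1,0]$. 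Both terms in the estimate then tend to zero.

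The main obstacle is precisely this denominator bound. Lemma~\ref{lemma: Delta theta denominator stays positive} only gives positivity at each point, so uniformity has to come from continuity of $k_m$ together with the fact that the limit point stays in $\Gamma$, which lies off $\{r_\mathcal{C}=0\}$ by Proposition~\ref{prop: global growth of Delta z=0}. This is why we restrict to $\Gamma$ and do not claim continuity on all of $S^0$. On $\{r_\mathcal{C}=0\}$ the integrand of $\Delta\theta$ can blow up, and continuity genuinely fails there.
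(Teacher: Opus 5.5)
Your proposal is correct and follows the paper's proof essentially verbatim. Both use the same split into an integrand-difference term and a period-mismatch term, and both rely on local uniform convergence of $k_n$ (Remark~\ref{rmk: local uniform convergence of w_n}) and on a uniform lower bound for the denominator for large $n$, obtained from Lemma~\ref{lemma: Delta theta denominator stays positive} together with continuity of $k_m$ (Lemma~\ref{lemma: continuity of minimal curvature}). Your aside about $\partial S_2^0$ is not needed, since $\Gamma\cap S^0$ stays below $\{r_{\mathcal{C}}=0\}$ and hence away from $\partial S_2^0$.
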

\begin{proof}
  Let $\Gamma \cap S^0 \ni (\lambda_n,C_n^2)\to(\lambda,C^2)\in \Gamma \cap S^0$ and denote their corresponding periods by $P_n$ and $P$ (which are uniformly bounded as $\Gamma$ stays away from the possible singular point $(\frac{p-1}{2},0)$) and curvatures by $k_n$ and $k$. Then
  \begin{equation*}
    \begin{split}
     \frac{|\Delta \theta(\lambda_n,C_n^2) - \Delta \theta(\lambda,C^2)|}{2} &= \left|\int_{-P_n}^0 (\theta_n)_s \, ds - \int_{-P}^0 \theta_s \, ds \right|  = \left| \int_{-P}^0(\theta_n)_s  - \theta_s \, ds + \int_{-P_n}^{-P}(\theta_n)_s \, ds\right| \\
      &\leq P\|(\theta_n)_s-\theta_s\|_{L^\infty(0,P)} + |P_n-P|\|(\theta_n)_s\|_{L^\infty(P,P_n)}.
    \end{split}
  \end{equation*}
  We also have
  \begin{equation*}
    (\theta_n)_s = \frac{C_n |J_{0,n}| ((p-1)k_n(s)^p - \lambda_n)}{k_n(s)^{2p-2}|J_{0,n}|^2-p^2C_n^2},
  \end{equation*}
  where the denominator is bounded below by some $\eps_{\lambda,C^2}>0$ for all $n$ sufficiently large (i.e.\ in a neighborhood of $(\lambda,C^2)$), thanks to Lemma~\ref{lemma: Delta theta denominator stays positive} and Lemma~\ref{lemma: continuity of minimal curvature}.
  Since $k_n\to k$ in $L^\infty(0,P)$ by Remark~\ref{rmk: local uniform convergence of w_n}, we have $(\theta_n)_s \to \theta_s$ and the first term converges to zero. The second term converges also to zero by Proposition~\ref{prop: continuity of period}.
\end{proof}

\begin{remark}
  We remark that $\Delta \theta$ is not continuous in $S^0$ and does not have a well-defined limit at $(p-1,0)$. In fact, $\Delta \theta$ jumps by $2\pi$ when crossing the curve $r_\mathcal{C}=0$, since then the $p$-elastica transverses the cylindrical axis, see also \cite[Figure~2]{langer-singer_classification}.
\end{remark}

We now turn towards the evaluation of the limits of $\Delta \theta$ at $(p-1,0)$ and $(\lambda^*_p,0)$ along $\Gamma$. 
In particular, in the former case, the asymptotic behavior from Corollary~\ref{cor: asymptotics at p-1 of Gamma} comes into play.

\begin{proposition}
  \label{prop: Delta theta at right endpoint}
  Let $(\lambda_n, C^2_n) \to (p-1,0)$ such that $C_n^2 \leq (p-1-\lambda_n)^\sigma$ with $\sigma \in (1,2)$. Then $\lim_{n} \Delta \theta (\lambda_n , C_n^2) =0$.
\end{proposition}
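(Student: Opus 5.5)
The plan is to bound the integrand of \eqref{eq: formula for Delta theta} directly in terms of $\eps_n:=p-1-\lambda_n\to 0$. The half-period $P(\lambda_n,C_n^2)$ stays bounded, since it converges to $\pi$ by Proposition~\ref{prop: continuity of period}. So it suffices to show that $\sup_s|(\theta_n)_s|\to 0$. I drop the index $n$ below and write $\eps=p-1-\lambda$, so that the hypothesis reads $C^2\le \eps^\sigma$ with $\sigma\in(1,2)$.

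\emph{Step 1 (numerator).} We have $|J_0|^2=\eps^2+p^2C^2\le \eps^2+p^2\eps^\sigma=O(\eps^\sigma)$, because $\sigma<2$. Hence $C|J_0|\le \eps^{\sigma/2}\cdot O(\eps^{\sigma/2})=O(\eps^\sigma)$. Next, $k^p\in[u_m,1]$, which gives $(p-1)k^p-\lambda\le \eps$ from above. From below, \eqref{eq: estimate for um} (with some fixed $\gamma\in(1,2)$) gives
\begin{equation*}
(p-1)k^p-\lambda\ge (p-1)u_m-\lambda=-\eps+2pC^2+O(\eps^\gamma).
\end{equation*}
Altogether $|(p-1)k^p-\lambda|=O(\eps)$ uniformly in $s$, so the numerator of $\theta_s$ is $O(\eps^{1+\sigma})$.

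\emph{Step 2 (denominator).} I will show that $D(s):=k(s)^{2p-2}|J_0|^2-p^2C^2\ge \tfrac12\eps^2$ for $\eps$ small. Since $D$ is increasing in $k$, it suffices to check this at $k_m$, where $k_m^{2p-2}=w_m^2$. Corollary~\ref{cor: approx of A from estimate at p-1} gives $w_m^2=1-4\left(\tfrac{\eps}{p}-C^2\right)+O(\eps^\gamma)$, hence $1-w_m^2=O(\eps)$. Therefore
\begin{equation*}
D(s)\ge w_m^2\eps^2-p^2C^2(1-w_m^2)=\eps^2(1+O(\eps))-O(\eps^{1+\sigma}).
\end{equation*}
Because $1+\sigma>2$, the error term is $o(\eps^2)$, and the claim follows. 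This is the key point where the asymptotic hypothesis $C^2\le\eps^\sigma$ with $\sigma>1$ (Corollary~\ref{cor: asymptotics at p-1 of Gamma}) enters: without it, the $p^2C^2$ term could cancel the $\eps^2$ term, and $\Delta\theta$ would not even be bounded. I expect this cancellation to be the main obstacle. In particular, the $O(\eps^\gamma)$ error in $w_m$ is harmless only because it gets multiplied by $|J_0|^2$ or by $C^2$, both of which are $O(\eps^\sigma)$.

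\emph{Step 3 (conclusion).} Combining the two steps gives $|\theta_s|\le O(\eps^{1+\sigma})/(\eps^2/2)=O(\eps^{\sigma-1})$ uniformly in $s$. Hence
\begin{equation*}
|\Delta\theta(\lambda_n,C_n^2)|\le 2P(\lambda_n,C_n^2)\,O(\eps_n^{\sigma-1})\to 0,
\end{equation*}
since $\sigma>1$ and $P(\lambda_n,C_n^2)\to\pi$.
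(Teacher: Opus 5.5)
Your proposal is correct and follows essentially the same route as the paper: you bound the numerator of $\theta_s$ by $O(\eps^{1+\sigma})$ using $|(p-1)k^p-\lambda|=O(\eps)$ and $C|J_0|=O(\eps^\sigma)$, you bound the denominator from below at $k_m$ by $\eps^2+O(\eps^{1+\sigma})$ via Corollary~\ref{cor: approx of A from estimate at p-1}, and you use $P_n\to\pi$. Your splitting $D\ge w_m^2\eps^2-p^2C^2(1-w_m^2)$ is a slightly cleaner way to get the lower bound, and it makes the argument independent of Lemma~\ref{lemma: Delta theta denominator stays positive}, which presupposes $\Delta z=0$ and is not among the hypotheses here.
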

\begin{proof}
  We show that 
  \begin{equation*}
    \frac{|\Delta \theta(\lambda_n,C_n^2)|}{2} = \bigg| \int_{-P_n}^0 \frac{C_n \sqrt{(p-1-\lambda_n)^2 + p^2 C_n^2} \left((p-1)k_{n}(s)^p - \lambda_n \right)}  {k_{n}(s)^{2p-2}((p-1-\lambda_n)^2 + p^2 C_n^2)-p^2C_n^2} ds \bigg| \to 0.
  \end{equation*}
 The denominator is nonnegative by Lemma~\ref{lemma: Delta theta denominator stays positive}. Moreover, from Corollary~\ref{cor: approx of A from estimate at p-1}, taking $\gamma=\sigma$, we have for $n$ large
  \begin{equation}
    \label{eq: quantitative lower bound for denom theta_s}
    \begin{split}
      D_n &:=k_{n}(s)^{2p-2}((p-1-\lambda_n)^2 + p^2 C_n^2)-p^2C_n^2  \\ 
      &\geq k_{m,n}^{2p-2}((p-1-\lambda_n)^2 + p^2 C_n^2)-p^2C_n^2 \\
      &= \left(1-\frac{2}{p}(p-1-\lambda_n) + 2C_n^2 + O(|p-1-\lambda_n|^\sigma)\right)^{2} [(p-1-\lambda_n)^2 + p^2 C_n^2]-p^2C_n^2 \\
      &= \left(1-\frac{4}{p}(p-1-\lambda_n) + 4C_n^2 + O(|p-1-\lambda_n|^\sigma)\right) [(p-1-\lambda_n)^2 + p^2 C_n^2]-p^2C_n^2 \\
      &= (p-1-\lambda_n)^2 + O(|p-1-\lambda_n|^{1+\sigma})
    \end{split}
  \end{equation} 
 as well as
 \begin{equation*}
  \begin{split}
    (p-1)k_{m,n}^p - \lambda_n  &= (p-1)\left( 1-\frac{2}{p-1}(p-1-\lambda_n) + \frac{2p}{p-1}C_n^2 + O(|p-1-\lambda_n|^\sigma)\right) - \lambda_n \\
    &= -(p-1-\lambda_n) + O(|p-1-\lambda_n|^\sigma).
  \end{split}
 \end{equation*}
 Taking the absolute value inside the integral, we have
 \begin{equation*}
  |(p-1) k_n(s)^p - \lambda_n|  \leq \max(|(p-1)k_{m,n}^p - \lambda_n|, p-1-\lambda_n) \leq 2(p-1-\lambda_n)
 \end{equation*}
 and hence 
  \begin{equation*}
    |\Delta \theta| \leq 2 P_n \frac{C_n \sqrt{(p-1-\lambda_n)^2 + p^2 C_n^2} (p-1 - \lambda_n)}  {(p-1-\lambda)^2 + O(|p-1-\lambda|^{1+\sigma})}.
  \end{equation*}
  Since $C_n\leq (p-1-\lambda_n)^{\sigma/2}$, for $n$ large, the numerator is bounded as
  \begin{equation}
  \label{eq: quantitative upper bound for numerator}
    N_n \leq (p-1-\lambda_n)^{\sigma/2} \sqrt{(p-1-\lambda_n)^2 + p^2 (p-1-\lambda_n)^{\sigma}} (p-1 - \lambda_n) \leq O (|p-1-\lambda_n|^{1+\sigma}).
  \end{equation}
  Combining the estimates on numerator \eqref{eq: quantitative upper bound for numerator} and denominator \eqref{eq: quantitative lower bound for denom theta_s} with Proposition~\ref{prop: continuity of period} gives
  \begin{equation*}
    \frac{|\Delta \theta(\lambda_n,C_n^2)|}{2} \leq 2 P_{n} \frac{N_n}{D_n} \leq 4 P_n  \frac{O(|p-1-\lambda_n|^{1+\sigma})}{(p-1-\lambda_n)^{2}} = O(|p-1-\lambda_n|^{\sigma-1}) \to 0 
  \end{equation*}
  as $n\to \infty$, i.e.\ $\lambda_n \to p-1$. This finishes the proof.
\end{proof}

We now calculate $\Delta \theta$ at the second endpoint around $(\lambda^*_p,0)$, where the limiting direction is irrelevant. However, note that now the minimal curvature $k_m$ approaches zero and has to be carefully estimated.

\begin{proposition}
  \label{prop: theta limit at figure 8}
  Let $(\lambda,C^2) \to (\lambda^*_p,0)$. Then $ \Delta \theta (\lambda, C^2) \to -\pi$.
\end{proposition}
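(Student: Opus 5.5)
The plan is to show that, as $C\to0$ near $(\lambda^*_p,0)$, all of $\Delta\theta$ comes from a boundary layer around the minimum of $k$, and that this layer contributes exactly $-\pi$. Away from the minimum, $k$ is bounded below and $\theta_s$ carries a factor $C$, so that part is negligible. Near the minimum, $k^{2p-2}\sim C^2$, and the denominator $k^{2p-2}|J_0|^2-p^2C^2$ is of the same order as the numerator $C|J_0|\,((p-1)k^p-\lambda)\approx -C\lambda|J_0|$. Since $\lambda^*_p\in(0,\tfrac{p-1}{2})$ by Lemma~\ref{lemma: Delta z on C=0}, we have $\lambda\approx\lambda^*_p>0$, and for $(\lambda,C^2)$ near $(\lambda^*_p,0)$ we are in case (i) of Proposition~\ref{prop: estimates for um} with $\delta$ fixed.

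\textbf{Change of variables.} As for $\Delta z$, write $\Delta\theta$ in the $w$-variable via Remark~\ref{rmk: k prime}:
\begin{equation*}
\Delta\theta=2\int_{w_m}^1 \frac{C|J_0|\,((p-1)w^{\frac{p}{p-1}}-\lambda)}{(w^2|J_0|^2-p^2C^2)\sqrt{g(w)}}\,dw .
\end{equation*}
Set $E_0:=\tfrac{(p-1)(p-1-2\lambda)}{p^2}>0$ and $a:=|J_0|\to p-1-\lambda$. By Corollary~\ref{cor: estimates for km}(i), $w_m=k_m^{p-1}=C/\sqrt{E_0}+O(C^{\gamma})$ with $\gamma>1$. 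Fix $\beta\in(0,1)$ and split the integral at $w=C^\beta$.

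\textbf{Outer part.} On $[C^\beta,1]$ the denominator is at least $a^2C^{2\beta}/2$. Moreover, $\int_{C^\beta}^1 dw/\sqrt g$ is bounded, because $g$ is bounded below away from its roots and has a simple root at $1$ (Lemma~\ref{lemma: estimates for gprime and gbis at 1}). The integrand is therefore $O(C^{1-2\beta})$ in sup norm with integrable weight, so choosing $\beta<1/2$ makes this part vanish. I would refine the split if the two zones need to overlap differently.

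\textbf{Inner part.} For $w\le C^\beta$ we have
\begin{equation*}
g(w)=E_0-C^2w^{-2}+O\big(C^2+w^{\frac{p}{p-1}}\big),\qquad (p-1)w^{\frac p{p-1}}-\lambda=-\lambda+O(C^{\beta p/(p-1)}).
\end{equation*}
Substitute $w=Cx/\sqrt{E_0}$, so that $x$ runs from $x_m=1+O(C^{\gamma-1})$ to $C^{\beta-1}\sqrt{E_0}\to\infty$. The integrand then becomes, up to errors,
\begin{equation*}
\frac{-\lambda a\, x}{(a^2x^2-p^2E_0)\sqrt{x^2-1}} .
\end{equation*}
Using $a^2-p^2E_0=\lambda^2$ (direct computation at the limit), the model integral is
\begin{equation*}
2\int_1^\infty\frac{-\lambda a\,x\,dx}{(a^2x^2-p^2E_0)\sqrt{x^2-1}} .
\end{equation*}
With $y=\sqrt{x^2-1}$ this equals $-2\lambda a\int_0^\infty \frac{dy}{a^2y^2+\lambda^2}=-\pi$.

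\textbf{Main obstacle.} The hard step is justifying the passage to this model integral uniformly. The square-root singularity sits at the moving endpoint $x_m$, not at $1$. I would handle it as in Lemma~\ref{lemma: convergence of s-periods}: lower-bound $g$ by $g'(\xi)(w-w_m)$ near $w_m$ with $g'\gtrsim C^{-1}$ there, which gives an $L^q$ bound ($q\in(1,2)$) for dominated/Vitali convergence in the rescaled variable. The denominator $a^2x^2-p^2E_0\ge \lambda^2/2$ stays uniformly positive since $x\ge x_m\approx1$ (cf. Lemma~\ref{lemma: Delta theta denominator stays positive}, now quantitatively). The relative errors in $g$ are $O(C^{2-2\beta}+C^{\beta p/(p-1)})$ on the inner zone, and the tail $x\to\infty$ decays like $x^{-2}$, so the truncation at $C^{\beta-1}$ costs $O(C^{1-\beta})$. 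Together with the vanishing outer part this yields $\Delta\theta\to-\pi$.
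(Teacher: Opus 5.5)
Your argument is correct, but it reaches the limit by a different route than the paper's proof.

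\textbf{The paper's route.} The paper substitutes $v=k^{2p-2}=w^2$. In this variable $\psi(v)=v\,g(\sqrt v)=Ev-C^2+O(v^{\frac{3p-2}{2p-2}})$ is nearly affine near $v_m$. It splits at $v=C^{1/2}$ and $v=1-C^{1/2}$, so its inner zone is $w\le C^{1/4}$ (your $\beta=1/4$), and the layer at $w=1$ is treated separately. On the inner piece it does three things:
\begin{itemize}
\item it pulls out the numerator by the weighted mean value theorem, and the numerator tends to $-\lambda^*_p$;
\item it replaces $\psi(v)$ by $\psi'(\xi_v)(v-v_m)$, with $\psi'=E_0+O(C^{\frac{p}{4p-4}})$ uniformly;
\item it integrates the rest in closed form as an arctangent whose argument blows up.
\end{itemize}
The value $-\pi$ then comes from the arctangent limit $\frac{\pi}{2}$ combined with $v_m|J_0|^2-p^2C^2=\frac{\lambda^2}{E_0}C^2+o(C^2)$. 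The middle and right pieces are shown to be $O(C^{1/4})$ and $O(C^{5/4})$.

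\textbf{Your route.} You blow up at scale $w\sim C$ to a parameter-free profile integral. You evaluate it exactly via $y=\sqrt{x^2-1}$ and $a^2-p^2E_0=\lambda^2+p^2C^2$. You pass to the limit by two estimates:
\begin{itemize}
\item Vitali's theorem near the moving endpoint $x_m$, using $g'\ge 2C^2w^{-3}$ on the inner zone, i.e.\ $g(Cx/\sqrt{E_0})\ge E_0(x_m^{-2}-x^{-2})\gtrsim x-x_m$;
\item domination by $x^{-2}$ at infinity.
\end{itemize}
Everything else is absorbed into one crude $O(C^{1-2\beta})$ bound.

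\textbf{Comparison.} Both proofs use the same input, $w_m=C/\sqrt{E_0}+o(C)$, which is also what keeps the denominator positive. The paper's version is explicit and gives rates without any compactness step. Yours isolates the universal profile and makes clear why the answer is exactly $-\pi$: only $\lambda^*_p>0$ and the identity for $a^2-p^2E_0$ matter. As written, however, it gives no rate.

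One small slip: your ``relative errors in $g$'' are not uniformly small on the inner zone. Since $x_m<1$, the model $1-x^{-2}$ is negative on $(x_m,1)$. Your Vitali treatment near $x_m$ already makes that claim unnecessary.
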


\begin{proof}
  Perform again the change of variables $k=k(s)$, $dk = k'(s)ds$  (see Remark~\ref{rmk: k prime}) to obtain 
  \begin{equation*}
    \frac{\Delta \theta}{2} = C |J_0| \int_{k_m}^1 \frac{(p-1)k^p - \lambda}{k^{2p-2}|J_0|^2 - p^2 C^2}\frac{1}{\frac{1}{p-1}\sqrt{E k^{4-2p} -\frac{(p-1)^2}{p^2} k^{4} + 2\lambda \frac{p-1}{p^2} k^{4-p} - C^2 k^{6-4p}  }} dk. 
  \end{equation*}
  Substitute $v=k^{2p-2}$, $dv = (2p-2)k^{2p-3}dk = (2p-2)v^{\frac{2p-3}{2p-2}}dk$ to get for $v_m=k_m^{2p-2}$ (notice the $2$ and the factor $p-1$ cancel),
  \begin{equation*}
    \begin{split}
    \Delta \theta &= C |J_0| \int_{v_m}^1 \frac{(p-1)v^{\frac{p}{2p-2}} - \lambda}{v|J_0|^2 - p^2 C^2}\frac{1}{\sqrt{E v^\frac{4-2p}{2p-2} -\frac{(p-1)^2}{p^2} v^\frac{4}{2p-2} + 2\lambda \frac{p-1}{p^2} v^\frac{4-p}{2p-2} - C^2 v^\frac{6-4p}{2p-2}}} \frac{1}{v^\frac{2p-3} {2p-2}} dv \\
    &= C |J_0| \int_{v_m}^1 \frac{(p-1)v^{\frac{p}{2p-2}} - \lambda}{v|J_0|^2 - p^2 C^2}\frac{1}{\sqrt{E v -\frac{(p-1)^2}{p^2} v^\frac{4p-2}{2p-2} + 2\lambda \frac{p-1}{p^2} v^\frac{3p-2}{2p-2} - C^2 }} dv.\\
    \end{split}
  \end{equation*}
  First, note that since $\lambda^*_p<\frac{p-1}{2}$, there exist $\delta_p$ such that $\lambda < \frac{p-1}{2}-\delta_p$.  
  Hence, from Corollary~\ref{cor: estimates for km} and Remark~\ref{rmk:first order Taylor},
  \begin{equation*}
  \begin{split}
    v_m & = \left(\left(\frac{(p-1)(p-1-2\lambda)}{p^2} \right)^{\frac{1}{2-2p}}C^{\frac{2}{2p-2}} + O(C^{\min(\frac{2p+2}{2p-2}, \frac{4p-2}{2p-2})}) \right)^{2p-2} = \frac{p^2}{(p-1)(p-1-2\lambda)} C^2 + O(C^{\min(\frac{3p-2}{p-1},4)}),
  \end{split}
  \end{equation*}
  thus for $C^2$ sufficiently small, $C^\frac{1}{2}\gg v_m$, and 
  \begin{equation}
    \begin{split}
      \label{eq: approx for km denominator}
      v_m |J_0|^2 - p^2C^2   &\geq \left(\frac{p^2}{(p-1)(p-1-2\lambda)} C^2 + O(C^{\min(\frac{3p-2}{p-1},4)}) \right) ((p-1-\lambda)^2+p^2C^2) -p^2C^2 \\
      & = \frac{p^2 \lambda^2}{(p-1)(p-1-2\lambda)}C^2 + O(C^{\min(\frac{3p-2}{p-1},4)}).
    \end{split}
  \end{equation}
  To conclude, the denominator is strictly positive for small $C^2$, a quantitative version of Lemma~\ref{lemma: Delta theta denominator stays positive}.
  Let
  \begin{equation*}
  \begin{split}
      \psi(v)&:= vg(v^{\frac{1}{2}})= E v -\frac{(p-1)^2}{p^2} v^\frac{4p-2}{2p-2} + 2\lambda \frac{p-1}{p^2} v^\frac{3p-2}{2p-2} - C^2, \\
      \psi'(v) &= E-\frac{4p-2}{2p-2} \frac{(p-1)^2}{p^2} v^{\frac{2p}{2p-2}} + 2\lambda \frac{3p-2}{2p-2} \frac{p-1}{p^2} v^{\frac{p}{2p-2}}.
  \end{split}
  \end{equation*}
  We claim that $\psi$ is also unimodal, i.e.\ it increases until its maximum and then decreases. To see this, consider $A(v) = v^{\frac{-2p}{2p-2}}\psi'(v)$ which has derivative
  \begin{equation*}
      A'(v) = \frac{-2p}{2p-2}Ev^{\frac{-2p}{2p-2}-1} - 2\lambda\frac{3p-2}{2p-2} \frac{p-1}{p^2} \frac{p}{2p-2}v^{\frac{-p}{2p-2}-1} <0,
  \end{equation*}
  since $E = \frac{(p-1)^2}{p^2}-2\lambda\frac{p-1}{p^2}+C^2$ is positive for $\lambda$ close to $\lambda_p^*<\frac{p-1}{2}$.
  Therefore $A$ and also $\psi'$ have only one zero in $(0,1)$ with sign change positive-negative, proving the claim.
  We compute
  \begin{equation*}
    \begin{split}
      \psi(C^{\frac{1}{2}}) &= E C^{\frac{1}{2}} -\frac{(p-1)^2}{p^2}C^{\frac{1}{2}\frac{4p-2}{2p-2}} + 2\lambda \frac{(p-1)}{p^2}C^{\frac{1}{2}\frac{3p-2}{2p-2}} -C^2 = \frac{(p-1)(p-1-2\lambda)}{p^2} C^{\frac{1}{2}} + O(C^{\frac{1}{2} \frac{3p-2}{2p-2}}), \\
      \psi(1-C^{\frac{1}{2}}) &= E(1-C^{\frac{1}{2}}) - \frac{(p-1)^2}{p^2}\left(1-C^{\frac{1}{2}}\right)^{\frac{4p-2}{2p-2}} + 2\lambda\frac{p-1}{p^2} \left(1-C^{\frac{1}{2}}\right)^{\frac{3p-2}{2p-2}} -C^2 \\
      &= E(1-C^{\frac{1}{2}})- \frac{(p-1)^2}{p^2}\left(1- \frac{4p-2}{2p-2} C^{\frac{1}{2}} + O(C)\right) + 2\lambda\frac{p-1}{p^2} \left(1-\frac{3p-2}{2p-2}C^{\frac{1}{2}} + O(C) \right) -C^2 \\
      &= C^{\frac{1}{2}} \left(-\frac{(p-1)(p-1-2\lambda)}{p^2} + \frac{(p-1)^2}{p^2} \frac{4p-2}{2p-2} - 2\lambda \frac{p-1}{p^2}\frac{3p-2}{2p-2}  \right) +O(C) \\
      &= \frac{p-1-\lambda}{p} C^{\frac{1}{2}} + O(C) ,  
    \end{split}
  \end{equation*}
  and since $\psi$ is unimodal,
  \begin{equation}
  \label{eq: estimate for psi in middle}
      \inf_{v \in (C^{\frac{1}{2}}, 1-C^\frac{1}{2})} \psi(v) = \min (\psi(C^{\frac{1}{2}}), \psi(1-C^{\frac{1}{2}}) ) = \min (\tfrac{p-1-\lambda}{p}, \tfrac{(p-1)(p-1-2\lambda)}{p^2} )C^{\frac{1}{2}} +  O(C^{\min(1,\frac{3p-2}{4p-4})})\geq c_p C^{\frac{1}{2}},
  \end{equation}
  for some constant $c_p>0$. Moreover by similar computations,
  \begin{equation}
    \label{eq: estimates for psiprime}
    \begin{split}    
     &\text{in } (v_m,C^\frac{1}{2}):\quad \psi'(v) = E -\frac{4p-2}{2p-2} \frac{(p-1)^2}{p^2}v^{\frac{2p}{2p-2}} + 2\lambda\frac{3p-2}{2p-2} \frac{p-1}{p^2} v^{\frac{p}{2p-2}} = \frac{(p-1)(p-1-2\lambda)}{p^2} + O(C^{\frac{p}{4p-4}}),\\
     &\text{in } (1-C^\frac{1}{2},1):\quad \psi'(v) =  E -\frac{4p-2}{2p-2} \frac{(p-1)^2}{p^2}v^{\frac{2p}{2p-2}}  + 2\lambda\frac{3p-2}{2p-2} \frac{p-1}{p^2}v^{\frac{p}{2p-2}} = \frac{\lambda-(p-1)}{p} + O(C^{\frac{p}{4p-4}}) <0.
    \end{split}
  \end{equation}
  We now split up the integral to deal with the singular endpoints separately,
  \begin{equation*}
    \begin{split}
      \Delta \theta &= I_1+I_2+I_3 = C |J_0| \left( \int_{v_m}^{C^{\frac{1}{2}}} + \int_{C^{\frac{1}{2}}}^{1-C^{\frac{1}{2}}} + \int_{1-C^{\frac{1}{2}}}^1 \right) \frac{(p-1)v^{\frac{p}{2p-2}} - \lambda}{v|J_0|^2 - p^2 C^2}\frac{1}{\sqrt{\psi(v)}} dv.
    \end{split}
    \end{equation*}
    
    \underline{First term:}
  By the weighted mean value theorem (Theorem~\ref{thm: weighted mvt}) there exists $a\in [v_m,C^{\frac{1}{2}}]$ such that
  \begin{equation*}
    I_1 = C |J_0| ((p-1)a^{\frac{p}{2p-2}} - \lambda) \int_{v_m}^{C^{\frac{1}{2}}}  \frac{1}{v|J_0|^2 - p^2 C^2}\frac{1}{\sqrt{\psi(v)}} dv.
  \end{equation*}
  Moreover, for $v\in [v_m, C^{\frac{1}{2}}]$, there exists $\xi_v \in (v_m,v)$ such that
  \begin{equation*}
    \psi(v) = \psi(v_m) + \psi'(\xi_v)(v-v_m) = \psi'(\xi_v)(v-v_m) = \left( \frac{(p-1)(p-1-2\lambda)}{p^2} + O(C^{\frac{p}{4p-4}}) \right) (v-v_m),
  \end{equation*}
  using the uniform bound \eqref{eq: estimates for psiprime} in $(v_m, C^{\frac{1}{2}})$ for $\psi'(\xi_v)$, that is $O(C^{\frac{p}{4p-4}})$ can be taken independent of $v$.
  Hence
  \begin{equation*}
    \begin{split}
      I_1 &= C |J_0| ((p-1)a^{\frac{p}{2p-2}} - \lambda) \int_{v_m}^{C^{\frac{1}{2}}}  \frac{1}{v|J_0|^2 - p^2 C^2}\frac{1}{\sqrt{\left( \frac{(p-1)(p-1-2\lambda)}{p^2} + O(C^{\frac{p}{4p-4}}) \right) (v-v_m)}} dv \\
      &= \frac{C |J_0| ((p-1)a^{\frac{p}{2p-2}} - \lambda)}{\sqrt{\frac{(p-1)(p-1-2\lambda)}{p^2} + O(C^{\frac{p}{4p-4}})}}  \int_{v_m}^{C^{\frac{1}{2}}}  \frac{1}{v|J_0|^2 - p^2 C^2}\frac{1}{\sqrt{v-v_m}} dv  \\
      &= \frac{C |J_0| ((p-1)a^{\frac{p}{2p-2}} - \lambda)}{\sqrt{\frac{(p-1)(p-1-2\lambda)}{p^2} + O(C^{\frac{p}{4p-4}})}}   \frac{2}{\sqrt{|J_0|^2\bigl(|J_0|^2 v_m-p^2C^2\bigr)}} \arctan\!\left(\sqrt{\frac{|J_0|^2\bigl(C^{1/2}-v_m \bigr)}{|J_0|^2 v_m-p^2C^2}}\right),
    \end{split}
  \end{equation*}
  using the substitution $t=u-v_m$ and the standard integral $\int \frac{1}{at^2 +M^2}dt = \frac{1}{M\sqrt{a}}\arctan(\frac{t\sqrt{a}}{M})$. We also use \eqref{eq: approx for km denominator} to ensure positivity of the denominator.
  Passing to the limit as $(\lambda,C^2) \to (\lambda^*_p,0)$, we have $a\to 0$ and thus 
  \begin{equation*}
    \lim_{(\lambda,C^2) \to (\lambda^*_p,0)} I_1 = \frac{-2  \lambda^*_p }{\sqrt{\frac{(p-1)(p-1-2\lambda^*_p)}{p^2}}} \lim_{(\lambda,C^2) \to (\lambda^*_p,0)} \frac{C}{\sqrt{v_m |J_0|^2  - p^2 C^2}} \lim_{(\lambda,C^2) \to (\lambda^*_p,0)} \arctan\left( \frac{|J_0|\sqrt{C^{\frac{1}{2}} - v_m}}{\sqrt{|J_0|^2 v_m - p^2 C^2}} \right).
  \end{equation*}
  Use now the approximation \eqref{eq: approx for km denominator} to calculate
  \begin{equation*}
    \begin{split}
      \frac{C}{\sqrt{v_m|J_0|^2 - p^2 C^2}} &= \frac{C}{\sqrt{\frac{p^2\lambda^2}{(p-1)(p-1-2\lambda)}C^2 + O(C^{\min(\frac{3p-2}{p-1},4})}} \\
       &= \sqrt{\frac{(p-1)(p-1-2\lambda)}{p^2 \lambda^2}} +O(C^{\min(\frac{p}{p-1},2)}) \to \frac{\sqrt{(p-1)(p-1-2\lambda^*_p)}}{p \lambda^*_p},
    \end{split}
  \end{equation*}
as well as
\begin{equation*}
  \frac{|J_0|\sqrt{C^{\frac{1}{2}} - v_m }}{\sqrt{v_m |J_0|^2  - p^2 C^2}} = \frac{|J_0|\sqrt{C^{\frac{1}{2}} - v_m}}{\sqrt{\frac{p^2\lambda^2}{(p-1)(p-1-2\lambda)}C^2 + O(C^{\frac{6p-4}{2p-2}})}} = \frac{|J_0| }{\sqrt{\frac{p^2 \lambda^2}{(p-1)(p-1-2\lambda)}}} \frac{\sqrt{C^{\frac{1}{2}} -v_m}}{C} \left(1+ O(C^{\frac{p}{p-1}}) \right) \to \infty.
\end{equation*}
Hence, we obtain 
\begin{equation*}
  \lim_{(\lambda,C^2) \to (\lambda^*_p,0)} I_1 = \frac{-2  \lambda^*_p }{\sqrt{\frac{(p-1)(p-1-2\lambda^*_p)}{p^2}}} \frac{\sqrt{(p-1)(p-1-2\lambda^*_p)}}{p\lambda^*_p} \frac{\pi}{2} = -\pi.
\end{equation*}

\underline{Second term:} 
From \eqref{eq: estimate for psi in middle}, as $(\lambda,C^2) \to (\lambda^*_p,0)$, we directly have 
\begin{equation*}
\begin{split}
  |I_2| &\leq C |J_0| \frac{p-1-\lambda}{C^{\frac{1}{2}}|J_0|^2-p^2C^2} \int_{C^{\frac{1}{2}}}^{1-C^{\frac{1}{2}}} \frac{1}{\sqrt{\psi(u)}} \leq C |J_0| \frac{p-1-\lambda}{C^{\frac{1}{2}}|J_0|^2-p^2C^2} \frac{1}{\sqrt{c_p C^{\frac{1}{2}}}} = O(C^{\frac{1}{4}}) \to 0.
\end{split}
\end{equation*}

\underline{Third term:}
Similar to the first term, using \eqref{eq: estimates for psiprime}, for any $v \in [1-C^{\frac{1}{2}}, 1]$, there exists $ \xi_v \in [v,1]$ such that
\begin{equation*}
  \psi(v) = \psi(1) - \psi'(\xi_v)(1-v) = -\psi'(\xi_v)(1-v) = \left( \frac{p-1-\lambda}{p} + O(C^{\frac{p}{4p-4}}) \right) (1-v).
\end{equation*}
Thus, as $(\lambda,C^2) \to (\lambda^*_p,0)$, we have
\begin{equation*}
  \begin{split}
    |I_3| &\leq C |J_0| \frac{p-1-\lambda}{(1-C^{\frac{1}{2}})|J_0|^2-p^2C^2} \int_{1-C^{\frac{1}{2}}}^1 \frac{1}{\sqrt{\psi(v)}}dv \\
    &= C |J_0| \frac{p-1-\lambda}{(1-C^{\frac{1}{2}})|J_0|^2-p^2C^2} \int_{1-C^{\frac{1}{2}}}^1 \frac{1}{\sqrt{\left( \frac{p-1-\lambda}{p} + O(C^{\frac{p}{4p-4}}) \right) (1-v)}}dv \\
    &= C |J_0| \frac{p-1-\lambda}{(1-C^{\frac{1}{2}})|J_0|^2-p^2C^2}\frac{1}{\sqrt{\frac{p-1-\lambda}{p} + O(C^{\frac{p}{4p-4}}) }} \int_{1-C^{\frac{1}{2}}}^1 \frac{1}{\sqrt{ 1-v}}dv \\
    &= C |J_0| \frac{p-1-\lambda}{(1-C^{\frac{1}{2}})|J_0|^2-p^2C^2}\frac{2\sqrt{C^{\frac{1}{2}}}}{\sqrt{\frac{p-1-\lambda}{p} + O(C^{\frac{p}{4p-4}})}} = O(C^{\frac{5}{4}}) \to 0.
  \end{split}
\end{equation*}
We conclude that $\lim_{(\lambda,C^2) \to (\lambda^*_p,0)} \Delta \theta(\lambda,C^2) = -\pi $.
\end{proof}

We are now able to prove the main result, Theorem~\ref{thm: main result}.

\begin{proof}[Proof of Theorem~\ref{thm: main result}] The proof is split into several steps.

 \textbf{Step 1:} Proposition~\ref{prop: global growth of Delta z=0} gives a connected compact set $\Gamma$ connecting $(\lambda^*_p,0)$ and $(p-1,0)$ in $S^0$, along which $\Delta z$ vanishes. 
  Moreover, by Corollary~\ref{cor: asymptotics at p-1 of Gamma} with $\sigma= 3/2$, there exists $\delta_p>0$ such that $\Gamma$ lies below the curve
  $$\lambda \mapsto \left(\lambda,(p-1-\lambda)^{3/2}\right)\Big|_{[p-1-\delta_p,p-1]}$$ 
  in $S^0$. 
  Thus, by Propositions~\ref{prop: theta limit at figure 8} and \ref{prop: Delta theta at right endpoint}, along $\Gamma$ we have $\Delta \theta(\lambda, C^2)  \to -\pi$  at the endpoint $(\lambda^*_p,0)$ and $\Delta \theta(\lambda, C^2)  \to 0$  at the endpoint $(p-1,0)$. 
  Since $\Delta \theta$ is continuous along $\Gamma$ inside $S^0$ (Proposition~\ref{prop: continuiy of Delta theta}), any value in $(-\pi,0)$ is attained. 
  Hence, we conclude that for each $\frac{n}{m}=q \in (0,\frac{1}{2}) \cap \Q$, there exists $(\lambda,C^2) \in S$ such that for the corresponding curve $\gamma_{\lambda,C^2}$ solving \eqref{eq:EL for k and tau}, $\Delta z(\lambda,C^2)=0$ and $\Delta \theta(\lambda,C^2) = -2\pi \frac{n}{m}$. 
  In other words, $\gamma_{\lambda,C^2}$ closes in $m$ periods of its curvature while making exactly $n$ turns around the cylindrical axis, i.e.\ $\gamma_{\lambda,C^2}$ has $m$-fold rotational symmetry.

 \textbf{Step 2:} By Proposition~\ref{prop: global growth of Delta z=0}, the minimal radius $r_\mathcal{C}$ is strictly positive and thus $\gamma_{\lambda,C^2}$ does not cross the cylindrical axis.
 We show that the curve $s\mapsto (r(s),z(s))$ is a regular simple closed curve of period $2P(\lambda,C^2)$, i.e.\ $\gamma_{\lambda,C^2}$ lies on a torus of revolution.
 From Proposition~\ref{prop: derivatives for cylindrical coordinates}, $r$ and $z$ are periodic and have only two critical points in each period of the curvature (namely minimum and maximum) and $(r_s(s),z_s(s))\neq(0,0)$. 
 First, since $r(s) = \frac{p}{|J_0|^2}\sqrt{|J_0|^2k^{2p-2}(s)-p^2C^2}$, it is $2P$ periodic and strictly decreasing on $(0,P)$, strictly increasing on $(P,2P)$ and has maximum at $s=0$ and minimum at $s=P$.
 Hence the curve, restricted to $[0,P]$ or $[P,2P]$, is graphical and it remains to show that $z(s_1) \neq z(s_2)$ for any $s_1 \in (0,P)$ and $s_2 \in (P,2P)$. 
 On $(0,P)$, the function $z(t)=\int_0^t \frac{(p-1)k(s)^p-\lambda}{|J_0|}ds$ has a strictly decreasing integrand by Proposition~\ref{prop: existence} and Remark~\ref{rmk: k prime} (as $k(s)$ decreases monotonically from $k_M=1$ to $k_m$) and $z(0)=z(P)=0$. Thereby $z(t)>0$. 
 Analogously on $(P,2P)$, $z(t)<0$ and we conclude that curve is simple and closed.

 \textbf{Step 3:} It remains to show that $\gamma_{\lambda,C^2}$ itself is embedded. Suppose that there exists $s_1,s_2 \in [0,2mP)$ with $s_1<s_2$ and $a \in \Z$ such that 
 \begin{equation*}
  (r(s_1), z(s_1), \theta(s_1)) =  \gamma_{\lambda,C^2}(s_1) = \gamma_{\lambda,C^2}(s_2) = (r(s_2), z(s_2), \theta(s_2)+2\pi a). 
 \end{equation*}
 By periodicity, we may assume $s_1 \in [0,2P(\lambda,C^2))$.
 By the previous step, $s\mapsto (r(s),z(s))$ is injective on $[0,2P(\lambda,C^2))$, i.e.\ $(r(s_1), z(s_1)) = (r(s_2), z(s_2))$ implies that $s_2=s_1 + 2bP(\lambda,C^2)$ for some $\N \ni b<m$.
 That is, $\theta(s_2)-\theta(s_1) = b\Delta\theta = -2\pi \frac{bn}{m} \notin 2\pi \Z$, as $1\leq b<m$ and $\gcd(n,m)=1$.
  We conclude that $\gamma_{\lambda,C^2}$ is embedded and equivalent to the $(n,m)$-torus knot.
\end{proof}

We remark several interesting facts about the result.
\begin{remark}
  When approaching the endpoint $(p-1,0)$ along $\Gamma$, the torus knot $\gamma_{\lambda,C^2}$ winds tighter and tighter and, restricted to its first curvature period, $\gamma_{\lambda,C^2}$ looks almost like a circle orthogonal to $\frac{\partial}{\partial \theta}$, with a small gap at its endpoints. When approaching the endpoint $(\lambda^*_p,0)$ on the other hand, the inner radius of the circle degenerates and the first curvature period of $\gamma_{\lambda,C^2}$ resembles a figure-eight (with a small gap at its endpoints), again orthogonal to $\frac{\partial}{\partial \theta}$. This is depicted in Figure~\ref{fig:torus different knots} and Figure~\ref{fig:cs different knots}.

More precisely, as $(\lambda,C^2)\to(\lambda^*_p,0)$, we have $r_\mathcal{C} \approx \frac{p|J_0| k_m^{p-1}}{|J_0|} \approx \frac{p(p-1-\lambda_p^*)k_m^{p-1}}{(p-1-\lambda_p^*)^2} \to 0$ since $k_m \to 0$. 
On the other hand, $R_\mathcal{C} \to \frac{p}{(p-1-\lambda_p^*)}$, a finite value. 
  For $(\lambda,C^2)\to(p-1,0)$, from Corollary~\ref{prop: wm around S2:2}, 
  \begin{equation*}
      \sqrt{|J_0|^2 k_m^{2p-2} -p^2C^2} \geq \sqrt{|J_0|^2 (1-O(|p-1-\lambda|)) -p^2C^2} = \sqrt{(p-1-\lambda)^2 - O(|p-1-\lambda|^{1+\sigma})} \geq \frac{1}{2} (p-1-\lambda),
  \end{equation*}
 using the fact that $C^2 \leq (p-1-\lambda)^\sigma$ along $\Gamma$.
 Thus from \eqref{eq: r_C definition}, 
  \begin{equation*}
  R_\mathcal{C} \geq r_\mathcal{C} = \frac{p\sqrt{|J_0|^2k_m^{2p-2} -p^2C^2}}{|J_0|^2} 
  \geq \frac{\frac{p}{2} (p-1-\lambda)}{(p-1-\lambda)^2+p^2C^2} \geq \frac{1}{4p} |p-1-\lambda|^{1 -\sigma} \to \infty.
  \end{equation*}
  Moreover, from \eqref{eq: estimate for um},
  \begin{equation*}
    |\theta_s(s)| = \left|\frac{C|J_0|((p-1)k(s)^p-\lambda)}{|J_0|^2k(s)^{2p-2}-p^2C^2} \right| \leq  \frac{C |J_0|O(|p-1-\lambda|)}{\left| |J_0|^2k_m^{2p-2}-p^2C^2 \right|}  \leq \frac{ C |J_0|O(|p-1-\lambda|)}{\frac{1}{2}(p-1-\lambda)^2} = O(|p-1-\lambda|^{\sigma/2 + \sigma/2 + 1 -2}) \to 0,
  \end{equation*} 
  the  angle $\theta$ stays almost constant in one (uniformly bounded, see Proposition~\ref{prop: continuity of period}) curvature period. 
  That is, $\gamma_{\lambda,C^2}$ does a full revolution around the axis of the torus of revolution, while sweeping out only a small angle. 
  This in turn means that $\gamma_{\lambda,C^2}$ has to be  close to a geodesic on the torus of revolution, i.e. an arc orthogonal to $\frac{\partial}{\partial \theta}$. Since $k_m \to 1$, the geodesic and therefore also $\gamma_{\lambda,C^2}$ (restricted to one curvature period) are close to a circle of radius $1$.
\end{remark}

\begin{figure}[ht]
    \centering
    \begin{subfigure}{0.24\textwidth}
        \centering
        \includegraphics[trim={80px 100px 80px 100px}, clip, width=\linewidth]{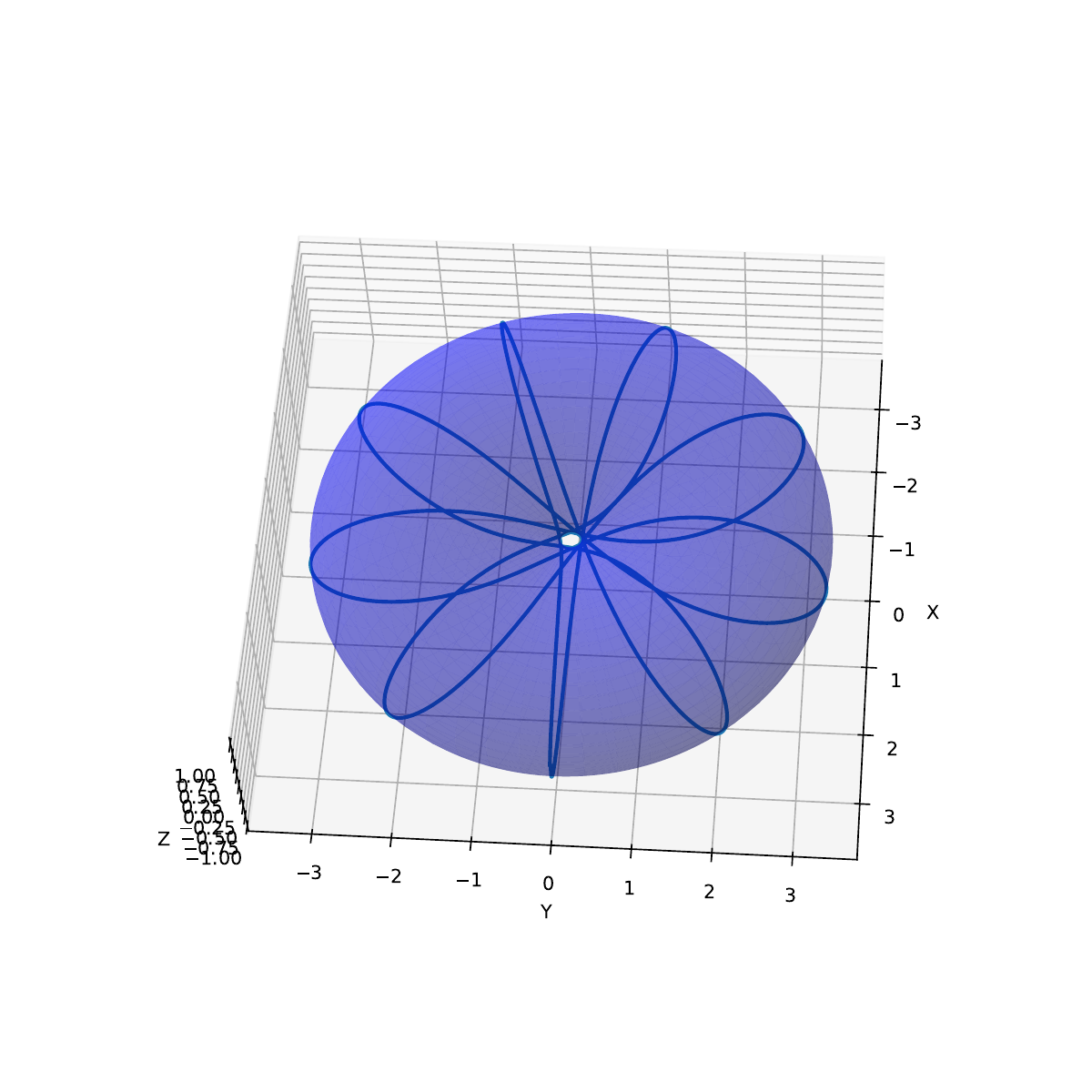}
        \caption{$q= \frac{n}{m}=\frac{4}{9}$}
    \end{subfigure}
    \hfill
    \begin{subfigure}{0.24\textwidth}
        \centering
        \includegraphics[trim={80px 100px 80px 100px}, clip, width=\linewidth]{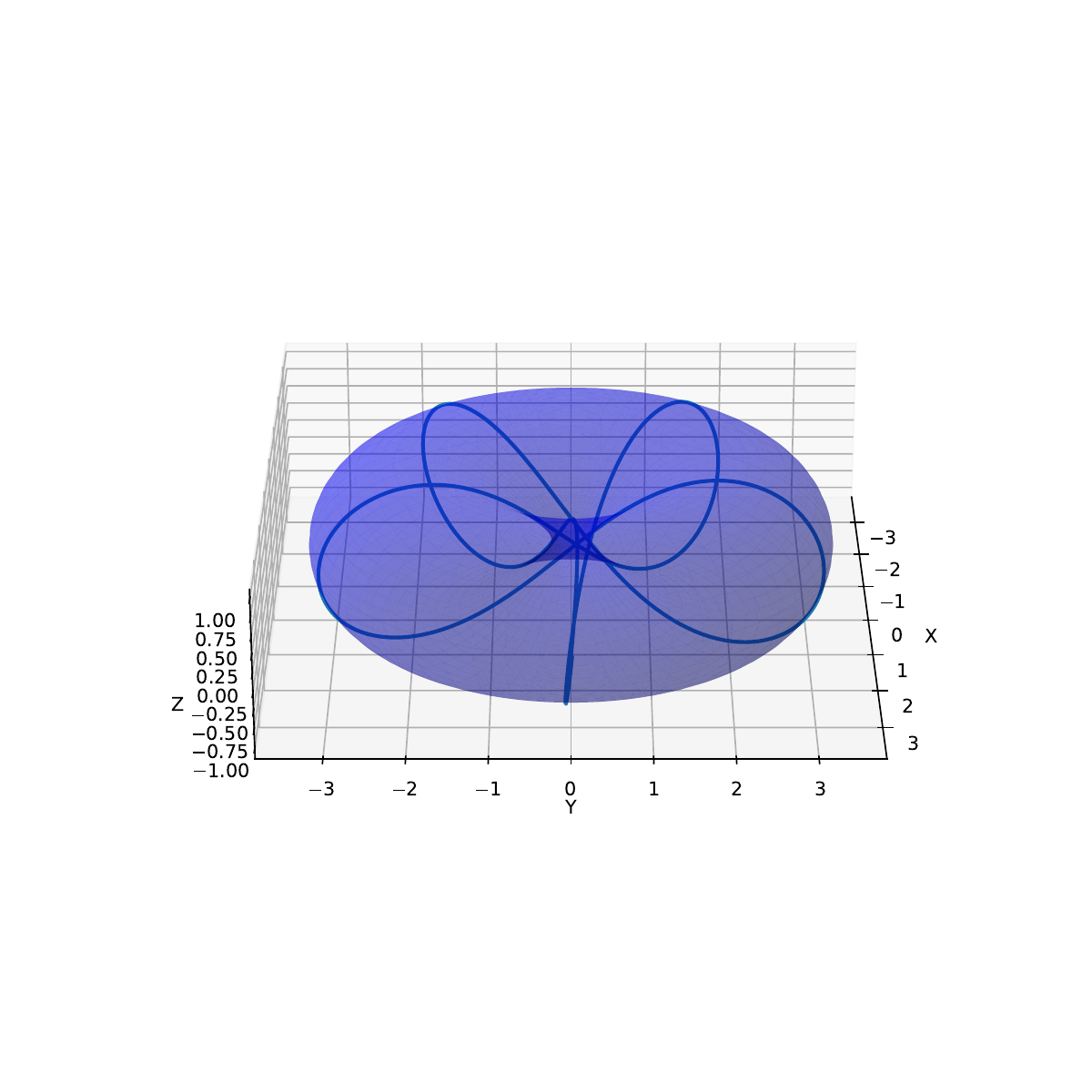}
        \caption{$q= \frac{n}{m}=\frac{2}{5}$}
    \end{subfigure}
    \hfill
    \begin{subfigure}{0.24\textwidth}
        \centering
        \includegraphics[trim={80px 100px 80px 100px}, clip, width=\linewidth]{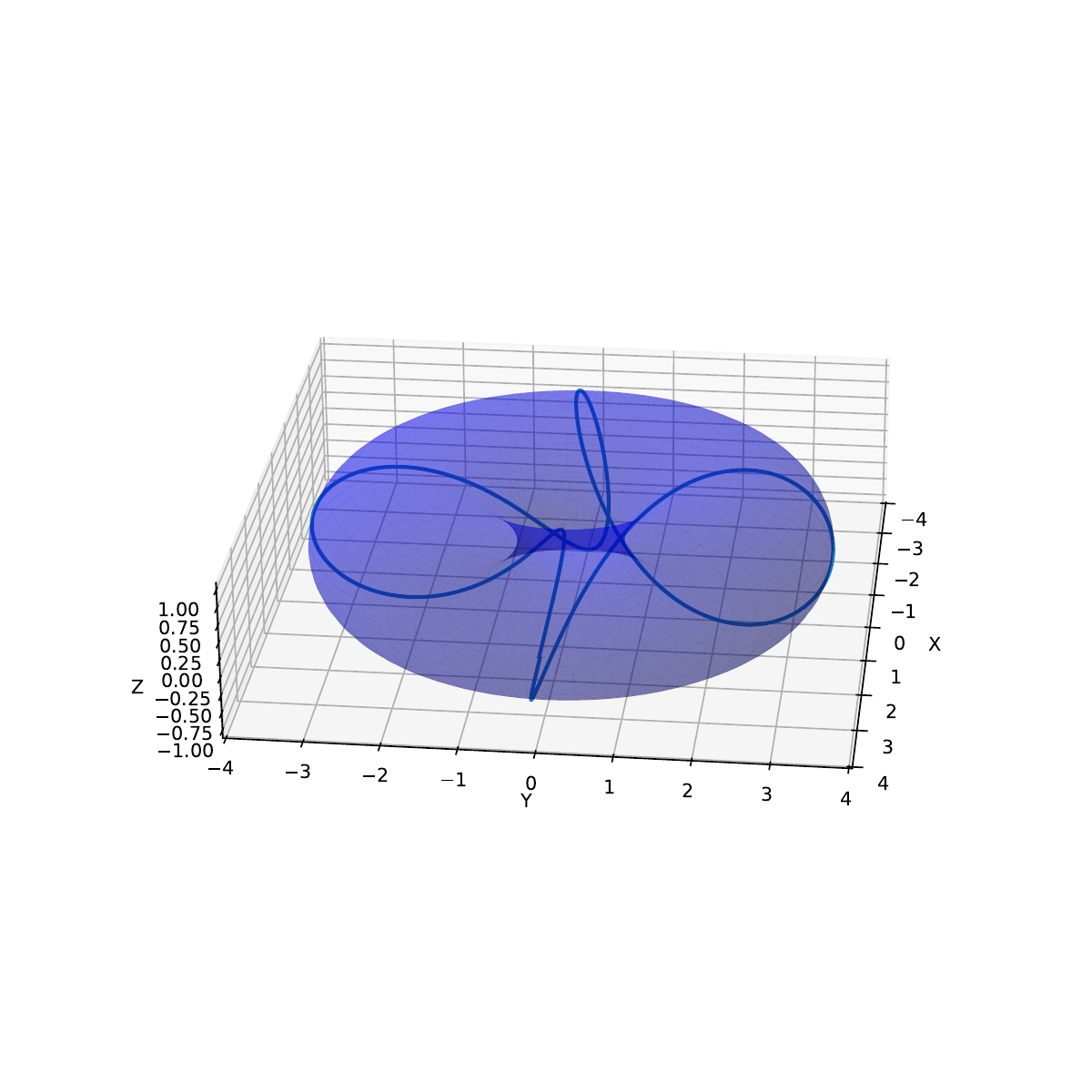}
        \caption{$q= \frac{n}{m}=\frac{1}{4}$}
    \end{subfigure}
        \hfill
    \begin{subfigure}{0.24\textwidth}
        \centering
        \includegraphics[trim={80px 100px 80px 100px}, clip, width=\linewidth]{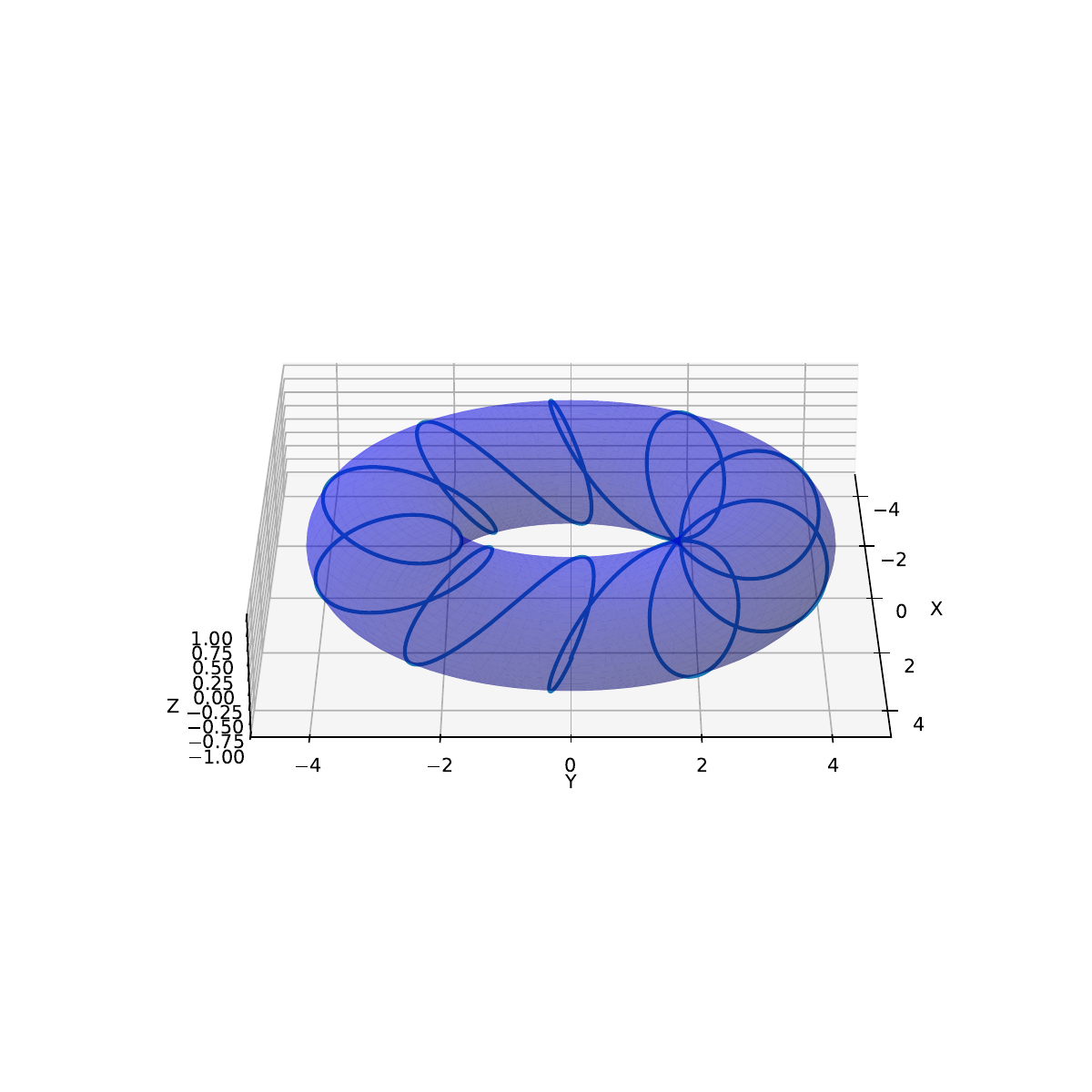}
        \caption{$q= \frac{n}{m}=\frac{1}{10}$}
    \end{subfigure}
    \caption{Different $(n,m)$-torus knot $p$-elasticae for $p=2$: For $q = \frac{n}{m}\to0$, the torus knot winds tightly around the torus, for $q= \frac{n}{m}\to \frac{1}{2}$, it resembles interlaced $p$-figure-eight curves.}
    \label{fig:torus different knots}
\end{figure}

\begin{figure}[ht]
    \centering
    \begin{subfigure}{0.24\textwidth}
        \centering
        \includegraphics[trim={30px 30px 30px 135px}, clip, width=\linewidth]{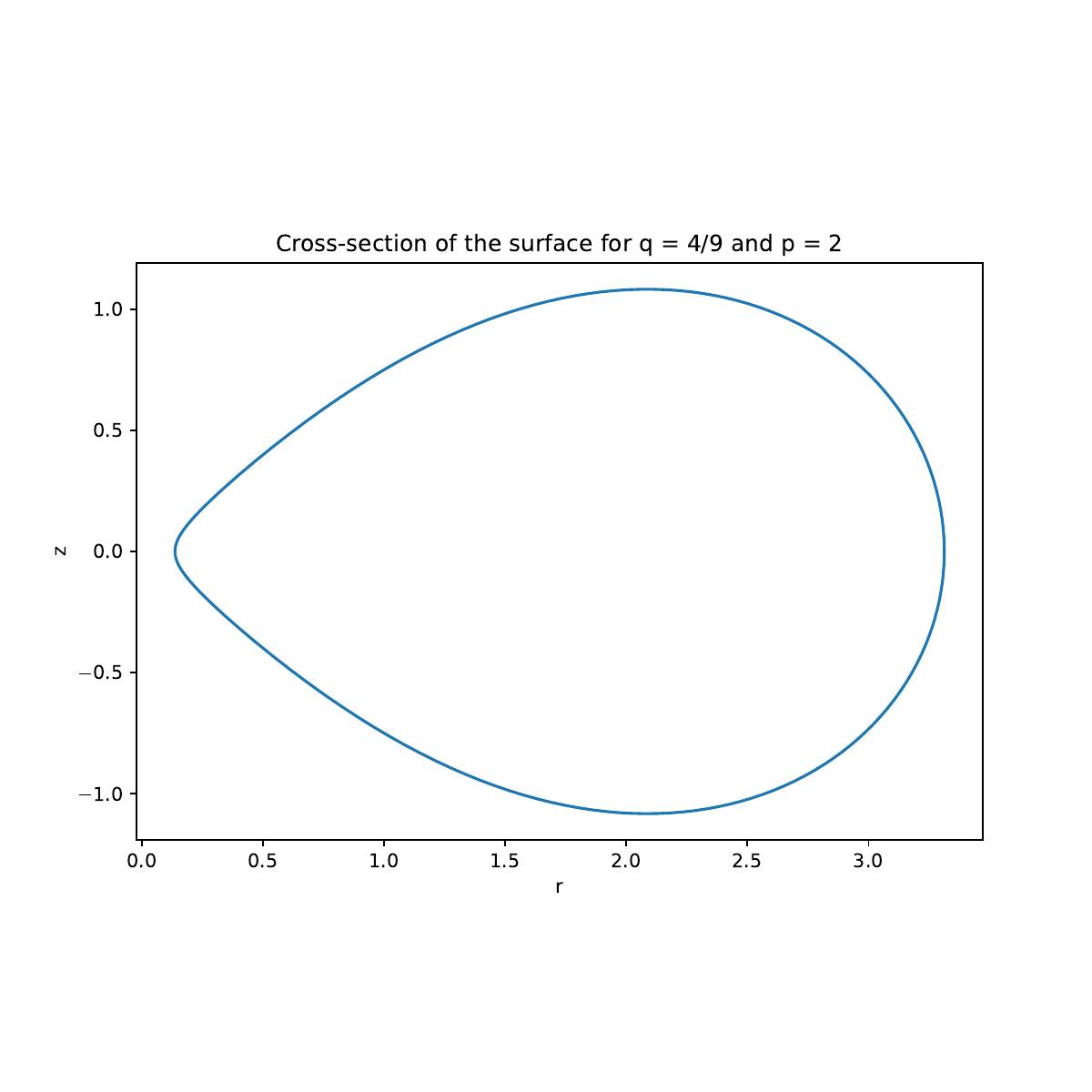}
        \caption{$q= \frac{n}{m}=\frac{4}{9}$}
    \end{subfigure}
    \hfill
    \begin{subfigure}{0.24\textwidth}
        \centering
        \includegraphics[trim={30px 30px 30px 130px}, clip, width=\linewidth]{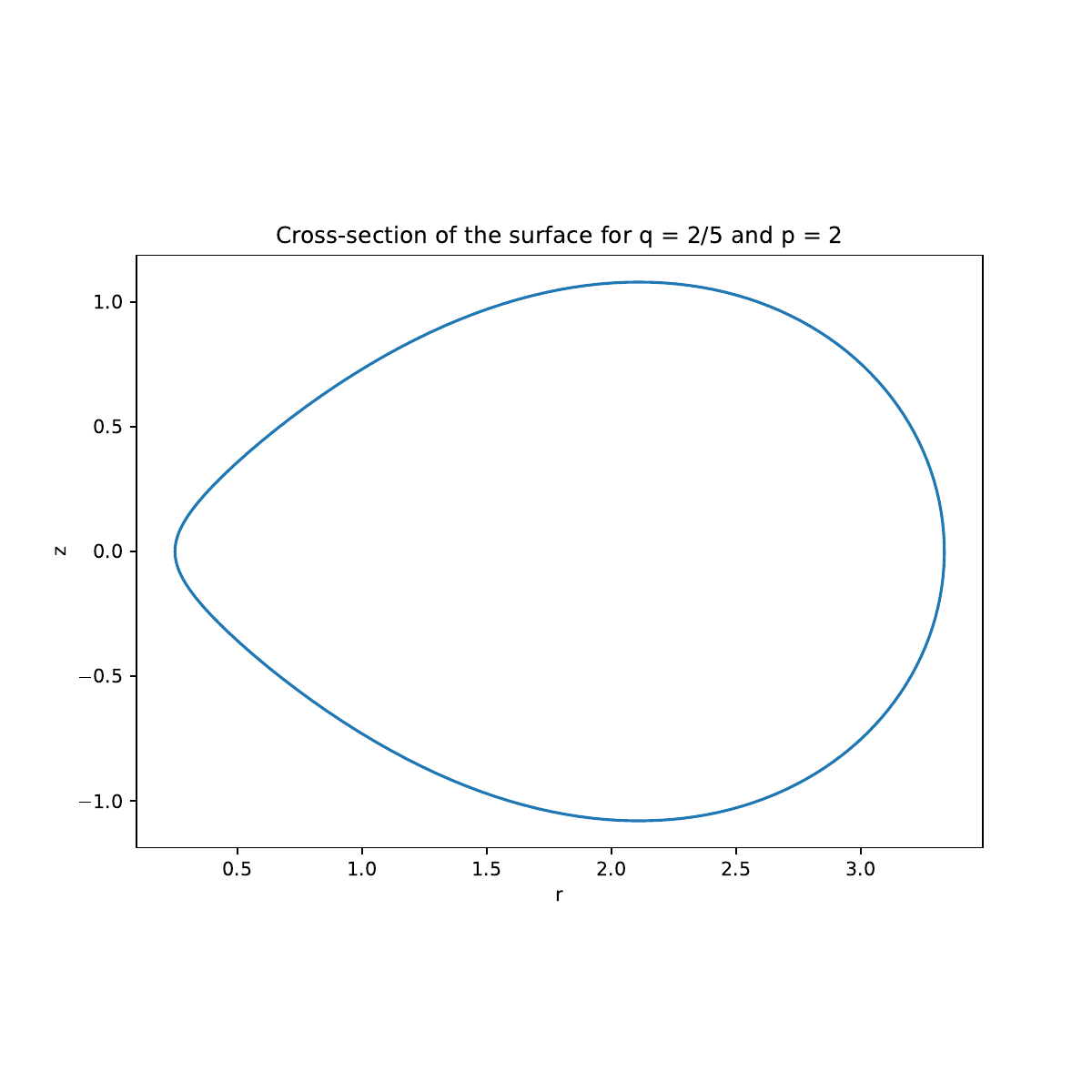}
        \caption{$q= \frac{n}{m}=\frac{2}{5}$}
    \end{subfigure}
    \hfill
    \begin{subfigure}{0.24\textwidth}
        \centering
        \includegraphics[trim={30px 30px 30px 118px}, clip, width=\linewidth]{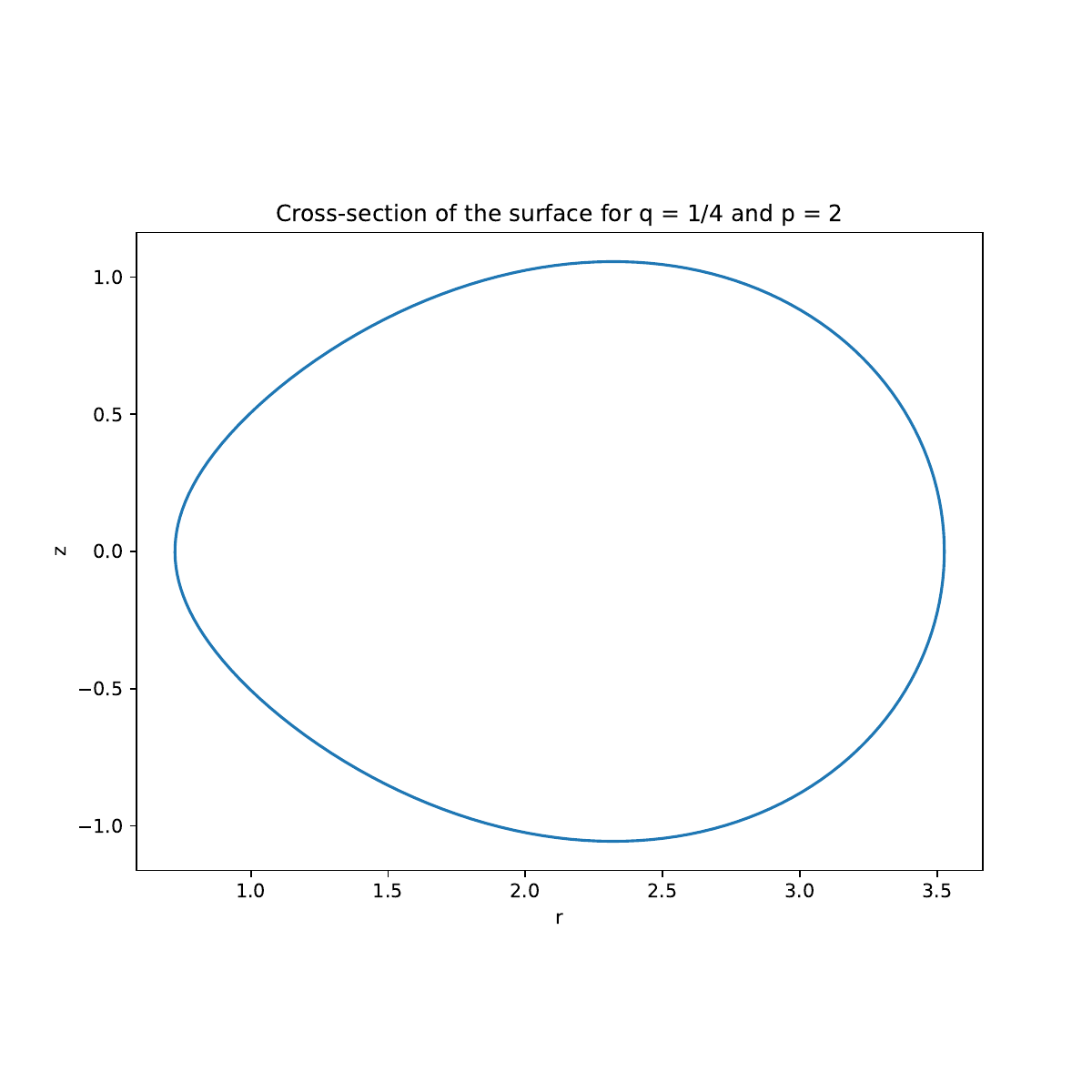}
        \caption{$q= \frac{n}{m}=\frac{1}{4}$}
    \end{subfigure}
        \hfill
    \begin{subfigure}{0.24\textwidth}
        \centering
        \includegraphics[trim={30px 30px 30px 110px}, clip, width=\linewidth]{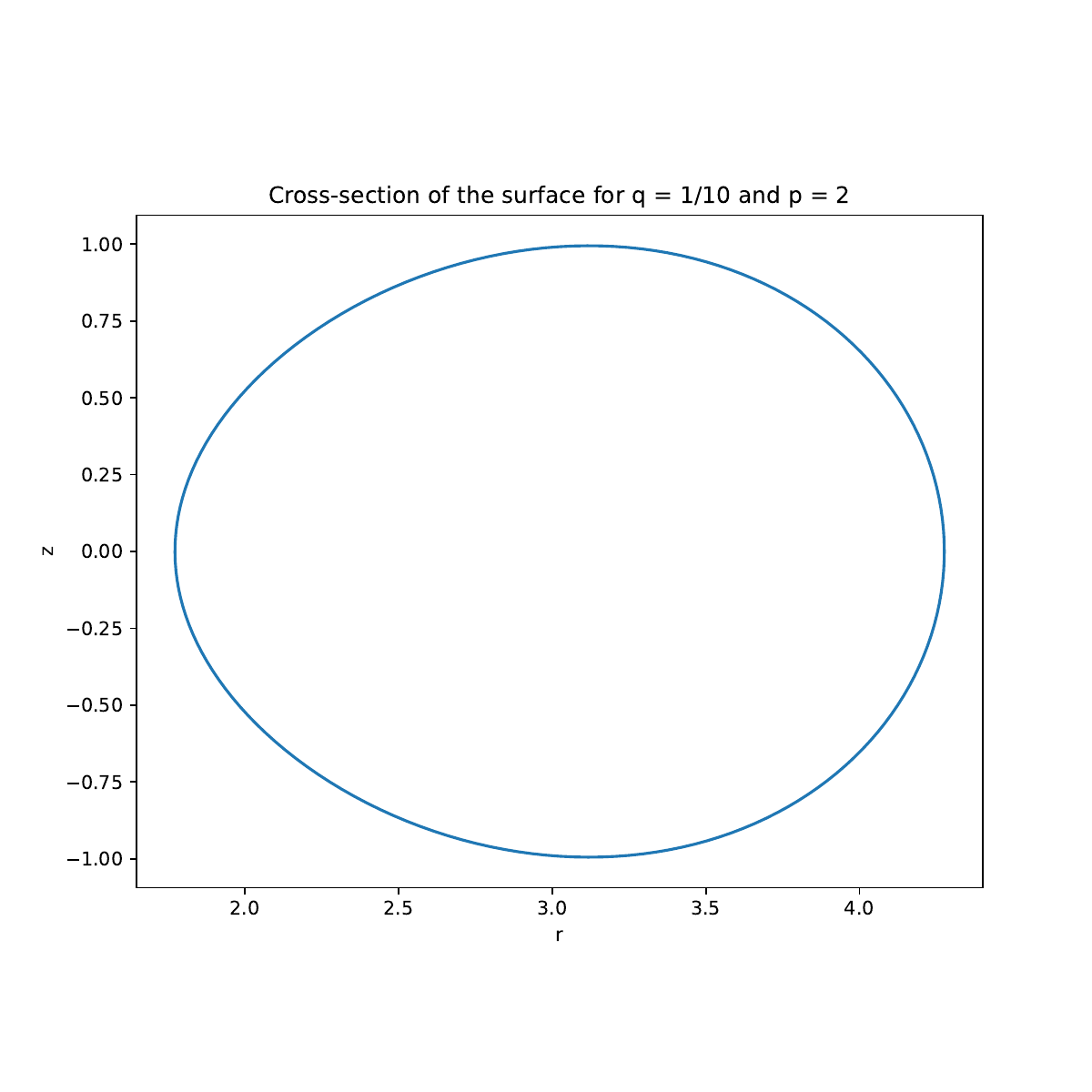}
        \caption{$q= \frac{n}{m}=\frac{1}{10}$}
    \end{subfigure}
    \caption{Cross sections of different torus knot $p$-elasticae for $p=2$: For $q= \frac{n}{m}\to 0$, the cross section becomes circular, for $q= \frac{n}{m}\to \frac{1}{2}$ a leaf of the $p$-figure-eight.}
    \label{fig:cs different knots}
\end{figure}

\begin{remark}
  Concerning the uniqueness, in the case $p=2$, \cite{langer-singer_classification} uses the explicit solution formula to differentiate $\Delta \theta$ along $\{\Delta z=0\}$ by lengthy calculations involving elliptic integrals. Such an approach is not fruitful here, mainly due to two issues:
  \begin{itemize}
    \item The set $\{\Delta z=0\}$ is not completely characterized, Proposition~\ref{prop: Delta z=0 is continuous curve} only gives existence of a connected subset within $\{\Delta z=0\}$. One way to show $\Gamma = \{\Delta z=0\}$ would be to show $\frac{\partial }{\partial C^2} \Delta z >0$ on $\{\Delta z=0\}$.
    This would be even stronger, since it implies that $\{\Delta z=0\}$ is graphical.
    So far only monotonicity of $u_m$ (thus also $k_m$) is known, i.e.\ the function $C^2 \mapsto u_m(C^2)$ is strictly increasing.
  Indeed, since $h$ is smooth in $u$ and its parameters away from the origin, by the implicit function theorem on $0= h(u_m(C^2))$,
  \begin{equation*}
   0= \frac{\partial h}{\partial u} (u_m(C^2)) \frac{\partial u_m}{\partial C^2}(C^2) + \frac{\partial h}{\partial C^2} (u_m(C^2)),
  \end{equation*}
  hence $\frac{\partial u_m}{\partial C^2}(C^2) = - \frac{\frac{\partial h}{\partial C^2} (u_m(C^2))}{\frac{\partial h}{\partial u} (u_m(C^2))}$. 
  As $\frac{\partial h}{\partial C^2} (u_m(C^2)) = 1-\frac{u_m^{\frac{2}{p}}}{u_m^2}<0$ and $\frac{\partial h}{\partial u} (u_m(C^2))>0$ (Lemma~\ref{lemma: number of solutions}), we have $\frac{\partial u_m}{\partial C^2}(C^2)>0$.
    
    \item In \cite{langer-singer_classification}, an explicit formulation of $\{\Delta z=0\}$ and even $\Delta \theta$ (using elliptic integrals and the Heuman $\Lambda_0$ function) is derived, but here we only have the integral formulations \eqref{eq: Delta z original} and \eqref{eq: formula for Delta theta}, which make an analytic calculation for the derivative of $\Delta \theta$  along $\{\Delta z=0\}$ very complicated. It might be possible to show that $\frac{\partial}{\partial \lambda} \Delta \theta >0$, which then combined with the possible graphicality of $\{\Delta z=0\}$ would give uniqueness.
  \end{itemize}
  However, numerical evidence suggests that the closed elasticae from Theorem~\ref{thm: main result} are indeed unique.
  This would imply that each knot class contains a unique closed $p$-elastica (up to similarity transforms), except for the unknot class, which contains infinitely many closed $p$-elasticae. (The fact that the unknot class contains infinitely many closed $p$-elasticae follows already from our existence theorem.) 
  
  Minimizing the elastic energy within a fixed knot class has seen substantial recent progress, see e.g.\ \cites{vonderMosel1, vonderMosel2, Blatt_knot, knots_computation}, usually using a combination of an elastic energy and the celebrated \textit{O'Hara knot energy} to prevent self-intersections.
\end{remark}

\begin{remark}
  For the quadratic case $p=2$, Langer and Singer prove in a subsequent paper \cite{langersinger_minmax} by a minmax argument that all such closed non-planar elastica are unstable (only the once covered circle is stable) in $\R^3$. We conjecture that an analogous result holds for general exponents $p\in (1,\infty)$. 
\end{remark}

\appendix
\section{Appendix}
We have two short elementary lemmas.

\begin{lemma}
\label{lemma: unique root}
  Let $f:[0,\infty)\to \R$ be given as
  \begin{equation*}
    f(x) := \frac{p-1}{p} x^{\frac{p+1}{p-1}} - \frac{\lambda}{p} x^{\frac{1}{p-1}} - C^2 x^{-3}
  \end{equation*}
  with $\lambda \in  \R$ and $C^2>0$. 
  Then $f$ has only one strictly positive root. The sign change at this strictly positive root is negative-positive.
\end{lemma}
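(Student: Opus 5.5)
The idea is to clear the negative power $x^{-3}$ and reduce the question to the root structure of a function that is easier to handle. For $x>0$ I would write
\begin{equation*}
  f(x) = x^{-3}\, F(x), \qquad F(x) := \frac{p-1}{p} x^{\frac{p+1}{p-1}+3} - \frac{\lambda}{p} x^{\frac{1}{p-1}+3} - C^2 .
\end{equation*}
Since $x^{-3}>0$, the functions $f$ and $F$ have the same positive roots and the same signs on $(0,\infty)$. The endpoint behaviour is then immediate. As $x\downarrow 0$ we have $F(x)\to -C^2<0$, because both exponents are positive. As $x\to\infty$ we have $F(x)\to+\infty$, because the leading exponent $\frac{p+1}{p-1}+3$ exceeds $\frac{1}{p-1}+3$ and its coefficient $\frac{p-1}{p}$ is positive. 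By the intermediate value theorem there is at least one positive root, and the overall sign pattern runs from negative to positive.

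For uniqueness I would factor out the smaller power and set $a:=\frac{1}{p-1}+3>0$:
\begin{equation*}
  F(x) = x^{a}\Big( \frac{p-1}{p} x^{\frac{p}{p-1}} - \frac{\lambda}{p}\Big) - C^2 .
\end{equation*}
I would then split into two cases according to the sign of $\lambda$.

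If $\lambda\le 0$, then $F$ is a sum of strictly increasing terms plus a constant. It is therefore strictly increasing on $(0,\infty)$, which gives a unique root with a negative-positive sign change.

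If $\lambda>0$, let $x_1:=\left(\frac{\lambda}{p-1}\right)^{\frac{p-1}{p}}$. The bracket is nonpositive on $(0,x_1]$, so $F\le -C^2<0$ there and no root can lie in that interval. On $(x_1,\infty)$ both factors $x^a$ and the bracket are positive and strictly increasing, so their product is strictly increasing and $F$ is strictly increasing. Since $F(x_1)=-C^2<0$ and $F\to\infty$, there is exactly one root, located in $(x_1,\infty)$. At that root $F$ passes from negative to positive, and $f$ inherits the same sign change.

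The only point needing care is the $\lambda>0$ case, where $F$ need not be monotone on all of $(0,\infty)$. This is handled by the splitting at $x_1$ described above. Alternatively, one could compute $F'$ directly and check that it is positive wherever $F\ge -C^2$ could vanish, but the factored argument avoids that computation. The constant $C^2>0$ is used in two places: it guarantees $F<0$ near $0$, and it guarantees $F<0$ on $(0,x_1]$.
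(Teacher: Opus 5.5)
Your proof is correct, but it normalizes differently from the paper.

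You multiply by $x^{3}$. This makes the $C^2$-term constant, but the $\lambda$-term keeps an $x$-dependence whose sign depends on $\lambda$. That is what forces your case split on the sign of $\lambda$ and the localization at $x_1=(\lambda/(p-1))^{(p-1)/p}$.

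The paper instead multiplies by $x^{-1/(p-1)}$. This turns the $\lambda$-term into the constant $-\lambda/p$ and leaves $\frac{p-1}{p}x^{p/(p-1)}-C^2x^{-3-1/(p-1)}$, a sum of two strictly increasing functions. Hence $x^{-1/(p-1)}f$ is strictly increasing on all of $(0,\infty)$ for every $\lambda\in\R$; the paper checks this via the derivative $x^{1/(p-1)}+\bigl(3+\frac{1}{p-1}\bigr)C^2x^{-4-1/(p-1)}>0$. The function also runs from $-\infty$ at $0^+$ to $+\infty$, so uniqueness and the negative-positive sign change follow at once.

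The paper's choice of multiplier gives a uniform argument with no cases. Yours is equally elementary and needs no differentiation. As a small by-product, yours shows that for $\lambda>0$ the root lies in $(x_1,\infty)$, i.e.\ where $(p-1)x^{p/(p-1)}>\lambda$.
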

\begin{proof}
  We show that only one other positive root exists. Any such root will also be a root to $g(x):=x^{\frac{-1}{p-1}} f(x)$. However, for $x>0$,
  \begin{equation*}
    g'(x) = \left( \frac{p-1}{p} x^{\frac{p}{p-1}} - \frac{\lambda}{p} - C^2 x^{-3-\frac{1}{p-1}} \right)' = x^{\frac{1}{p-1}} + \left(3+\frac{1}{p-1}\right)C^2 x^{-4-\frac{1}{p-1}} > 0,
  \end{equation*}
  and since $\lim_{x\to 0^+} g(x) = -\infty$ and $\lim_{x\to \infty} g(x) = \infty$, the result follows by the intermediate value theorem and the positivity of $g'$.
\end{proof}

\begin{lemma}
  \label{lemma: number of solutions}
  The function $V:[0,\infty) \to \R$ given as
  \begin{equation*}
    V(x) := \frac{(p-1)^2}{p^2}  x^{\frac{2p} {p-1}} - 2\lambda \frac{p-1}{p^2}x^{\frac{p}{p-1}} + C^2 x^{-2},
  \end{equation*}
  with $\lambda\in \R$ and $C^2> 0$ is strictly unimodal, i.e.\ strictly decreasing until it reaches its minimum and then strictly increasing. 
  In particular, $g(x) = E-V(x)$ has zero, one or two real positive solutions for $E<\min V$, $E=\min V$ and $E>\min V$ respectively and is strictly increasing until it reaches the maximum and then strictly decreasing.
\end{lemma}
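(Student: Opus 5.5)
We need to prove Lemma (number of solutions): $V$ is strictly unimodal on $(0,\infty)$, and consequently $g=E-V$ has zero, one or two positive roots according to $E<\min V$, $E=\min V$, $E>\min V$, and is increasing then decreasing.

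The plan is to reduce everything to Lemma~\ref{lemma: unique root}. I would first differentiate $V$ on $(0,\infty)$:
\begin{equation*}
  V'(x) = 2\frac{p-1}{p} x^{\frac{p+1}{p-1}} - \frac{2\lambda}{p} x^{\frac{1}{p-1}} - 2C^2 x^{-3} .
\end{equation*}
This is exactly twice the function $f$ of Lemma~\ref{lemma: unique root}. That lemma, applied with the same $\lambda\in\R$ and $C^2>0$, gives the following:
\begin{itemize}
  \item $V'$ has exactly one positive root $x_0$;
  \item $V'$ changes sign from negative to positive at $x_0$.
\end{itemize}
Since $V'$ is continuous on $(0,\infty)$ and has no other zeros, $V'<0$ on $(0,x_0)$ and $V'>0$ on $(x_0,\infty)$. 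Hence $V$ is strictly decreasing on $(0,x_0]$, strictly increasing on $[x_0,\infty)$, and $\min V=V(x_0)$. This is the strict unimodality. (At $x=0$ itself $V=+\infty$ because of the $C^2x^{-2}$ term, so the domain is effectively $(0,\infty)$.)

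Next I record the limits: $V(x)\to+\infty$ as $x\to0^+$ (from $C^2x^{-2}$ with $C^2>0$) and as $x\to\infty$ (the $x^{2p/(p-1)}$ term dominates). The count of roots of $g=E-V$ then follows from the intermediate value theorem on each monotone branch:
\begin{itemize}
  \item If $E<\min V$, then $g<0$ everywhere, so there are no roots.
  \item If $E=\min V$, the only root is $x_0$, since $V>V(x_0)$ elsewhere by strict monotonicity.
  \item If $E>\min V$, then $V-E$ goes continuously and strictly monotonically from $+\infty$ to $V(x_0)-E<0$ on $(0,x_0)$, and from $V(x_0)-E<0$ to $+\infty$ on $(x_0,\infty)$. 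This gives exactly one root on each side, so two roots in total.
\end{itemize}
Finally, $g'=-V'$, so $g$ is strictly increasing up to its maximum at $x_0$ and strictly decreasing afterwards.

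There is no real obstacle here. The only point to check carefully is that the sign-change statement of Lemma~\ref{lemma: unique root} transfers to $V'$ with the correct orientation, which it does because $V'=2f$ with a positive factor.
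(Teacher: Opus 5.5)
Your proposal is correct and follows the same route as the paper: you identify $V'=2f$ with $f$ from Lemma~\ref{lemma: unique root} and read off strict unimodality from its unique negative-to-positive sign change. The paper stops at that step, and your additions supply exactly what the ``in particular'' part needs: the limits $V\to+\infty$ as $x\to0^+$ and $x\to\infty$, and the intermediate value argument on each monotone branch to count the roots of $E-V$.
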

\begin{proof}
  Since 
  \begin{equation*}
    V'(x) = 2 \left( \frac{p-1}{p} x^{\frac{p+1}{p-1}} - \frac{\lambda}{p} x^{\frac{1}{p-1}} - C^2 x^{-3} \right),
  \end{equation*}
  we apply Lemma~\ref{lemma: unique root}.
\end{proof}

We also have asymptotic estimates on the derivative of $g$ at its zeros.
\begin{lemma}
  \label{lemma: estimates for gprime and gbis at 1}
  We have
  \begin{equation*}  
     g'(1) = -2\left(\tfrac{p-1-\lambda}{p}-C^2 \right), \qquad g''(1) = -2 - \frac{2}{p(p-1)}(p-1-\lambda) -6C^2.
  \end{equation*}
\end{lemma}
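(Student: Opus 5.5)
This is a direct computation from the explicit formulas \eqref{eq: derivatives for g period} for $g'$ and $g''$. Evaluating at $x=1$ kills all powers of $x$, leaving only the coefficients.

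For the first derivative, setting $x=1$ in
\[
g'(x) = -2\tfrac{p-1}{p} x^{\frac{p+1}{p-1}} + \tfrac{2\lambda}{p} x^{\frac{1}{p-1}} + 2C^2 x^{-3}
\]
gives
\[
g'(1) = -2\tfrac{p-1}{p} + \tfrac{2\lambda}{p} + 2C^2 = -2\left(\tfrac{p-1-\lambda}{p} - C^2\right).
\]
As a sanity check, this is negative in $S$ and vanishes on $\partial S_2$, which is consistent with $1$ being the upper root of $g$.

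For the second derivative, setting $x=1$ in
\[
g''(x) = -2\tfrac{p+1}{p} x^{\frac{2}{p-1}} + \tfrac{2\lambda}{p(p-1)} x^{\frac{2-p}{p-1}} - 6C^2 x^{-4}
\]
gives
\[
g''(1) = -2\tfrac{p+1}{p} + \tfrac{2\lambda}{p(p-1)} - 6C^2.
\]
It then remains to rewrite the first two terms over the common denominator $p(p-1)$:
\[
-\tfrac{2(p+1)(p-1) - 2\lambda}{p(p-1)} = -\tfrac{2p^2 - 2 - 2\lambda}{p(p-1)} = -\tfrac{2p(p-1) + 2(p-1-\lambda)}{p(p-1)} = -2 - \tfrac{2}{p(p-1)}(p-1-\lambda).
\]
This yields the stated formula. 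It also matches the expression used for $A = -4/g''(1)$ in Proposition~\ref{prop: wm around S2:2}.

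There is no real obstacle here. The only points requiring care are re-deriving $g'$ and $g''$ from $g$ to confirm the exponents in \eqref{eq: derivatives for g period}, and carrying out the algebraic regrouping $2p^2 - 2 - 2\lambda = 2p(p-1) + 2(p-1-\lambda)$ correctly.
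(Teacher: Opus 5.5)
Your proposal is correct and matches the paper's proof: the paper likewise writes out $g'$ and $g''$ explicitly and evaluates them at $w=1$. Your regrouping $2p^2-2-2\lambda = 2p(p-1)+2(p-1-\lambda)$, which gives the stated form of $g''(1)$, is correct.
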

\begin{proof}
  This follows from
  \begin{equation}
    \begin{split}
      \label{eq: derivates for g in full form}
      g'(w) &= -2\frac{p-1}{p} w^{\frac{p+1}{p-1}} + 2\frac{\lambda}{p}w^{\frac{1}{p-1}} + 2C^2 w^{-3}, \\
      g''(w) &= -2\frac{p+1}{p} w^{\frac{2}{p-1}} + 2\frac{\lambda}{p(p-1)}w^{\frac{2-p}{p-1}} + 2(-3)C^2 w^{-4}.
    \end{split}
  \end{equation}
\end{proof}

\begin{lemma}
  \label{lemma: estimate for gprime at wm}
  Let $\gamma \in (1,2)$ and $(\lambda,C^2)\in S^0$ such that Proposition~\ref{prop: wm around S2:2} holds. Then 
  \begin{equation*}
    \begin{split}
      g'(w_m) = 2\left(\tfrac{p-1-\lambda}{p}-C^2 \right) + O(|\tfrac{p-1-\lambda}{p}-C^2|^\gamma) = -g'(1) + O(|\tfrac{p-1-\lambda}{p}-C^2|^\gamma).
    \end{split}
  \end{equation*}
\end{lemma}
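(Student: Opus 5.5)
The plan is to write $g'(w_m)$ out explicitly from the formula in Lemma~\ref{lemma: estimates for gprime and gbis at 1},
\begin{equation*}
g'(w) = -2\tfrac{p-1}{p} w^{\frac{p+1}{p-1}} + 2\tfrac{\lambda}{p}w^{\frac{1}{p-1}} + 2C^2 w^{-3},
\end{equation*}
and substitute the expansion $w_m = 1 - A\eps + O(\eps^\gamma)$ from Proposition~\ref{prop: wm around S2:2}. Here $\eps := \tfrac{p-1-\lambda}{p}-C^2$ and $A = -4/g''(1)$. Since $w_m$ lies in a fixed neighbourhood of $1$ (say $[\tfrac12,1]$) once $\eps$ is small, $g'$ is smooth there with $g'''$ bounded uniformly in $(\lambda,C^2)\in S^0$. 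Indeed $\lambda\in[0,p-1]$ and $C^2\le \tfrac{p-1}{p}$, and the powers of $w$ stay bounded on $[\tfrac12,1]$. So a second-order Taylor expansion of $g'$ about $w=1$ with Lagrange remainder is legitimate, and its constants depend only on $p$.

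The key steps, in order, are as follows.

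\textbf{Step 1.} Expand about $w=1$:
\begin{equation*}
g'(w_m) = g'(1) + g''(1)(w_m-1) + O(|w_m-1|^2).
\end{equation*}

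\textbf{Step 2.} Insert $w_m - 1 = -A\eps + O(\eps^\gamma)$ together with $g'(1) = -2\eps$ from Lemma~\ref{lemma: estimates for gprime and gbis at 1}. This gives
\begin{equation*}
g'(w_m) = -2\eps - g''(1)A\eps + O(\eps^\gamma) + O(\eps^2).
\end{equation*}
Here I use that $|g''(1)|$ is bounded uniformly, which holds since $g''(1) = -2-\frac{2}{p(p-1)}(p-1-\lambda)-6C^2$ lies in a compact interval.

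\textbf{Step 3.} Use $A = -4/g''(1)$, so that $-g''(1)A = 4$. Then $g'(w_m) = 2\eps + O(\eps^\gamma)$, because $\gamma<2$ lets the $O(\eps^2)$ term be absorbed. Comparing with $g'(1) = -2\eps$ yields the second form of the claim, $g'(w_m) = -g'(1) + O(\eps^\gamma)$.

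I expect the main obstacle to be the uniformity of the $O$-constants in $(\lambda,C^2)$. Proposition~\ref{prop: wm around S2:2} supplies the bounds $\pm\eps^\gamma$ with explicit constant $1$, so the only remaining source of non-uniformity is the Taylor remainder. I would handle it by bounding $\sup_{[1/2,1]}|g'''|$ explicitly in terms of $p$ alone, using the bounds on $\lambda$ and $C^2$ valid in $S^0$. One must also check that $w_m\ge\tfrac12$, which follows from $w_m \ge 1 - A\eps - \eps^\gamma$ with $A\le 2$ and $\eps$ small, by shrinking $\bar\eps_\gamma$ if necessary.
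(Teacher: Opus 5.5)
Your proposal is correct and is essentially the paper's argument. The paper substitutes $w_m = 1 + \frac{4}{g''(1)}\eps + O(\eps^\gamma)$ into each power appearing in $g'$ and collects the coefficient of $\eps$; this is just your expansion $g'(1)+g''(1)(w_m-1)$ written out term by term, with the identity $\frac{p+1}{p}-\frac{\lambda}{p(p-1)}+3C^2=-\tfrac12 g''(1)$ playing the role of your $-g''(1)A=4$. Your explicit check that the Taylor remainder is uniform on $[\tfrac12,1]$ covers a point the paper leaves implicit.
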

\begin{proof}
  Using Proposition~\ref{prop: wm around S2:2},
  \begin{equation*}
    w_m = 1- \frac{-4}{g''(1)} \left(\tfrac{p-1-\lambda}{p}-C^2\right) + O(|\tfrac{p-1-\lambda}{p}-C^2|^\gamma),
  \end{equation*}
  in \eqref{eq: derivates for g in full form} we obtain, 
  \begin{equation*}
  \begin{split}
    g'(w_m) &= -2\frac{p-1}{p} \left(1+ \frac{4}{g''(1)} \left(\tfrac{p-1-\lambda}{p}-C^2\right) + O(|\tfrac{p-1-\lambda}{p}-C^2|^\gamma)\right)^{\frac{p+1}{p-1}} \\ 
    &\quad + 2\frac{\lambda}{p} \left( 1+ \frac{4}{g''(1)} \left(\tfrac{p-1-\lambda}{p}-C^2\right) + O(|\tfrac{p-1-\lambda}{p}-C^2|^\gamma) \right)^{\frac{1}{p-1}} \\
    &\quad + 2C^2 \left(1+ \frac{4}{g''(1)} \left(\tfrac{p-1-\lambda}{p}-C^2\right) + O(|\tfrac{p-1-\lambda}{p}-C^2|^\gamma)\right)^{-3} \\
    &= -2\frac{p-1}{p} \left( 1 + \frac{4}{g''(1)} \frac{p+1}{p-1} \left(\tfrac{p-1-\lambda}{p}-C^2\right) + O(|\tfrac{p-1-\lambda}{p}-C^2|^\gamma)\right) \\
    &\quad + 2\frac{\lambda}{p}\left(1 + \frac{4}{g''(1)} \frac{1}{p-1} \left(\tfrac{p-1-\lambda}{p}-C^2\right) + O(|\tfrac{p-1-\lambda}{p}-C^2|^\gamma)\right) \\
    &\quad + 2C^2 \left(1 -3 \frac{4}{g''(1)} \left(\tfrac{p-1-\lambda}{p}-C^2\right) + O(|\tfrac{p-1-\lambda}{p}-C^2|^\gamma)\right) \\
    &= \left(\tfrac{p-1-\lambda}{p}-C^2 \right) \left( -2 -\frac{8}{g''(1)} \frac{p+1}{p} + \frac{8\lambda}{g''(1)p(p-1)} - \frac{24}{g''(1)} C^2 \right)+ O(|\tfrac{p-1-\lambda}{p}-C^2|^\gamma)\\
    &= \left(\tfrac{p-1-\lambda}{p}-C^2 \right) \left( -2 -\frac{8}{g''(1)} \left( \frac{p+1}{p} - \frac{\lambda}{p(p-1)} + 3C^2 \right) \right) + O(|\tfrac{p-1-\lambda}{p}-C^2|^\gamma) \\
    &= 2\left(\tfrac{p-1-\lambda}{p}-C^2 \right) + O(|\tfrac{p-1-\lambda}{p}-C^2|^\gamma).\\
  \end{split}
  \end{equation*}
  We have used Lemma~\ref{lemma: estimates for gprime and gbis at 1}  in the last equality.
\end{proof}

\subsection{Classical Results}
Recall the following classical results from differential geometry and real analysis.
\begin{proposition}
  \label{prop: characterization of Killing field}
  \cite[Proposition~1]{langer-singer_classification}. A vector field $J$ in $\R^3$ along a curve $\gamma \subset \R^3$ extends to a Killing vector field on $\R^3$ if and 
  only if 
  \begin{equation*}
    0=\langle J',T \rangle = \langle J'',N \rangle = \langle J'''-\frac{k'}{k}J''+k^2 J',B \rangle.
  \end{equation*}
\end{proposition}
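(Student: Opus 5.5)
This is the classical Killing field criterion from Langer--Singer, and I would prove it directly. Write $J(s)$ along the arclength-parameterized $\gamma$ with Frenet frame $(T,N,B)$, curvature $k>0$ and torsion $\tau$.

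\emph{Necessity.} Suppose $J(s)=a+\omega\times\gamma(s)$. Then $J'=\omega\times T$, so $\langle J',T\rangle=0$. Next, $J''=k\,\omega\times N$ gives $\langle J'',N\rangle=0$. Differentiating once more,
\[
J'''=k'\,\omega\times N+k\,\omega\times(-kT+\tau B).
\]
Combining,
\[
J'''-\tfrac{k'}{k}J''+k^2J'=k\tau\,\omega\times B,
\]
which is orthogonal to $B$. So all three conditions hold.

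\emph{Sufficiency.} Suppose the three conditions hold. I would construct $\omega$ and $a$ explicitly.

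First, $\langle J',T\rangle=0$ means $J'$ is a combination of $N$ and $B$. Any such combination can be written as $\omega(s)\times T$ for a vector $\omega(s)$, unique up to adding multiples of $T$. The strategy is to pin down the tangential component of $\omega$ using the remaining two conditions, and then show $\omega'=0$.

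Write $J'=\alpha N+\beta B$. A direct computation gives
\[
\langle J'',N\rangle=\alpha'-\tau\beta.
\]
Define the candidate
\[
\omega=\beta N-\alpha B+\mu T,
\]
so that $\omega\times T=\alpha N+\beta B=J'$. The free function $\mu$ is fixed by requiring $\omega'\perp T$ and $\omega'\parallel T$ to cancel. Computing,
\[
\omega'=(\mu'-k\beta)T+(\mu k+\beta'+\tau\alpha)N+(\tau\beta-\alpha')B.
\]
The $B$-component vanishes by the second condition $\langle J'',N\rangle=0$.

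Now choose $\mu=-(\beta'+\tau\alpha)/k$, which kills the $N$-component; this uses $k>0$. The third condition, after expanding $J'''$ in the frame, should then reduce exactly to $\mu'=k\beta$, so that the $T$-component vanishes too. I expect this bookkeeping to be the main obstacle: one must expand $\langle J''',B\rangle$, $\langle J'',B\rangle$ and $\langle J',B\rangle$ in terms of $\alpha,\beta,\tau,k$ and their derivatives, use $\alpha'=\tau\beta$ to simplify, and check that the result is $k\,(k\beta-\mu')$ up to sign.

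Once $\omega$ is shown to be constant, set $a:=J-\omega\times\gamma$. Then $a'=J'-\omega\times T=0$, so $a$ is constant. Hence $J=a+\omega\times\gamma$ is the restriction of a Killing vector field on $\mathbb{R}^3$.
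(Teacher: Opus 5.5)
The paper gives no proof of this proposition; it quotes it from Langer--Singer, so your argument has to stand on its own. Your necessity part is correct. Your strategy for sufficiency also works and gives a complete proof: write $J'=\omega\times T$, fix the tangential part of $\omega$, show $\omega'=0$, then conclude that $a=J-\omega\times\gamma$ is constant. As written, however, it has a sign slip, and it leaves the one nontrivial step unverified.

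\emph{The sign.} Since $N\times T=-B$ and $B\times T=N$, your candidate gives $(\beta N-\alpha B+\mu T)\times T=-\alpha N-\beta B=-J'$. Take instead
\[
\omega:=T\times J'+\frac{\nu}{k}\,T=\frac{\nu}{k}\,T-\beta N+\alpha B,\qquad \nu:=\langle J'',B\rangle=\beta'+\tau\alpha .
\]
This is minus your $\omega$ (with your $\mu=-\nu/k$) and satisfies $\omega\times T=J'$. Nothing else in your argument depends on the overall sign.

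\emph{The postponed step.} It is short if you use the second condition before differentiating. Since $\langle J'',N\rangle=\alpha'-\tau\beta\equiv 0$, you have $J''=-k\alpha T+\nu B$ identically. Hence $J'''=-(k\alpha)'T-(k^2\alpha+\tau\nu)N+\nu'B$, and
\[
\Big\langle J'''-\tfrac{k'}{k}J''+k^2J',B\Big\rangle=\nu'-\tfrac{k'}{k}\nu+k^2\beta=k\Big(\big(\tfrac{\nu}{k}\big)'+k\beta\Big).
\]
Now differentiate $\omega$ with the Frenet equations:
\begin{itemize}
\item the $N$-component is $\nu-\beta'-\tau\alpha=0$;
\item the $B$-component is $\alpha'-\tau\beta=0$;
\item the $T$-component is $(\nu/k)'+k\beta$.
\end{itemize}
Therefore
\[
\omega'=\frac{1}{k}\Big\langle J'''-\tfrac{k'}{k}J''+k^2J',B\Big\rangle\,T ,
\]
which vanishes by the third condition. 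This is exactly what you predicted: it is $k(k\beta-\mu')$ in your notation.

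The argument uses $k>0$, which the statement already presupposes through $k'/k$. It also needs enough regularity ($\gamma\in C^3$, $J\in C^3$) for $\tau$ and $\nu'$ to exist. Both hold in the paper's application, where the curves are analytic with $k>0$.
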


\begin{theorem}[Weighted Mean Value Theorem for Integrals]
\label{thm: weighted mvt}
Let $f: [a, b] \to \mathbb{R}$ be a continuous function and let $g: [a, b] \to \mathbb{R}$ be an integrable function that does not change sign on the interval $[a, b]$. 
Then there exists at least one point $c \in (a, b)$ such that
\begin{equation*}
    \int_{a}^{b} f(x)g(x) \, dx = f(c) \int_{a}^{b} g(x) \,dx.
\end{equation*}
\end{theorem}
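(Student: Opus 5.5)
This is the Weighted Mean Value Theorem for integrals, a classical result, and I would prove it by reducing it to the intermediate value theorem. Assume without loss of generality that $g\geq 0$ on $[a,b]$; if $g\leq 0$, apply the result to $-g$. Since $f$ is continuous on the compact interval $[a,b]$, it attains a minimum $m=f(x_m)$ and a maximum $M=f(x_M)$. Multiplying $m\leq f\leq M$ by $g\geq 0$ and integrating gives
\begin{equation*}
  m\int_a^b g\,dx \leq \int_a^b fg\,dx \leq M\int_a^b g\,dx .
\end{equation*}

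If $\int_a^b g\,dx=0$, both sides of the desired identity vanish, so any $c\in(a,b)$ works. Otherwise set $\mu:=\int_a^b fg\,dx / \int_a^b g\,dx\in[m,M]$. By the intermediate value theorem applied to $f$ on the closed interval between $x_m$ and $x_M$, some $c$ in that interval satisfies $f(c)=\mu$. This already proves the statement with $c\in[a,b]$.

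The only delicate point, and the main obstacle, is getting $c$ in the \emph{open} interval $(a,b)$. If $m<\mu<M$, the intermediate value theorem yields $c$ strictly between $x_m$ and $x_M$, hence $c\in(a,b)$.

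If instead $\mu=M$, then $\int_a^b (M-f)g\,dx=0$ with a nonnegative integrand, so $(M-f)g=0$ almost everywhere. Since $\int_a^b g\,dx>0$, the set $\{g>0\}$ has positive measure, so it contains a point $c\in(a,b)$ with $f(c)=M=\mu$. (The set has positive measure, so it cannot be contained in $\{a,b\}$ together with the null set where the identity fails.) The case $\mu=m$ is symmetric.

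For its use in Proposition~\ref{prop: theta limit at figure 8} the closed-interval version already suffices, since there only $a\to 0$ is needed.
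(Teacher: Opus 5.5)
Your proof is correct. The paper gives no proof of this statement; it only recalls it in the appendix as a classical fact, and your argument (bounds $m\le\mu\le M$, the intermediate value theorem when $m<\mu<M$, and an a.e.\ argument when $\mu\in\{m,M\}$) is the standard one.

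One point in your favor: because you handle the extreme cases with a Lebesgue a.e.\ argument, your proof also covers the weight the paper actually uses in Proposition~\ref{prop: theta limit at figure 8}. That weight is $\bigl(v|J_0|^2-p^2C^2\bigr)^{-1}\psi(v)^{-1/2}$, which blows up like $(v-v_m)^{-1/2}$ at $v_m$ and so is only improperly Riemann integrable.

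Your closing remark is also accurate. The paper itself only takes $a\in[v_m,C^{1/2}]$ and needs nothing beyond $a\to 0$.
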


Recall also the following special cases of Taylor's theorem.
\begin{remark}
  \label{rmk:first order Taylor}
  Let $a,b,r \in \R$, $s<t$ and $\eps \ll 1$. Then a first order Taylor expansion gives
  \begin{equation*}
    \begin{split}
      (a\eps^s + b\eps^t)^r &= a^r \eps^{rs} + ra^{r-1} b \eps^{t+(r-1)s} + \frac{r(r-1)}{2} a^{r-2}b^2 \eps^{2t+(r-2)s} \left(1+\xi\frac{b}{a}\eps^{t-s} \right)^{r-2}\\
      &= a^r \eps^{rs} + ra^{r-1} b \eps^{t+(r-1)s} +O(\eps^{2t+(r-2)s}),
    \end{split}
  \end{equation*}
  where $\xi \in [0,1]$, that is the constant in $O(\eps^{2t+(r-2)s})$ depends on $r,s,t$ and on $a$,$b$ in form of a power-law.
\end{remark}

\begin{remark}
  \label{rmk: Taylor for (1+x)to the r}
  Let $\eps\ll 1$, $a,b \in \R$ and $\gamma>1$. Then from
   a second order Taylor expansion around $1$, there exists $\xi$ with $
   |\xi| \leq |a\eps+b\eps^\gamma|$ such that
\begin{equation*}
  \begin{split}
  (1 +a\eps +b \eps^\gamma)^r &= 1 +ra\eps +rb\eps^\gamma + \frac{r(r-1)}{2} (a\eps+b\eps^\gamma)^2 + \frac{r(r-1)(r-2)}{6} (1+\xi)^{r-3} (a\eps+b\eps^\gamma)^3 \\
    & = 1 + \eps[ra] + \eps^\gamma [rb] + \eps^2 \left[\frac{r(r-1)}{2}a^2 \right] + \eps^{1+\gamma} [r(r-1) ab]  + O(\eps^{\min(2\gamma,3)}).
  \end{split}
\end{equation*}
The constant in $O(\eps^{\min(2\gamma,3)})$ depends only polynomially on $r$, $a$, $b$, since for $\eps$ sufficiently small $(1+\xi)^{r-3}$ remains bounded (e.g.\ for $\eps<\frac{1}{2(a+b)}$, the inequality $|\xi|\leq \eps|a + b\eps^{1-\gamma}|\leq \frac{1}{2}$ holds), regardless of $r$ and so can be absorbed into the constant.
\end{remark}

\bibliographystyle{plain}
\bibliography{references}

\end{document}